\newtheoremstyle{newline}
  {3pt}
  {3pt}
  {\itshape}
  {}
  {\bfseries}
  {.}
  {.5em}
  {}
\theoremstyle{newline}\newtheorem{theorem}{Theorem}[section]
\theoremstyle{newline}\newtheorem{corollary}{Corollary}[section]
\theoremstyle{newline}\newtheorem{proposition}{Proposition}[section]
\theoremstyle{newline}\newtheorem{lemma}{Lemma}[section]
\theoremstyle{newline}
\theoremstyle{newline}\newtheorem*{remark}{Remark}
\newtheoremstyle{newlinedefn}
  {3pt}
  {3pt}
  {}
  {}
  {\bfseries}
  {.}
  {.5em}
  {}
\theoremstyle{newlinedefn}
 \numberwithin{equation}{section} 
\newcommand{\bu}{\boldsymbol{u}}
\newcommand{\bv}{\boldsymbol{v}}
\newcommand{\bw}{\boldsymbol{w}}
\newcommand{\boldf}{\boldsymbol{f}}
\newcommand{\bg}{\boldsymbol{g}}
\newcommand{\beps}{\boldsymbol{\varepsilon}}
\newcommand{\phib}{\boldsymbol{\phi}}
\newcommand{\bbT}{\mathbf{T}}
\newcommand{\bbS}{\mathbf{S}}
\newcommand{\bbU}{\mathbf{U}}
\newcommand{\diver}{\mathrm{div}}
\begin{document}
\onehalfspacing
\title[Nonlinear dynamic fracture problems]{Nonlinear dynamic fracture problems with polynomial and strain-limiting constitutive relations}
\author{Victoria Patel}
\email{victoria.patel@maths.ox.ac.uk}
\subjclass{35M13, 35K99, 74D10, 74H20, 74R99.}
\keywords{Nonlinear viscoelasticity, strain-limiting theory, evolutionary problem, global existence, weak solution, dynamic fracture, phase-field.}

\thanks{V. Patel is supported by the UK Engineering and Physical Sciences Research Council [EP/L015811/1].}
\maketitle
{\footnotesize
 \centerline{Mathematical Institute, University of Oxford}
   \centerline{Andrew Wiles Building, Woodstock Road }
   \centerline{Oxford OX2 6GG, UK}
   }

\begin{abstract}
We extend the framework of dynamic fracture problems with a phase-field approximation to the case of a nonlinear constitutive relation between the Cauchy stress tensor \( \bbT \), linearised strain \( \beps(\bu) \) and strain rate \( \beps(\bu_t ) \). The relationship takes the form \( \beps(\bu_t) + \alpha\beps(\bu) = F(\bbT) \) where \( F \) satisfies certain \(p\)-growth conditions. We take particular care to study the case \(p=1\) of a `strain-limiting' solid, that is, one in which the strain is bounded {\it a priori}. We prove the existence of long-time, large-data weak solutions of a balance law coupled with a minimisation problem for the phase-field function and an energy-dissipation inequality, in any number \( d\) of spatial dimensions. In the case of Dirichlet boundary conditions, we also prove the satisfaction of an energy-dissipation equality. 
\end{abstract}

\section{Introduction}\label{sec:intro}

In this work, we are interested in combining the theory of dynamic fracture with the theory of nonlinear, implicit constitutive relations. At present, mathematical analysis of dynamic fracture problems has only been performed in the setting of linear elasticity to the author's knowledge. In this work, we aim to extend such fracture studies and examine time-dependent damage problems with a constitutive relation from nonlinear viscoelasticity. Of particular interest is the strain-limiting case, by which we mean that the linearised strain \( \beps(\bu) = \frac{1}{2}(\nabla \bu + (\nabla \bu)^{\mathrm{T}})\) is bounded {\it a priori}. A key motivation for studying such models is that such a defining relation ensures that no singularity is experienced in the strain. Indeed, we cannot contradict the small strain assumption, which is essential to the construction of the model here. 
This contrasts the known results for damage models with a linear constitutive relation where a singularity occurs at the crack tip \cite{onthenonlinearelasticresponse}.  
In this section, we discuss the theory of implicit constitutive relations that was established by Rajagopal in his pioneering work \cite{onICT} and a series of later papers, followed by an introduction to the theory of dynamic fracture based on the principles in \cite{mott1948brittle, larsen2010models}. Then we combine the two theories to obtain a fracture problem with a nonlinear constitutive relation between the Cauchy stress tensor, linearised strain tensor and strain rate. 
We particularly emphasise the work in Section \ref{sec:a} where strain-limiting models are studied in the context of dynamic fracture problems, the state of the art in this work. 

A  framework for nonlinear, implicit constitutive relations in continuum mechanics was introduced by Rajagopal in his pioneering work \cite{onICT} and a series of later papers. Although such relations  had been studied previously, this was only in an {\it ad hoc} way. The novelty of \cite{onICT} is the systematic and thermodynamically consistent structure that is available for the family of constitutive relations. These ideas are extended in \cite{RN20}  where nonlinear relationships between the Cauchy stress tensor \( \bbT\), strain tensor \( \beps\) and strain rate \( \beps_t\) are justified in the small strain setting. Namely, we may consider the defining relation
\begin{equation}\label{intro:equ14}
\beps(\bu_t + \alpha\bu)  = F(\bbT),
\end{equation}
where \( F\) is a nonlinear, monotonic function from \( \mathbb{R}^{d\times d}_{sym}\) to itself and \( \alpha\) is a positive constant. We let \( \beps = \frac{1}{2}((\nabla \cdot) + (\nabla \cdot)^{\mathrm{T}}) \) denote the symmetric gradient operator, \( \bu \) is the displacement and \( \bbT \) is the Cauchy stress tensor. The relation (\ref{intro:equ14}) is a generalisation of the classical linear Kelvin--Voigt model for viscoelastic bodies. 

A subclass of these models that we take particular care to study here is that of a strain-limiting solid. A strain-limiting constitutive relation ensures that the strain is bounded {\it a priori}, irrespective of the magnitude of the stress tensor. 
The motivation to study such models is as follows. 
It is well known that brittle materials can fracture in the small strain range under large stresses. 
Furthermore, under the assumption that the strain is small, the mathematical model that results from the balance laws of momentum and mass for the deformation of the body under investigation can be linearised. As we are considering the motion of solids, it is sensible to assume that we are working in the small strain setting. Indeed, solids damage and break under sufficiently large strains. 
Moreover, the linearisation procedure as a result of the small strain assumption simplifies the problem and makes it more amenable to mathematical analysis. 
Hence, from now on, it is assumed that the small displacement gradient assumption holds, that is, 
\begin{equation}\label{intro:smalldisp}
\sup_{(t,x) \in Q} |\nabla \bu(t,x) | = O(\delta),
\end{equation}
for a \( \delta\ll 1\), where \( Q\) is the space-time domain. It follows from (\ref{intro:smalldisp}) that the linearised strain \( \beps(\bu) \) as defined above must  be at most of order \( \delta\). 

Now, consider a body with a crack which is possibly growing due to an external force acting on the material. 
In the case of a linear constitutive relation,   the strain has magnitude of order \(r^{-\frac{1}{2}} \) where \( r\) is the distance to the crack tip \cite{onthenonlinearelasticresponse}. The strain experiences a singularity at the crack tip, contradicting the standing assumption that the strain is small. A more reliable model could be one in which the magnitude of the strain is bounded {\it a priori} so such contradictions cannot  arise. 

This motivates studying the constitutive relation
\begin{equation}\label{intro:equ1}
\beps(\bu_t + \alpha\bu) = F(\bbT) := \frac{\bbT }{(1  + |\bbT|^a)^{\frac{1}{a}}},
\end{equation}
for fixed parameter \( a> 0 \). 
Other choices for the function \( F \) are available, but for simplicity of the presentation, we make this specific choice as suggested in \cite{RN21, RN19}. For further details of the possible generalisations, we refer to the remark in Section \ref{sec:a}. 

The defining relation (\ref{intro:equ1}) is coupled with the balance law
\begin{equation}\label{intro:equ9}
\bu_{tt} = \diver(\bbT) + \boldf,
\end{equation}
where \( \boldf\) represents the external force,  alongside suitable boundary and initial conditions.
The balance law comes from considering the balance of linear momentum under (\ref{intro:smalldisp}). The balance of angular momentum reduces to \( \bbT = \bbT^{\mathrm{T}}\), which is naturally included in (\ref{intro:equ1}). Due to the assumption (\ref{intro:smalldisp}), we assume that the density is constant in time and so we do not include a balance of mass. 
For further details on the relevant kinematics and the derivation of this problem, we refer to \cite{mypaperpreprint} and the references therein. We note that the strain-limiting problem (\ref{intro:equ1}), (\ref{intro:equ9}) has been recently studied on domains without a crack. 
For the analysis of the spatially periodic problem, we refer to \cite{mypaperpreprint} and for a general domain with Dirichlet boundary conditions, we mention \cite{preprint2}. 

In this paper, we consider  (\ref{intro:equ1}), (\ref{intro:equ9}) in the context of  dynamic fracture. 
Hence, before proceeding further, we  give a brief introduction to the modern theory of fracture. 
The  principles that drive the mathematical analysis of fracture in brittle bodies can be traced back to the seminal work of Griffith \cite{griffith1921vi}. In the quasi-static setting\footnote{By quasi-static setting, we mean that the intertial term \( \bu_{tt}\) is negligible in the balance of momentum equation (\ref{intro:equ9}).}, he stipulated that a crack can only grow if the increase in surface energy is exactly equal to the resulting decrease in elastic potential energy. 
In other words, the newly created area is proportional to the loss of stored energy. 
In the anti-plane setting\footnote{By anti-plane setting, we mean that, in three spatial dimensions, the displacement \( \bu \) takes the particular form \( \bu(x_1,x_2,x_3) = (0, 0, u(x_1,x_2))\). } with a linear relationship between the stress and strain, we write this mathematically as
\begin{equation}\label{intro:equ2}
\frac{\mu }{2} \int_{\Omega\setminus C(t)} |\nabla u(t) |^2 \,\mathrm{d}x + G_c \mathcal{H}^{d-1}(C(t)) = \frac{\mu }{2} \int_{\Omega\setminus C(0)} |\nabla u(0) |^2 \,\mathrm{d}x + G_c \mathcal{H}^{d-1} ( C(0)) ,
\end{equation} 
where  \( C(t) \subset \Omega\) is the crack set, \( \mu \) is the elasticity constant and \( G_c \) is the fracture toughness. 
We use \( \mathcal{H}^{d-1}\) to denote the \((d-1)\)-dimensional Hausdorff measure. 
The first term in (\ref{intro:equ2}) corresponds to the elastic energy and the second term to the surface energy. This formulation was first introduced by Francfort and Marigo \cite{MR1633984}, based on the Mumford--Shah functional that is used in image segmentation \cite{MR997568}.

Alongside this energy balance,  the  balance of linear momentum in the quasi-static setting must hold. We formulate this as a variational problem. Indeed, the couple \((u(t), C(t)) \) must satisfy
\begin{equation}\label{intro:equ11}
\mathcal{A}(u(t), C(t)) := \frac{\mu }{2} \int_{\Omega\setminus C(t)} |\nabla u(t) |^2 \,\mathrm{d}x + G_c \mathcal{H}^{d-1} (C(t)) \leq \frac{\mu }{2} \int_{\Omega\setminus \tilde{C}} |\nabla \tilde{u}|^2 \,\mathrm{d}x + G_c \mathcal{H}^{d-1} (\tilde{C}) ,
\end{equation}
for every suitable comparison couple \((\tilde{u}, \tilde{C}) \) with \( C(t) \subset \tilde{C}\). 
The principle of irreversibility is also required to hold, that is, the crack is non-decreasing in time, so \(C(s) \subset C(t) \) for every \( s< t\). Problems of the form (\ref{intro:equ2}), (\ref{intro:equ11}), where we work directly with the crack set, are called sharp interface problems. 
Mathematical analysis of this anti-plane shear problem took place in \cite{MR1633984}, with higher dimensional settings treated in \cite{MR1897378} and \cite{MR1988896}. Both the crack set and displacement are treated as unknowns in these works. This causes significant analytical difficulties. Indeed, the analysis takes places on the set \(\cup_{t\geq 0} \{t\}\times ( \Omega\setminus C(t))\), which is not only a domain that changes in time, but is {\it a priori} unknown because the crack set \((C(t))_{t\geq 0}\) is unknown. 

An approach to   avoid working on time dependent cracking domains is to approximate the crack set by a phase-field function. The Ambrosio--Tortorelli  functional \cite{MR1113814, MR1164940} takes the following form in the context of quasi-static fracture for linear elasticity:
\begin{equation}\label{intro:equ10}
\mathcal{A}_\epsilon (u, v) = \frac{\mu }{2} \int_\Omega (v^2 + \eta_\epsilon) |\nabla u |^2 \,\mathrm{d}x + G_c \int_\Omega\epsilon|\nabla v|^2  + \frac{1}{4\epsilon} (1 -v )^2\,\mathrm{d}x,
\end{equation}
where \( \epsilon>0\) is an approximation parameter, \( \eta_\epsilon\) is a numerical stability parameter such that \( 0 < \eta_\epsilon\ll \epsilon\),  and \( v\) is the unknown phase-field function, also referred to in the literature as the material parameter. The function \( v \) approximates the crack set in the sense that \( v \) takes values close to \( 1\) away from the crack and  values close to \( 0 \) near  the crack. 
Although \( \epsilon\) acts as a length scale for the approximation, 
we must be careful not to assign a physical meaning to this parameter \cite{BourdinBlaise2011Atmf}. For the mathematical analysis of this approximation in the quasi-static setting, we refer to  \cite{KUHN20103625} and \cite{MR2106765}, for example. 

The approximation (\ref{intro:equ10}) is a sensible choice because \( \mathcal{A}_\epsilon\) converges to the sharp-interface functional \( \mathcal{A}\) in the sense of \( \Gamma\)-convergence. We refer to \cite{MR1968440} for the details but the essence is as follows. Let \(((u^\epsilon, v^\epsilon) )_\epsilon\) be a sequence such that \(v^\epsilon\) minimises \( v\mapsto\mathcal{A}_\epsilon( u^\epsilon, v) \) over suitable test functions \( v\leq v_\epsilon\) at every time. Then there exists a subsequence in \( \epsilon\) and a  solution couple \( (u, C)\) to  (\ref{intro:equ2}), (\ref{intro:equ11}) such that, for a.e. time \( t\), we have
\begin{align*}
&v^\epsilon\nabla u^\epsilon(t) \rightarrow\nabla u(t) \quad\text{ strongly in }L^2(\Omega)^d,
\\
&\lim_{\epsilon\rightarrow 0+} \int_\Omega((v^\epsilon)^2 + \eta_\epsilon) |\nabla  u^\epsilon|^2 \,\mathrm{d}x = \int_\Omega|\nabla u(t) |^2\,\mathrm{d}x
\\
&\lim_{\epsilon\rightarrow 0+ }  \int_\Omega\epsilon|\nabla v|^2  + \frac{1}{4\epsilon} (1 -v )^2\,\mathrm{d}x = \mathcal{H}^{d-1}( C(t)).
\end{align*}
 Hence, the sequence of solutions for the approximate problem represents an approximation to the solution of the sharp-interface functional. 

We remark that there are other choices of approximation available. For example, the  fourth order functional 
\begin{align*}
\tilde{\mathcal{A}}_\epsilon(u, v) = \frac{\mu }{2} \int_\Omega(v^2 + \eta_\epsilon) |\nabla u|^2 \,\mathrm{d}x + \frac{G_c}{4}\int_\Omega\frac{1}{\epsilon} (1-v)^2 + 2\epsilon|\nabla v|^2 + \epsilon^3 |\Delta v|^2 \,\mathrm{d}x,
\end{align*}
is investigated in \cite{BORDEN2014100} and \cite{MR4060061}. The functional  \( \Gamma\)-converges to \( \mathcal{A}\). An advantage of this choice is that the higher order spatial derivatives improve the convergence rate of numerical solutions. In particular, when studying these fracture problems from the point of view of computational analysis, we note that the Ambrosio--Tortorelli functional is not necessarily the best choice of approximation for the sharp-interface functional. 

The aforementioned literature all concern the quasi-static problem. Here, we are interested in the adaptation of Griffith's theory to the dynamic problem, that is,  we do not assume that the  term \( \bu_{tt}\) is negligible. An extension was first introduced by Mott \cite{mott1948brittle} and relies on the following three principles:
\begin{itemize}
\item[(i)] elastodynamics,  the balance of linear momentum should hold away from the crack set;
\item[(ii)] energy balance, an energy-dissipation balance that includes the kinetic energy should hold;
\item[(iii)] irreversibility, the crack cannot heal itself in time. 
\end{itemize}
However, this is not sufficient for a well-posed problem. The case of a stationary crack should be ruled out. The following additional principle was introduced by Larsen \cite{larsen2010models}:
\begin{itemize}
\item[(iv)] maximal dissipation, that is, if the crack can grow while satisfying (i)--(iii), then it should.
\end{itemize}
As with the quasi-static problem, the study of dynamic fracture problems broadly follows one of two approaches: working directly in the sharp-interface setting, or an approximation by means of a phase-field function, generally based on the Ambrosio--Tortorelli functional. The focus in this paper is on the use of a phase-field approximation, thus avoiding the technicalities of working in a time dependent domain. However, for completeness, we first discuss the known results in the sharp-interface case.

For sharp-interface problems, the analysis is significantly more complicated compared to the quasi-static model due to the presence of the intertial term. Hence, a  first step in the analysis is to solve the elastodynamic equation for an unknown displacement when the crack set \((C(t))_{t>0 }\) is known {\it a priori}. We look for a function \( \bu \) such that
\begin{equation}\label{intro:equ12}
\bu_{tt} = \diver\big( \mathbb{E}(t,x) \nabla \bu \big) + \boldf (t,x) \quad \text{ in }\Omega\setminus C(t), \quad t\in [0, T],
\end{equation}
where \( \mathbb{E}\) is the elasticity tensor, given a sequence of subsets \((C(t))_{t\geq 0 }\) of \( \Omega\) satisfying some regularity requirements. In an attempt to better understand (\ref{intro:equ12}), in \cite{MR2847479} for the scalar-valued case and \cite{MR4117509} for the vector-valued case, the wave equation
\begin{align*}
\bu_{tt} - \Delta \bu - \gamma \Delta \bu_t = \boldf
\end{align*}
is studied, where  \( \gamma \in \{0, 1\}\). The system is damped if \( \gamma = 1\) and undamped if \( \gamma = 0 \). The viscous term from the damping provides a certain smoothing effect. A result of this is that, if \( \gamma =1\),  uniqueness and an energy-dissipation balance can be shown. However, uniqueness and satisfaction of an appropriate energy-dissipation balance is an open problem in the undamped setting. 
We note that in  both  \cite{MR2847479}  and \cite{MR4117509}, the existence proofs use a discretisation in the time variable. This is the standard approach to tackle such problems, and we use a similar approach here.

However,  different approaches to prove the existence of solutions are available under  stronger regularity requirements on the crack set. In \cite{MR2306785}, \cite{MR3748493} and, for the vector-valued case, \cite{MR3654907}, a co-ordinate change is used to transform the problem from one on \( \cup_{t\geq 0} \{ t\}\times (\Omega\setminus C(t)) \),  a cracking domain,  to one on the reference domain \( [0, \infty) \times (\Omega\setminus C(0))  \). A solution is found on this reference domain and it is shown  that the existence of such a solution to the transformed problem is equivalent to the existence of a solution to the original problem on the cracking domain. 

The next step in the analysis is to deal with the case where both the crack set and displacement are {\it a priori} unknown. As far as the author is aware, the only known result in the multi-dimensional setting is in \cite{MR3541897}. Under some assumptions on the regularity and geometry of the crack set, the authors show that among all possible couples \((u(t), C(t))_{t\geq 0 }\), there exists one which satisfies the  maximal dissipation condition. There is no damping term present, which is an advantage of the analysis compared to the aforementioned works.  Although a significant amount remains unknown, this work suggests that the maximal dissipation condition is the correct principle to impose on the dynamic problem. 

An important aspect of all of the aforementioned literature is that only linear elasticity is considered, that is, a linear relationship between the Cauchy stress tensor and linearised strain. This leads to a significant gap in the literature. Indeed, to the author's knowledge, there has been no study of dynamic fracture problems in the context of nonlinear constitutive relations. 
In this work, we aim to extend some of the known results to such models, considering both a general case and that of a strain-limiting solid. To avoid the technicalities that come with the sharp-interface model, we  consider a dynamic problem with a phase-field approximation based on the Ambrosio--Tortorelli functional. A regularised model of the type that we study in this work was first introduced in \cite{BourdinBlaise2011Atmf, larsen2010models, MR2673410}. The formulation is based on the principles of elastodynamics, energy-balance and irreversibility, but not maximal dissipation as in \cite{MR3541897}. 
This is due to the fact that maximal dissipation is naturally guaranteed by the formulation. Indeed, the minimisation problem (\ref{intro:equ4}) introduced below ensures that the `crack' grows whenever it is possible for it to do so. 

In \cite{MR2673410}, the authors solve the equation
\begin{equation}\label{intro:equ3}
\bu_{tt} = \diver\big( (v^2 + \eta_\epsilon) \mathbb{E}\beps(\bu + \bu_t)\big) + \boldf \quad \text{ in } (0, T) \times \Omega,
\end{equation}
subject to the couple \((\bu, v) \) solving the  minimisation problem
\begin{align}\label{intro:equ4}
\mathcal{A}_\epsilon(\bu(t), v(t)) &:= \frac{1}{2}\int_\Omega (v(t)^2 + \eta_\epsilon)\mathbb{E}\beps(\bu) : \beps(\bu) \,\mathrm{d}x
+ \int_\Omega\frac{1}{4\epsilon} (1 - v(t))^2 + \epsilon|\nabla v(t)|^2 \,\mathrm{d}x 
\\&= \inf_{v\leq v(t)}\mathcal{A}_\epsilon(\bu(t), v). \nonumber
\end{align}
The first integral on the right-hand side of (\ref{intro:equ4}) represents an approximation of the elastic energy and the second  is an approximation of the surface energy, with fixed approximation parameter \( \epsilon\). 
The minimisation problem is identical to that  for the quasi-static model. In the literature, it is sometimes referred to as unilateral minimality.

We note that an issue occurs as a result of the minimisation problem when taking the limit in the approximation parameter \( \epsilon\). 
As stated in \cite{MR2673410},  ``the unilateral minimality of the \( v\) variable has no obvious counterpart in a sharp-interface model.''
This means that there is nothing ensuring maximal dissipation in the limiting model. 
Another deficiency with the model (\ref{intro:equ3}), (\ref{intro:equ4}) is the presence of the viscous  term \( \bu_t\). When taking the limit in the approximation parameter, the viscoelastic paradox can occur \cite{MR4117498}, that is,  the only possible solution of the corresponding sharp-interface problem is constant in time. However, in such dynamic fracture models, our interest lies in non-stationary solutions. Hence this paradox is a potential deficiency of the damping term in (\ref{intro:equ3}). In the problems that we study here, we include a damping term. It is out of the scope of the paper to remove this term but will be the focus of future work. 

Such a shortcoming is overcome in \cite{MR4064375} where the author considers a variation of (\ref{intro:equ3}), (\ref{intro:equ4}) that has no such damping term in (\ref{intro:equ3}). 
Rather than include a dissipative term in (\ref{intro:equ3}), a rate-dependent term is included in the minimisation problem (\ref{intro:equ4}). The author of \cite{MR4064375} considers the balance law
\begin{equation}\label{intro:equ5}
\bu_{tt} = \diver\big( (v^2 + \eta_\epsilon) \mathbb{E}\beps(\bu)\big) + \boldf 
\end{equation}
coupled with the rate-dependent minimisation problem
\begin{equation}\label{intro:equ7}
\mathcal{A}_\epsilon(\bu(t), v(t)) + (v_t(t) , v(t) )_{k,2} = \inf_{v\leq v(t) } \Big\{ \mathcal{A}_\epsilon(\bu(t), v) + (v_t(t) , v )_{k,2}\Big\} ,
\end{equation}
where \( \mathcal{A}_\epsilon\) is as in (\ref{intro:equ4}) and \((\cdot, \cdot)_{k,2}\) is the standard inner product in the Sobolev space \( H^k(\Omega) \). 
For every \( k \in \mathbb{N}_0\), it is shown that a weak solution to (\ref{intro:equ5}), (\ref{intro:equ7}) exists such that it satisfies an energy-dissipation inequality. However, if \( k > \frac{d}{2}\), it is possible to  prove that the energy-dissipation inequality is in fact an equality at every point in time. This is essentially a consequence of the Sobolev embedding theorem, from which we know that \( H^k(\Omega) \) is continuously embedded into \( C(\overline{\Omega}) \) for \( k > \frac{d}{2}\). 

The proof of the existence of solutions to the  dynamic problem in both \cite{MR2673410} and \cite{MR4064375} involves introducing a discretisation in the time variable. The time-discrete formulation involves an alternating procedure. The elastodynamic equation is solved for the displacement given the phase-field function from the previous time step. The minimisation problem for an unknown phase-field function is then solved, given the displacement function from the aforementioned step. A similar procedure is used here. 

A key novelty of the presentation in Section \ref{sec:p} is the formulation of the problem and the extension of the analysis of the linear case to a model of nonlinear viscoelasticity with a Kelvin--Voigt rheology, based on the implicit constitutive framework discussed previously. 
In particular, we note the derivation of bounds on the sequence of solutions to the time discrete problem. This demonstrates the highly nonlinear nature of the problem. 
However, the case of the strain-limiting material is the true highlight of this work. We make a significant step towards the use of strain-limiting constitutive relations in sharp-interface fracture models, which is the ultimate goal. 

The nonlinear nature of the problem adds significant analytical challenges to problem, compared with the equivalent problem in the linear setting. Not only does the problem require careful formulation, but we work hard to obtain various {\it a priori} bounds and to improve the regularity of approximate solution sequences. In the strain-limiting case, we have the added complication that the stress tensor \( \bbT \) is only bounded {\it a priori} in \( L^1(Q)^{d\times d}\). It is well known that this space has poor compactness properties. Indeed, at best we can obtain weak-* convergence in the space of Radon measures on \( \overline{Q}\). However, we want to stay out of the space of measure-valued solutions.  We use careful manipulations to improve the regularity of the approximations of the stress which allow the deduction of a pointwise convergence result, despite the poor integrability. This is later improved to strong convergence in \(L^1(0, T; L^1_{loc}( \Omega)^{d\times d}) \) and we prove that the limit is exactly the solution that we are looking for. 

There are two main results that we present in this work.  The first, presented in Section \ref{sec:p},  concerns a constitutive relation of the form
\begin{equation}\label{intro:equ8}
\beps(\bu_t + \alpha\bu) = F(\bbT) :=  |\bbT|^{p-2}\bbT,
\end{equation}
where \( p > 1\). When  \( p \in (1,2]\), we prove the existence of a weak solution \((\bu, v) \) to the  elastodynamic equation (\ref{intro:equ9}) coupled with a minimisation problem analogous to (\ref{intro:equ4})  and an energy-dissipation balance at every point in time. For \( p > 2\), we also obtain an existence result. However, the energy-dissipation balance only holds at almost every point in time. This results from a lack of regularity of the phase-field function. 

The second result, presented in Section \ref{sec:a},  concerns strain-limiting solids with the constitutive relation (\ref{intro:equ1}).  The elastodynamic equation (\ref{intro:equ9}) is coupled with the defining relation (\ref{intro:equ1}), a rate-dependent minimisation problem of the type (\ref{intro:equ7}) and an energy-dissipation balance. 
We supplement this with mixed Dirichlet--Neumann boundary conditions. We restrict ourselves to \( k > \frac{d}{2} + 1\) to ensure good regularity of the phase-field function. This allows the derivation  of higher regularity estimates on approximations of the displacement and stress tensor. 

If the Neumann part of the boundary is non-empty, due to a lack of global estimates on the stress tensor, the elastodynamic equation is shown to hold weakly up to a penalisation term on the Neumann part of the boundary. 
Indeed, the only global convergence result for the approximations of the stress tensor is in the set of measures on \( \overline{Q}\) due to the lack of weak compactness in the Lebesgue space \( L^1(Q)\). 
An important consequence of this is that we are only able to show that an energy-dissipation inequality holds. However, if we have a fully Dirichlet boundary condition, we have no such penalisation term and are able to obtain a full existence result. Indeed, a weak form of (\ref{intro:equ9}) holds, the constitutive relation is satisfied pointwise a.e., the minimisation problem holds at every point in time and we have an energy-dissipation equality. If additionally \( a\in (0, \frac{2}{d}) \), we also have \( \bbT \in L^{1+\delta}(0, T; L^{1 + \delta}_{loc}( \Omega)^{d\times d}) \) for every positive \( \delta\) such that \( a + \delta< \frac{2}{d}\). 

The structure of the remainder of the paper is as follows. In Section \ref{sec:naux}, we state the relevant notation and background results. In Section \ref{sec:p}, we study the nonlinear fracture problem for a constitutive relation of the form (\ref{intro:equ8}). The case \( p \in (1,2]\) is dealt with first. The existence of a solution is shown by means of a discretisation in time and semi-discretisation in space. In particular, we consider a  Galerkin approximation in space with respect to the displacement variable \( \bu \). Next, we adapt this work to the case \( p \in (2,\infty) \). We must use the monotonicity of the approximations of the phase-field function to overcome the lack of uniform bounds on the time derivatives of these functions. In Section \ref{sec:a}, we study the strain-limiting problem with constitutive relation (\ref{intro:equ1}). The proof uses a discretisation in time with an elliptic regularisation.   We obtain suitable weighted regularity estimates on the sequence of approximations \( (\bbT^n)_n \) of the stress tensor \((\bbT^n)_n \) to overcome the poor integrability properties of the sequence. These estimates are used to prove the pointwise convergence of this sequence to a limit \( \bbT \), which we show to be a part of the required solution triple \((\bu, \bbT, v) \). The climax of this work in the proof of a full existence result under Dirichlet boundary conditions and a partial existence result when the Neumann part of the boundary is non-empty.

\subsection{Notation and auxiliary results}\label{sec:naux}
Throughout this paper,  \(\Omega\subset\mathbb{R}^d\) denotes a bounded domain with Lipschitz boundary \( \partial\Omega\). The boundary has a Dirichlet part \( \Gamma_D\) and a Neumann part \( \Gamma_N \). These are relatively open but disjoint subsets of \( \partial\Omega\) such that \( \overline{\Gamma_D\cup \Gamma_N} = \partial\Omega\). We always  assume that \( \Gamma_D\neq \emptyset\).  
We let \( \mathbf{n}\)  denote the outward unit normal to a point on the boundary \(\partial\Omega\).  
We fix the spatial dimension \( d\geq 2\) and  a final time horizon \( T\in (0, \infty ) \). We denote the space-time domain by \( Q = (0, T) \times \Omega\).

We use \( W^{k,p}(\Omega) \) to denote the standard Sobolev space for \( k \in \mathbb{N}\), \( p \in [1,\infty]\) with the usual norm \( \|\cdot\|_{k,p}\). 
When considering \( W^{k,p}(\Omega_0)\) for a subdomain \( \Omega_0\subset \Omega \), we write  \( \|\cdot\|_{W^{k,p}(\Omega_0)}\). 
We write \( C^\infty_D(\overline{\Omega}) \) for the set of functions \( h\in C^\infty(\overline{\Omega}) \) such that \( h = 0 \) on \( \Gamma_D\) and let \( W^{k,p}_D(\Omega) \) denote  the closure of \( C^\infty_D(\overline{\Omega}) \) with respect to \( \|\cdot \|_{k,p}\) for \( p \in [1,\infty)\). We define \( W^{k,\infty}_D(\Omega) := W^{k,\infty}(\Omega) \cap W^{k,1}_D(\Omega)\).

We define \( C^1( \partial\Omega)^d\) to be the set of continuously differentiable functions on \( \partial\Omega\) such that there exists an extension to the  space \( C^1(\overline{\Omega})^d\). The norm on \( C^1(\partial\Omega)^d\) is defined by
\[
\|\bw \|_{C^1(\partial\Omega)} = \inf\big\{ \|\tilde{\bw}\|_{1, \infty} \,:\, \tilde{\bw} \in C^1(\overline{\Omega})^d ,\, \tilde{\bw} = \bw \text{ on }\partial\Omega\big\}. 
\]
We define \( C^1_0(\Gamma_N)^d \) to be the subset of those functions \( \bw \in C^1(\partial\Omega)^d\) such that \( \bw \) vanishes on \( \overline{\Gamma_D}\). By Theorem 1 of \cite{MR2373373}, we know that there exists a linear map \( E: C^1(\partial\Omega) \rightarrow C^1(\overline{\Omega}) \) such that \( T\bw  = \bw \) on \( \partial\Omega\) for every \( \bw \in C^1(\partial\Omega)^d\). Furthermore, there exists a constant \( C\), depending only on \(\Omega\), such that
\begin{equation}\label{intro:equ15}
\|T \bw \|_{1,\infty} \leq C \|\bw \|_{C^1( \partial\Omega) }. 
\end{equation}
The restriction of \( T \) to the space \(C^1_D(\overline{\Omega})^d\) maps into \( C^1_0(\Gamma_N)^d\) by definition of the latter function space. Moreover, for every \( \bw \in C^1_0(\Gamma_N)^d\), there exists an extension \( \tilde{\bw}\in C^1_D(\overline{\Omega})^d\) such that (\ref{intro:equ15}) holds. This property is vital when dealing with the penalisation term in Section \ref{sec:a} on the Neumann part of the boundary. 


 If \( \mathcal{F}\) is a function space of real-valued functions with norm \( \|\cdot \|_{\mathcal{F}}\), we let \(\mathcal{F}^d\) denote the set of \( \mathbb{R}^d\)-valued functions such that each co-ordinate is an element of \( \mathcal{F}\). The norm is given by  \(   \||\cdot|\|_{\mathcal{F}}\), where \( |\cdot |\) is the Euclidean norm, but is denoted by \( \|\cdot \|_{\mathcal{F}}\) with an abuse of notation. The meaning is to be understood from the context.  We define \( \mathcal{F}^{d\times d}\) analogously, but with \( |\cdot |\)  meaning the Frobenius norm. 
 
 For a Banach space \( X\), we use \( L^q(0, T; X) \) to denote the standard Bochner space with norm denoted \( \|\cdot \|_{L^q(0, T; X)}\).  Furthermore, \( L^\infty_{w^*}(0, T; X^*) \) denotes the set of functions \( h:[0, T]\rightarrow X^*\) such that 
\( t\mapsto \langle h(t), x\rangle_{(X^*,X)}\) is measurable on \( [0, T]\) for every element \( x\in X\) and there exists a constant \( c\), independent of \( x\), such that
\[
|\langle h(t), x\rangle_{(X^*, X)}|\leq c\|x\|_X\quad\text{ for a.e. }t\in [0, T].
\]
By Theorem 10.1.16 of \cite{RN118}, for example, \( L^\infty_{w^*}(0, T; X^*) \) is the dual space of \( L^1(0, T; X) \). In particular, bounded subsets of \( L^\infty_{w^*}(0, T; X^*) \) are weakly-* compact by the Banach--Alaoglu theorem.

Let \( H^k_D(\Omega) \) be the set of functions \( h \in W^{k,2}(\Omega)\)  such that \( h = 0\) on \( \Gamma_D\). 
We use \( H^k_{D+1}(\Omega)\) to denote the set of  \( h \in  W^{k,2}(\Omega) \)  such that \( h = 1\) on \( \Gamma_D\). 
The duals of \( H^1_D(\Omega) \) and \( W^{1,p}_D(\Omega)^d\) are denoted by \( H^{-1}_D(\Omega)\) and \( W^{-1,p^\prime}_D(\Omega)^d\), respectively, where \( p^\prime  = \frac{p}{p-1}\) is the H\"{o}lder conjugate of \(  p\). 

We use lower case letters to denote scalar-valued functions, bold  lower case letters for vector-valued functions and bold upper case letters for tensor-valued functions. The differential operator \( \nabla \) denotes the gradient operator so, if \( \bu \) is an \(\mathbb{R}^d\)-valued function and \( \bbT \) is an \( \mathbb{R}^{d\times d}\)-valued function, we have
\[
\nabla \bu = (\partial_i \bu_j)_{i,j=1}^d, \quad \nabla \bbT = (\partial_k T_{ij})_{i,j,k=1}^d.
\]
The divergence operator \(\diver\) is defined by
\begin{align*}
\diver(\bu) = \frac{\partial u_i}{\partial x_i}, \quad \diver(\bbT) = \Big( \frac{\partial T_{ij}}{\partial x_j}\Big)_{j=1}^d,
\end{align*}
assuming Einstein's summation convention. 
Throughout this paper, we  fix positive problem parameters \( \alpha\), \( \epsilon\) and \(\eta \), where \( \alpha\) is the inverse of the viscosity coefficient, \( \epsilon\) is the approximation level for the phase-field function and \( \eta\) is a stability coefficient. In practice,  \(\eta\ll\epsilon\ll 1\) but this is not required for the results here.

 We require certain properties of convex conjugate functions. In this paper, we define the function \( \varphi:\mathbb{R}^{d\times d}\rightarrow [0, \infty) \) by 
 \begin{equation}\label{naux:equ1}
  \varphi(\bbT) = \frac{|\bbT|^p}{p},\quad\text{ and } \quad\varphi(\bbT)  = \int_0^{|\bbT|} \frac{t}{(1 + t^a)^{\frac{1}{a}}}\,\mathrm{d}t,
 \end{equation}
 in Section \ref{sec:p} and Section \ref{sec:a}, respectively. 
 Given one of these choices, we further define
 \begin{align*}
 F(\bbT) = \frac{\partial \varphi}{\partial\bbT}(\bbT) = \Big( \frac{\partial\varphi}{\partial T_{ij}}(\bbT)\Big)_{i,j=1}^d, \quad \varphi^*(\bbT) = \sup_{\bbS\in \mathbb{R}^{d\times d}}\Big\{ \bbT :\bbS - \varphi(\bbS)\Big\},
 \end{align*}
 the derivative and convex conjugate, respectively. On the image of \( F\) over \( \mathbb{R}^{d\times d}\), we can show that  
 \(\frac{\partial\varphi^*}{\partial\bbT} = F^{-1}\). For every \( \bbT \in \mathbb{R}^{d\times d}\) such that the statements make sense, we have
 \begin{equation}\label{intro:equ13}
  \bbT:F(\bbT ) = \varphi(\bbT) + \varphi^*(F(\bbT)) ,\quad \bbT:F^{-1}(\bbT) = \varphi(F^{-1}(\bbT)) + \varphi^*(\bbT) .
 \end{equation}
Given the choices  in (\ref{naux:equ1}),  we notice that \( F(\bbT) = |\bbT|^{p-2}\bbT\) and \( F(\bbT) = \frac{\bbT}{(1 + |\bbT|^a)^{\frac{1}{a}}}\), respectively.  

We  require the following result in the analysis of the strain-limiting problem.  The proof can be found in \cite{RN8}. 
 \begin{lemma}
 For every \( y\geq 0 \) and \( a > 0 \), we have
 \begin{align*}
 \min\{ 1 , 2^{-1 + \frac{1}{a}}\}(1 + y) \leq (1 + y^a)^{\frac{1}{a}}\leq \max\{ 1 , 2^{-1 + \frac{1}{a}}\}(1 + y) .
 \end{align*}
 \end{lemma}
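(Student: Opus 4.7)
The inequality asserts a two-sided comparison between $(1+y^a)^{1/a}$ and $1+y$, which can be read as a comparison of the $\ell^a$ and $\ell^1$ (quasi-)norms of the vector $(1,y)\in\mathbb{R}^2$. The plan is to split into the cases $a\geq 1$ and $a\in(0,1]$ and, in each case, to establish one of the two required inequalities with constant $1$ via (super-/sub-)additivity of $t\mapsto t^a$, and the other with constant $2^{1/a-1}$ via the two-point power mean inequality. Which of $1$ or $2^{1/a-1}$ plays the role of $\min$ and which of $\max$ is then determined automatically by the sign of $1/a - 1$, since $2^{1/a-1}\leq 1$ when $a\geq 1$ and $2^{1/a-1}\geq 1$ when $a\leq 1$.

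For the constant-$1$ direction, I would use that $t\mapsto t^a$ is superadditive on $[0,\infty)$ when $a\geq 1$ (quick proof: $x\mapsto (x+y)^a - x^a$ has non-negative derivative on $[0,\infty)$ for fixed $y\geq 0$), giving $1 + y^a \leq (1+y)^a$, and subadditive when $a\in(0,1]$, giving the reverse. Taking $a$-th roots yields $(1+y^a)^{1/a}\leq 1 + y$ in the first case and $(1+y^a)^{1/a}\geq 1 + y$ in the second, which are respectively the upper bound (since here $\max\{1,2^{1/a-1}\}=1$) and the lower bound (since here $\min\{1,2^{1/a-1}\}=1$).

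For the constant-$2^{1/a-1}$ direction, apply the power mean inequality to the pair $\{1,y\}$: if $a\geq 1$ one obtains
\[
\frac{1+y}{2}\leq \left(\frac{1+y^a}{2}\right)^{1/a},
\]
which rearranges to $(1+y^a)^{1/a}\geq 2^{1/a-1}(1+y)$, delivering the remaining lower bound. If $a\in(0,1]$, the inequality reverses and produces the matching upper bound $(1+y^a)^{1/a}\leq 2^{1/a-1}(1+y)$.

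The argument is entirely elementary, so there is no genuine obstacle. The only point that requires care is the bookkeeping: matching the direction of each of the two inequalities to the sign of $1/a - 1$ so that $1$ and $2^{1/a-1}$ land on the correct sides as the minimum and maximum constants. The boundary case $a = 1$, where both constants collapse to $1$ and the statement reduces to the trivial identity $1 + y = 1 + y$, can either be checked separately or absorbed into either of the two cases.
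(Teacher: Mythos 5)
Your proof is correct and complete. The paper itself does not supply a proof of this lemma; it simply cites an external reference (\cite{RN8}), so there is no in-paper argument to compare against. Your two-pronged strategy is the standard one: the constant-$1$ inequality follows from super-/sub-additivity of $t\mapsto t^a$ (equivalently, the comparison $\ell^a\hookleftarrow\ell^1$ or $\ell^a\hookrightarrow\ell^1$ on $\mathbb{R}^2$ in the respective regimes), while the constant-$2^{1/a-1}$ inequality is exactly the two-point power mean inequality comparing $M_1$ and $M_a$. The derivative argument for superadditivity is sound (the function $x\mapsto(x+y)^a-x^a$ is non-decreasing and equals $y^a$ at $x=0$, so evaluate at $x=1$), and the bookkeeping of which constant sits where, governed by the sign of $\tfrac{1}{a}-1$, is handled correctly. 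The case $a=1$ is indeed degenerate and absorbed by either branch.
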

 A key tool in the deduction of various {\it a priori} estimates and uniform bounds on the sequences of approximate solutions is the following variation of the Korn--Poincar\'{e} inequality for a partly homogeneous Dirichlet boundary condition. The result follows from Theorem 4 in \cite{MR971465}, combined with an application of the Poincar\'{e} inequality.
 
 \begin{theorem}[Korn--Poincar\'{e} inequality]\label{korn}
 Let \( \Omega\), \( \Gamma_D\) and \( \Gamma_N\) be as  above with \( \Gamma_D \neq \emptyset\). For every \( p \in (1,\infty) \), there exists a constant \( C\) depending only on \( \Omega\), \( \Gamma_D\) and \( p \) such that, for every \( \bu\in W^{1,p}_D(\Omega)^d\),
 \begin{align*}
 \|\bu\|_{1,p}\leq C\|\beps(\bu) \|_p.
 \end{align*}
 \end{theorem}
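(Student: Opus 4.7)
The plan is to combine the $L^p$ version of Korn's second inequality (Nitsche's Theorem 4) with a standard compactness/contradiction argument that exploits the boundary condition on $\Gamma_D$. Specifically, Nitsche's result tells us that for every $p\in (1,\infty)$ there is a constant $C_0 = C_0(\Omega,p)$ with
\begin{equation*}
\|\nabla \bu\|_p \leq C_0 \bigl(\|\bu\|_p + \|\beps(\bu)\|_p\bigr) \qquad \text{for all } \bu \in W^{1,p}(\Omega)^d.
\end{equation*}
This does not yet give the Korn--Poincar\'e inequality, because the $\|\bu\|_p$ term on the right cannot be absorbed directly; the boundary condition on $\Gamma_D$ must be used to eliminate it.

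I would argue by contradiction. Suppose there is no constant $C$ such that $\|\bu\|_{1,p} \leq C\|\beps(\bu)\|_p$ on $W^{1,p}_D(\Omega)^d$. Then there exists a sequence $(\bu_n)_n \subset W^{1,p}_D(\Omega)^d$ with $\|\bu_n\|_{1,p} = 1$ and $\|\beps(\bu_n)\|_p \to 0$. Since $(\bu_n)_n$ is bounded in the reflexive space $W^{1,p}(\Omega)^d$, a subsequence (not relabelled) converges weakly to some $\bu \in W^{1,p}(\Omega)^d$; by the Rellich--Kondrachov compact embedding $W^{1,p}(\Omega)^d \hookrightarrow \hookrightarrow L^p(\Omega)^d$ the convergence is strong in $L^p$. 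The continuity of the trace operator gives $\bu = 0$ on $\Gamma_D$, so $\bu \in W^{1,p}_D(\Omega)^d$. Moreover, because $\beps$ is a bounded linear operator and $\beps(\bu_n) \to 0$ in $L^p$, the weak limit satisfies $\beps(\bu) = 0$ almost everywhere in $\Omega$.

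The next step is the rigidity argument: functions with vanishing symmetric gradient on a connected Lipschitz domain are rigid body motions, i.e.\ $\bu(x) = \boldsymbol{a} + Bx$ for some $\boldsymbol{a} \in \mathbb{R}^d$ and antisymmetric $B \in \mathbb{R}^{d \times d}$. Since $\bu = 0$ on the relatively open, nonempty set $\Gamma_D \subset \partial \Omega$, and an affine map vanishing on a set that contains a nondegenerate $(d-1)$-dimensional piece of a Lipschitz surface must be identically zero, we conclude $\boldsymbol{a} = 0$ and $B = 0$, hence $\bu \equiv 0$. Consequently $\|\bu_n\|_p \to 0$. (On a general Lipschitz domain that is not connected, the same conclusion applies componentwise provided each connected component has a nontrivial piece of $\Gamma_D$; otherwise one adjusts by restricting to components, but for the statement as given connectedness is tacitly assumed.)

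Finally I would close the loop with Nitsche's inequality applied to $\bu_n$:
\begin{equation*}
\|\nabla \bu_n\|_p \leq C_0\bigl(\|\bu_n\|_p + \|\beps(\bu_n)\|_p\bigr) \longrightarrow 0,
\end{equation*}
which together with $\|\bu_n\|_p \to 0$ forces $\|\bu_n\|_{1,p} \to 0$, contradicting the normalisation $\|\bu_n\|_{1,p} = 1$. The main technical obstacle is the rigidity/uniqueness step, where one must be careful that the trace of a rigid motion on a relatively open piece $\Gamma_D$ of a Lipschitz boundary can only vanish if the motion itself is zero; this is where the assumption $\Gamma_D \neq \emptyset$ (and the implicit positive $(d-1)$-measure of $\Gamma_D$) is essential. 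Everything else is a routine compactness argument built on the cited Korn second inequality.
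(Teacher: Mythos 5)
Your proof is correct and complete, and you are right to flag that the "obvious'' combination of Korn's second inequality with Poincar\'{e} does not work by direct absorption: the constants need not cooperate, and some genuine input from the boundary condition is required. The compactness/contradiction argument, together with the rigidity step (vanishing symmetric gradient implies $\bu(x)=\boldsymbol{a}+Bx$ with $B$ antisymmetric, and an affine map of this form whose zero set contains a relatively open piece of a Lipschitz boundary must be identically zero since otherwise that zero set has dimension $\leq d-2$), is the standard way to do this.

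The paper's own proof is the one-line remark that the result ``follows from Theorem~4 in the cited reference, combined with an application of the Poincar\'{e} inequality.'' The most natural reading is that the cited Theorem~4 already supplies a constrained Korn inequality, $\|\nabla\bu\|_p \leq C\|\beps(\bu)\|_p$ for $\bu\in W^{1,p}_D(\Omega)^d$ (i.e.\ a Korn-first-inequality-type result with the boundary condition built in), after which Poincar\'{e} on $W^{1,p}_D$ adds the $\|\bu\|_p$ term. Your route is different in that you begin from the unconstrained Korn second inequality and derive the constrained bound from scratch via compactness and rigidity. What this buys you is that you don't need to presuppose a mixed-boundary-condition Korn inequality as a black box, only the basic $L^p$ Korn second inequality; the price is that it is nonconstructive (the constant is not exhibited) and needs the rigidity lemma and a compact embedding. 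One small misattribution: the reference \cite{MR971465} that the paper cites is not Nitsche's 1981 RAIRO paper (Nitsche's Theorem~4 is indeed the $L^p$ second Korn inequality you quote, so the mathematical content of your premise is fine, but the label is off). Also, as you note, the connectedness of $\Omega$ is tacit in the paper's setup and is needed for the rigidity step; you handle this appropriately.
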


\section{Nonlinear fracture problems with growth}\label{sec:p}
In this section, we study nonlinear dynamic fracture problems with growth in the constitutive relation, by which we mean that the strain is not {\it a priori} bounded. 
This is the first step in the analysis of strain-limiting fracture problems because we want to understand nonlinear dynamic fracture problems. However, we start with this growth setting so that we gain some understanding of the problem formulation without the additional difficulty of the lack of integrability of the stress tensor that we have in the case that \( F\) is bounded. 

We consider the following problem: find a triple \((\bu, \bbT, v):Q \rightarrow\mathbb{R}^d \times \mathbb{R}^{d\times d}\times \mathbb{R}\) such that 
\begin{equation}\label{p:equ1}
\begin{aligned}
\bu_{tt} &= \diver\big(b(v)\bbT\big) + \boldf &\quad&\text{ in }Q,
\\
\beps(\bu_t + \alpha\bu) &= F(\bbT) := |\bbT|^{p-2}\bbT&\quad&\text{ in }Q,
\\
\bu\big|_{\Gamma_D} &= \mathbf{0}, \, v\big|_{\Gamma_D} = 1&\quad& \text{ on }[0, T]\times \Gamma_D,
\\
b(v) \bbT \mathbf{n} &= \bg&\quad& \text{ on }[0, T]\times \Gamma_N,
\\
\bu(0, \cdot)&= \bu_0, \bu_t(0, \cdot) = \bu_1,\, v(0, \cdot) = v_0&\quad&\text{ in }\Omega,
\end{aligned}
\end{equation}
subject to the minimisation problem
\begin{equation}\label{p:equ2}
\mathcal{E}(\bu(t), v(t)) + \mathcal{H}(v(t)) = \inf\Big\{ \mathcal{E}(\bu(t), v) + \mathcal{H}(v)\,:\, v\in H^1_{D+1}(\Omega),\, v\leq v(t) \Big\},
\end{equation}
and the energy-dissipation equality
\begin{equation}\label{p:equ3}
\begin{aligned}
&\mathcal{F}(t;\bu(t), \bu_t(t), v(t)) + \int_0^t \int_\Omega b(v) \big[ F^{-1}(\beps(\bu_t + \alpha\bu))  - F^{-1}(\beps(\alpha\bu))\big]:\beps(\bu_t)  + \boldf_t\cdot \bu \,\mathrm{d}x\,\mathrm{d}s
\\
&\quad 
+ \int_0^t \int_{\Gamma_N} \bg_t \cdot \bu\,\mathrm{d}S\,\mathrm{d}s
\\
&= \mathcal{F}(0; \bu_0, \bu_1, v_0) ,
\end{aligned}
\end{equation}
for every \( t\in [0, T]\), alongside the crack non-healing property \( v_t \leq 0 \). The functionals \( \mathcal{F}\), \( \mathcal{E}\), \( \mathcal{H}\) denote the total, elastic and surface energy, respectively, and are defined by
\begin{align*}
\mathcal{F}(t;\bu, \bw, v) &= \mathcal{K}(\bw) + \mathcal{E}(\bu, v)+ \mathcal{H}(v) - \langle l(t),\bu\rangle,
\\
\mathcal{E}(\bu, v) &= \int_\Omega\frac{b(v)}{\alpha}\varphi^*(\beps(\alpha\bu)) \,\mathrm{d}x,
\\
\mathcal{H}(v)  &= \frac{1}{4\epsilon}\| 1 - v\|_2^2 + \epsilon\|\nabla v\|_2^2 .
\end{align*}
We define the functionals \( \mathcal{K}\) and \( l \) by
\begin{align*}
\mathcal{K}(\bw) &= \frac{\|\bw\|_2^2}{2},
\\
\langle l(t), \bu\rangle &= \int_\Omega\boldf(t)\cdot \bu \,\mathrm{d}x +\int_{\Gamma_N} \bg(t) \cdot\bu\,\mathrm{d}S,
\end{align*}
representing the kinetic energy and external force, respectively. 
 To ensure that the minimisation problem holds at the initial time \( t = 0\), we demand the compatibility condition  
\begin{equation}\label{p:equ5}
\mathcal{E}(\bu_0, v_0) + \mathcal{H}(v_0) = \inf\Big\{ \mathcal{E}(\bu_0, v)+ \mathcal{H}(v)\,:\, v\in H^1_{D+1}(\Omega),\,v\leq v_0\Big\}.
\end{equation}
The function \( b :\mathbb{R}\rightarrow\mathbb{R}\) is defined   by
\begin{equation}\label{p:equ4}
b(v) = v^2 + \eta.
\end{equation}
The stability parameter \( \eta\) is required so that we do not experience degeneracy in the elastodynamic equation (\ref{p:equ1})$_1$ at points where the phase-field function \( v\) vanishes. 

We remark that 
the functional \( \mathcal{A}(v) := \mathcal{E}(\bu, v) + \mathcal{H}(v) \) is well-defined on \( H^1_{D+1}(\Omega) \), provided that we have \( \bu \in W^{1,p^\prime}(\Omega)^d\), which we later show to be the case for solutions of (\ref{p:equ1})--(\ref{p:equ3}). However, the functional \( \mathcal{A}\) may  take the value  positive infinity when testing against functions that are not also elements of \( L^\infty(\Omega) \). 

In this section we consider mixed Dirichlet--Neumann boundary conditions. We assume a homogeneous boundary condition on the Dirichlet part for simplicity of the presentation. However, it is not too difficult to adapt the arguments to allow for inhomogeneous boundary conditions. We refer to  \cite{preprint2}, for example,  for details on how to do this.  

Now we state the main results  for this section and  have a brief discussion regarding the methods used in the proofs. There are  two main results in this section concerning the existence of  weak energy solutions to   (\ref{p:equ1})--(\ref{p:equ3}), that is, a weak solution satisfying an energy-dissipation balance.  We differentiate between the cases \( p \in (1,2]\) and \( p \in (2,\infty) \). This is due to a technical argument that allows good time regularity of the phase-field function in the former case, but not necessarily the latter.  We remark that uniqueness remains an open problem for dynamic fracture models and  is out of the scope of this paper. Our focus is on the existence of solutions. The two main results are as follows. 

\begin{theorem}\label{p:thm1}
Suppose that \( p \in (1,2]\),  \( l \in C^1([0, T]; W^{-1,p}_D(\Omega)^d) \), \( \bu_0\), \( \bu_1 \in W^{1,p^\prime}(\Omega)^d\), and \( v_0 \in H^1_{D+1}(\Omega) \) such that \( v_0 \in [0, 1]\) a.e. in \(\Omega\) and  the compatibility condition (\ref{p:equ5}) holds. 
There exists a weak energy solution  \((\bu, \bbT, v) \) of problem  (\ref{p:equ1})--(\ref{p:equ3}) in the following sense with  
\begin{itemize}
\item \(\bu \in W^{2,2}(0, T; L^2(\Omega)^d) \cap W^{1,\infty}(0, T; W^{1,p^\prime}_D(\Omega)^d) \),
\item \( \bbT\in L^\infty(0, T; L^p(\Omega)^{d\times d}) \),
\item \( v\in W^{1,\infty}(0, T; H^1(\Omega) ) \) with \( v(t) \in H^1_{D+1}(\Omega) \) for every \( t\in [0, T]\). 
\end{itemize}
The weak elastodynamic equation holds,
\begin{equation}\label{p:equ6}
\int_\Omega\bu_{tt}(t) \cdot \bw + b(v(t))\bbT(t) :\beps(\bw) \,\mathrm{d}x = \langle l(t), \bw\rangle,
\end{equation}
for a.e. \( t\in (0, T) \) and every \( \bw\in W^{1,p^\prime}_D(\Omega)\), the constitutive relation \(\beps(\bu_t + \alpha\bu) = F(\bbT) \) is satisfied pointwise a.e. in \( Q\), the minimisation problem (\ref{p:equ2}) holds for every \( t\in [0, T]\) and the energy-dissipation equality (\ref{p:equ3}) is satisfied for every \( t\in [0, T]\). We have the non-healing property \( v_t \leq 0 \) pointwise a.e. in \( Q\). The initial conditions are satisfied  in the sense that
\begin{align*}
\lim_{t\rightarrow 0+}\big[ \|\bu(t) - \bu_0 \|_{1,p^\prime} + \|\bu_t(t) - \bu_1 \|_2 + \|v(t)- v_0\|_{1,2}\big] = 0.
\end{align*}
\end{theorem}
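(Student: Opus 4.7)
The plan is to combine a Rothe-type time discretisation with a spatial Galerkin approximation in $\bu$, coupled with the obvious alternating scheme for the two unknowns: at each discrete time step one first solves a (Galerkin approximation of the) nonlinear elliptic problem for the displacement with the phase-field frozen at the previous step, and then one minimises the regularised Ambrosio--Tortorelli functional in $v$ subject to the unilateral constraint $v\le v^{n-1}$. The phase-field update is the simpler of the two: since $v\mapsto \mathcal{E}(\bu^n,v)+\mathcal{H}(v)$ is strictly convex and coercive on $H^1_{D+1}(\Omega)$ (using $b(v)\ge\eta>0$), a unique minimiser $v^n\in [0,1]\cap H^1_{D+1}(\Omega)$ exists and satisfies $v^n\le v^{n-1}$ automatically. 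For the displacement step, with $v^{n-1}$ fixed one replaces $\bu_{tt}$ by the second-order difference quotient and $\bu_t$ by the backward difference, which reduces the problem to a strongly monotone, coercive operator equation of the form $A(\bu^n)=\ell^n$ on $W^{1,p'}_D(\Omega)^d$; existence and uniqueness of a Galerkin approximation $\bu^n_m$ for $m\to\infty$ follow from the theory of monotone operators, and the spatial Galerkin limit $m\to\infty$ is standard since $p\in(1,2]$ yields $p'\ge 2$ and the Korn--Poincar\'e inequality (Theorem~\ref{korn}) provides full coercivity.

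The next step is to derive a discrete energy-dissipation inequality by testing the time-discrete momentum balance with $\bu^n-\bu^{n-1}$ and exploiting (\ref{intro:equ13}) to convert the dissipation term into $b(v^{n-1})[\varphi(\bbT^n)+\varphi^*(F(\bbT^n))-F(\bbT^n):\beps(\alpha\bu^n)]$; a discrete chain rule for $\mathcal{E}(\bu^n,v^n)$ combined with the unilateral minimality $\mathcal{E}(\bu^n,v^n)+\mathcal{H}(v^n)\le\mathcal{E}(\bu^n,v^{n-1})+\mathcal{H}(v^{n-1})$ yields, after summation over $n$, uniform bounds on $\bbT^n$ in $L^p$, on $\beps(\bu^n),\beps(\bu^n_t)$ in $L^{p'}$, on $\bu^n_{tt}$ in $L^2$, and on $v^n$ in $H^1$. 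The $W^{1,\infty}(0,T;H^1)$ bound on $v$ is obtained by testing the minimisation with $\min(v^n,v^{n-1})=v^n$ and $\max(v^n,v^{n-1})=v^{n-1}$, using convexity of $\mathcal{H}$ together with the $L^{p'}$ bound on $\beps(\bu^n)$: this produces a quadratic inequality in $\|v^n-v^{n-1}\|_{1,2}/\tau$, giving the Lipschitz-in-time regularity that is available precisely because $p\le 2$.

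One then constructs piecewise affine and piecewise constant interpolants and passes to the limit $\tau\to 0$. Compactness gives $\bu^\tau\to\bu$, $\bu_t^\tau\rightharpoonup\bu_t$, $v^\tau\to v$ (strongly in $C([0,T];L^2)$ by Aubin--Lions, using $v^\tau\in[0,1]$ and the $H^1$ bound), and $\bbT^\tau\rightharpoonup\bbT$ weakly in $L^p(Q)^{d\times d}$. The linear momentum balance (\ref{p:equ6}) is immediate once one justifies $b(v^\tau)\bbT^\tau\rightharpoonup b(v)\bbT$, which uses the strong convergence of $v^\tau$. The identification $\bbT=F^{-1}(\beps(\bu_t+\alpha\bu))$ is the main obstacle and is handled by a Minty-type monotonicity argument applied to the weighted operator $\bbT\mapsto b(v)F(\bbT)$: one rewrites the dissipation using (\ref{intro:equ13}), takes the $\limsup$ of the discrete energy-dissipation inequality, and combines with weak lower semicontinuity of $\mathcal{E}(\cdot,v)$, $\mathcal{H}$, and $\mathcal{K}$ to extract $\limsup\int_Q b(v^\tau)F(\bbT^\tau):\bbT^\tau\le \int_Q b(v)\boldsymbol{Z}:\bbT$ for the weak limit $\boldsymbol{Z}$ of $F(\bbT^\tau)$; the standard Minty trick (using strict monotonicity of $F$) then yields $\boldsymbol{Z}=F(\bbT)$ and $\beps(\bu_t+\alpha\bu)=F(\bbT)$ a.e. Passage to the limit in the minimisation (\ref{p:equ2}) at each fixed $t$ uses the strong $H^1$ convergence of $v^\tau(t)$ (consequence of the Lipschitz-in-time bound and $\Gamma$-convergence of the minimisation functionals in $v$), combined with a recovery sequence argument for comparison functions $\tilde v\le v(t)$.

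The final step is the energy-dissipation equality (\ref{p:equ3}). The discrete scheme gives only one direction; the reverse inequality comes from testing the limiting momentum balance with $\bu_t$, which is legitimate because the regularity $\bu_t\in L^\infty(0,T;W^{1,p'}_D)$ and $\bu_{tt}\in L^2(0,T;L^2)$ is exactly enough to make all terms meaningful, and using (\ref{intro:equ13}) to convert $b(v)F^{-1}(\beps(\bu_t+\alpha\bu)):\beps(\bu_t)$ into the required form. Testing the minimisation (\ref{p:equ2}) with $v(t+h)\le v(t)$ for $h>0$ and passing to $h\to 0+$ (using $v_t\le 0$ a.e., a consequence of the unilateral constraint in the limit) yields the required identity involving $\partial_t\mathcal{H}(v)$, which completes the matching upper and lower bounds on $\mathcal{F}(t;\bu(t),\bu_t(t),v(t))$. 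The initial conditions are recovered from continuity of the interpolants at $t=0$ together with the compatibility condition (\ref{p:equ5}). I expect the Minty identification under the presence of the weight $b(v)$ and the derivation of the sharp (rather than merely $\le$) energy balance to be the principal technical hurdles.
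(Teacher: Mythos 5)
Your blueprint --- Rothe discretisation with an alternating scheme, discrete energy inequality, Minty identification, reverse energy inequality --- matches the paper's strategy, and you correctly locate the $p\le 2$ restriction in the Lipschitz-in-time control of $v$. There is, however, a genuine gap in how you propose to derive that control. You suggest passing to the spatial Galerkin limit at each fixed time level \emph{before} sending $\tau\to 0$, and then obtaining a bound on $\|v^n-v^{n-1}\|_{1,2}/\tau$ by testing the minimality with $\min(v^n,v^{n-1})$ and $\max(v^n,v^{n-1})$. In the unilateral setting $v^n\le v^{n-1}$ those tests reduce to $v^n$ (trivial) and $v^{n-1}$ (the minimality itself): this telescopes, giving nothing new beyond the $H^1$ bound, and no control of the divided difference. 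The actual mechanism (Lemma~\ref{p:lem2}) subtracts the Euler--Lagrange inequalities at consecutive time steps, and the resulting right-hand side is only closed via the $L^\infty$ norm of $\beps(\bu^\beta_m)$, which is available precisely because $\bu^\beta_m\in V_N\subset C^1(\overline\Omega)^d$. Once the spatial Galerkin limit has been taken, that $L^\infty$ control is gone. The paper keeps $N$ fixed while sending $M\to\infty$ for exactly this reason, and only then establishes the $N$-independent Lipschitz bound (Lemma~\ref{p:lem11}) in the time-\emph{continuous} setting, where a genuine time derivative permits the Hölder split with exponents $p'/(p'-2)$ and $p'/2$ absorbed against the coercive term $\frac1\alpha\int|v^N_t|^2\varphi^*(\beps(\alpha\bu^N))\,\mathrm dx$, and where the uniform-in-time $L^{p'}$ bound on $\beps(\bu^N_t)$ from Lemma~\ref{p:lem8} is available. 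A time-discrete substitute for that last bound does exist, but requires the convexity tensor calculus of Section~\ref{sec:a} (cf.\ Lemma~\ref{a:lem3}), not a standard argument; the paper's order of limits is chosen to avoid exactly that complication.

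Two smaller points. Your Minty identification of $\bbT$ with the weight $b(v)$, and the passage to the limit in the minimisation via the Euler--Lagrange inequality $0\le[\partial_v\mathcal E+\mathcal H'](\chi)$ for $\chi\le 0$, are both sound once the strong convergence of $v^\tau$ and the $L^1$ convergence of $\varphi^*(\beps(\alpha\bu^\tau))$ are in hand, exactly as in the paper. But the reverse energy inequality is more than ``testing the momentum balance with $\bu_t$'': that step recovers only the kinetic and forcing contributions. The elastic and surface pieces require the re-discretisation argument of Proposition~\ref{p:prop6}: one partitions $[0,t]$, applies unilateral minimality at each partition point to obtain a one-sided Riemann sum, and passes to the limit to recover the key inequality (\ref{p:equ24}). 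Your sketch of testing the minimisation with $v(t+h)$ and sending $h\to 0+$ is in the right spirit but elides this Riemann-sum structure; a pointwise-in-time difference quotient alone does not deliver the integrated inequality.
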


\begin{theorem}\label{p:thm2}
Suppose that \( p \in (2,\infty) \), \( l \in C^1([0, T]; W^{-1,p}_D(\Omega)^d)\), \( \bu_0 \), \( \bu_1\in W^{1,p^\prime}_D(\Omega)^d\cap L^2(\Omega)^d\), and \( v_0 \in H^1_{D+1}(\Omega ) \) such that  \( v_0 \in [0, 1]\) a.e. in \( \Omega\) and  the compatibility condition (\ref{p:equ5}) holds. There exists a weak energy solution  \((\bu, \bbT, v) \) of (\ref{p:equ1})--(\ref{p:equ3}) with regularity
\begin{itemize}
\item \(\bu \in W^{2,2}(0, T; L^2(\Omega)^d) \cap W^{1,\infty}(0, T; W^{1,p^\prime}_D(\Omega)^d) \),
\item \( \bbT \in L^\infty(0, T; L^p(\Omega)^{d\times d}) \),
\item \( v:[0, T]\rightarrow H^1_{D+1}(\Omega)\) with \( v\in L^\infty(0, T; H^1(\Omega))\) and \( v\) is continuous a.e. as a map from \( [0, T]\) to \(L^2(\Omega) \), 
\end{itemize}
solving the problem (\ref{p:equ1})--(\ref{p:equ3}) in the following sense. 
The weak elastodynamic equation (\ref{p:equ6}) holds for a.e. \( t\in (0, T) \) and every test function \( \bw \in W^{1,p^\prime}_D(\Omega)^d\cap L^2(\Omega)^d\), we have the constitutive relation \( \beps(\bu_t + \alpha\bu) = F(\bbT) \)  pointwise a.e. in \(Q\), the minimisation problem (\ref{p:equ2}) is satisfied for every \( t\in [0, T]\), and the energy-dissipation equality (\ref{p:equ3}) holds for a.e. \( t\in [0, T]\). Furthermore,  \( v(t) \leq v(s) \) a.e. in \( \Omega\), for every \( 0 \leq s\leq t\leq T\). The initial conditions hold in the sense that \( v(0) = v_0\) and 
\begin{align*}
\lim_{t\rightarrow 0+ }\big[  \|\bu(t) - \bu_0 \|_{1,p^\prime}  + \|\bu_t(t) - \bu_1 \|_2 \big] = 0.
\end{align*}
\end{theorem}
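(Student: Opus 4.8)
The plan is to follow the same discretisation scheme as in the proof of Theorem \ref{p:thm1}, namely a time-discretisation into $N$ steps of size $\tau = T/N$ coupled with a Galerkin approximation in space for the displacement, but to track carefully where the argument for $p \in (1,2]$ breaks down when $p > 2$ and to repair it. At each time step we solve, alternately, a weak elliptic problem for the discrete displacement $\bu^n$ (given the phase-field $v^{n-1}$ from the previous step) using monotonicity of $F$ and the Korn--Poincar\'e inequality (Theorem \ref{korn}) to get coercivity in $W^{1,p'}_D(\Omega)^d$, and then the minimisation problem (\ref{p:equ2}) for $v^n$ over the constraint set $\{v \le v^{n-1}\}$, which is non-empty, closed and convex in $H^1_{D+1}(\Omega)$, so a minimiser exists by the direct method. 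The discrete energy-dissipation estimate — obtained by testing the displacement equation with the discrete time derivative $\delta\bu^n := (\bu^n - \bu^{n-1})/\tau$ and using the minimality of $v^n$ as a competitor against $v^{n-1}$ — yields, via the convex-duality identity (\ref{intro:equ13}) and a discrete Gronwall argument using $l \in C^1$, uniform bounds on $\bu^n$ in $W^{1,\infty}$-in-time with values in $W^{1,p'}_D$, on $\delta\bu^n$ in $L^\infty$-in-time with values in $L^2$, on $\delta^2 \bu^n$ in $L^2(Q)$, on $\bbT^n = F^{-1}(\beps(\delta\bu^n + \alpha\bu^n))$... wait, rather $F(\bbT^n) = \beps(\cdot)$, so $\bbT^n$ bounded in $L^\infty(0,T;L^p)$, and on $v^n$ in $L^\infty(0,T;H^1)$.

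The crucial difference from the case $p \le 2$ is the time-regularity of the phase-field. When $p \le 2$ one can, by an additional testing argument exploiting the structure of $\mathcal{E}$, obtain a uniform bound on $\delta v^n$ in $L^\infty(0,T;H^1)$ (or at least in $L^2$), giving $v \in W^{1,\infty}(0,T;H^1)$ in the limit; for $p > 2$ the relevant estimate produces a term involving $\|\beps(\bu^n)\|_{p'}$ with $p' < 2$ multiplied against quantities one cannot control, so no uniform bound on $\delta v^n$ is available. The repair is to abandon time-differentiability of $v$ and instead rely only on the \emph{monotonicity} $v^n \le v^{n-1} \le \dots \le v^0 = v_0$, pointwise a.e. This gives, for the piecewise-constant interpolant $v^\tau(t)$, a uniformly bounded, monotone-in-time family in $H^1(\Omega)$ with values in $[0,1]$; by a Helly-type selection theorem (a monotone sequence of uniformly $H^1$-bounded functions has, along a subsequence, a pointwise-in-$t$ limit $v(t)$ weakly in $H^1$ and strongly in $L^2$ by Rellich, for \emph{every} $t$) one extracts a limit $v$ that is monotone in time, hence continuous into $L^2$ at all but countably many $t$, which is exactly the asserted ``continuous a.e.'' regularity. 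The passage to the limit in the minimisation problem (\ref{p:equ2}) at a fixed $t$ then uses lower semicontinuity of $\mathcal{H}$ and of $\mathcal{E}(\bu(t),\cdot)$ together with a recovery-sequence (``mutual recovery sequence'') construction, exactly as in the linear phase-field literature, to show that the limit $v(t)$ is still a unilateral minimiser; one has to check this survives the nonlinearity of $\varphi^*$, using the uniform $W^{1,p'}$-bound on $\bu$ and convexity of $\varphi^*$.

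For the remaining limit passages: compactness of $(\bu^\tau)$ in $C([0,T];L^2) \cap$ (weak-$*$) $W^{1,\infty}(0,T;W^{1,p'}_D)$ via Aubin--Lions gives a limit $\bu$ with the stated regularity and lets us pass to the limit in the weak elastodynamic equation (\ref{p:equ6}) tested against $\bw \in W^{1,p'}_D \cap L^2$ — note the need for $\bbT^\tau b(v^\tau) \to \bbT\, b(v)$ weakly in $L^{p}(Q)$, which follows once $v^\tau \to v$ a.e. (hence $b(v^\tau) \to b(v)$ a.e. and boundedly) and $\bbT^\tau \rightharpoonup \bbT$ weakly in $L^p$. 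Identifying $\bbT$ with $F^{-1}(\beps(\bu_t + \alpha\bu))$ pointwise a.e. is the standard monotone-operator (Minty--Browder) step: from the energy inequality one gets $\limsup \int\!\int b(v^\tau)\bbT^\tau : \beps(\delta\bu^\tau)$ controlled, and monotonicity of $F^{-1}$ closes the identification; here strong $L^2(Q)$-convergence of $\delta\bu^\tau \to \bu_t$ plus the a.e. convergence of $b(v^\tau)$ is what makes the Minty trick go through. Finally, the energy-dissipation equality (\ref{p:equ3}) holds only for a.e.\ $t$: lower semicontinuity gives ``$\le$'' for a.e. $t$, and the reverse inequality is obtained by testing the limit equation with $\bu_t$ itself — legitimate because $\bu_t \in L^\infty(0,T;W^{1,p'}_D) \cap W^{1,2}(0,T;L^2)$ — and using the minimality of $v(t)$; the set of admissible $t$ is restricted to Lebesgue points of the relevant time-integrands, which is why the equality degrades from ``every $t$'' to ``a.e.\ $t$''. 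The main obstacle is precisely the phase-field step: without time-derivative bounds one must run the whole minimisation-limit and energy-balance argument using only monotonicity and the Helly-type pointwise-in-time selection, and one must verify that the mutual recovery sequence for the unilateral minimality is compatible with the nonlinear elastic energy $\mathcal{E}(\bu,v) = \int_\Omega \tfrac{b(v)}{\alpha}\varphi^*(\beps(\alpha\bu))\,\mathrm{d}x$ rather than the quadratic one — this is where the bulk of the new work lies.
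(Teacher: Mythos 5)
Your proposal follows essentially the same route as the paper: the time-discrete alternating scheme with a spatial Galerkin approximation, the observation that the bound on the discrete time derivative of the phase field fails for $p>2$, and the repair via the monotonicity $v^n\le v^{n-1}$ together with a Helly-type selection theorem and Rellich compactness, yielding a limit $v(t)$ for \emph{every} $t$ that is monotone and hence a.e.\ continuous into $L^2(\Omega)$ --- this is precisely Corollary \ref{p:cor2} and Proposition \ref{p:prop7}. Two secondary remarks. First, the mutual-recovery-sequence machinery you invoke for the unilateral minimality is not needed here: since $b(v)=v^2+\eta$ and $\varphi^*\ge 0$, the map $v\mapsto \mathcal{E}(\bu(t),v)+\mathcal{H}(v)$ is convex and G\^{a}teaux differentiable, so minimality is equivalent to the variational inequality $0\le[\partial_v\mathcal{E}+\mathcal{H}'](\chi)$ for $\chi\le 0$, and one passes to the limit in that inequality directly using the strong $L^1$ convergence of $\varphi^*(\beps(\alpha\bu^N))$. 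Second, and more importantly, the ``$\ge$'' direction of the energy balance is \emph{not} obtained by testing the limit equation with $\bu_t$: that test only accounts for the kinetic and forcing terms, and what remains is the inequality $\mathcal{E}(\bu(t),v(t))+\mathcal{H}(v(t))-\int_0^t\int_\Omega b(v)F^{-1}(\beps(\alpha\bu)):\beps(\bu_t)\ge \mathcal{E}(\bu_0,v_0)+\mathcal{H}(v_0)$, which requires a chain rule in time for $\mathcal{E}$ that is unavailable since $v$ has no time derivative. The paper's actual argument telescopes $\mathcal{E}$ over a partition whose nodes are chosen to be continuity points of $v$ (this is where the a.e.\ continuity from the Helly selection is consumed), uses unilateral minimality at each node to compare $\mathcal{E}(\bu^K_{k-1},v^K_k)$ with $\mathcal{E}(\bu^K_{k-1},v^K_{k-1})$, and then verifies weak-$*$ compactness of $\beps(\overline{\bu}^K_t)$ in $L^\infty(0,T;L^{p'})$ to pass $K\to\infty$; your sketch gestures at this in its final sentences but should make it the centrepiece of the energy-balance step rather than the test with $\bu_t$.
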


Both proofs follow the same steps initially. A two-level approximation is introduced with parameters \((N,M) \),  where \( N \) refers to a discretisation in space for the displacement variable and \(M \) is a time discretisation parameter.  
For notational simplicity, we  denote \(\beta = (N,M)\).
We  let \( M \rightarrow\infty \), followed by the limit as \( N \rightarrow\infty\). It is in the latter limit where we need to differentiate between the cases \( p \in (1,2]\) and \( p \in (2,\infty) \). 
Hence, we focus first on proving Theorem \ref{p:thm1}. In Section \ref{sec:plarge}, we  present the required changes for the proof of Theorem \ref{p:thm2}. 

 The two-level approximation suggests how the analysis could be adapted when considering the problem in a numerical setting and using, for example, finite-element approximations. However, we choose Galerkin approximations with very high regularity to simplify the analysis when taking the limit as \( M \rightarrow\infty\). 
 In particular, we are able to avoid technical arguments such as those in Section 3.6 of \cite{MR2673410}.
 
We structure the proof of Theorem \ref{p:thm1} in the following way. First, we introduce the two-level approximation and in Theorem \ref{p:thm3} prove the existence of unique solutions to the time-discrete Galerkin approximation. In Propositions \ref{p:prop2} and \ref{p:prop3}, we show that the solution sequence satisfies discrete counterparts of the minimisation problem (\ref{p:equ2}) and the energy-dissipation equality (\ref{p:equ3}). Then we derive various \(M\)-independent   bounds,  which allow us to take the limit as \( M \rightarrow\infty \). We obtain a weak-energy solution of a time-continuous Galerkin approximation of (\ref{p:equ1})--(\ref{p:equ3}). We derive various \( N \)-independent bounds on this solution. Due to the growth of the function \( F\), the solution of the time-continuous Galerkin approximation experiences good integrability and regularity. Hence, we can take the limit as \( N\rightarrow\infty \) and use standard compactness arguments to obtain a weak energy solution of (\ref{p:equ1})--(\ref{p:equ3}) in the sense of Theorem \ref{p:thm1}. We remark that, out of the time-discrete setting, we are unable to make any claims regarding the uniqueness of solutions. 

For the discretisation in space, let \((\phib_i)_{i=1}^\infty\) be an orthogonal basis of \( W^{k,2}_D(\Omega)^d\) such that it is orthonormal in \( L^2(\Omega)^d\) and \( k > \frac{d}{2}+1\). We make this choice so that \(W^{k,2}_D(\Omega)^d\) embeds continuously into \( C^1(\overline{\Omega})^d\),  by the Sobolev embedding theorem. For details of how to obtain such a basis, we refer to \cite{RN119} and the references therein. We define \( V_N = \mathrm{span}\{ \phib_1,\dots ,\phib_N\}\) and let \( P_N:L^2(\Omega)^d\rightarrow V_N \) denote the orthogonal projection operator. By construction, there exists a constant \( C> 0 \), independent of \( N \), such that
\[
\|P_N\bu\|_{k,2}\leq C \|\bu\|_{k,2} \quad\text{ for every }\bu\in W^{k,2}_D(\Omega)^d.
\]
In this finite-dimensional setting,  for every \( q\in [1,\infty]\),  \( \|\cdot\|_{q}\) and \( \|\cdot \|_{1,q}\) are norms on \( V_N \) that are equivalent to the usual norm \(\|\cdot \|_{V_N}:= \|\cdot\|_{k,2}\), with the constants of equivalence depending on \( N \). 

Now we  define the time discrete, finite-dimensional    problem. Let \( \beta = (N , M) \in \mathbb{N}^2\) with  \( V_N \) defined as above. We define the time step \( h = \frac{T}{M}\) and, for every \( 0 \leq m \leq M \), we set \( t^\beta_m = mh\). For problem data \(\bu_0\), \( \bu_1\), \( v_0 \), \( \boldf\) and \( \bg\) satisfying the assumptions of Theorem \ref{p:thm1}, we define an initialisation for the \( \beta\)-approximation of (\ref{p:equ1})--(\ref{p:equ3}) by
\begin{equation}\label{p:equ9}
\bu^\beta_0 = \bu_{0,N},\quad  \bu^\beta_{-1} =  \bu_{0,N} - h\bu_{1,N}  ,\quad  v^\beta_0 = v^{N}_0,
\end{equation}
where \( v^{N}_0\in H^1_{D+1}(\Omega) \) is the solution of the minimisation problem
\begin{equation}\label{p:equ10}
\mathcal{E}(\bu_{0,N}, v^{N}_0) + \mathcal{H}(v^{N}_0) = \inf\Big\{ \mathcal{E}(\bu_{0,N}, v) + \mathcal{H}(v): v\leq v_0 ,v\in H^1_{D+1}(\Omega)\Big\}.
\end{equation}
We impose (\ref{p:equ10}) so that, at the level of the approximate problem, the minimisation problem is satisfied at \( t = 0 \). The function \(v^N_0\) exists and is unique, applying the direct method in the calculus of variations and strict convexity. 
The functions \( \bu_{0,N}\), \( \bu_{1,N}\in W^{k,2}_D(\Omega)^d \) are approximations of the initial data \( \bu_0 \), \( \bu_1 \), respectively, and are chosen  depending on the value of \(p \).

In the case that \( p \in (1,2]\), we choose the approximations such that \( \bu_{0,N}\), \( \bu_{1,N}\in V_N \) and, for every \( N \), 
\begin{align*}
  0 = \lim_{\tilde{N}\rightarrow\infty} \|\bu_0 - \bu_{0,\tilde{N}}\|_{1,p^\prime} + \|\bu_1 - \bu_{1,\tilde{N}}\|_{1,p^\prime} \leq  \|\bu_0 - \bu_{0,N}\|_{1,p^\prime} + \|\bu_1 - \bu_{1,N}\|_{1,p^\prime}\leq C ,
\end{align*}
where \( C = C(\bu_0 , \bu_1) \) depends only on the initial data. 
Existence of such a construction is deduced as follows.  The space \(\cup_{N \geq 1}V_N \) is dense in \( W^{k,2}_D(\Omega)^d\) so, by the Sobolev embedding theorem, it must be dense in \( W^{1,p^\prime}_D(\Omega)^d \) for any \( p^\prime\in (1,\infty) \). 
Furthermore, \( V_N \) is a closed subset of \(W^{1,p^\prime}_D(\Omega)^d\). Hence for every \( \tilde{\bu}\in W^{1,p^\prime}_D(\Omega)^d\),  there exists a \( \tilde{\bu}_N \in V_N \) such that
\[
\|\tilde{\bu} - \tilde{\bu}_N \|_{1,p^\prime} = \inf_{\bv_N \in V_N} \|\tilde{\bu} - \bv_N\|_{1,p^\prime} \leq \|\tilde{\bu}\|_{1,p^\prime}.
\]
By density, we must have
\[
\lim_{N\rightarrow\infty}\inf_{\bv_N \in V_N} \|\tilde{\bu} - \bv_N\|_{1,p^\prime} = 0. 
\]
Thus we conclude the existence of such approximation sequences for \( \bu_0 \) and \( \bu_1\) as above. Furthermore, we note that because \( p^\prime\geq 2\), it follows that the sequences \((\bu_{0, N})_N \) and \((\bu_{1,N})_N \) are bounded in \( L^2(\Omega)^d\) and converge strongly in this space to \( \bu_0\), \( \bu_1\), respectively, whenever \( \bu_0 \), \( \bu_1\in W^{1,p^\prime}_D(\Omega)^d\). 

In the case that \( p > 2\), we must choose the sequences in such a way so that they approximate the data in both \( L^2(\Omega)^d\) and \( W^{1,p^\prime}_D(\Omega)^d\).  
This is due to the fact that \( W^{1,p^\prime}_D(\Omega) \) is not necessarily embedded in \( L^2(\Omega)\), unlike the case that \( p \leq 2\). 
We replace \(\|\cdot \|_{1,p^\prime}\) in the above by \( \|\tilde{\bu}\|_* := \|\tilde{\bu}\|_2 + \|\tilde{\bu}\|_{1,p^\prime}\) and repeat the same arguments. Hence, given \( \bu_0\), \( \bu_1 \in L^2(\Omega)^d\cap W^{1,p^\prime}_D(\Omega)^d\), we can find \( \bu_{0, N}\), \( \bu_{1,N}\in V_N \) such that
\begin{align*}
0 = \lim_{N\rightarrow\infty} \|\bu_i - \bu_{i,N}\|_2 + \|\bu_i - \bu_{i,N}\|_{1,p^\prime} \ \leq \|\bu_i\|_2 + \|\bu_i\|_{1,p^\prime}\quad\text{ for }i \in \{1,2\}. 
\end{align*}


 
 The \(\beta\)-approximation problem is defined  recursively as follows. For every \( 1\leq m \leq M \), we look for a function \( \bu^\beta_m \in V_N \) such that
\begin{equation}\label{p:equ47}
\int_\Omega\delta^2\bu^\beta_m\cdot\bw + b(v^\beta_{m-1}) F^{-1}(\beps(\delta\bu^\beta_m + \alpha\bu^\beta_m)) :\beps(\bw) \,\mathrm{d}x = \langle l^\beta_m, \bw\rangle,
\end{equation}
for every \( \bw \in V_N \), where \( l^\beta_m := l(t^\beta_m)\). Next we look for a function \( v^\beta_m \in H^1_{D+1}(\Omega) \) that solves the minimisation problem
\begin{equation}\label{p:equ7}
\mathcal{E}(\bu^\beta_m, v^\beta_m ) + \mathcal{H}(v^\beta_m) = \inf\Big\{ \mathcal{E}(\bu^\beta_m, v) + \mathcal{H}(v)\,:\, v\in H^1_{D+1}(\Omega),\, v\leq v^\beta_{m-1}\Big\}. 
\end{equation}
We use \( \delta\) and \( \delta^2\) to denote the first and second order difference quotients, respectively, by which we mean
\begin{equation}\label{p:equ8}
\delta\bu^\beta_m := \frac{\bu^\beta_m - \bu^\beta_{m-1}}{h},\quad \delta^2\bu^\beta_m := \frac{\delta\bu^\beta_m - \delta\bu^\beta_{m-1}}{h} = \frac{\bu^\beta_m - 2\bu^\beta_{m-1} + \bu^\beta_{m-2}}{h^2}.
\end{equation}
We  define \( \delta v^\beta_m\) and \( \delta l^\beta_m \) analogously.

We remark that the function \( F\) is a bijection from \( \mathbb{R}^{d\times d}\) to itself so we can formulate the problem in terms of only the displacement and the phase-field function. We reintroduce the stress function later when taking the limit as \( N \rightarrow\infty \), to simplify the notation.

\begin{theorem}\label{p:thm3}
Suppose that \( l \in C^1([0, T]; W^{-1,p}_D(\Omega)^d) \) with \( \bu_0\), \( \bu_1 \in W^{1,p^\prime}_D(\Omega)^d\cap L^2(\Omega)^d\) and \( v_0 \in H^1_{D+1}(\Omega) \) such that  \( v_0 \in [0, 1]\) a.e. in \(\Omega\). 
For every \( \beta\in \mathbb{N}^2\), defining the initial data by (\ref{p:equ9}), there exists a unique sequence of solutions \((\bu^\beta_m, v^\beta_m)_{m=1}^M\) solving the time discrete problem (\ref{p:equ7}), (\ref{p:equ8}).  
\end{theorem}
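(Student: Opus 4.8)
The plan is to prove Theorem~\ref{p:thm3} by induction on $m$, solving at each step first the elastodynamic equation~(\ref{p:equ47}) for $\bu^\beta_m$ and then the minimisation problem~(\ref{p:equ7}) for $v^\beta_m$. The base of the induction is furnished by the initialisation~(\ref{p:equ9}), where $\bu^\beta_0 = \bu_{0,N}$, $\bu^\beta_{-1} = \bu_{0,N} - h\bu_{1,N}$ and $v^\beta_0 = v^N_0$ is obtained from~(\ref{p:equ10}) by the direct method and strict convexity of $v \mapsto \mathcal{E}(\bu_{0,N}, v) + \mathcal{H}(v)$ on the admissible set $\{v \leq v_0\} \cap H^1_{D+1}(\Omega)$; note $v_0 \in [0,1]$ ensures this set is non-empty. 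For the inductive step, assume $\bu^\beta_{m-2}, \bu^\beta_{m-1} \in V_N$ and $v^\beta_{m-1} \in H^1_{D+1}(\Omega)$ with $0 \leq v^\beta_{m-1} \leq 1$ have been constructed.

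\textbf{Solving the elastodynamic step.} Expanding $\delta^2 \bu^\beta_m = h^{-2}(\bu^\beta_m - 2\bu^\beta_{m-1} + \bu^\beta_{m-2})$ and $\delta \bu^\beta_m = h^{-1}(\bu^\beta_m - \bu^\beta_{m-1})$, equation~(\ref{p:equ47}) becomes a nonlinear equation for the single unknown $\bu^\beta_m \in V_N$: writing $\bu^\beta_m = \bw$ as the test function too, one sees the problem is the Euler--Lagrange equation of a strictly convex, coercive functional on the finite-dimensional space $V_N$. Indeed, I would introduce
\[
J(\bw) = \frac{1}{2h^2}\|\bw\|_2^2 - \frac{1}{h^2}\int_\Omega (2\bu^\beta_{m-1} - \bu^\beta_{m-2})\cdot\bw\dd x + \int_\Omega b(v^\beta_{m-1}) \Psi\big(\beps(\tfrac{1}{h}(\bw - \bu^\beta_{m-1}) + \alpha\bw)\big)\dd x - \langle l^\beta_m, \bw\rangle,
\]
where $\Psi$ is a primitive of $\bbS \mapsto F^{-1}(\bbS)$; since $F = \partial\varphi/\partial\bbT$ with $\varphi$ convex, $F^{-1} = \partial\varphi^*/\partial\bbT$ is monotone and we may take $\Psi = \varphi^*$ composed appropriately, so $J$ is convex. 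The map $\bw \mapsto \beps(\tfrac{1}{h}(\bw - \bu^\beta_{m-1}) + \alpha\bw) = \beps((\tfrac{1}{h}+\alpha)\bw) - \tfrac{1}{h}\beps(\bu^\beta_{m-1})$ is an affine bijection in its linear part (the coefficient $\tfrac{1}{h}+\alpha > 0$), so by the Korn--Poincaré inequality (Theorem~\ref{korn}) on $W^{1,p'}_D(\Omega)^d$ together with the lower bound $b(v^\beta_{m-1}) \geq \eta > 0$ and the $p'$-growth from below of $\varphi^*$, the functional $J$ is coercive on $V_N$ with respect to $\|\cdot\|_{1,p'}$ (equivalently, any norm on $V_N$, these being equivalent on the finite-dimensional space). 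Strict convexity in $\bw$ follows from the strict convexity of $\bw \mapsto \|\bw\|_2^2$. Hence $J$ has a unique minimiser $\bu^\beta_m \in V_N$, which is the unique solution of~(\ref{p:equ47}).

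\textbf{Solving the minimisation step.} Given $\bu^\beta_m \in V_N \subset W^{k,2}_D(\Omega)^d \hookrightarrow C^1(\overline{\Omega})^d$, the functional $v \mapsto \mathcal{E}(\bu^\beta_m, v) + \mathcal{H}(v) = \int_\Omega \frac{b(v)}{\alpha}\varphi^*(\beps(\alpha\bu^\beta_m))\dd x + \frac{1}{4\epsilon}\|1-v\|_2^2 + \epsilon\|\nabla v\|_2^2$ is, since $\beps(\alpha\bu^\beta_m) \in L^\infty(\Omega)^{d\times d}$, finite, weakly lower semicontinuous, coercive and strictly convex on the closed convex non-empty set $\{v \in H^1_{D+1}(\Omega) : v \leq v^\beta_{m-1}\}$ (non-empty because $v^\beta_{m-1}$ itself is admissible). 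The direct method then yields a unique minimiser $v^\beta_m$. It remains to check the induction hypothesis propagates, i.e. $0 \leq v^\beta_m \leq 1$: truncating $v^\beta_m$ to $\max\{0, \min\{v^\beta_m, 1\}\}$ does not increase $\mathcal{H}$ (each term is pointwise non-increased or unchanged, using $v^\beta_{m-1} \leq 1$ for the upper truncation) and does not increase $\mathcal{E}$ since $b$ is increasing on $[0,\infty)$ and $|v|$ on $(-\infty,0]$ decreases under truncation to $0$; by uniqueness the minimiser already satisfies these bounds. This closes the induction and proves the theorem.

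\textbf{Main obstacle.} The delicate point is the solvability of the nonlinear algebraic step~(\ref{p:equ47}): one must verify that despite the nonlinearity $F^{-1}$ having merely $p'$-growth (with $p' \geq 2$, so superlinear but possibly fast-growing), the functional $J$ is genuinely coercive and the minimiser lies in $V_N$ — here the equivalence of all norms on the finite-dimensional $V_N$ and the strict positivity $b(v^\beta_{m-1}) \geq \eta$ are what make the argument go through, but some care is needed to identify the correct convex potential $\Psi$ whose Euler--Lagrange equation reproduces~(\ref{p:equ47}) exactly, since the strain rate combines $\delta\bu^\beta_m$ and $\alpha\bu^\beta_m$ nonlinearly through $F^{-1}$. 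An alternative, should the variational route prove awkward, is to invoke a corollary of Brouwer's fixed-point theorem (in the form used for Galerkin systems): the map $\bu^\beta_m \mapsto$ (left side of~(\ref{p:equ47}) minus right side), viewed as a continuous map on the finite-dimensional $V_N$, is coercive by the same estimates, hence has a zero; uniqueness then follows from strict monotonicity of the operator $\bw \mapsto h^{-2}\bw + b(v^\beta_{m-1})\,\mathrm{"}\diver\mathrm{"}\,F^{-1}(\beps(\cdots))$, using strict monotonicity of $F^{-1}$ off the kernel of $\beps$ plus Korn--Poincaré.
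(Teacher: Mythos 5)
Your proposal is correct and follows essentially the same route as the paper, whose entire proof consists of invoking the Browder--Minty theorem for the elastodynamic step and the direct method with strict convexity for the minimisation step; your convex-minimisation formulation of the first step (with potential $(\tfrac{1}{h}+\alpha)^{-1}\varphi^*$ composed with the affine strain map, which is indeed the correct scaling) is just the gradient-operator version of the same monotonicity argument, and you correctly identify the monotone-operator route as the alternative. The only slip is cosmetic: $p'\geq 2$ holds only for $p\leq 2$, but coercivity comes from the quadratic inertial term and the equivalence of norms on $V_N$ in any case, so nothing is affected.
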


Existence follows in a standard way from the Browder--Minty theorem for (\ref{p:equ7}) and the direct method in the calculus of variations for (\ref{p:equ8}) so we do not include the details. Uniqueness is a result of strict monotonicity and strict convexity, respectively. 

Before proceeding any further, we note the following convergence result  for the sequence \((v^N_0)_N\). This is vital knowledge when we take the limit as \( N \rightarrow\infty \). It enables us to prove the satisfaction of the energy-dissipation equality when taking the limit in \( N \). 

\begin{proposition}\label{p:prop1}
Let \( v_0 \in H^1_{D+1}(\Omega)\) with \( v_0 \in [0,1]\) a.e. in \( \Omega\) and \( \bu_0 \in W^{1,p^\prime}_D(\Omega)^d\cap L^2(\Omega)^d\) such that the compatibility condition (\ref{p:equ5}) holds. Let \( v^{N}_0\in H^1_{D+1}(\Omega)\) be the solution of the minimisation problem (\ref{p:equ10}).
Then \( v^N_0\in [0, 1]\) a.e. in \( \Omega\) and we have  \( v^N_0 \rightarrow v_0 \) strongly in \( H^1(\Omega) \) as \( N \rightarrow\infty \). 
\end{proposition}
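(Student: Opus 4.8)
The plan is to first establish the pointwise bounds $v^N_0 \in [0,1]$ a.e., then prove strong $H^1$-convergence by a combination of a uniform energy bound, weak compactness, lower semicontinuity, and an upper bound obtained by comparing against a well-chosen competitor built from $v_0$. For the pointwise bounds, I would argue as in the standard Ambrosio--Tortorelli setting: since $v^N_0$ minimises $v \mapsto \mathcal{E}(\bu_{0,N}, v) + \mathcal{H}(v)$ over $\{v \in H^1_{D+1}(\Omega) : v \le v_0\}$, the truncation $\max\{v^N_0, 0\}$ (respectively $\min\{v^N_0, 1\}$, which is still admissible since $v_0 \le 1$) is an admissible competitor that does not increase either $\mathcal{H}$ (truncation decreases the Dirichlet term and moves the $L^2$ term closer to $1$ on $\{v^N_0 < 0\}$) or $\mathcal{E}$ (the weight $b(v^N_0)/\alpha = (|v^N_0|^2+\eta)/\alpha$ only decreases under $|\cdot|$-truncation toward $[0,1]$, since $\varphi^* \ge 0$); strict convexity then forces $v^N_0 = \max\{v^N_0,0\}$ and $v^N_0 = \min\{v^N_0,1\}$ a.e., so $v^N_0 \in [0,1]$.

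Next, for the convergence, I would obtain a uniform bound on $\mathcal{H}(v^N_0)$: testing the minimisation (\ref{p:equ10}) with the competitor $v_0$ itself (which is admissible, $v_0 \le v_0$) gives
\[
\mathcal{E}(\bu_{0,N}, v^N_0) + \mathcal{H}(v^N_0) \le \mathcal{E}(\bu_{0,N}, v_0) + \mathcal{H}(v_0).
\]
Since $\bu_{0,N} \to \bu_0$ strongly in $W^{1,p'}_D(\Omega)^d$ and $\beps(\alpha \bu_{0,N}) \to \beps(\alpha \bu_0)$ in $L^{p'}$, while $\varphi^*$ grows like $|\cdot|^p$ (with exponent $p/(p-1) = p'$ matching; note $F^{-1}(\beps) = |\beps|^{p'-2}\beps$ so $\varphi^*(\beps) \sim |\beps|^{p'}$) and $0 \le b(v_0) \le 1+\eta$, continuity of $\bu \mapsto \mathcal{E}(\bu, v_0)$ along this sequence gives $\mathcal{E}(\bu_{0,N}, v_0) \to \mathcal{E}(\bu_0, v_0)$, hence $\mathcal{E}(\bu_{0,N}, v^N_0) + \mathcal{H}(v^N_0)$ is bounded uniformly in $N$. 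Because $\mathcal{E} \ge 0$, this bounds $\mathcal{H}(v^N_0)$, hence $(v^N_0)_N$ in $H^1(\Omega)$; extract a subsequence with $v^N_0 \rightharpoonup \bar v$ weakly in $H^1$, strongly in $L^2$, and a.e., with $\bar v \in [0,1]$, $\bar v \le v_0$, and $\bar v \in H^1_{D+1}(\Omega)$ (the trace condition passes to the weak limit). By weak lower semicontinuity of $\mathcal{H}$ and (using a.e. convergence of $b(v^N_0) \to b(\bar v)$ together with the strong $L^{p'}$-convergence of $\beps(\alpha \bu_{0,N})$, plus Fatou or dominated convergence since $b$ is bounded) convergence of $\mathcal{E}(\bu_{0,N}, v^N_0) \to \mathcal{E}(\bu_0, \bar v)$ — or at least lower semicontinuity — we get
\[
\mathcal{E}(\bu_0, \bar v) + \mathcal{H}(\bar v) \le \liminf_N \big[\mathcal{E}(\bu_{0,N}, v^N_0) + \mathcal{H}(v^N_0)\big] \le \limsup_N \big[\mathcal{E}(\bu_{0,N}, v_0) + \mathcal{H}(v_0)\big] = \mathcal{E}(\bu_0, v_0) + \mathcal{H}(v_0).
\]
Since $\bar v$ is admissible for the limiting minimisation problem (\ref{p:equ5}) and, by the compatibility condition, $v_0$ is its unique minimiser (uniqueness by strict convexity), this chain forces $\bar v = v_0$. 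As the limit is independent of the subsequence, the whole sequence converges: $v^N_0 \rightharpoonup v_0$ weakly in $H^1$ and $v^N_0 \to v_0$ in $L^2$. Finally, the chain of inequalities above, being an equality in the limit, shows $\mathcal{H}(v^N_0) \to \mathcal{H}(v_0)$; combined with weak $H^1$-convergence and the fact that $\mathcal{H}$ controls $\|1-v\|_2^2 + \|\nabla v\|_2^2$ with $L^2$-convergence of $v^N_0$ already in hand, this upgrades $\|\nabla v^N_0\|_2 \to \|\nabla v_0\|_2$, which together with weak convergence of the gradients gives strong convergence $\nabla v^N_0 \to \nabla v_0$ in $L^2$, i.e. $v^N_0 \to v_0$ strongly in $H^1(\Omega)$.

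The main obstacle I anticipate is the convergence (or lower semicontinuity) of the elastic term $\mathcal{E}(\bu_{0,N}, v^N_0)$: it involves the product of the oscillating weight $b(v^N_0)$ with $\varphi^*(\beps(\alpha\bu_{0,N}))$, and although $b(v^N_0)$ is uniformly bounded and converges a.e. while $\varphi^*(\beps(\alpha\bu_{0,N}))$ converges strongly in $L^1$ (from strong $W^{1,p'}$-convergence of $\bu_{0,N}$ and the $p'$-growth of $\varphi^*$), one must be slightly careful to justify passing the limit through the product — a Vitali/dominated-convergence argument using the uniform bound $0 \le b \le 1+\eta$ and the strong $L^1$-convergence of the second factor suffices. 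The only other subtlety is confirming that the competitor estimate in (\ref{p:equ10}) is genuinely available, i.e. that $v_0$ lies in the admissible class, which is immediate from $v_0 \in H^1_{D+1}(\Omega)$ and $v_0 \le v_0$. Everything else — truncation arguments, weak lower semicontinuity of $\mathcal{H}$, and the uniqueness of the limiting minimiser under (\ref{p:equ5}) — is routine.
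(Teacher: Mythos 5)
Your proposal is correct and follows essentially the same route as the paper: truncation for the pointwise bounds (the paper only needs the truncation at $0$, deducing the upper bound directly from the constraint $v^N_0\le v_0\le 1$), comparison against the competitor $v_0$ for the uniform $H^1$ bound, weak compactness plus lower semicontinuity and the compatibility condition with strict convexity to identify the limit as $v_0$, and finally convergence of $\mathcal{H}(v^N_0)$ (via dominated convergence for the elastic term) to upgrade weak to strong $H^1$-convergence. The subtlety you flag about the product $b(v^N_0)\varphi^*(\beps(\alpha\bu_{0,N}))$ is resolved exactly as you suggest, by the uniform bound on $b$ and dominated convergence.
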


\begin{proof}
First, we notice that, comparing \( v\) with \( \max\{v,0\}\), any minimiser of (\ref{p:equ10}) must be non-negative. By construction, we have \( v_0^N \leq v_0 \leq 1\). Hence \((v^N_0)_N \) is uniformly bounded in \( L^\infty(\Omega) \). Next, by construction, we have
\begin{align*}
C(\epsilon) \|v^N_0\|_{1,2}^2 &\leq \mathcal{E}(\bu_{0,N}, v_0^N) + \mathcal{H}(v^N_0)
\\
& \leq \mathcal{E}(\bu_{0,N}, v_0) + \mathcal{H}(v_0)
\\
& \leq \frac{b(1)}{\alpha} \int_\Omega \varphi^*(\beps(\bu_{0, N})) \,\mathrm{d}x + C(\epsilon) \|v_0 \|_{1,2}^2
\\
&\leq C\big[ \|\beps( \bu_{0,N}) \|_{p^\prime}^{p^\prime} + \|v_0 \|_{1,2}^2\big]
\\
& \leq C\big[ \|\bu_0\|_2^{p^\prime} +  \|\bu_0 \|_{1,p^\prime}^{p^\prime} + \|v_0 \|_{1,2}^2 \big].
\end{align*}
It follows that \( (v^N_0)_N \) converges weakly in \( H^1(\Omega) \) and strongly in \( L^2(\Omega)\) to a limit \( \tilde{v}_0\in H^1_D(\Omega)\) with \( \tilde{v}_0 \leq v_0\), up to a subsequence that we do not relabel. It follows from Fatou's lemma and weak lower semi-continuity that
\begin{equation}\label{p:equ36}
\begin{aligned}
\mathcal{E}(\bu_0, \tilde{v}_0) + \mathcal{H}( \tilde{v}_0) & \leq \lim_{N\rightarrow\infty} \Big[ 
\mathcal{E}(\bu_{0,N}, v_0^N) + \mathcal{H}(v^N_0)\Big]
\\
& \leq \lim_{N\rightarrow\infty } \Big[ \mathcal{E}(\bu_{0,N}, v_0) + \mathcal{H}(v_0)\Big]
\\
&= \mathcal{E}(\bu_0, v_0) + \mathcal{H}(v_0) 
\\
& \leq \mathcal{E}(\bu_0, \tilde{v}_0) + \mathcal{H}(\tilde{v_0}) .
\end{aligned}
\end{equation}
The final inequality follows from the compatibility condition (\ref{p:equ5}). Hence \( \tilde{v}_0 \) is also a minimiser of the problem (\ref{p:equ5}). By strict convexity, we deduce that \( v_0 = \tilde{v}_0\). 

Next, we notice that \( (\mathcal{E}(\bu_{0,N}, v^N_0) )_N\) converges to \( \mathcal{E}(\bu_0, v_0) \) as a result of the dominated convergence theorem and the uniform boundedness of \((b(v^N_0))_N\). Inserting this into (\ref{p:equ36}), it follows that
\begin{align*}
\lim_{N\rightarrow\infty}\mathcal{H}(v^N_0) = \mathcal{H}(v_0).
\end{align*}
The functional \( \mathcal{H}\) is equivalent to the usual norm on \( H^1(\Omega) \). Thus \((v^N_0)_N \) converges weakly and in norm to \( v_0 \) and so we have strong convergence. 
\end{proof}

Now we prove that the sequence of solutions satisfies  properties related to the minimisation problem, and an inequality analogous to the energy-dissipation equality  for the time continuous solution. The first result is an immediate consequence of the minimisation problem (\ref{p:equ8}). However, the formulation presented in Proposition \ref{p:prop2} is more amenable when it comes to taking the limit as \( M \rightarrow\infty \). Corollary \ref{p:cor1} follows from Proposition \ref{p:prop2}, which is vital in proving uniform bounds on the sequence \((\delta v^\beta_m)_{m=1}^M \). In Proposition \ref{p:prop3}, we prove an energy-dissipation inequality in the spirit of (\ref{p:equ3}). We are not able to obtain an equality in this setting due to the presence of numerical dissipation terms resulting from the time discretisation. We later use this inequality to obtain one direction of an energy-dissipation equality for the semi-discrete couple that is constructed by taking the limit in \(M\). 

\begin{proposition}\label{p:prop2}
Let the assumptions of Theorem \ref{p:thm3} hold. Then \( v^\beta_m \in [0, 1]\) and \( \delta v^\beta_m \leq 0 \) a.e. in \( \Omega\), for every \( 1\leq m \leq M\). Furthermore, for every \( \tilde{\chi}\in H^1_{D+1}(\Omega) \) such that \( \tilde{\chi}\leq v^\beta_{m-1}\), we have
\begin{equation}\label{p:equ11}
\begin{aligned}
0 &\leq \big[ \partial_v\mathcal{E}(\bu^\beta_m , v^\beta_m) + \mathcal{H}^\prime(v^\beta_m) \big](\tilde{\chi} - v^\beta_m)
\\
&= \int_\Omega \frac{b^\prime(v^\beta_m)}{\alpha}(\tilde{\chi} - v^\beta_m) \varphi^*(\beps(\alpha\bu^\beta_m)) + \frac{1}{2\epsilon} (v^\beta_m - 1) (\tilde{\chi} - v^\beta_m) + 2\epsilon\nabla v^\beta_m \cdot \nabla (\tilde{\chi} - v^\beta_m) \,\mathrm{d}x.
\end{aligned}
\end{equation}
In particular, for every \( \chi \in H^1_D(\Omega) \) such that \( \chi \leq 0 \),
\begin{equation}\label{p:equ17}
0 \leq \big[ \partial_v\mathcal{E}(\bu^\beta_m , v^\beta_m ) + \mathcal{H}^\prime(v^\beta_m) \big](\chi).
\end{equation}
\end{proposition}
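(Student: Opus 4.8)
\emph{Proof strategy.} The plan is to exploit that, by (\ref{p:equ7}), the function $v^\beta_m$ is the minimiser of the functional $J(v):=\mathcal{E}(\bu^\beta_m,v)+\mathcal{H}(v)$ over the closed convex set $K:=\{v\in H^1_{D+1}(\Omega):v\le v^\beta_{m-1}\ \text{a.e. in }\Omega\}$. At the level of the approximate problem one has $\bu^\beta_m\in V_N\hookrightarrow C^1(\overline{\Omega})^d$, so $\varphi^*(\beps(\alpha\bu^\beta_m))$ is bounded on $\Omega$; hence $J$ is finite, convex and G\^{a}teaux differentiable on all of $H^1_{D+1}(\Omega)$ (the caveat in the remark after (\ref{p:equ4}) does not apply here), and $\mathcal{H}$ is strictly convex, so the minimiser is unique. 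I would deduce every assertion from these two facts.

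\emph{Pointwise bounds.} The inequality $\delta v^\beta_m\le 0$ is immediate, since the admissible set in (\ref{p:equ7}) forces $v^\beta_m\le v^\beta_{m-1}$ a.e.; chaining this down to the initialisation gives $v^\beta_m\le v^\beta_{m-1}\le\cdots\le v^\beta_0=v^N_0\le v_0\le 1$ a.e., using (\ref{p:equ10}) and Proposition \ref{p:prop1} for the last inequalities. For the lower bound I would argue by induction on $m$: assuming $v^\beta_{m-1}\ge 0$ a.e. (true for $m=1$ by Proposition \ref{p:prop1}), the truncation $w:=\max\{v^\beta_m,0\}$ lies in $K$, because $w=1$ on $\Gamma_D$ and $w\le\max\{v^\beta_{m-1},0\}=v^\beta_{m-1}$. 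Since $b(w)=w^2+\eta\le (v^\beta_m)^2+\eta=b(v^\beta_m)$ and $\varphi^*\ge 0$ we get $\mathcal{E}(\bu^\beta_m,w)\le\mathcal{E}(\bu^\beta_m,v^\beta_m)$, while $|1-w|\le|1-v^\beta_m|$ and $|\nabla w|\le|\nabla v^\beta_m|$ a.e. give $\mathcal{H}(w)\le\mathcal{H}(v^\beta_m)$. Thus $J(w)\le J(v^\beta_m)$, so $w$ is also a minimiser over $K$, and uniqueness forces $v^\beta_m=w\ge 0$, closing the induction. Hence $v^\beta_m\in[0,1]$ a.e.

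\emph{The variational inequality.} Fix $\tilde\chi\in K$. By convexity of $K$, $v^\beta_m+t(\tilde\chi-v^\beta_m)\in K$ for every $t\in[0,1]$, so $g(t):=J(v^\beta_m+t(\tilde\chi-v^\beta_m))$ attains its minimum on $[0,1]$ at $t=0$, whence $g'(0^+)\ge 0$. It then remains to identify $g'(0^+)$ with $[\partial_v\mathcal{E}(\bu^\beta_m,v^\beta_m)+\mathcal{H}'(v^\beta_m)](\tilde\chi-v^\beta_m)$. The contribution of $\mathcal{H}$ is elementary, as $t\mapsto\mathcal{H}(v^\beta_m+t(\tilde\chi-v^\beta_m))$ is a quadratic polynomial whose derivative at $0$ equals $\int_\Omega\frac{1}{2\epsilon}(v^\beta_m-1)(\tilde\chi-v^\beta_m)+2\epsilon\nabla v^\beta_m\cdot\nabla(\tilde\chi-v^\beta_m)\,\mathrm{d}x$. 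For the contribution of $\mathcal{E}$, the $t$-derivative of the integrand is $\frac{b'(v^\beta_m+t(\tilde\chi-v^\beta_m))}{\alpha}(\tilde\chi-v^\beta_m)\varphi^*(\beps(\alpha\bu^\beta_m))$, which for $t\in[0,1]$ is dominated by $\frac{C}{\alpha}(|v^\beta_m|+|\tilde\chi|)|\tilde\chi-v^\beta_m|\in L^1(\Omega)$ since $\varphi^*(\beps(\alpha\bu^\beta_m))\in L^\infty(\Omega)$ and $v^\beta_m,\tilde\chi\in L^2(\Omega)$; differentiating under the integral sign gives $\int_\Omega\frac{b'(v^\beta_m)}{\alpha}(\tilde\chi-v^\beta_m)\varphi^*(\beps(\alpha\bu^\beta_m))\,\mathrm{d}x$. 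Summing the two contributions reproduces exactly the right-hand side of (\ref{p:equ11}), and $g'(0^+)\ge 0$ is the asserted inequality. Finally, (\ref{p:equ17}) is the special case $\tilde\chi=v^\beta_m+\chi$ with $\chi\in H^1_D(\Omega)$, $\chi\le 0$: then $\tilde\chi=1$ on $\Gamma_D$ and $\tilde\chi=v^\beta_m+\chi\le v^\beta_m\le v^\beta_{m-1}$, so $\tilde\chi\in K$, and $\tilde\chi-v^\beta_m=\chi$ in (\ref{p:equ11}).

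\emph{Main obstacle.} There is no genuine analytic difficulty; the points needing care are (i) verifying that each comparison function (the truncation $w$, the perturbations $v^\beta_m+t(\tilde\chi-v^\beta_m)$, and $v^\beta_m+\chi$) respects both the Dirichlet trace $=1$ on $\Gamma_D$ and the obstacle constraint $\le v^\beta_{m-1}$ — the latter being where the induction on $m$ and Proposition \ref{p:prop1} are used — and (ii) justifying the differentiation of $\mathcal{E}$ under the integral, which relies entirely on the extra regularity $\bu^\beta_m\in V_N\hookrightarrow C^1(\overline{\Omega})^d$ available at the discrete level.
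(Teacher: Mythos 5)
Your proposal is correct and follows exactly the route the paper intends: the paper states this result as an ``immediate consequence of the minimisation problem'' without writing out the details, and your argument (monotonicity from the obstacle constraint, the truncation comparison $\max\{v^\beta_m,0\}$ with induction on $m$ for the lower bound — mirroring the argument in Proposition \ref{p:prop1} — and the standard first-variation computation for the variational inequality, using $\bu^\beta_m\in V_N\hookrightarrow C^1(\overline{\Omega})^d$ to justify differentiability of $\mathcal{E}$) is precisely the omitted reasoning. No gaps.
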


\begin{corollary}\label{p:cor1}
Let the assumptions of Theorem \ref{p:thm3} hold. For every \( 1\leq m \leq M \), we have
\begin{align*}
0 = \big[ \partial_v\mathcal{E}(\bu^\beta_m , v^\beta_m ) + \mathcal{H}^\prime(v^\beta_m) \big](\delta v^\beta_m).
\end{align*}
\end{corollary}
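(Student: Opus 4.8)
\textbf{Proof proposal for Corollary \ref{p:cor1}.}

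The plan is to derive the equality by applying the variational inequality (\ref{p:equ11}) from Proposition \ref{p:prop2} twice, with two carefully chosen admissible test functions whose difference is precisely a positive multiple of $\delta v^\beta_m$, and then combining the two resulting one-sided bounds. The key observation is that since $\delta v^\beta_m \leq 0$ a.e.\ in $\Omega$ (also from Proposition \ref{p:prop2}), and $v^\beta_m$ and $v^\beta_{m-1}$ both lie in $H^1_{D+1}(\Omega)$ with $v^\beta_m \leq v^\beta_{m-1}$, one can move both in the direction of $v^\beta_{m-1}$ and in the direction of $2v^\beta_m - v^\beta_{m-1}$ while staying admissible.

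First I would set $\tilde{\chi} = v^\beta_{m-1}$ in (\ref{p:equ11}); this is admissible since $v^\beta_{m-1} \in H^1_{D+1}(\Omega)$ and trivially $v^\beta_{m-1} \leq v^\beta_{m-1}$. Noting that $\tilde{\chi} - v^\beta_m = v^\beta_{m-1} - v^\beta_m = -h\,\delta v^\beta_m$, this yields
\[
0 \leq \big[ \partial_v\mathcal{E}(\bu^\beta_m , v^\beta_m ) + \mathcal{H}^\prime(v^\beta_m) \big](-h\,\delta v^\beta_m),
\]
i.e.\ $\big[ \partial_v\mathcal{E}(\bu^\beta_m , v^\beta_m ) + \mathcal{H}^\prime(v^\beta_m) \big](\delta v^\beta_m) \leq 0$. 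For the reverse inequality, I would choose $\tilde{\chi} = 2v^\beta_m - v^\beta_{m-1} = v^\beta_m + h\,\delta v^\beta_m$. This function lies in $H^1_{D+1}(\Omega)$ (it equals $1$ on $\Gamma_D$ since both $v^\beta_m$ and $v^\beta_{m-1}$ do, and it has the required Sobolev regularity), and it satisfies $\tilde{\chi} \leq v^\beta_{m-1}$ precisely because $\tilde{\chi} - v^\beta_{m-1} = 2(v^\beta_m - v^\beta_{m-1}) = 2h\,\delta v^\beta_m \leq 0$. Here $\tilde{\chi} - v^\beta_m = h\,\delta v^\beta_m$, so (\ref{p:equ11}) gives $\big[ \partial_v\mathcal{E}(\bu^\beta_m , v^\beta_m ) + \mathcal{H}^\prime(v^\beta_m) \big](\delta v^\beta_m) \geq 0$. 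Combining the two bounds yields the claimed equality.

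I should double-check one subtlety: the functional $\partial_v\mathcal{E}(\bu^\beta_m , v^\beta_m) + \mathcal{H}^\prime(v^\beta_m)$ must make sense when paired with $\delta v^\beta_m$ (equivalently with $v^\beta_{m-1}$), which follows because $v^\beta_{m-1}, v^\beta_m \in H^1_{D+1}(\Omega) \cap L^\infty(\Omega)$ (by the bound $v^\beta_m \in [0,1]$ a.e.) and $\bu^\beta_m \in V_N$ so that $\varphi^*(\beps(\alpha\bu^\beta_m)) \in L^1(\Omega)$; all three integrands in the explicit expression on the right-hand side of (\ref{p:equ11}) are then integrable, and each pairing extends to the test direction $\delta v^\beta_m \in H^1_D(\Omega) \cap L^\infty(\Omega)$. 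This is the only point requiring care; the rest is the two-sided test-function argument described above, so I do not expect any genuine obstacle. The main (minor) point is simply recognising the right pair of admissible competitors.
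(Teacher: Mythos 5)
Your argument is correct and coincides with the paper's own proof: the paper likewise tests (\ref{p:equ11}) with $\tilde{\chi} = v^\beta_{m-1}$ for one inequality and $\tilde{\chi} = 2v^\beta_m - v^\beta_{m-1}$ for the other, the latter being admissible precisely because $v^\beta_m \leq v^\beta_{m-1}$. Your additional check that the pairing is well defined is a sensible (if routine) remark that the paper leaves implicit.
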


\begin{proof}
Using (\ref{p:equ10}), we see that (\ref{p:equ17}) also holds for \( m= 0 \). 
Reasoning as in the proof of Lemma 3.2 of \cite{MR2673410}, we immediately obtain the following. Indeed, we test in (\ref{p:equ11}) against \( \tilde{\chi} = v^\beta_{m-1}\) for ``\(\geq \)'' and against \( \tilde{\chi} = 2v^\beta_m - v^\beta_{m-1}\) for the opposite direction ``\(\leq\)''.
\end{proof}

\begin{proposition}[Discrete energy-dissipation inequality]\label{p:prop3}
Let the assumptions of Theorem \ref{p:thm3} hold. The sequence of solutions to (\ref{p:equ7}), (\ref{p:equ8}) satisfies, for every \( 1\leq m \leq M \),
\begin{align*}
&\mathcal{F}(t^\beta_m;\bu^\beta_m, \delta\bu^\beta_m , v^\beta_m) + h\sum_{j=1}^m \langle \delta l^\beta_j, \bu^\beta_j \rangle
\\
&\quad
+ h \sum_{j=1}^m \int_\Omega b(v^\beta_{j-1}) \big[ F^{-1}(\beps(\delta\bu^\beta_j + \alpha\bu^\beta_j)) - F^{-1}(\beps(\alpha\bu^\beta_m)) \big] :\beps(\delta\bu^\beta_m) \,\mathrm{d}x
\\
&\leq \mathcal{F}(0; \bu_{0,N},  \bu_{1,N}, v^{N}_0).
\end{align*}
\end{proposition}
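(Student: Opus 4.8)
The plan is to derive a discrete energy balance by testing the discrete elastodynamic equation \eqref{p:equ47} with the natural discrete velocity and summing over time steps, then absorbing the surface-energy contribution using the results about the minimisation problem. First I would test \eqref{p:equ47} at step $j$ against $\bw = h\,\delta\bu^\beta_j = \bu^\beta_j - \bu^\beta_{j-1} \in V_N$. The inertial term $\int_\Omega \delta^2\bu^\beta_j \cdot (\bu^\beta_j - \bu^\beta_{j-1})\dd x = h\int_\Omega \delta^2\bu^\beta_j\cdot\delta\bu^\beta_j\dd x$ telescopes after the elementary identity $(\delta\bu^\beta_j - \delta\bu^\beta_{j-1})\cdot\delta\bu^\beta_j = \tfrac12|\delta\bu^\beta_j|^2 - \tfrac12|\delta\bu^\beta_{j-1}|^2 + \tfrac12|\delta\bu^\beta_j - \delta\bu^\beta_{j-1}|^2$, so that summing over $j=1,\dots,m$ produces $\mathcal{K}(\delta\bu^\beta_m) - \mathcal{K}(\delta\bu^\beta_0)$ plus a nonnegative numerical dissipation term $\tfrac{h^2}{2}\sum_j \|\delta^2\bu^\beta_j\|_2^2 \geq 0$, which is why we only obtain an inequality. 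The forcing term gives $h\sum_j \langle l^\beta_j, \delta\bu^\beta_j\rangle$, which I would rewrite by discrete summation by parts (Abel summation) as $\langle l^\beta_m, \bu^\beta_m\rangle - \langle l^\beta_0, \bu_{0,N}\rangle - h\sum_j\langle \delta l^\beta_j, \bu^\beta_{j-1}\rangle$; adjusting the last sum from $\bu^\beta_{j-1}$ to $\bu^\beta_j$ costs another term that combines with the discretisation errors and should be sign-controlled or absorbed into the $\langle\delta l^\beta_j,\bu^\beta_j\rangle$ term appearing in the statement (tracking these shifts carefully is where bookkeeping is needed).

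Next I would handle the stress term. Testing gives $h\sum_j \int_\Omega b(v^\beta_{j-1}) F^{-1}(\beps(\delta\bu^\beta_j + \alpha\bu^\beta_j)):\beps(\delta\bu^\beta_j)\dd x$. The trick, exactly as in the statement, is to write $\beps(\delta\bu^\beta_j) = \beps(\delta\bu^\beta_j + \alpha\bu^\beta_j) - \alpha\beps(\bu^\beta_j)$ and insert and subtract $F^{-1}(\beps(\alpha\bu^\beta_j))$. The ``extra'' part $h\sum_j\int_\Omega b(v^\beta_{j-1})[F^{-1}(\beps(\delta\bu^\beta_j+\alpha\bu^\beta_j)) - F^{-1}(\beps(\alpha\bu^\beta_j))]:\beps(\delta\bu^\beta_j)\dd x$ is (up to the $\beps(\alpha\bu^\beta_m)$ vs.\ $\beps(\alpha\bu^\beta_j)$ discrepancy in the statement) precisely the dissipation term appearing in the claim, and by monotonicity of $F^{-1}$ it has a definite sign, so it can either be kept or dropped. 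The remaining piece $-\alpha h\sum_j\int_\Omega b(v^\beta_{j-1})F^{-1}(\beps(\alpha\bu^\beta_j)):\beps(\delta\bu^\beta_j)\dd x$ should be converted to a telescoping difference of $\mathcal{E}(\bu^\beta_j, v^\beta_{j-1})$: using \eqref{intro:equ13} one has $F^{-1}(\beps(\alpha\bu)):\beps(\alpha\bu) = \varphi(F^{-1}(\beps(\alpha\bu))) + \varphi^*(\beps(\alpha\bu))$ and $\varphi^*$ is convex with derivative $F^{-1}$, so $\varphi^*(\beps(\alpha\bu^\beta_j)) - \varphi^*(\beps(\alpha\bu^\beta_{j-1})) \leq F^{-1}(\beps(\alpha\bu^\beta_j)):(\beps(\alpha\bu^\beta_j) - \beps(\alpha\bu^\beta_{j-1})) = \alpha h\, F^{-1}(\beps(\alpha\bu^\beta_j)):\beps(\delta\bu^\beta_j)$. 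Multiplying by $b(v^\beta_{j-1})/\alpha$ and summing yields a bound of the stress term by $\sum_j [\mathcal{E}(\bu^\beta_j, v^\beta_{j-1}) - \mathcal{E}(\bu^\beta_{j-1}, v^\beta_{j-1})]$ (with the correct sign so that it contributes $+\mathcal{E}(\bu^\beta_m,\cdot)$ after telescoping once the phase-field terms are inserted).

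Finally I would incorporate the surface energy and the phase-field variable. The telescoping of $\mathcal{E}$ leaves cross terms of the form $\mathcal{E}(\bu^\beta_{j-1}, v^\beta_{j-1}) - \mathcal{E}(\bu^\beta_{j-1}, v^\beta_{j-2})$, which I would control using the minimality \eqref{p:equ7}: since $v^\beta_{j-1}$ minimises $\mathcal{E}(\bu^\beta_{j-1},\cdot) + \mathcal{H}(\cdot)$ over competitors $\leq v^\beta_{j-2}$, and $v^\beta_{j-2}$ is itself such a competitor, we get $\mathcal{E}(\bu^\beta_{j-1}, v^\beta_{j-1}) + \mathcal{H}(v^\beta_{j-1}) \leq \mathcal{E}(\bu^\beta_{j-1}, v^\beta_{j-2}) + \mathcal{H}(v^\beta_{j-2})$. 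Adding $\mathcal{H}$ terms throughout and summing these inequalities makes $\mathcal{H}(v^\beta_j)$ telescope down to $\mathcal{H}(v^\beta_m) - \mathcal{H}(v^N_0)$ while simultaneously killing the unwanted $\mathcal{E}$ cross terms. Collecting everything, moving the initial-data terms to the right and the numerical dissipation (which is $\geq 0$) to the left, and recognising the definition of $\mathcal{F}(t^\beta_m; \bu^\beta_m, \delta\bu^\beta_m, v^\beta_m) = \mathcal{K}(\delta\bu^\beta_m) + \mathcal{E}(\bu^\beta_m, v^\beta_m) + \mathcal{H}(v^\beta_m) - \langle l^\beta_m, \bu^\beta_m\rangle$ gives the stated inequality, noting $\mathcal{K}(\delta\bu^\beta_0) = \mathcal{K}(\bu_{1,N})$ by \eqref{p:equ9}. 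The main obstacle I anticipate is the careful bookkeeping of the various index shifts ($\bu^\beta_{j-1}$ vs.\ $\bu^\beta_j$, $v^\beta_{j-1}$ vs.\ $v^\beta_j$, $\beps(\alpha\bu^\beta_j)$ vs.\ $\beps(\alpha\bu^\beta_m)$) and making sure every discretisation-error term that is not explicitly present in the statement is provably nonnegative, so that discarding it preserves the direction of the inequality; the convexity inequality for $\varphi^*$ and the monotonicity of $F^{-1}$ are the structural facts that guarantee these signs.
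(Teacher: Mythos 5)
Your proposal is correct and follows essentially the same route as the paper's proof: testing \eqref{p:equ47} with $h\delta\bu^\beta_j$, the polarisation identity \eqref{p:equ35} for the inertial term, Abel summation for the forcing, adding and subtracting $F^{-1}(\beps(\alpha\bu^\beta_j))$ in the stress term, and unilateral minimality \eqref{p:equ7} to absorb the elastic cross terms into the surface energy. Your direct use of the gradient inequality for the convex function $\varphi^*$ is simply a compressed form of the paper's fundamental-theorem-of-calculus-plus-monotonicity argument in \eqref{p:equ16}, and you are right to flag the index discrepancies in the statement (the arguments $\beps(\alpha\bu^\beta_m)$ and $\beps(\delta\bu^\beta_m)$ inside the sum should carry the index $j$).
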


\begin{proof}
We test in the elastodynamic equation (\ref{p:equ7}) against \( h\delta\bu^\beta_m \). The interial term, that is, the one involving \( \delta^2\bu^\beta_m\), can be rewritten as
\begin{equation}\label{p:equ13}
\begin{aligned}
\int_\Omega \delta^2\bu^\beta_m \cdot h \delta\bu^\beta_m \,\mathrm{d}x &= \int_\Omega \big( \delta\bu^\beta_m - \delta\bu^\beta_{m-1}\big) \cdot \delta\bu^\beta_m \,\mathrm{d}x
\\
&= \frac{1}{2}\Big( \|\delta\bu^\beta_m\|_2^2 - \|\delta\bu^\beta_{m-1}\|_2^2 + \|\delta\bu^\beta_m - \delta\bu^\beta_{m-1}\|_2^2 \Big),
\end{aligned}
\end{equation}
as a result of the well-known relation
\begin{equation}\label{p:equ35}
\mathbf{a}\cdot (\mathbf{a} - \mathbf{b}) = \frac{1}{2}\big( |\mathbf{a}|^2 - |\mathbf{b}|^2 + |\mathbf{a} - \mathbf{b}|^2\big) \quad \text{ for all }\mathbf{a},\,\mathbf{b}\in \mathbb{R}^d.
\end{equation}
For the term involving the external force, we write
\begin{equation}\label{p:equ14}
\langle l^\beta_m, h\delta\bu^\beta_m 
\rangle 
= \langle l^\beta_m , \bu^\beta_m \rangle - \langle l^\beta_{m-1}, \bu^\beta_{m-1}\rangle + h\langle \delta l^\beta_m, \bu^\beta_{m-1}\rangle.
\end{equation}
For the term involving the nonlinearity, we must be more careful. First, we write
\begin{equation}\label{p:equ15}
\begin{aligned}
&\int_\Omega b(v^\beta_{m-1}) F^{-1}(\beps(\delta\bu^\beta_m + \alpha\bu^\beta_m)):\beps(h\delta\bu^\beta_m)\,\mathrm{d}x
\\&= h \int_\Omega b(v^\beta_{m-1}) \big[ F^{-1}(\beps(\delta\bu^\beta_m + \alpha\bu^\beta_m)) - F^{-1}(\beps( \alpha\bu^\beta_m))\big]:\beps(\delta\bu^\beta_m) \,\mathrm{d}x 
\\&\quad
+ \int_\Omega  b(v^\beta_{m-1}) F^{-1}(\beps( \alpha\bu^\beta_m)):\beps(h\delta\bu^\beta_m)\,\mathrm{d}x.
\end{aligned}
\end{equation}
The first term on the right-hand side of (\ref{p:equ15}) is in the form that we want. 
For the second term, using that \(F^{-1}\) is the derivative of \(\varphi^*\), we get
\begin{equation}\label{p:equ16}
\begin{aligned}
&\int_\Omega b(v^\beta_{m-1}) F^{-1}(\beps( \alpha\bu^\beta_m)):\beps(h\delta\bu^\beta_m)\,\mathrm{d}x 
\\
&= \int_\Omega\int_0^1 b(v^\beta_{m-1}) F^{-1}(\alpha\beps(s\bu^\beta_m + (1-s) \bu^\beta_{m-1})) : \beps(\bu^\beta_m - \bu^\beta_{m-1})\,\mathrm{d}s \,\mathrm{d}x
\\
&\quad + \int_\Omega \int_0^1 b(v^\beta_{m-1}) \big[ F^{-1}(\beps( \alpha\bu^\beta_m)) - F^{-1}(\alpha\beps(s\bu^\beta_m + (1-s) \bu^\beta_{m-1}))\big] : \beps(\bu^\beta_m - \bu^\beta_{m-1})\,\mathrm{d}s \,\mathrm{d}x
\\
&\geq \int_\Omega\int_0^1 b(v^\beta_{m-1}) F^{-1}(\alpha\beps(s\bu^\beta_m + (1-s) \bu^\beta_{m-1})) : \beps(\bu^\beta_m - \bu^\beta_{m-1})\,\mathrm{d}s \,\mathrm{d}x
\\
&= \int_\Omega \frac{b(v^\beta_m)}{\alpha}\varphi^*(\beps(\alpha\bu^\beta_m)) - \frac{b(v^\beta_{m-1}) }{\alpha}\varphi^*(\beps(\alpha\bu^\beta_{m-1})) + \frac{b(v^\beta_{m-1}) - b(v^\beta_m) }{\alpha}\varphi^*(\beps(\alpha\bu^\beta_m)) \,\mathrm{d}x
\\
&\geq \mathcal{E}(\bu^\beta_m, v^\beta_m) - \mathcal{E}(\bu^\beta_{m-1}, v^\beta_{m-1}) + \mathcal{H}(v^\beta_m) - \mathcal{H}(v^\beta_{m-1}),
\end{aligned}
\end{equation}
where the transition to the final line follows from the minimisation problem (\ref{p:equ7}). 
Summing (\ref{p:equ13}), (\ref{p:equ14}), (\ref{p:equ15}) and (\ref{p:equ16}), we deduce that
\begin{align*}
&\mathcal{F}(t^\beta_m;\bu^\beta_m, \delta\bu^\beta_m, v^\beta_m) + h\langle \delta l^\beta_m, \bu^\beta_{m-1}\rangle
\\
&\quad
+ h\int_\Omega b(v^\beta_{m-1}) \big[ F^{-1}(\beps(\delta\bu^\beta_m + \alpha\bu^\beta_m)) - F^{-1}(\beps(\alpha\bu^\beta_m)) \big] : \beps(\delta\bu^\beta_m) \,\mathrm{d}x
\\
&\leq \mathcal{F}(t^\beta_{m-1};\bu^\beta_{m-1},\delta\bu^\beta_{m-1}, v^\beta_{m-1}).
\end{align*}
Using this inequality recursively, we obtain the required result.
\end{proof}

\subsection{\(M\)-independent {\it a priori} bounds}
Now we focus on finding estimates on the sequence of solutions to the time discrete problem that are independent of  \( M \). This is so that we can take the limit as \( M \rightarrow\infty \) and obtain sufficiently strong convergence results. All constants may depend on the data  and  parameters of the original problem, but we specify the dependence on \( N \) if it is present. The first bound is  an immediate corollary of the energy-dissipation inequality.

\begin{lemma}\label{p:lem1}
Let the assumptions of Theorem \ref{p:thm3} hold and suppose that (\ref{p:equ5}) holds.  There exists a constant \( C = C(N) \), independent of \( M \), such that
\begin{align*}
\max_{1\leq m \leq M}\|\bu^\beta_m\|_{V_N} + \max_{1\leq m \leq M} \|\delta\bu^\beta_m \|_{V_N} + \max_{1\leq m \leq M} \|v^\beta_m \|_{1,2}\leq C.
\end{align*}
\end{lemma}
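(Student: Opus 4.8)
The whole statement is squeezed out of the discrete energy--dissipation inequality of Proposition~\ref{p:prop3}. The first step is to discard its dissipation sum. Since $F$ is a monotone bijection of $\mathbb{R}^{d\times d}$, so is $F^{-1}$, and the two arguments appearing there differ by exactly $\beps(\delta\bu^\beta_j+\alpha\bu^\beta_j)-\beps(\alpha\bu^\beta_j)=\beps(\delta\bu^\beta_j)$; together with $b(v^\beta_{j-1})=\,(v^\beta_{j-1})^2+\eta\ge\eta>0$ this makes each summand non-negative. Writing out $\mathcal{F}$ and moving $\langle l^\beta_m,\bu^\beta_m\rangle$ to the right, Proposition~\ref{p:prop3} yields, for every $1\le m\le M$,
\[
\tfrac12\|\delta\bu^\beta_m\|_2^2+\mathcal{E}(\bu^\beta_m,v^\beta_m)+\mathcal{H}(v^\beta_m)\ \le\ \mathcal{F}(0;\bu_{0,N},\bu_{1,N},v^{N}_0)+\langle l^\beta_m,\bu^\beta_m\rangle-h\sum_{j=1}^m\langle\delta l^\beta_j,\bu^\beta_j\rangle.
\]

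Next I would record coercivity of the left-hand side and control of the right. In the present case $p\in(1,2]$ one has $\varphi^*(\bbS)=|\bbS|^{p^\prime}/p^\prime$, so using $b\ge\eta$ and the Korn--Poincar\'e inequality (Theorem~\ref{korn}) applied to $\bu^\beta_m$, which lies in $V_N\subset W^{1,p^\prime}_D(\Omega)^d$ and hence vanishes on $\Gamma_D$, we get $\mathcal{E}(\bu^\beta_m,v^\beta_m)\ge c_1\|\bu^\beta_m\|_{1,p^\prime}^{p^\prime}$ for a constant $c_1=c_1(\eta,\alpha,p,\Omega,\Gamma_D)>0$; moreover $\mathcal{H}(v^\beta_m)\ge\epsilon\|\nabla v^\beta_m\|_2^2$, and by Proposition~\ref{p:prop2} we have $v^\beta_m\in[0,1]$ a.e., so $\|v^\beta_m\|_2\le|\Omega|^{1/2}$. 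On the right-hand side, $\mathcal{F}(0;\bu_{0,N},\bu_{1,N},v^{N}_0)$ is a finite constant independent of $M$ (and in fact bounded uniformly in $N$, using the minimality~(\ref{p:equ10}), Proposition~\ref{p:prop1} and the convergence of $\bu_{0,N},\bu_{1,N}$); also $|\langle l^\beta_m,\bu^\beta_m\rangle|\le\|l\|_{C([0,T];W^{-1,p}_D(\Omega)^d)}\|\bu^\beta_m\|_{1,p^\prime}$, and since $\delta l^\beta_j=\tfrac1h\int_{t^\beta_{j-1}}^{t^\beta_j}l'(s)\,\mathrm{d}s$ we have $\|\delta l^\beta_j\|_{W^{-1,p}_D}\le\|l'\|_{C([0,T];W^{-1,p}_D(\Omega)^d)}$, whence, using $h\sum_{j=1}^m 1\le T$,
\[
\Big|h\sum_{j=1}^m\langle\delta l^\beta_j,\bu^\beta_j\rangle\Big|\ \le\ T\,\|l'\|_{C([0,T];W^{-1,p}_D(\Omega)^d)}\max_{1\le j\le m}\|\bu^\beta_j\|_{1,p^\prime}.
\]

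Combining, and setting $A_m:=\max_{1\le j\le m}\|\bu^\beta_j\|_{1,p^\prime}$, the displayed inequality gives $c_1\|\bu^\beta_j\|_{1,p^\prime}^{p^\prime}\le C+C A_m$ for every $j\le m$, hence $c_1 A_m^{p^\prime}\le C+C A_m$ with $C$ independent of $M$ and $N$. Because $p^\prime=p/(p-1)\ge 2>1$, Young's inequality absorbs $C A_m$ into $\tfrac{c_1}{2}A_m^{p^\prime}$, so $A_M\le\bar A$ with $\bar A$ independent of $M$; feeding this back bounds $\tfrac12\|\delta\bu^\beta_m\|_2^2$ and $\mathcal{H}(v^\beta_m)$, and together with $v^\beta_m\in[0,1]$ this bounds $\|v^\beta_m\|_{1,2}$, all uniformly in $m$ and $M$. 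Finally, since $\bu^\beta_m$ and $\delta\bu^\beta_m$ lie in the finite-dimensional space $V_N$, on which $\|\cdot\|_{1,p^\prime}$ and $\|\cdot\|_2$ are equivalent to $\|\cdot\|_{V_N}$ with ($N$-dependent) constants, the $M$-uniform bounds on $\|\bu^\beta_m\|_{1,p^\prime}$ and $\|\delta\bu^\beta_m\|_2$ upgrade to $\max_m\|\bu^\beta_m\|_{V_N}+\max_m\|\delta\bu^\beta_m\|_{V_N}\le C(N)$. The only points requiring a little care are the superlinear absorption step (which needs precisely $p^\prime>1$) and noticing that the cumulative forcing sum is harmless because $h\sum_{j=1}^m 1\le T$, so no discrete Gronwall argument is needed; the dependence on $N$ enters only at the very last, norm-equivalence step, every energy estimate above being $N$-uniform.
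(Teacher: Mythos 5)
Your proof is correct, and it closes the estimate by a genuinely different route from the paper. Both arguments start from the discrete energy--dissipation inequality of Proposition \ref{p:prop3} and discard the dissipation sum by monotonicity of \(F^{-1}\) together with \(b\geq\eta\), exactly as you do. The divergence is in how the forcing terms are absorbed: the paper adds \(\|\bu^\beta_m\|_2^2\) to both sides, expands \(\bu^\beta_m=\bu^\beta_0+h\sum_j\delta\bu^\beta_j\) to generate a cumulative term \(h\sum_{j}(\|\bu^\beta_j\|_2^2+\|\delta\bu^\beta_j\|_2^2)\), and then invokes the discrete Gronwall inequality before passing to the \(V_N\) norm. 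You instead exploit the superlinear coercivity \(\mathcal{E}(\bu,v)\geq c_1\|\bu\|_{1,p^\prime}^{p^\prime}\) (via \(\varphi^*(\bbS)=|\bbS|^{p^\prime}/p^\prime\), \(b\geq\eta\) and Korn--Poincar\'e), note that the right-hand side is only \emph{linear} in \(A_m=\max_{j\leq m}\|\bu^\beta_j\|_{1,p^\prime}\), evaluate the inequality at the maximising index, and absorb by Young's inequality using \(p^\prime>1\). This is cleaner, avoids Gronwall altogether, and has the added virtue that every estimate before the final norm-equivalence step is uniform in \(N\) — which is in the spirit of what the paper needs again in Section \ref{sec:Nbounds}. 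Two small remarks: your restriction to \(p\in(1,2]\) is unnecessary, since \(\varphi^*(\bbS)=|\bbS|^{p^\prime}/p^\prime\) and \(p^\prime>1\) hold for every \(p>1\) and the lemma (under the hypotheses of Theorem \ref{p:thm3}) must cover \(p>2\) as well — your argument applies verbatim there; and the claim that \(\mathcal{F}(0;\bu_{0,N},\bu_{1,N},v^N_0)\) is bounded uniformly in \(N\) is true but not needed here, since the constant is allowed to depend on \(N\).
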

\begin{proof}
Rearranging the energy dissipation inequality and using monotonicity of \( F \) to eliminate terms, we see that
\begin{equation}\label{p:equ37}
\begin{aligned}
&\mathcal{K}(\delta\bu^\beta_m) + \mathcal{E}(\bu^\beta_m , v^\beta_m)+ \mathcal{H}(v^\beta_m) 
\\&\leq
\mathcal{K}(\delta\bu^\beta_0) + \mathcal{E}(\bu^\beta_0, v^\beta_0) + \mathcal{H}(v^\beta_0) - \langle l^\beta_0, \bu^\beta_0\rangle - h\sum_{j=1}^m \langle \delta l^\beta_j , \bu^\beta_{j-1}\rangle + \langle l^\beta_m, \bu^\beta_m \rangle
\\
&\leq \frac{\|\bu_{1,N}\|_2^2}{2} + \frac{b(1)}{\alpha}\int_\Omega \varphi^*(\beps(\alpha \bu_{0,N})) \,\mathrm{d}x + \mathcal{H}(v^{N}_0)
\\&\quad
+ C_T\|l(0) \|_{-1,2}\|\bu_{0,N}\|_{1,2}
+ \frac{h}{2} \sum_{j=1}^m\|\delta l^\beta_j \|_{-1,2}^2 + \frac{C_T h}{2}\sum_{j=0}^{m-1} \|\bu^\beta_{j} \|_{1,2}^2 
\\&\quad
+ C_T\|l\|_{L^\infty(0, T; W^{-1,2}_D(\Omega))} \|\bu^\beta_m \|_{1,2}.
\end{aligned}
\end{equation}
Using  that \( \bu^\beta_m = \bu^\beta_0 + \sum_{j=1}^m\delta\bu^\beta_j \) and adding \( \|\bu^\beta_m \|_2^2 \) to both sides of (\ref{p:equ37}), we apply Young's inequality and the equivalence of norms in \( V_N \) to deduce that
\begin{align*}
&{\|\bu^\beta_m \|_2^2} +  {\|\delta\bu^\beta_m \|_2^2} +\mathcal{E}(\bu^\beta_m, v^\beta_m)  +  \mathcal{H}(v^\beta_m)
\\
&\leq C(N)\Big[ \|\bu_{1,N}\|_{1,2}^2 + \| \bu_{0,N}\|_{1,2}^2 + \|  \bu_{0,N}\|_{1,p^\prime}^{p^\prime}+ \sup_{N\in \mathbb{N}}\|v^{N}_0\|_{1,2}^2 
 +\|l\|_{L^\infty(0, T; W^{-1,2}_D(\Omega))}^2 
  \\&\quad
  + \|l_t\|_{L^2(0, T; W^{-1,2}_D(\Omega))} 
 + h \sum_{j=1}^{m-1}\Big( \|\bu^\beta_j\|_2^2 + \|\delta\bu^\beta_j \|_2^2\Big) \Big]. 
\end{align*}
Applying the discrete Gronwall inequality and equivalence of norms again, we maximise over the indices \( 1\leq m \leq M\) to obtain the stated  result.
\end{proof}

Due to the regularity of the functions in \( V_N \),   Lemma \ref{p:lem1} gives a very strong bound on the solution sequence. Taking the limit in the time discretisation first allows us to   maintain these nice regularity properties as long as possible.  Without the compatibility condition (\ref{p:equ5}),  an \(M \)-independent bound is still possible as the above result shows. However,   later \( N \)-independent bounds are not possible without the assumption, hence why it is included in later results concerning \( N \)-independent bounds. 

The phase-field function experiences a nonlinearity in the elastodynamic equation, by which we mean the presence of the term \( b(v^\beta_m) \). Hence, to identify the resulting limit, we require  a strong convergence result for   \((v^\beta_m )_{m=1}^M \). To this end, we look for a bound on the discrete time derivatives \((\delta v^\beta_m)_{m=1}^M\). Due to the nonlinearity in the constitutive relation, we cannot argue as in the proof of Lemma 3.4 from \cite{MR2673410}. In particular, we cannot `transfer' the effects of the nonlinearity between \(\beps(\bu) \) and its derivative in time. 
Instead we  make use of  the embedding \( V_N \subset W^{k,2}_D(\Omega) \subset C^1(\overline{\Omega})\), a consequence of the Sobolev embedding theorem for \( k > \frac{d}{2}  + 1\), and using Corollary \ref{p:cor1}. 

\begin{lemma}\label{p:lem2}
Let the assumptions of Lemma \ref{p:lem1} hold. There exists a constant \( C = C(N) \) independent of \( M \) such that
\begin{align*}
\max_{1\leq m \leq M}\|\delta v^\beta_m\|_{1,2}\leq C.
\end{align*}
\end{lemma}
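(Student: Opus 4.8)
The plan is to control the discrete time derivative of the phase-field function by exploiting the variational inequality from Proposition \ref{p:prop2} together with Corollary \ref{p:cor1}, which gives us that the Euler--Lagrange operator evaluated against $\delta v^\beta_m$ vanishes. First I would write down the defining inequality \eqref{p:equ11} at index $m$ with the admissible test function $\tilde{\chi} = v^\beta_{m-1}$ (legitimate since $v^\beta_{m-1} \leq v^\beta_{m-1}$ trivially, and $v^\beta_{m-1} \in H^1_{D+1}(\Omega)$), and also the inequality at index $m-1$ with test function $\tilde{\chi} = v^\beta_m$ (legitimate because $v^\beta_m \leq v^\beta_{m-1}$ by the monotonicity in Proposition \ref{p:prop2}). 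Adding these two inequalities, the cross terms combine so that, after dividing by $h^2$, one obtains a coercive quadratic form in $\delta v^\beta_m$ on the left: precisely $\tfrac{1}{2\epsilon}\|\delta v^\beta_m\|_2^2 + 2\epsilon\|\nabla \delta v^\beta_m\|_2^2$ (up to sign bookkeeping on the $(v-1)$ term, which is handled exactly as in the telescoping), controlled on the right by the mismatch between the elastic-energy derivative terms at the two consecutive indices.

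The key remaining term is then $\int_\Omega \tfrac{1}{\alpha}\big[b'(v^\beta_m)\varphi^*(\beps(\alpha\bu^\beta_m)) - b'(v^\beta_{m-1})\varphi^*(\beps(\alpha\bu^\beta_{m-1}))\big]\,\delta v^\beta_m\,\mathrm{d}x$, which I would estimate by splitting it into a piece where $b'$ is frozen and a piece where $\varphi^*(\beps(\alpha\bu))$ is frozen. Here is where I invoke the Sobolev embedding $V_N \hookrightarrow C^1(\overline{\Omega})$ for $k > \tfrac{d}{2}+1$: this gives $\|\beps(\bu^\beta_m)\|_\infty \leq C(N)\|\bu^\beta_m\|_{V_N}$ and similarly $\|\delta\bu^\beta_m\|_\infty \leq C(N)\|\delta\bu^\beta_m\|_{V_N}$, both of which are bounded uniformly in $M$ by Lemma \ref{p:lem1}. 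Since $\varphi^*$ is locally Lipschitz (on the relevant bounded range of arguments) and $b'$ is globally Lipschitz with $b'(v) = 2v$, one gets $|b'(v^\beta_m)\varphi^*(\beps(\alpha\bu^\beta_m)) - b'(v^\beta_{m-1})\varphi^*(\beps(\alpha\bu^\beta_{m-1}))| \leq C(N)\big(|v^\beta_m - v^\beta_{m-1}| + |\beps(\bu^\beta_m - \bu^\beta_{m-1})|\big) = C(N)h\big(|\delta v^\beta_m| + |\beps(\delta\bu^\beta_m)|\big)$ pointwise. Multiplying by $\tfrac{1}{h^2}|\delta v^\beta_m|$, integrating, and using $\|\beps(\delta\bu^\beta_m)\|_\infty \leq C(N)$ from Lemma \ref{p:lem1}, the right-hand side is bounded by $C(N)\|\delta v^\beta_m\|_2 + C(N)\|\delta v^\beta_m\|_2^2 / h$ — wait, I need to be careful here: the $1/h$ must not survive.

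Actually the cleaner route, and the one I would carry out, is not to divide through by $h^2$ prematurely. From \eqref{p:equ11} at $m$ tested against $v^\beta_{m-1}$ and at $m-1$ tested against $v^\beta_m$, adding gives directly $\tfrac{1}{2\epsilon}\|v^\beta_m - v^\beta_{m-1}\|_2^2 + 2\epsilon\|\nabla(v^\beta_m - v^\beta_{m-1})\|_2^2 \leq \int_\Omega \tfrac{1}{\alpha}\big[b'(v^\beta_m)\varphi^*(\beps(\alpha\bu^\beta_m)) - b'(v^\beta_{m-1})\varphi^*(\beps(\alpha\bu^\beta_{m-1}))\big](v^\beta_m - v^\beta_{m-1})\,\mathrm{d}x$, and by the pointwise Lipschitz bound above the right-hand side is $\leq C(N)\,h\,\|v^\beta_m - v^\beta_{m-1}\|_1 + C(N)\,\|v^\beta_m - v^\beta_{m-1}\|_2\cdot\|\beps(\bu^\beta_m - \bu^\beta_{m-1})\|_2$. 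Using $\|v^\beta_m - v^\beta_{m-1}\|_2 \leq C\|v^\beta_m - v^\beta_{m-1}\|_{1,2}$ and $\|\beps(\bu^\beta_m-\bu^\beta_{m-1})\|_2 \leq C(N)\|\bu^\beta_m - \bu^\beta_{m-1}\|_{V_N} = C(N)h\|\delta\bu^\beta_m\|_{V_N} \leq C(N)h$, Young's inequality absorbs the $H^1$-norm of the difference into the coercive left-hand side, leaving $\|v^\beta_m - v^\beta_{m-1}\|_{1,2}^2 \leq C(N)h^2$, i.e. $\|\delta v^\beta_m\|_{1,2} \leq C(N)$ uniformly in $m$ and $M$.

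The main obstacle is the treatment of the elastic-energy-derivative mismatch: one must verify that $\varphi^*$ is Lipschitz on the range of $\beps(\alpha\bu^\beta_m)$ (which holds because that range is bounded, uniformly in $M$, via Lemma \ref{p:lem1} and the $C^1$-embedding of $V_N$, and $\varphi^*$ is $C^1$ hence locally Lipschitz), and — more delicately — that the constant in the Lipschitz bound and in $\|\bu^\beta_m\|_{V_N}$ may depend on $N$ but not on $M$. The whole point of ordering the limits ($M\to\infty$ first) and of choosing the high-regularity basis with $k > \tfrac{d}{2}+1$ is precisely to make this $C^1$-control available; without it, one cannot bound the product $b'(v)\varphi^*(\beps(\alpha\bu))$ in $L^2$ against the $H^1$-difference of the phase field, which is the step where the argument of Lemma 3.4 in \cite{MR2673410} would otherwise be used but fails here because the nonlinearity $F$ prevents transferring the nonlinear dependence between $\beps(\bu)$ and $\beps(\bu_t)$.
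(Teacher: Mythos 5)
Your overall strategy is the paper's own: pair the variational inequality \eqref{p:equ11} at index \( m \) (so that the Euler--Lagrange functional acts on \( v^\beta_{m-1}-v^\beta_m \)) with the one at index \( m-1 \) acting on \( v^\beta_m - v^\beta_{m-1} \), extract coercivity from the difference of the \( \mathcal{H}^\prime \)-terms, and control the elastic mismatch via the embedding \( V_N \subset C^1(\overline{\Omega}) \) together with the \( M \)-independent bounds of Lemma \ref{p:lem1}. Two bookkeeping points: admissibility of \( \tilde{\chi} = v^\beta_m \) at index \( m-1 \) requires \( v^\beta_m \leq v^\beta_{m-2} \) (not merely \( v^\beta_m \leq v^\beta_{m-1} \)), which does follow from monotonicity; and the case \( m = 1 \) needs the inequality at index \( 0 \), which is supplied by the construction \eqref{p:equ10}.

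The genuine problem is your treatment of the piece of the mismatch in which \( \varphi^* \) is frozen. Writing the difference as \( \big(b^\prime(v^\beta_m)-b^\prime(v^\beta_{m-1})\big)\varphi^*(\beps(\alpha\bu^\beta_m)) + b^\prime(v^\beta_{m-1})\big(\varphi^*(\beps(\alpha\bu^\beta_m))-\varphi^*(\beps(\alpha\bu^\beta_{m-1}))\big) \) and then passing to absolute values, the first piece paired with \( v^\beta_m - v^\beta_{m-1} \) integrates to \( \tfrac{2}{\alpha}\int_\Omega (v^\beta_m - v^\beta_{m-1})^2\,\varphi^*(\beps(\alpha\bu^\beta_m))\,\mathrm{d}x \leq C(N)\|v^\beta_m-v^\beta_{m-1}\|_2^2 \). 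There is no spare factor of \( h \) here --- your claimed bound \( C(N)\,h\,\|v^\beta_m - v^\beta_{m-1}\|_1 \) is not justified, because both factors \( |b^\prime(v^\beta_m)-b^\prime(v^\beta_{m-1})| = 2|v^\beta_m-v^\beta_{m-1}| \) and the test function contribute the \emph{same} increment \( v^\beta_m-v^\beta_{m-1} \) --- and \( C(N)\|v^\beta_m-v^\beta_{m-1}\|_2^2 \) cannot be absorbed into \( \tfrac{1}{2\epsilon}\|v^\beta_m-v^\beta_{m-1}\|_2^2 \), since nothing forces \( C(N) \leq \tfrac{1}{2\epsilon} \). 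The repair is to keep the sign instead of the modulus: since \( b^\prime(v) = 2v \) and \( \varphi^* \geq 0 \), this piece equals \( -\tfrac{2}{\alpha}\int_\Omega (v^\beta_m - v^\beta_{m-1})^2\varphi^*(\beps(\alpha\bu^\beta_m))\,\mathrm{d}x \leq 0 \) on the right-hand side (equivalently, it moves to the left with a favourable sign), which is exactly what the paper's proof does. With that observation only the \( \varphi^* \)-difference piece remains, and your fundamental-theorem / \( L^\infty \) estimate together with \( \|\beps(\bu^\beta_m - \bu^\beta_{m-1})\|_2 \leq C(N)h \) and Young's inequality closes the argument as you describe.
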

\begin{proof}
Using Proposition \ref{p:prop2} and Corollary \ref{p:cor1}, for every \( 1\leq m \leq M\), we have
\begin{equation}\label{p:equ38}
0  = \big[ \partial_v\mathcal{E}(\bu^\beta_m , v^\beta_m ) + \mathcal{H}^\prime(v^\beta_m) \big](\delta v^\beta_m),
\end{equation}
and 
\begin{equation}\label{p:equ39}
0 \leq \big[ \partial_v\mathcal{E}(\bu^\beta_{m-1} , v^\beta_{m-1} ) + \mathcal{H}^\prime(v^\beta_{m-1}) \big](\delta v^\beta_m).
\end{equation}
Subtracting (\ref{p:equ38}) from (\ref{p:equ39}), we get
\begin{align*}
\frac{1}{2\epsilon}\|\delta v^\beta_m\|_2^2 + 2\epsilon\|\nabla \delta v^\beta_m \|_2^2 \leq \frac{1}{h}\int_\Omega \Big[ \frac{b^\prime(v^\beta_{m-1})}{\alpha}\varphi^*(\beps(\alpha\bu^\beta_{m-1})) - \frac{b^\prime(v^\beta_m)}{\alpha}\varphi^*(\beps(\alpha\bu^\beta_m))\Big]\cdot \delta v^\beta_m\,\mathrm{d}x.
\end{align*}
We add and subtract  \( \alpha^{-1}b(v^{\beta}_{m-1}) \varphi^*(\beps(\alpha\bu^\beta_m)) \) to the right-hand side of the above and use that \( b^\prime(v) = 2v\)  to deduce that
\begin{align*}
&\frac{1}{2\epsilon}\|\delta v^\beta_m\|_2^2 + 2\epsilon\|\nabla \delta v^\beta_m \|_2^2  + \frac{2}{\alpha} \int_\Omega |\delta v^\beta_m|^2 \varphi^*(\beps(\alpha\bu^\beta_m)) \,\mathrm{d}x
\\&\leq \int_\Omega\int_0^1 b^\prime(v^\beta_{m-1}) \delta v^\beta_m F^{-1}(\alpha\beps(s\bu^\beta_{m-1} + (1-s)\bu^\beta_m)) :\beps(\bu^\beta_{m-1} - \bu^\beta_m) \,\mathrm{d}s\,\mathrm{d}x
\\
&\leq  2\|\delta v^\beta_m\|_2\big[ \|\beps(\bu^\beta_{m-1}) \|_2 + \|\beps(\bu^\beta_m) \|_2 \big]\cdot \big[ \|\beps(\bu^\beta_{m-1}) \|_\infty^{p^\prime - 1} + \|\beps(\bu^\beta_{m}) \|_\infty^{p^\prime - 1} \big]
\\
&\leq \frac{\|\delta v^\beta_m\|_2^2}{4\epsilon} + C(N) \Big[\max_{0\leq j \leq M } \|\bu^\beta_j \|_{V_N}^{2 p^\prime}  + \max_{1\leq j \leq M}\|\delta\bu^\beta_j \|_{V_N}^{2p^\prime} \Big],
\end{align*}
using H\"{o}lder's inequality, followed by Young's inequality. Applying Lemma \ref{p:lem1}, the  result follows. 
\end{proof}

In the elastodynamic equation for weak energy solutions of (\ref{p:equ1})--(\ref{p:equ3}), we have the interial term \( \bu_{tt}\). 
Hence, we should look for a bound on the sequence \((\delta^2\bu^\beta_m)_{m=1}^M \). The finite-dimensional setting of \( V_N \) simplifies this matter significantly. Indeed, we need only use the weak elastodynamic equation (\ref{p:equ47}), the equivalence of norms in finite-dimensional spaces and the previously determined bounds. 

\begin{lemma}\label{p:lem3}
Let the assumptions of Lemma \ref{p:lem1} hold. There exists a constant \( C = C(N)\) independent of \( M \) such that 
\begin{align*}
\max_{1\leq m \leq M} \|\delta^2\bu^\beta_m \|_{V_N} \leq C.
\end{align*}
\end{lemma}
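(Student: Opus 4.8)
The plan is to test the time-discrete elastodynamic equation \eqref{p:equ47} against a suitable element of $V_N$ and then transfer the norm onto $\delta^2 \bu^\beta_m$ using the equivalence of norms on the finite-dimensional space $V_N$. Concretely, fix $1 \leq m \leq M$. Since $\delta^2 \bu^\beta_m \in V_N$, I would take $\bw = \delta^2 \bu^\beta_m$ in \eqref{p:equ47}, which gives
\begin{align*}
\|\delta^2 \bu^\beta_m\|_2^2 = \langle l^\beta_m, \delta^2 \bu^\beta_m \rangle - \int_\Omega b(v^\beta_{m-1}) F^{-1}(\beps(\delta\bu^\beta_m + \alpha \bu^\beta_m)) : \beps(\delta^2 \bu^\beta_m) \,\mathrm{d}x.
\end{align*}
The first term on the right is bounded by $\|l^\beta_m\|_{-1,2} \|\delta^2 \bu^\beta_m\|_{1,2} \leq \|l\|_{L^\infty(0,T;W^{-1,2}_D(\Omega))} \|\delta^2 \bu^\beta_m\|_{1,2}$. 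For the nonlinear term, I would use $0 \leq b(v^\beta_{m-1}) = (v^\beta_{m-1})^2 + \eta \leq 1 + \eta$ (since $v^\beta_{m-1} \in [0,1]$ a.e.\ by Proposition \ref{p:prop2}), the structural bound $|F^{-1}(\boldsymbol{\xi})| \leq |\boldsymbol{\xi}|^{p'-1}$ coming from $F(\bbT) = |\bbT|^{p-2}\bbT$, and then Hölder's inequality to bound the integral by
\begin{align*}
C \big( \|\beps(\delta \bu^\beta_m)\|_\infty + \alpha \|\beps(\bu^\beta_m)\|_\infty \big)^{p'-1} \|\beps(\delta^2 \bu^\beta_m)\|_1 \leq C(N) \big( \|\delta\bu^\beta_m\|_{V_N} + \|\bu^\beta_m\|_{V_N}\big)^{p'-1} \|\delta^2 \bu^\beta_m\|_{1,2},
\end{align*}
where the last step uses the embedding $V_N \subset W^{k,2}_D(\Omega)^d \subset C^1(\overline{\Omega})^d$ and the equivalence of all norms on $V_N$.

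Combining these two estimates yields $\|\delta^2 \bu^\beta_m\|_2^2 \leq C(N) \big(1 + \|\delta\bu^\beta_m\|_{V_N} + \|\bu^\beta_m\|_{V_N}\big)^{p'-1} \|\delta^2 \bu^\beta_m\|_{1,2}$. Now I invoke again the equivalence of norms on $V_N$: there is a constant $c(N) > 0$ with $\|\delta^2 \bu^\beta_m\|_{1,2} \leq c(N) \|\delta^2 \bu^\beta_m\|_2$, so the left-hand side controls the $\|\cdot\|_{1,2}$-factor on the right after dividing through by $\|\delta^2\bu^\beta_m\|_{1,2}$ (handling trivially the case where this vanishes). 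This gives $\|\delta^2 \bu^\beta_m\|_{V_N} \leq C(N) \big(1 + \|\delta\bu^\beta_m\|_{V_N} + \|\bu^\beta_m\|_{V_N}\big)^{p'-1}$. Finally, applying Lemma \ref{p:lem1} to bound $\max_m \|\bu^\beta_m\|_{V_N}$ and $\max_m \|\delta \bu^\beta_m\|_{V_N}$ by a constant $C(N)$ independent of $M$, and then maximising over $1 \leq m \leq M$, delivers the claim.

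The only mild subtlety — and what I would flag as the "main obstacle," though it is really a bookkeeping point — is making sure every constant of norm-equivalence on $V_N$ is tracked as depending on $N$ but not on $M$; this is automatic since $V_N$ is a fixed finite-dimensional space once $N$ is chosen, so the equivalence constants are fixed numbers independent of the time step $h = T/M$. There is no genuine analytic difficulty here because we are still in the finite-dimensional Galerkin setting where all the nonlinearity can be absorbed crudely; the real work of extracting $M$-independent-and-$N$-independent bounds happens later. One should also note that $\delta^2\bu^\beta_m$ is well-defined for all $1 \leq m \leq M$ thanks to the initialisation $\bu^\beta_{-1} = \bu_{0,N} - h\bu_{1,N}$ in \eqref{p:equ9}, and for $m = 1$ the quantity $\delta\bu^\beta_0 = \bu_{1,N}$ is likewise controlled in $V_N$.
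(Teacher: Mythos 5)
Your proposal is correct and is essentially the paper's own argument written out in full: the paper reads the bound on \( \delta^2\bu^\beta_m \) directly off the elastodynamic equation (estimating its \( V_N^* \) norm via the uniform bound on \( (v^\beta_m)_m \) and Lemma \ref{p:lem1}) and then invokes equivalence of norms on \( V_N \), which is exactly what your choice of test function \( \bw = \delta^2\bu^\beta_m \) accomplishes. The only cosmetic point is that the duality pairing with \( l^\beta_m \) should be taken in \( W^{-1,p}_D \times W^{1,p^\prime}_D \) rather than \( W^{-1,2}_D \times W^{1,2}_D \), but this is immaterial on the finite-dimensional space \( V_N \).
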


\begin{proof}
By the uniform bound on \((v^\beta_m)_{m=0}^M \), the elastodynamic equation (\ref{p:equ7}) and Lemma \ref{p:lem1}, it  follows that 
\begin{align*}
\max_{1\leq m \leq M}\|\delta^2\bu^\beta_m\|_{V_N^*}\leq C(N ),
\end{align*}
where \( V_N^* \) is the dual space.  By equivalence of norms, the  result follows. 
\end{proof}

\subsection{The limit $ M \rightarrow\infty$}
Now we define   interpolants of the  solution sequence for the time discrete problem. We  derive bounds  on these functions  using the results of Lemmas \ref{p:lem1}, \ref{p:lem2} and \ref{p:lem3}, and then take the limit as \( M\rightarrow\infty \). 
The limiting functions define a solution of a time continuous Galerkin approximation  of (\ref{p:equ1})--(\ref{p:equ3}) with respect to \( V_N \). 
We define the piecewise affine, backwards and forwards interpolants, respectively, of the sequence \((\bu^{\beta}_m)_{m=0}^M\) by
\begin{align*}
\overline{\bu}^\beta (t) &= \frac{t-t^\beta_{m-1}}{h}\bu^\beta_m + \frac{t^\beta_m - t}{h}\bu^\beta_{m-1} &\quad& \text{ for }t\in [t^\beta_{m-1}, t^\beta_m],\, 1\leq m \leq M,
\\
\bu^{\beta,+}(t) & = \bu^\beta_m &\quad& \text{ for }t\in (t^\beta_{m-1}, t^\beta_m],\, 1\leq m \leq M,
\\
\bu^{\beta,-}(t) &= \bu^\beta_{m-1}&\quad& \text{ for }t\in [t^\beta_{m-1}, t^\beta_m),\, 1\leq m \leq M,
\end{align*}
where we extend \( \bu^{\beta,+}\) and \( \bu^{\beta,-}\) continuously to \( t = 0\) and \( t = T \), respectively. Similarly, we denote by \(\overline{\bu}^{\beta,\prime}\), \( \bu^{\beta, +,\prime}\), \( \bu^{\beta,-, \prime}\) the interpolants of \((\delta\bu^\beta_m)_{m=0}^M\), \( \bu^{\beta,+,\prime\prime}\) the backwards interpolant of \((\delta^2\bu^\beta_m)_{m=1}^M \), \( \overline{v}^\beta\), \( v^{\beta,+}\), \( v^{\beta,-}\) the interpolants of \((v^\beta_m)_{m=1}^M \), and \(v^{\beta,+,\prime}\) the backwards interpolant of \((\delta v^\beta_m)_{m=1}^M\). We  can identify
\begin{align*}
\overline{\bu}^\beta_t = \bu^{\beta,+,\prime},\quad \overline{\bu}^{\beta,\prime}_t = \bu^{\beta,+,\prime\prime}, \quad \overline{v}^\beta_t= v^{\beta,+,\prime}.
\end{align*}
By Lemmas \ref{p:lem1}, \ref{p:lem2} and \ref{p:lem3}, we deduce that there exists a constant \( C = C(N) \), independent of \( M \), such that
\begin{align*}
&\|\overline{\bu}^\beta\|_{W^{1,\infty}(V_N)} + \|\bu^{\beta,\pm}\|_{L^\infty(V_N) }
+ \|\overline{\bu}^{\beta,\prime}\|_{W^{1,\infty}(V_N)} + \|\bu^{\beta,\pm,\prime}\|_{L^\infty(V_N)} + \|\bu^{\beta,+,\prime\prime}\|_{L^\infty(V_N)} 
\\&\quad
+ \|\overline{v}^\beta\|_{W^{1,\infty}(H^1)} + \|v^{\beta,\pm}\|_{W^{1,\infty}(H^1)} + \|v^{\beta,+,\prime}\|_{L^\infty(H^1)}
\\&
\leq C.
\end{align*}
With this bound and standard weak compactness results for Bochner spaces, we obtain the  convergence result in Lemma \ref{p:lem4}. Various strong convergence results follow from application of the Aubin--Lions lemma and the compact embedding \( V_N \subset C^1(\overline{\Omega})^d\). 
We include only the ones that are used in identifying the limiting couple as a solution of the problem of interest. 
In particular, these strong convergence results are used for identification of the nonlinear terms in the limit. We collect the convergences in Lemma \ref{p:lem4}. 
We do not include the details of the proof as it follows standard procedures. 
Then we prove various properties of the limiting couple, denoted \(( \bu^N, v^N) \). In Proposition \ref{p:prop4}, we show that a weak elastodynamic equation holds with respect to the Galerkin approximation from \( V_N \), and in Proposition \ref{p:prop5} that an appropriate minimisation problem is satisfied at every point in \( [0, T]\). Finally, in Proposition \ref{p:prop6}, we prove that an energy-dissipation equality holds.  

\begin{lemma}\label{p:lem4}
Let the assumptions of Lemma \ref{p:lem1} hold and denote \( \beta = (N,M) \) where \( N \) is fixed.
There exists a subsequence in \( M\), independent of \( N \), not relabelled, and a limiting couple \((\bu^N, v^N) \) such that  \( \bu^N\in W^{2,\infty}(0, T; V_N) \) and  \( v^N\in W^{1,\infty}(0, T; H^1(\Omega)) \) with \( v^N(t) \in H^1_{D+1}(\Omega) \) for every \( t\in [0, T]\), and the following convergence results hold as \( M \rightarrow\infty \):
\begin{itemize}
\item \( \overline{\bu}^{(N,M)}, \, \overline{\bu}^{(N,M), \prime} \overset{\ast}{\rightharpoonup}\bu^N, \, \bu^N_t\) weakly-* in \( W^{1,\infty}(0, T; V_N)\), respectively;
\item  \( \overline{\bu}^{(N,M)}, \, \overline{\bu}^{(N,M), \prime} \rightarrow\bu^N, \, \bu^N_t\) strongly in \( C([0, T]; C^1(\overline{\Omega})^d) \), respectively; 
\item \( \bu^{(N,M),\pm}, \, \bu^{(N,M),\pm,\prime}\overset{\ast}{\rightharpoonup}\bu^N, \, \bu^N_t\) weakly-* in \( L^{\infty}(0, T; V_N)\), respectively;
\item \( \overline{v}^{(N,M)} \overset{\ast}{\rightharpoonup} v^N\)   weakly-* in \( W^{1,\infty}(0, T; H^1(\Omega))\);
\item \( v^{(N,M),\pm}\overset{\ast}{\rightharpoonup} v^N\) weakly-* in \( L^{\infty}(0, T; H^1(\Omega))\).
\end{itemize}
Furthermore, \( v^N\in [0,1]\)  and \( v^N_t\leq 0 \) a.e. in \( Q\).
\end{lemma}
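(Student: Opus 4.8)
The plan is to read off all of the stated convergences from the $M$-uniform bound displayed immediately above, which collects the estimates of Lemmas \ref{p:lem1}--\ref{p:lem3} with a constant $C=C(N)$. First I would invoke sequential weak-* compactness: since $V_N$ is finite dimensional, $W^{1,\infty}(0,T;V_N)$ and $L^\infty(0,T;H^1(\Omega))$ are dual spaces of separable Banach spaces, so the Banach--Alaoglu theorem extracts, for fixed $N$, a single subsequence in $M$ along which every interpolant appearing in the statement converges weakly-* to some limit; a diagonal argument over $N$ then makes this subsequence independent of $N$. A priori the various interpolants could converge to different limits, so the second step is to show they do not. From $h=T/M$ one has
\[
\|\overline{\bu}^{(N,M)} - \bu^{(N,M),\pm}\|_{L^\infty(0,T;V_N)} \le h\max_{1\le m\le M}\|\delta\bu^\beta_m\|_{V_N} \le C(N)h \xrightarrow{M\to\infty} 0,
\]
and analogously for the first and second difference quotients of $(\bu^\beta_m)_m$ and for $(v^\beta_m)_m$, so all interpolants built from $(\bu^\beta_m)_m$ share one limit $\bu^N$ and those built from $(v^\beta_m)_m$ share one limit $v^N$. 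Exploiting the identities $\overline{\bu}^\beta_t=\bu^{\beta,+,\prime}$, $\overline{\bu}^{\beta,\prime}_t=\bu^{\beta,+,\prime\prime}$, $\overline{v}^\beta_t=v^{\beta,+,\prime}$ together with the closedness of distributional differentiation under weak-* convergence, I would identify the limits of the derivative interpolants as $\bu^N_t$, $\bu^N_{tt}$, $v^N_t$; in particular $\bu^N\in W^{2,\infty}(0,T;V_N)$ and $v^N\in W^{1,\infty}(0,T;H^1(\Omega))$.

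For the strong convergences I would note that the $W^{1,\infty}(0,T;V_N)$ bounds on $\overline{\bu}^{(N,M)}$ and $\overline{\bu}^{(N,M),\prime}$ make these families bounded and uniformly Lipschitz as maps into the finite-dimensional target $V_N$, so the Arzel\`a--Ascoli theorem (equivalently, the Aubin--Lions lemma) gives strong convergence in $C([0,T];V_N)$; since all norms on $V_N$ are equivalent and $V_N\subset W^{k,2}_D(\Omega)^d$ embeds continuously into $C^1(\overline{\Omega})^d$ for $k>\frac{d}{2}+1$, this upgrades to strong convergence in $C([0,T];C^1(\overline{\Omega})^d)$, with limits $\bu^N$ and $\bu^N_t$ by uniqueness. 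The pointwise properties then pass to the limit via Proposition \ref{p:prop2}: each $v^\beta_m\in[0,1]$ a.e. and $\delta v^\beta_m\le 0$ a.e., so every $v^{(N,M),\pm}$ lies in the convex, strongly (hence weakly-*) closed set $\{w:0\le w\le 1\text{ a.e. in }Q\}$ and every $v^{(N,M),+,\prime}$ lies in the closed cone $\{w\le 0\}$, giving $v^N\in[0,1]$ and $v^N_t\le 0$ a.e. in $Q$. The Dirichlet condition survives because $v^\beta_m-1\in H^1_D(\Omega)$, a closed subspace preserved under weak limits, and $v^N\in W^{1,\infty}(0,T;H^1(\Omega))\hookrightarrow C([0,T];H^1(\Omega))$, so $v^N(t)\in H^1_{D+1}(\Omega)$ for every $t\in[0,T]$.

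I do not expect a genuine obstacle in this lemma: because the spatial discretisation keeps us in the finite-dimensional space $V_N$, all the required compactness is elementary and no true PDE compactness is needed. The only points demanding care are the coincidence of the affine, forward and backward interpolants---which is what legitimises the identification of the time derivatives---and the stability of the order constraint $0\le v\le 1$, the monotonicity $v_t\le 0$ and the Dirichlet constraint under weak-* limits; both are handled by the $O(h)$ estimate above and by the (weak-*) closedness of the relevant convex sets. The $N$-independence of the subsequence is a cosmetic point secured by the diagonal extraction.
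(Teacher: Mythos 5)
Your argument is correct and is precisely the standard compactness routine that the paper invokes without detail (weak-* compactness in the Bochner spaces, the $O(h)$ coincidence of the affine and piecewise-constant interpolants, Aubin--Lions/Arzel\`a--Ascoli together with the embedding $V_N\subset C^1(\overline{\Omega})^d$ for the strong convergences, and weak-* closedness of the convex constraint sets for $0\le v^N\le 1$, $v^N_t\le 0$ and the Dirichlet condition). The diagonal extraction over $N$ is indeed the right way to secure the $N$-independence of the subsequence, and no step in your outline fails.
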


\begin{proposition}\label{p:prop4}
Let the assumptions of Lemma \ref{p:lem1} hold and let \((\bu^N, v^N) \) be the limiting couple constructed in Lemma \ref{p:lem4}. For a.e. \( t\in (0, T) \) and every \( \bw \in V_N \), we have
\begin{equation}\label{p:equ18}
\int_\Omega\bu^N_{tt}(t) \cdot \bw + b(v^N(t)) F^{-1}(\beps(\bu^N_t + \alpha\bu^N)) :\beps(\bw) \,\mathrm{d}x = \langle l(t), \bw\rangle.
\end{equation}
The initial conditions hold in the sense that
\begin{align*}
\lim_{t\rightarrow 0+} \Big[ \|\bu^N(t) - \bu_{0,N} \|_{V_N} + \|\bu^N_t(t) - \bu_{1,N} \|_{V_N} + \|v^N(t) - v^{N}_0\|_{1,2}\Big] = 0.
\end{align*}
\end{proposition}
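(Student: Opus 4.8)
The plan is to pass to the limit $M\to\infty$ in the time-discrete equation (\ref{p:equ47}), using the convergences collected in Lemma \ref{p:lem4}. First I would rewrite (\ref{p:equ47}) in terms of the interpolants: for $t\in(t^\beta_{m-1},t^\beta_m]$ one has $\delta^2\bu^\beta_m = \bu^{\beta,+,\prime\prime}(t)$, $v^\beta_{m-1} = v^{\beta,-}(t)$, $\delta\bu^\beta_m+\alpha\bu^\beta_m = \bu^{\beta,+,\prime}(t)+\alpha\bu^{\beta,+}(t)$ and $l^\beta_m = l^{\beta,+}(t)$, where $l^{\beta,+}$ denotes the forwards interpolant of $(l(t^\beta_m))_{m=0}^M$; thus (\ref{p:equ47}) says precisely that, for a.e. $t\in(0,T)$ and every $\bw\in V_N$,
\[
\int_\Omega \bu^{\beta,+,\prime\prime}(t)\cdot\bw + b(v^{\beta,-}(t))\, F^{-1}\big(\beps(\bu^{\beta,+,\prime}(t)+\alpha\bu^{\beta,+}(t))\big):\beps(\bw)\,\dd x = \langle l^{\beta,+}(t),\bw\rangle.
\]
Since $\bu^{\beta,+,\prime\prime} = \overline{\bu}^{\beta,\prime}_t$ and $\overline{\bu}^{\beta,\prime}\overset{\ast}{\rightharpoonup}\bu^N_t$ weakly-* in $W^{1,\infty}(0,T;V_N)$, the inertial term satisfies $\bu^{\beta,+,\prime\prime}\overset{\ast}{\rightharpoonup}\bu^N_{tt}$ weakly-* in $L^\infty(0,T;V_N)$, so $t\mapsto\int_\Omega\bu^{\beta,+,\prime\prime}(t)\cdot\bw\,\dd x\overset{\ast}{\rightharpoonup}t\mapsto\int_\Omega\bu^N_{tt}(t)\cdot\bw\,\dd x$ weakly-* in $L^\infty(0,T)$.

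The crux is identifying the nonlinear term in the limit, and here the strong compactness coming from the Galerkin space does the work. From Lemma \ref{p:lem4}, together with the fact that the one-sided interpolants differ from the affine ones by $O(h)$ in the $V_N$-norm (by Lemmas \ref{p:lem1}, \ref{p:lem3} and equivalence of norms), one has $\bu^{\beta,+},\bu^{\beta,+,\prime}\to\bu^N,\bu^N_t$ strongly in $C([0,T];C^1(\overline{\Omega})^d)$; hence $\beps(\bu^{\beta,+,\prime}+\alpha\bu^{\beta,+})\to\beps(\bu^N_t+\alpha\bu^N)$ uniformly on $\overline{Q}$ and, $F^{-1}$ being continuous, $F^{-1}(\beps(\bu^{\beta,+,\prime}+\alpha\bu^{\beta,+}))\to F^{-1}(\beps(\bu^N_t+\alpha\bu^N))$ uniformly on $\overline{Q}$. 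For the coefficient, applying the Aubin--Lions lemma to the $W^{1,\infty}(0,T;H^1(\Omega))$-bound on $(\overline{v}^\beta)$ gives $\overline{v}^\beta\to v^N$ strongly in $C([0,T];L^2(\Omega))$, hence $v^{\beta,-}\to v^N$ in $L^\infty(0,T;L^2(\Omega))$ and, along a further subsequence, a.e. in $Q$; since $0\le v^{\beta,-}\le1$ and $b$ is continuous, dominated convergence gives $b(v^{\beta,-})\to b(v^N)$ strongly in $L^q(Q)$ for every $q<\infty$. Combining, $b(v^{\beta,-})F^{-1}(\beps(\bu^{\beta,+,\prime}+\alpha\bu^{\beta,+}))\to b(v^N)F^{-1}(\beps(\bu^N_t+\alpha\bu^N))$ strongly in $L^2(Q)^{d\times d}$, and since $l\in C^1([0,T];W^{-1,p}_D(\Omega)^d)$ is uniformly continuous we also have $l^{\beta,+}\to l$ uniformly on $[0,T]$. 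Thus the second term on the left and the right-hand side of the displayed identity converge strongly in $L^2(0,T)$ as functions of $t$, while the first term converges weakly-* in $L^\infty(0,T)$; matching the weak-$L^2(0,T)$ limits of the two sides yields (\ref{p:equ18}) for a.e. $t$ and each fixed $\bw\in V_N$. Letting $\bw$ run over the basis $\{\phib_1,\dots,\phib_N\}$ of $V_N$ and intersecting the finitely many exceptional null sets, (\ref{p:equ18}) follows for a.e. $t\in(0,T)$ and every $\bw\in V_N$ by linearity.

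For the initial data, $\overline{\bu}^\beta\to\bu^N$ and $\overline{\bu}^{\beta,\prime}\to\bu^N_t$ strongly in $C([0,T];V_N)$ (equivalence of norms on $V_N$), and from (\ref{p:equ9}) one has $\overline{\bu}^\beta(0)=\bu^\beta_0=\bu_{0,N}$ and $\overline{\bu}^{\beta,\prime}(0)=\delta\bu^\beta_0 = h^{-1}(\bu^\beta_0-\bu^\beta_{-1})=\bu_{1,N}$ for every $\beta$, so $\bu^N(0)=\bu_{0,N}$ and $\bu^N_t(0)=\bu_{1,N}$; since $\bu^N\in W^{2,\infty}(0,T;V_N)\hookrightarrow C^1([0,T];V_N)$, the stated limits as $t\to0+$ follow. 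Likewise $\overline{v}^\beta(0)=v^\beta_0 = v^{N}_0$ and $\overline{v}^\beta\to v^N$ strongly in $C([0,T];L^2(\Omega))$ give $v^N(0)=v^{N}_0$, and $v^N\in W^{1,\infty}(0,T;H^1(\Omega))\hookrightarrow C([0,T];H^1(\Omega))$ gives $\|v^N(t)-v^{N}_0\|_{1,2}\to0$ as $t\to0+$.

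I expect the main obstacle to be not the passage to the limit itself but keeping track of which mode of convergence each term enjoys: the inertial term is only weakly-* convergent, whereas the nonlinear term must be identified through strong convergence, and the saving feature at this level is the compact embedding $V_N\hookrightarrow C^1(\overline{\Omega})^d$, which makes the strains converge uniformly so that mere continuity of $F^{-1}$ suffices — none of the monotone-operator or Minty-type arguments required in the $N\to\infty$ limit (and, a fortiori, in the strain-limiting $L^1$ setting) enter here. A minor but genuine point is that (\ref{p:equ18}) must hold on a single null set valid simultaneously for all $\bw\in V_N$, which is why the argument passes through a finite spanning set of $V_N$.
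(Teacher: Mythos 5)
Your proposal is correct and follows essentially the same route as the paper: rewrite the time-discrete equation (\ref{p:equ47}) in terms of the interpolants, pass to the limit using the strong convergence of the displacement interpolants in \( C([0,T];C^1(\overline{\Omega})^d) \) (so that continuity of \( F^{-1} \) suffices for the nonlinear term) together with weak-* convergence of the inertial term, and then recover the pointwise-in-time identity; the paper does this by testing against \( \psi\in C([0,T]) \) and invoking Lebesgue's differentiation theorem, which is the same device as your matching of weak \( L^2(0,T) \) limits followed by intersecting null sets over a finite spanning set of \( V_N \). Your treatment of the initial conditions via \( C^1([0,T];V_N) \) regularity and the explicit values of the interpolants at \( t=0 \) is likewise consistent with the paper's appeal to standard regularity results for Bochner functions.
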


\begin{proof}
For (\ref{p:equ18}),  rewriting (\ref{p:equ7}) in terms of the interpolants, for every \( M \in\mathbb{N}\) and a.e. \( t\in (0, T) \), we first notice that we have
\begin{align*}
\int_\Omega \overline{\bu}^{\beta,+,\prime}_t(t) \cdot \bw + b(v^{\beta,-}(t))F^{-1}(\beps(\bu^{\beta,+,\prime}  + \alpha\bu^{\beta,+})(t)):\beps(\bw) \,\mathrm{d}x = \langle l^{\beta,+}(t), \bw\rangle,
\end{align*}
for every \( \bw \in V_N \). By the  convergence results in Lemma \ref{p:lem4}, we  deduce that, for every  \( t\in (0, T) \),
\begin{equation}\label{p:equ40}
\begin{aligned}
&\lim_{M\rightarrow\infty}\Big\{ \int_\Omega b(v^{\beta,-}(t))F^{-1}(\beps(\bu^{\beta,+,\prime}  + \alpha\bu^{\beta,+})(t))):\beps(\bw) \,\mathrm{d}x   - \langle l^{\beta,+}(t), \bw\rangle\Big\}
\\&= \int_\Omega b(v^N(t)) F^{-1}(\beps(\bu^N_t + \alpha\bu^N)(t)):\beps(\bw)\,\mathrm{d}x - \langle l(t), \bw\rangle,
\end{aligned}
\end{equation}
and,  for every \( \psi  \in C([0, T]) \), 
\begin{equation}\label{p:equ41}
\lim_{M\rightarrow\infty}\int_Q \overline{\bu}^{\beta,+,\prime}_t(t) \cdot \bw\psi  \,\mathrm{d}x\,\mathrm{d}t = \int_Q \bu^N_{tt}(t) \cdot \bw \psi  \,\mathrm{d}x\,\mathrm{d}t.
\end{equation}
Using (\ref{p:equ40}), (\ref{p:equ41}) and Lebesgue's differentiation theorem, we obtain (\ref{p:equ18}). Satisfaction of the initial conditions follows from standard arguments and regularity results for Bochner functions. (See \cite[Chapter 5]{evansPDE}, for example.)
\end{proof}

\begin{proposition}\label{p:prop5}
Let the assumptions of Lemma \ref{p:lem1} hold and let \((\bu^N, v^N) \) be the limiting couple constructed in Lemma \ref{p:lem4}. For every \( t\in [0, T]\), \( v^N(t) \) solves the variational problem
\begin{equation}\label{p:equ20}
\mathcal{E}(\bu^N(t), v^N(t)) + \mathcal{H}(v^N(t)) = \inf\Big\{ \mathcal{E}(\bu^N(t), v)+ \mathcal{H}(v)\,:\,v\in H^1_{D+1}(\Omega), \, v\leq v^N(t)\Big\}.
\end{equation}
\end{proposition}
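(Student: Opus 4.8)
The plan is to pass to the limit $M\to\infty$ in the discrete minimisation problem (\ref{p:equ7}), using the strong convergence $\bu^{(N,M),+}\to\bu^N$ in $C([0,T];C^1(\overline\Omega)^d)$ and a $\Gamma$-convergence-type argument for the functional $\mathcal{E}(\cdot,\cdot)+\mathcal{H}(\cdot)$ in the $v$-variable. First I would fix $t\in[0,T]$ and work with the piecewise-constant interpolant $v^{(N,M),+}$, which satisfies (\ref{p:equ7}) at the node to the right of $t$, i.e. $\mathcal{E}(\bu^{(N,M),+}(t),v^{(N,M),+}(t))+\mathcal{H}(v^{(N,M),+}(t))\le \mathcal{E}(\bu^{(N,M),+}(t),v)+\mathcal{H}(v)$ for all admissible $v\le v^{(N,M),-}(t)$. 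Since $\bu^{(N,M),+}(t)\to\bu^N(t)$ in $C^1(\overline\Omega)^d$ and $b,\varphi^*$ are continuous with $\varphi^*$ of $p$-growth, one has $\mathcal{E}(\bu^{(N,M),+}(t),w)\to\mathcal{E}(\bu^N(t),w)$ uniformly over bounded sets of $w$ in $L^\infty(\Omega)$; together with the fact (from Lemma \ref{p:lem4} and the pointwise-in-time weak-* convergence, which must be upgraded at the fixed $t$ by the uniform $W^{1,\infty}(0,T;H^1)$ bound on $\overline v^{(N,M)}$, giving equicontinuity and hence $v^{(N,M),\pm}(t)\rightharpoonup v^N(t)$ in $H^1(\Omega)$ for \emph{every} $t$) this yields the lower bound
\[
\mathcal{E}(\bu^N(t),v^N(t))+\mathcal{H}(v^N(t))\le\liminf_{M\to\infty}\Big[\mathcal{E}(\bu^{(N,M),+}(t),v^{(N,M),+}(t))+\mathcal{H}(v^{(N,M),+}(t))\Big],
\]
via Fatou/dominated convergence for the $\mathcal{E}$-term and weak lower semicontinuity of $\mathcal{H}$.

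For the upper bound, take an arbitrary competitor $v\in H^1_{D+1}(\Omega)$ with $v\le v^N(t)$ and $v\in[0,1]$; I would construct a recovery sequence $v_M$ admissible in the $M$-th discrete problem, i.e. $v_M\le v^{(N,M),-}(t)$, with $v_M\to v$ in $H^1(\Omega)$. The natural choice is $v_M:=\min\{v,\,v^{(N,M),-}(t)\}$ — or, since $v^{(N,M),-}(t)\to v^N(t)\ge v$ only weakly, a truncation $v_M:=v-(v-v^{(N,M),-}(t))_+$. One then checks $v_M\in H^1_{D+1}(\Omega)$ (the positive-part truncation preserves the boundary value $1$ on $\Gamma_D$ since $v=v^{(N,M),-}(t)=1$ there) and that $(v-v^{(N,M),-}(t))_+\to 0$ in $H^1(\Omega)$: strongly in $L^2$ by the strong $L^2$ convergence $v^{(N,M),-}(t)\to v^N(t)$ and weakly in $H^1$, combined with the minimality-derived bound controlling $\|\nabla v_M\|_2$. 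Using $v_M$ in (\ref{p:equ7}) gives $\mathcal{E}(\bu^{(N,M),+}(t),v^{(N,M),+}(t))+\mathcal{H}(v^{(N,M),+}(t))\le\mathcal{E}(\bu^{(N,M),+}(t),v_M)+\mathcal{H}(v_M)$, and passing to the limit on the right-hand side (uniform convergence of $\mathcal{E}$ in its first slot plus continuity of $\mathcal{H}$ along the strong $H^1$ convergence $v_M\to v$) yields $\limsup_M[\cdots]\le\mathcal{E}(\bu^N(t),v)+\mathcal{H}(v)$. Combining with the lower bound and taking the infimum over $v$ gives (\ref{p:equ20}), and in particular $v^{(N,M),+}(t)\to v^N(t)$ strongly in $H^1(\Omega)$ as a by-product.

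The main obstacle I anticipate is the construction of the admissible recovery sequence: the discrete constraint $v_M\le v^{(N,M),-}(t)$ must be met \emph{exactly}, yet the barrier $v^{(N,M),-}(t)$ converges to $v^N(t)$ only weakly in $H^1$ (strongly only in $L^2$), so the naive competitor $v$ is not admissible for finite $M$ and the truncation $v_M$ must be shown to converge \emph{strongly} in $H^1$, not merely weakly — otherwise the $\mathcal{H}$-term is only lower semicontinuous in the wrong direction. Securing this strong convergence is where one uses the unilateral minimality of the discrete solutions (as in Lemma 3.2 of \cite{MR2673410}) to bound $\|\nabla v_M\|_2$ and close the estimate; the fixed-time (rather than a.e.-time) statement is what forces the preliminary equicontinuity argument for $v^{(N,M),\pm}$ coming from the uniform $W^{1,\infty}(0,T;H^1(\Omega))$ bound.
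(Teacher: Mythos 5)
Your route (passing to the limit in the minimisation problem itself, with a recovery sequence for each competitor) is genuinely different from the paper's, and the obstacle you flag at the end is not a technicality: it is a real gap that your argument does not close. For the truncation \( v_M=\min\{v,\,v^{(N,M),-}(t)\}\) one has
\[
\|\nabla v_M\|_2^2=\int_{\{v\le v^{(N,M),-}(t)\}}|\nabla v|^2\,\mathrm{d}x+\int_{\{v> v^{(N,M),-}(t)\}}|\nabla v^{(N,M),-}(t)|^2\,\mathrm{d}x,
\]
and on the second set you control \(\nabla v^{(N,M),-}(t)\) only weakly in \(L^2(\Omega)\), while the measure of \(\{v>v^{(N,M),-}(t)\}\subset\{v^N(t)>v^{(N,M),-}(t)\}\) need not vanish. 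Hence the estimate \(\limsup_{M}\mathcal{H}(v_M)\le\mathcal{H}(v)\), which is exactly what the upper bound requires, is missing; the appeal to ``minimality-derived bounds on \(\|\nabla v_M\|_2\)'' does not supply it, since unilateral minimality gives information about the discrete minimiser \(v^{(N,M),+}(t)\), not about strong \(H^1\) convergence of your competitor. Your liminf half and the upgrade of the weak-* convergence to every fixed \(t\) via the uniform \(W^{1,\infty}(0,T;H^1(\Omega))\) bound are fine.

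The paper avoids the recovery-sequence issue entirely by exploiting convexity: since \(v\mapsto\mathcal{E}(\bu,v)+\mathcal{H}(v)\) is convex (as \(b(v)=v^2+\eta\) and \(\varphi^*\ge0\)) and the constraint set \(\{v\in H^1_{D+1}(\Omega):v\le v^N(t)\}\) is convex, the minimisation (\ref{p:equ20}) is equivalent to the first-order inequality (\ref{p:equ22}) tested against arbitrary \(\chi\in H^1_D(\Omega)\) with \(\chi\le0\). The test function is then fixed, independent of \(M\), and the inequality is linear in \(\nabla v^{\beta,+}\), so the discrete counterpart (\ref{p:equ17}) passes to the limit using only the weak-* convergence of \((v^{\beta,+})_M\), the strong \(C([0,T];L^2(\Omega))\) convergence with the uniform \([0,1]\) bound, and the strong \(C([0,T];C^1(\overline{\Omega})^d)\) convergence of the displacements; a Lebesgue-point argument followed by continuity in \(t\) then yields every \(t\in[0,T]\). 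To make your scheme rigorous you would in effect have to derive this variational inequality anyway in order to build an admissible, strongly \(H^1\)-convergent recovery sequence, so I recommend adopting the convexity reduction.
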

\begin{proof}
By the construction of \((v^N_0)_N \), (\ref{p:equ20}) holds at \( t = 0\). 
For \( t\in (0, T]\), it suffices  to show that
\begin{equation}\label{p:equ22}
0 \leq \int_\Omega\frac{b^\prime(v^N(t))}{\alpha}\chi\varphi^*(\beps(\alpha\bu^N(t))) + \frac{1}{2\epsilon} \big( v^N(t)  -1 \big) \chi + 2\epsilon\nabla v^N(t) \cdot \nabla \chi \,\mathrm{d}x,
\end{equation}
for every \( \chi \in H^1_{D}(\Omega) \) with \( \chi \leq 0 \). However, by Proposition \ref{p:prop2}, for every \(\psi \in C([0, T]) \) with \( \psi \geq 0 \), we have
\begin{equation}\label{p:equ42}
0 \leq \int_Q \frac{b^\prime(v^{\beta,+})}{\alpha}\chi \psi  \varphi^*_n(\beps(\alpha\bu^{\beta,+})) + \frac{1}{2\epsilon}(v^{\beta,+}-1) \chi \psi  + 2\epsilon\nabla v^{\beta,+} \cdot \nabla (\chi \psi)  \,\mathrm{d}x\,\mathrm{d}t.
\end{equation}
Letting \( N \rightarrow\infty \) in (\ref{p:equ42}) and using the weak convergence results, an application of Lebesgue's differentiation theorem gives (\ref{p:equ22}) for a.e. \( t\in (0, T] \). Noticing that the right-hand side of (\ref{p:equ22}) is continuous as a function of \( t\), we conclude that (\ref{p:equ22}) holds for every \( t\in [0, T]\). 
\end{proof}


\begin{proposition}\label{p:prop6}
Let the assumptions of Lemma \ref{p:lem1} hold and let \((\bu^N, v^N) \)be  the limiting couple constructed in Lemma \ref{p:lem4}. For every \( t\in [0, T]\), the following energy-dissipation equality holds:
\begin{equation}\label{p:equ23}
\begin{aligned}
&\mathcal{F}(t;\bu^N(t), \bu^N_t(t), v^N(t)) + \int_0^t \langle l_t, \bu^N\rangle\,\mathrm{d}s
\\
&\quad
+ \int_0^t \int_\Omega b(v^N) \big[ F^{-1}(\beps(\bu^N_t + \alpha\bu^N)) - F^{-1}(\beps(\alpha\bu^N))\big]:\beps(\bu^N_t) \,\mathrm{d}x\,\mathrm{d}s
\\
&= \mathcal{F}(0;\bu_{0,N},  \bu_{1,N}, v^{N}_0).
\end{aligned}
\end{equation}
\end{proposition}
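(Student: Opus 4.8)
The plan is to pass to the limit $M\to\infty$ in the discrete energy-dissipation inequality of Proposition \ref{p:prop3} to get ``$\leq$'' in (\ref{p:equ23}), and then to obtain the reverse inequality ``$\geq$'' by testing the weak elastodynamic equation (\ref{p:equ18}) with $\bu^N_t$ and combining this with the minimisation property from Proposition \ref{p:prop5}. Since both directions hold for every $t$, equality follows. All the necessary convergences are collected in Lemma \ref{p:lem4}, and the regularity $\bu^N\in W^{2,\infty}(0,T;V_N)$ together with the finite-dimensionality of $V_N$ makes every manipulation with $\bu^N_t$ as a test function legitimate.

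For the direction ``$\leq$'', first I would rewrite Proposition \ref{p:prop3} in terms of the interpolants: the sum $h\sum_{j=1}^m\langle\delta l^\beta_j,\bu^\beta_j\rangle$ becomes $\int_0^{t}\langle \overline{l}^{\beta,\prime}, \bu^{\beta,+}\rangle\,\mathrm{d}s$ (up to endpoint corrections of order $h$), and the dissipation sum becomes $\int_0^{t}\int_\Omega b(v^{\beta,-})[F^{-1}(\beps(\bu^{\beta,+,\prime}+\alpha\bu^{\beta,+}))-F^{-1}(\beps(\alpha\bu^{\beta,+}))]:\beps(\bu^{\beta,+,\prime})\,\mathrm{d}x\,\mathrm{d}s$. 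On the left, $\mathcal F(t^\beta_m;\bu^\beta_m,\delta\bu^\beta_m,v^\beta_m)$ is handled termwise: $\mathcal K(\delta\bu^\beta_m)\to\mathcal K(\bu^N_t(t))$ and $\langle l(t^\beta_m),\bu^\beta_m\rangle\to\langle l(t),\bu^N(t)\rangle$ by the strong $C([0,T];C^1(\overline\Omega)^d)$ convergence; $\mathcal E$ and $\mathcal H$ are dealt with using weak lower semicontinuity in $H^1$ (for $\mathcal H$) and the strong convergence $\bu^{\beta,+}\to\bu^N$ in $C^1$ plus dominated convergence for the $\varphi^*(\beps(\alpha\bu^{\beta,+}))$ integrand (with $b(v^{\beta,+})$ uniformly bounded and $v^{\beta,+}$ converging a.e.\ along a further subsequence), so that $\mathcal E(\bu^N(t),v^N(t))+\mathcal H(v^N(t))\leq\liminf[\mathcal E(\bu^{\beta,+}(t),v^{\beta,+}(t))+\mathcal H(v^{\beta,+}(t))]$. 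For the dissipation integral on the left-hand side of (\ref{p:equ23}) I would want $\liminf$ of the discrete dissipation to be at least the continuous one; here monotonicity of $F^{-1}$ guarantees the integrand is nonnegative, and the strong convergence of $\bu^{\beta,+,\prime}$ and $\bu^{\beta,+}$ in $C^1$, together with $b(v^{\beta,-})\to b(v^N)$ a.e.\ boundedly, actually gives pointwise convergence of the integrand; Fatou then yields the needed inequality. The right-hand side $\mathcal F(0;\bu_{0,N},\bu_{1,N},v^N_0)$ is independent of $M$, so it passes unchanged. This gives ``$\leq$'' in (\ref{p:equ23}) for every $t$, after one integration by parts in time converting $\int_0^t\langle\overline{l}^{\beta,\prime},\bu^{\beta,+}\rangle$ into a form matching $-\int_0^t\langle l_t,\bu^N\rangle$ after moving it to the other side — care is needed to track boundary terms $\langle l(t),\bu^N(t)\rangle-\langle l(0),\bu_{0,N}\rangle$, which are exactly absorbed into the $\mathcal F$ terms via the $-\langle l(t),\bu\rangle$ summand in $\mathcal F$.

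For the direction ``$\geq$'', I would test (\ref{p:equ18}) with $\bw=\bu^N_t(t)$, which is admissible since $\bu^N_t(t)\in V_N$ for a.e.\ $t$. The inertial term integrates to $\frac{\mathrm d}{\mathrm dt}\mathcal K(\bu^N_t)$; the load term is $\langle l(t),\bu^N_t(t)\rangle=\frac{\mathrm d}{\mathrm dt}\langle l(t),\bu^N(t)\rangle-\langle l_t(t),\bu^N(t)\rangle$; and the stress term, using that $F^{-1}=\partial\varphi^*/\partial\bbT$ and a convexity (chain-rule) argument identical in spirit to (\ref{p:equ16}), splits into $\frac{\mathrm d}{\mathrm dt}\mathcal E(\bu^N(t),v^N(t))$ plus the dissipation integrand $b(v^N)[F^{-1}(\beps(\bu^N_t+\alpha\bu^N))-F^{-1}(\beps(\alpha\bu^N))]:\beps(\bu^N_t)$ plus a term $-\frac{1}{\alpha}b'(v^N)v^N_t\,\varphi^*(\beps(\alpha\bu^N))$ coming from differentiating $b(v^N(t))$ through the chain rule. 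That last term is exactly $-[\partial_v\mathcal E(\bu^N(t),v^N(t))](v^N_t(t))$ up to the $\mathcal H$-contribution; and by Proposition \ref{p:prop5} (in the form (\ref{p:equ22}) tested against $\chi=v^N_t(t)\leq0$, which lies in $H^1_D(\Omega)$) we have $[\partial_v\mathcal E(\bu^N(t),v^N(t))+\mathcal H'(v^N(t))](v^N_t(t))\geq0$, while $[\mathcal H'(v^N(t))](v^N_t(t))=\frac{\mathrm d}{\mathrm dt}\mathcal H(v^N(t))$ a.e.\ since $v^N\in W^{1,\infty}(0,T;H^1)$. Integrating the resulting identity/inequality in time from $0$ to $t$ and using the initial conditions from Proposition \ref{p:prop4} gives ``$\geq$'' in (\ref{p:equ23}).

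The main obstacle I anticipate is the time-differentiation of $\mathcal E(\bu^N(t),v^N(t))$: one must justify that $t\mapsto\mathcal E(\bu^N(t),v^N(t))$ is absolutely continuous with the chain-rule derivative $[\partial_\bu\mathcal E](\bu^N_t)+[\partial_v\mathcal E](v^N_t)$, and in particular that the ``crossed'' term $-\frac1\alpha b'(v^N)v^N_t\varphi^*(\beps(\alpha\bu^N))$ can be controlled and matches the variational inequality (\ref{p:equ22}) — note (\ref{p:equ22}) requires the test function in $H^1_D(\Omega)$, and $v^N_t(t)$ qualifies precisely because $v^N(t)\in H^1_{D+1}(\Omega)$ for all $t$ so $v^N_t(t)\in H^1_D(\Omega)$, and $v^N_t\leq0$ a.e.\ by Lemma \ref{p:lem4}. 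A second delicate point is ensuring the dissipation integrand passes to the limit with an inequality in the right direction in the ``$\leq$'' part; here I rely on its nonnegativity (monotonicity of $F^{-1}$) and the strong $C^1$ convergence of the displacements to upgrade to pointwise convergence, so Fatou applies cleanly. Once both inequalities are in hand for every $t\in[0,T]$, (\ref{p:equ23}) follows.
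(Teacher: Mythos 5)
Your proposal is correct, and the ``$\leq$'' half is exactly the paper's argument (pass to the limit in Proposition \ref{p:prop3} using lower semicontinuity and Fatou, then extend from a.e.\ $t$ to all $t$ by continuity). The ``$\geq$'' half, however, takes a genuinely different route. The paper reduces to the inequality (\ref{p:equ24}) in the same way you do (testing (\ref{p:equ18}) with $\bu^N_t$), but then proves (\ref{p:equ24}) by a telescoping Riemann-sum argument: it discretises $[0,t]$ with step $t/K$, uses the unilateral minimality (\ref{p:equ20}) to compare $\mathcal{E}(\bu^{N,K}_{k-1},v^{N,K}_{k-1})+\mathcal{H}(v^{N,K}_{k-1})\leq \mathcal{E}(\bu^{N,K}_{k-1},v^{N,K}_{k})+\mathcal{H}(v^{N,K}_{k})$, rewrites the remaining elastic increment as a line integral of $F^{-1}$, and passes $K\to\infty$. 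You instead differentiate $t\mapsto\mathcal{F}(t;\bu^N,\bu^N_t,v^N)$ directly, identify the residual as $[\partial_v\mathcal{E}+\mathcal{H}'](v^N_t)$, and bound it below by zero via (\ref{p:equ22}) tested with $\chi=v^N_t(t)\leq 0$. This is legitimate here: $\bu^N\in W^{2,\infty}(0,T;V_N)$ with $V_N\hookrightarrow C^1(\overline{\Omega})^d$ and $v^N\in W^{1,\infty}(0,T;H^1(\Omega))\cap L^\infty(Q)$ make $t\mapsto\mathcal{E}(\bu^N(t),v^N(t))+\mathcal{H}(v^N(t))$ Lipschitz with the chain-rule derivative you state, and $v^N_t(t)\in H^1_D(\Omega)$ with $v^N_t\leq0$ is an admissible test function; indeed the paper itself performs exactly this differentiation in Lemma \ref{p:lem10}, so your argument is in effect a one-sided version of that lemma run forwards. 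What each approach buys: yours is shorter and makes the mechanism (the variational inequality absorbs the $\partial_v$-terms) transparent; the paper's Riemann-sum argument never differentiates in the $v$-variable and therefore transfers verbatim to the case $p>2$ (proof of Theorem \ref{p:thm6}), where the limiting phase field is merely monotone in time and $v_t$ need not exist, so your route would not survive the subsequent limit passages. One small point to tighten in your ``$\leq$'' direction: for a \emph{fixed} $t$ the weak $H^1(\Omega)$ convergence of $v^{\beta,+}(t)$ needed for lower semicontinuity of $\mathcal{H}$ is not immediate from the Bochner-space weak-* convergences; the clean fix is the paper's, namely prove the inequality for a.e.\ $t$ and extend by continuity of $t\mapsto(\bu^N(t),\bu^N_t(t),v^N(t))$.
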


\begin{proof}
Trivially the result holds for  \( t = 0\).  Letting \( M\rightarrow\infty \) in the discrete energy-dissipation inequality and using weak lower semi-continuity, we deduce that (\ref{p:equ23}) holds with ``\(=\)'' replaced by ``\(\leq\)'', for a.e. \( t\in (0, T] \). 

For the opposite inequality, we first  notice that
\begin{align*}
&\mathcal{K}(\bu^N_t(t)) - \langle l(t), \bu^N(t) \rangle + \int_0^t \langle l_t,\bu^N\rangle\,\mathrm{d}s + \int_0^t \int_\Omega b(v^N) F^{-1}(\beps(\bu^N_t + \alpha\bu^N)):\beps(\bu^N_t)\,\mathrm{d}x\,\mathrm{d}s
\\
&\quad
- \mathcal{K}( \bu_{1,N}) + \langle l(0),\bu_{0,N}\rangle
\\
&= \int_0^t \int_\Omega \bu^N_{tt}\cdot \bu^N_t \,\mathrm{d}x\,\mathrm{d}s - \int_0^t \langle l,\bu^N_t\rangle \,\mathrm{d}s + \int_0^t \int_\Omega b(v^N)F^{-1}(\beps(\bu^N_t + \alpha\bu^N)):\beps(\bu^N_t)\,\mathrm{d}x\,\mathrm{d}s
\\
&=0,
\end{align*}
using (\ref{p:equ18}). Thus, it suffices to show that 
\begin{equation}\label{p:equ24}
\begin{aligned}
&\mathcal{E}(\bu^N(t), v^N(t)) + \mathcal{H}(v^N(t)) - \int_0^t \int_\Omega b(v^N)F^{-1}(\beps(\alpha\bu^N)):\beps(\bu^N_t) \,\mathrm{d}x\,\mathrm{d}s
\\
&\geq \mathcal{E}( \bu_{0,N}, v^{N}_0) + \mathcal{H}(v^{N}_0).
\end{aligned}
\end{equation}
To prove that (\ref{p:equ24}) holds, we mimic an argument from \cite{MR2673410}. Fixing  \( t\in (0, T]\), for each \( K\in \mathbb{N}_{\geq 2}\) we define a time step \( h_K = \frac{t}{K}\) and the corresponding time-discrete approximate solution sequence
\begin{align*}
\bu^{N,K}_k = \bu^N (hk) ,\quad v^{N,K}_k = v^N(hk)\quad\text{ for }0 \leq k \leq K.
\end{align*}
Then we can  write
\begin{equation}\label{p:equ25}
\mathcal{E}(\bu^N(t), v^N(t)) - \mathcal{E}( \bu_{0,N}, v^{N}_0) = \sum_{k=1}^K \Big[ \mathcal{E}(\bu^{N,K}_k, v^{N,K}_k)  - \mathcal{E}(\bu^{N,K}_{k-1}, v^{N,K}_{k-1}) \Big].
\end{equation}
Using the variational problem (\ref{p:equ20}), we deduce that
\begin{equation}\label{p:equ26}
\begin{aligned}
&\mathcal{E}(\bu^{N,K}_k, v^{N,K}_k)  - \mathcal{E}(\bu^{N,K}_{k-1}, v^{N,K}_{k-1})
\\
&\geq \int_\Omega \frac{b(v^{N,K}_k)}{\alpha} \Big( \varphi^*(\beps(\alpha\bu^{N,K}_k)) - \varphi^*(\beps(\alpha\bu^{N,K}_{k-1})) \Big) \,\mathrm{d}x + \mathcal{H}(v^{N,K}_{k-1}) - \mathcal{H}(v^{N,K}_k)
\\
&= \frac{t}{K}\int_\Omega\int_0^1 \frac{b(v^{N,K}_k)}{\alpha} F^{-1}(\alpha\beps(s\bu^{N,K}_k + (1-s)\bu^{N,K}_{k-1})):\beps(\delta\bu^{N,K}_{k}) \,\mathrm{d}s\,\mathrm{d}x
\\&\quad
 + \mathcal{H}(v^{N,K}_{k-1})  - \mathcal{H}(v^{N,K}_k).
\end{aligned}
\end{equation}
Using the previous notation for interpolants of discrete solution sequences, we insert (\ref{p:equ26}) into (\ref{p:equ25}) to obtain
\begin{equation}\label{p:equ49}
\begin{aligned}
&\mathcal{E}(\bu^N(t), v^N(t)) - \mathcal{E}( \bu_{0,N}, v^{N}_0) - \mathcal{H}(v^{N}_0) + \mathcal{H}(v^N(t))
\\
&\geq \int_0^t\int_\Omega\int_0^1 \frac{b(v^{N,K,+})}{\alpha}F^{-1}(\alpha\beps(s\bu^{N,K,+}+(1-s)\bu^{N,K,-})):\beps(\overline{\bu}^{N,K}_t)\,\mathrm{d}s\,\mathrm{d}x\,\mathrm{d}\tau.
\end{aligned}
\end{equation}
Using that  \(\bu^N\in W^{2,\infty}(0, T; V_N) \) and \( v^N\in W^{1, \infty}(0, T; H^1(\Omega)) \cap L^\infty(Q)\), we see that 
\begin{itemize}
\item \( \bu^{N,K,\pm}\rightarrow\bu^N\) strongly in \( C([0, T];V_N)\);
\item \( \overline{\bu}^{N,K}\rightarrow\bu^N\) strongly in \( W^{1,\infty}(0, T; V_N) \);
\item \( v^{N,K,+}\rightarrow v^N\) strongly in \( L^\infty(0, T; L^2(\Omega)^d) \).
\end{itemize}
Letting \( K \rightarrow\infty \) in (\ref{p:equ49}), it follows that
\begin{align*}
&\mathcal{E}(\bu^N(t), v^N(t)) - \mathcal{E}( \bu_{0,N}, v^{N}_0) - \mathcal{H}(v^{N}_0) + \mathcal{H}(v^N(t))
\\&\geq \int_0^t\int_\Omega \frac{b(v^N)}{\alpha}F^{-1}(\beps(\alpha\bu^N)) :\beps(\bu^N_t) \,\mathrm{d}x\,\mathrm{d}s.
\end{align*}
Hence the energy-dissipation equality holds for a.e. \( t\in [0, T]\). Using the continuity of the solution couple, we conclude that the equality must hold for every \( t\in [0, T]\).
\end{proof}

\subsection{\(N\)-independent bounds}\label{sec:Nbounds}
In this section, we derive \( N \)-independent bounds on the solution couple \(( \bu^N, v^N) \) constructed in Lemma \ref{p:lem4}. This allows us to take \( N \rightarrow\infty \) and obtain a weak energy solution of the original problem (\ref{p:equ1})--(\ref{p:equ3}). We reintroduce the stress tensor at this stage to simplify the representation. Also, we use the Korn--Poincar\'{e} inequality (Theorem \ref{korn}) throughout this section. 
The first two results provide   bounds on the displacement \( \bu \) and stress tensor \( \bbT \). In Lemma \ref{p:lem9}, we get a bound on the phase-field function \( v\). Following this, we differentiate between the cases \( p \in (1,2]\) and \( p \in (2,\infty) \). When \( p \in (1,2]\) we are able to improve the bounds on the phase-field function. Indeed, we obtain bounds on the time derivative in the spirit of of Lemma \ref{p:lem2}. We are unable to obtain such a result when \( p > 2\) for technical reasons. Instead, we make use of the monotonicity of the phase-field functions as a function of the time variable. These two cases are discussed in Section \ref{sec:psmall} and \ref{sec:plarge}, respectively. 


\begin{lemma}\label{p:lem7}
Let the assumptions of Lemma \ref{p:lem4} hold and let \((\bu^N, \bbT^N, v^N) \) be the solution triple constructed there, where we define \( \bbT^N := F^{-1}(\beps(\bu^N_t + \alpha\bu^N)) \). There exists a constant \( C\), independent of \( N \), such that
\begin{align*}
\sup_{t\in [0, T]}\|\bu^N(t) \|_2 +\sup_{t\in [0, T]}\|\bu^N(t)  \|_{1,p^\prime} + \sup_{t\in [0, T]}\|\bu^N_t(t) \|_2 + \int_0^T \|\bbT^N(t)\|_p^p + \|\bu^N_t(t) \|_{1,p^\prime}^{p^\prime} \,\mathrm{d}t  \leq C.
\end{align*}
\end{lemma}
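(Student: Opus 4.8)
The plan is to obtain the bound as an $N$-independent consequence of the energy-dissipation equality \eqref{p:equ23} from Proposition \ref{p:prop6}, exactly mirroring the structure of Lemma \ref{p:lem1} but without the luxury of the equivalence of norms on $V_N$. First I would rewrite \eqref{p:equ23} in the form
\begin{align*}
&\mathcal{K}(\bu^N_t(t)) + \mathcal{E}(\bu^N(t), v^N(t)) + \mathcal{H}(v^N(t)) \\
&\quad + \int_0^t \int_\Omega b(v^N)\big[ F^{-1}(\beps(\bu^N_t + \alpha\bu^N)) - F^{-1}(\beps(\alpha\bu^N))\big] : \beps(\bu^N_t) \dd x \dd s \\
&= \mathcal{F}(0; \bu_{0,N}, \bu_{1,N}, v^N_0) + \langle l(t), \bu^N(t)\rangle - \langle l(0), \bu_{0,N}\rangle - \int_0^t \langle l_t, \bu^N\rangle \dd s.
\end{align*}
The fourth term on the left is nonnegative by monotonicity of $F^{-1}$ (equivalently convexity of $\varphi^*$), so it can be dropped when bounding the first three terms. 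The key point is that the right-hand side is controlled: $\mathcal{F}(0; \cdot)$ is bounded uniformly in $N$ because $\|\bu_{1,N}\|_2$ is bounded (recall the initial-data construction gives $\bu_{i,N}\to\bu_i$ in $L^2$ and in $W^{1,p'}_D$), $\int_\Omega \varphi^*(\beps(\alpha\bu_{0,N}))\dd x \le C\|\beps(\bu_{0,N})\|_{p'}^{p'}$ is bounded since $\varphi^*$ has $p'$-growth, and $\|v^N_0\|_{1,2}$ is bounded by Proposition \ref{p:prop1}.

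Next I would estimate the load terms. Using $l \in C^1([0,T]; W^{-1,p}_D(\Omega)^d)$ and the Korn--Poincaré inequality (Theorem \ref{korn}) to control $\|\bu^N(t)\|_{1,p'}$ by $\|\beps(\bu^N(t))\|_{p'}$, I would bound $|\langle l(t), \bu^N(t)\rangle| \le \|l\|_{L^\infty(W^{-1,p}_D)} \|\bu^N(t)\|_{1,p'}$ and absorb this into $\mathcal{E}(\bu^N(t), v^N(t))$ after relating the latter to $\|\beps(\bu^N(t))\|_{p'}^{p'}$; here one uses that $b(v^N)\ge \eta > 0$ and that $\varphi^*(\beps(\alpha\bu^N)) \gtrsim |\beps(\alpha\bu^N)|^{p'} - 1$, so $\mathcal{E}(\bu^N(t),v^N(t)) \gtrsim \eta\|\beps(\bu^N(t))\|_{p'}^{p'} - C$. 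Since $p' \ge 2 > 1$, Young's inequality with exponents $p'$ and $p$ lets the load term be absorbed with a small constant into the elastic energy. The term $\int_0^t \langle l_t, \bu^N\rangle\dd s$ is handled by $|\langle l_t, \bu^N\rangle| \le \|l_t\|_{-1,p}\|\bu^N\|_{1,p'}$ and then a Grönwall argument in the variable $\Phi(t) := \mathcal{K}(\bu^N_t(t)) + \mathcal{E}(\bu^N(t),v^N(t)) + \mathcal{H}(v^N(t))$, noting $\|\bu^N(s)\|_{1,p'}$ is controlled by $\Phi(s)^{1/p'} + 1 \lesssim \Phi(s) + 1$. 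This yields $\sup_{t}\Phi(t) \le C$ with $C$ independent of $N$, giving the bounds on $\sup_t\|\bu^N_t(t)\|_2$, $\sup_t\|\bu^N(t)\|_{1,p'}$, and hence $\sup_t\|\bu^N(t)\|_2$ via Korn--Poincaré.

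It then remains to extract the space-time integrability $\int_0^T \|\bbT^N\|_p^p + \|\bu^N_t\|_{1,p'}^{p'}\dd t \le C$. For this I would return to the undropped dissipation term. Using the identity \eqref{intro:equ13}, namely $\bbT : F(\bbT) = \varphi(\bbT) + \varphi^*(F(\bbT))$ with $\bbT = F^{-1}(\beps(\bu^N_t + \alpha\bu^N))$, the dissipation integrand equals $b(v^N)[\varphi(\bbT^N) + \varphi^*(\beps(\bu^N_t + \alpha\bu^N)) - F^{-1}(\beps(\alpha\bu^N)):\beps(\bu^N_t + \alpha\bu^N) + F^{-1}(\beps(\alpha\bu^N)):\beps(\alpha\bu^N)]$, up to rearrangement; more cleanly, convexity of $\varphi^*$ gives
\[
b(v^N)\big[F^{-1}(\beps(\bu^N_t+\alpha\bu^N)) - F^{-1}(\beps(\alpha\bu^N))\big]:\beps(\bu^N_t) \ge c\, b(v^N)\big(|\bbT^N|^p - |\beps(\alpha\bu^N)|^{p'} - 1\big),
\]
using the strong monotonicity/growth of $F^{-1}$ (with $\varphi^*$ of $p'$-growth, $F^{-1}$ of $(p'-1)$-growth, and $\bbT^N = F^{-1}(\beps(\bu^N_t+\alpha\bu^N))$ so $|\beps(\bu^N_t+\alpha\bu^N)| \sim |\bbT^N|^{p-1}$). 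Since $b(v^N) \ge \eta$ and $\int_0^T\|\beps(\alpha\bu^N)\|_{p'}^{p'}\dd t \le T\sup_t\|\beps(\alpha\bu^N(t))\|_{p'}^{p'} \le C$ from the previous step, the energy-dissipation equality forces $\int_0^T \|\bbT^N\|_p^p\dd t \le C$. Finally $\beps(\bu^N_t) = F(\bbT^N) - \beps(\alpha\bu^N)$, so $\|\beps(\bu^N_t)\|_{p'}^{p'} \lesssim \|\bbT^N\|_p^p + \|\beps(\alpha\bu^N)\|_{p'}^{p'}$ (since $|F(\bbT^N)| = |\bbT^N|^{p-1}$ and $(p-1)p' = p$), and Korn--Poincaré upgrades this to the $\|\bu^N_t\|_{1,p'}^{p'}$ bound after integrating in time. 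The main obstacle I anticipate is the careful bookkeeping of the growth/coercivity exponents so that the load terms are genuinely absorbed (rather than merely estimated) and the constants come out independent of $N$ and $\eta$-uniform where needed; the structural inequalities are all available from \eqref{intro:equ13} and Theorem \ref{korn}, but matching $p$ versus $p'$ powers correctly is where errors creep in.
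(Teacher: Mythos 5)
Your argument is correct in substance but follows a different route from the paper's. The paper tests the elastodynamic equation (\ref{p:equ18}) directly against \( \bu^N_t+\alpha\bu^N\), which produces the dissipation term \( \int_\Omega b(v^N)\bbT^N:F(\bbT^N)\dd x=\int_\Omega b(v^N)|\bbT^N|^p\dd x\) with no further work, and then recovers \( \sup_t\|\bu^N(t)\|_{1,p'}\) from the memory-kernel representation \( |\beps(\bu^N(t))|\le \mathrm{e}^{-\alpha t}|\beps(\bu_{0,N})|+\int_0^t\mathrm{e}^{\alpha(s-t)}|\bbT^N|^{p-1}\dd s\), which requires the \( L^p(Q)\) bound on \( \bbT^N\) to be in hand first. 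You instead start from the already-established energy-dissipation equality of Proposition \ref{p:prop6}, read off \( \sup_t\|\beps(\bu^N(t))\|_{p'}\) directly from the coercivity \( \mathcal{E}(\bu^N(t),v^N(t))\ge \frac{\eta\alpha^{p'-1}}{p'}\|\beps(\bu^N(t))\|_{p'}^{p'}\), and then extract \( \int_0^T\|\bbT^N\|_p^p\dd t\) from the dissipation term via the lower bound \( (F^{-1}(A)-F^{-1}(B)):(A-B)\ge\tfrac12|A|^{p'}-C|B|^{p'}\) with \( |A|^{p'}=|\bbT^N|^p\). Both are legitimate; yours avoids the memory kernel at the cost of the extra Young-type estimate on the monotone difference, and it leans on Proposition \ref{p:prop6}, which precedes Lemma \ref{p:lem7} and holds under the same hypotheses, so there is no circularity. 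The worry about \( \eta\)-uniformity is unnecessary: \( \eta\) is a fixed problem parameter and the constant is allowed to depend on it.

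One step does not go through as written: you deduce \( \sup_t\|\bu^N(t)\|_2\) from \( \sup_t\|\bu^N(t)\|_{1,p'}\) ``via Korn--Poincar\'{e}''. For \( p\in(1,2]\) this is fine, since \( p'\ge 2\) and \( W^{1,p'}(\Omega)\hookrightarrow L^2(\Omega)\); but the lemma is also used for \( p>2\), where \( p'<2\) and the Sobolev embedding of \( W^{1,p'}(\Omega)\) need not reach \( L^2(\Omega)\) (for instance \( d=3\) and \( p'\) close to \( 1\)). The repair is immediate and does not use Korn--Poincar\'{e} at all: write \( \bu^N(t)=\bu_{0,N}+\int_0^t\bu^N_t(s)\dd s\), so that \( \|\bu^N(t)\|_2\le\|\bu_{0,N}\|_2+T\sup_s\|\bu^N_t(s)\|_2\), and both terms on the right are already controlled, the approximations \( (\bu_{0,N})_N\) having been constructed to be bounded in \( L^2(\Omega)^d\) for every value of \( p\).
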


\begin{proof}
For  the first, third and fourth bounds, we test against \(\bu^N_t + \alpha\bu^N\) in the elastodynamic equation (\ref{p:equ18}). Using the lower bound for \( b \), we can argue as in \cite{mypaperpreprint} and so do not include the details here. Using the constitutive relation and the resulting memory kernel property, for every \( t\in [0, T]\), we have
\begin{align*}
|\beps(\bu^N(t))| \leq \mathrm{e}^{-\alpha t}|\beps(\bu_{0,N})| + \int_0^t \mathrm{e}^{\alpha(s-t)} |\bbT^N|^{p-1}\,\mathrm{d}s.
\end{align*}
Taking the \( L^{p^\prime}(\Omega) \) norm of both sides, using the properties of the approximation of the initial data, and applying the Korn--Poincar\'{e} inequality, the \( L^\infty(0, T; W^{1,p^\prime}(\Omega)^{d}) \) bound on \((\bu^N)_N \) follows. Using the bound on \( \bbT^N \) with the constitutive relation, we see  that \( (\beps(\bu^N_t + \alpha\bu^N))_N \) is  bounded in \( L^{p^\prime}(Q)^{d\times d}\). Putting this together with the bound on \( \bu^N\) and applying the Korn--Poincar\'{e} inequality, the final bound on \( \bu^N_t\) follows.
\end{proof}

\begin{lemma}\label{p:lem8}
Let the assumptions of Lemma \ref{p:lem4} hold and let \((\bu^N, \bbT^N, v^N) \) be the solution triple constructed there. There exists a constant \( C\) independent of \( N \) such that
\begin{align*}
\int_Q |\bu^N_{tt}|^2 \,\mathrm{d}x \,\mathrm{d}t + \sup_{t\in [0, T]}\Big\{ \|\bbT^N(t) \|_p + \|\bu^N_t(t)\|_{1,p^\prime}\Big\}\leq C.
\end{align*}
\end{lemma}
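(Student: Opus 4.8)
\textbf{Proof proposal for Lemma \ref{p:lem8}.}

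The plan is to obtain a bound on $\bu^N_{tt}$ in $L^2(Q)^d$ first, and then use this together with the energy-dissipation equality from Proposition \ref{p:prop6} to upgrade the $L^\infty$-in-time control of $\bbT^N$ and $\bu^N_t$. For the inertial bound, I would formally differentiate the weak elastodynamic equation (\ref{p:equ18}) in time, or equivalently work at the level of difference quotients in the time-discrete problem and pass to the limit, testing the differentiated equation against $\bu^N_{tt}$. This produces a term $\int_\Omega \bu^N_{ttt}\cdot\bu^N_{tt}$ (which is the time derivative of $\tfrac12\|\bu^N_{tt}\|_2^2$) plus a term coming from $\partial_t\big(b(v^N)F^{-1}(\beps(\bu^N_t+\alpha\bu^N))\big):\beps(\bu^N_{tt})$, balanced against $\langle l_t(t),\bu^N_{tt}\rangle$. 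The key structural point is that $F^{-1}=\partial\varphi^*$ is monotone, so the principal part $\int_\Omega b(v^N)\,\partial_t F^{-1}(\beps(\bu^N_t+\alpha\bu^N)):\partial_t\beps(\bu^N_t+\alpha\bu^N)\,\mathrm dx \ge 0$ after one absorbs the $\alpha\beps(\bu^N_t)$ contribution; the remaining terms involve $\partial_t b(v^N)=2v^N v^N_t$, which is controlled using $v^N\in[0,1]$, $v^N_t\le 0$, and the $L^\infty(0,T;H^1)$ bound on $v^N$ from Lemma \ref{p:lem4}, together with the already-established bounds from Lemma \ref{p:lem7}. After a Grönwall argument in time, using that the initial datum $\bu^N_{tt}(0)$ is controlled (via the equation at $t=0$ applied to $\bu_{0,N},\bu_{1,N}$, which are bounded in $W^{1,p'}_D\cap L^2$ uniformly in $N$ by construction), one gets the $N$-independent bound on $\int_Q|\bu^N_{tt}|^2$.

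For the pointwise-in-time bounds on $\bbT^N$ and $\bu^N_t$, I would exploit the energy-dissipation equality (\ref{p:equ23}): rearranging it gives, for every $t$,
\begin{align*}
\mathcal{K}(\bu^N_t(t)) + \mathcal{E}(\bu^N(t),v^N(t)) + \mathcal{H}(v^N(t))
= \mathcal{F}(0;\bu_{0,N},\bu_{1,N},v^N_0) + \langle l(t),\bu^N(t)\rangle - \int_0^t\langle l_t,\bu^N\rangle\,\mathrm ds
\end{align*}
minus the dissipation integral, which is nonnegative by monotonicity of $F^{-1}$. The right-hand side is bounded uniformly in $N$ and in $t$ using Lemma \ref{p:lem7} (which controls $\sup_t\|\bu^N\|_{1,p'}$ and hence $\langle l,\bu^N\rangle$), the hypothesis $l\in C^1([0,T];W^{-1,p}_D)$, and Proposition \ref{p:prop1} for the convergence $v^N_0\to v_0$ in $H^1$ (so $\mathcal{H}(v^N_0)$ is bounded). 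This yields a uniform bound on $\mathcal{E}(\bu^N(t),v^N(t)) = \int_\Omega \tfrac{b(v^N)}{\alpha}\varphi^*(\beps(\alpha\bu^N))\,\mathrm dx$ for every $t$; since $b(v^N)\ge\eta>0$ and $\varphi^*$ is equivalent to $|\cdot|^{p'}$ up to constants, this gives $\sup_t\|\beps(\alpha\bu^N(t))\|_{p'}\le C$, hence $\sup_t\|\bu^N(t)\|_{1,p'}\le C$ by Korn--Poincaré (Theorem \ref{korn}). To convert this into the stated bounds on $\bbT^N$ and $\bu^N_t$, I would use the memory-kernel representation already invoked in Lemma \ref{p:lem7}, namely $\beps(\bu^N(t)) = \mathrm e^{-\alpha t}\beps(\bu_{0,N}) + \int_0^t \mathrm e^{\alpha(s-t)} F(\bbT^N(s))^{-1}$-type manipulation — more precisely, from $\beps(\bu^N_t+\alpha\bu^N)=F(\bbT^N)$ one writes $\bbT^N = F^{-1}\big(\beps(\bu^N_t)+\alpha\beps(\bu^N)\big)$, and combines the $L^\infty(0,T;L^{p'})$ bound on $\beps(\bu^N)$ with the differentiated-equation bound to control $\beps(\bu^N_t(t))$ pointwise in $t$; since $F^{-1}$ has $(p-1)$-growth, $\|\bbT^N(t)\|_p$ is then controlled by a power of $\|\beps(\bu^N_t(t)+\alpha\bu^N(t))\|_{p'}$.

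The main obstacle I anticipate is making the time-differentiation argument rigorous: one cannot directly differentiate (\ref{p:equ18}) in time since $v^N$ is only Lipschitz in time and $F^{-1}$ is merely monotone with $(p-1)$-growth, so the formal computation above needs to be carried out at the finite-dimensional Galerkin level on second-order difference quotients $\delta^2\bu^N_k$ (or by a further time-discretisation of the already-constructed $\bu^N$, as in the proof of Proposition \ref{p:prop6}), where all manipulations are legitimate, and only then passed to the limit via weak lower semicontinuity. Care is needed because the test function $\bu^N_{tt}$ lives only in $V_N$ at the approximate level and the constant must not blow up as the inner discretisation is refined; the monotonicity inequality $\big(F^{-1}(\bbA)-F^{-1}(\bbB)\big):(\bbA-\bbB)\ge 0$ is what keeps the dissipative term with a good sign throughout. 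A secondary technical point is handling the $\partial_t b(v^N)$ term: here the sign condition $v^N_t\le 0$ is genuinely useful, since the problematic contribution $2\int_\Omega v^N v^N_t\, \varphi^*{}'(\cdots)$-type term then has a favourable sign or can be absorbed, avoiding any need for a bound on $v^N_{tt}$ which is not available.
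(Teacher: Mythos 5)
There is a genuine gap: the route you propose (differentiating the elastodynamic equation in time and testing against $\bu^N_{tt}$) cannot be closed with $N$-independent constants under the hypotheses of Theorem \ref{p:thm1}, and it is also not needed. A Gr\"{o}nwall argument on the differentiated equation requires an $N$-uniform bound on the initial value $\|\bu^N_{tt}(0)\|_2$. From the equation at $t=0$ this amounts to controlling $\mathrm{div}\big(b(v^N_0)F^{-1}(\beps(\bu_{1,N}+\alpha\bu_{0,N}))\big)$ in $L^2(\Omega)^d$ (or its $V_N$-projection uniformly in $N$), which needs second-order spatial regularity of the initial data together with control of $\nabla v^N_0$ in a strong norm; neither is available here, since $\bu_0,\bu_1$ are only in $W^{1,p'}$ and the equivalence of norms on $V_N$ degenerates as $N\to\infty$. (The paper does run exactly this kind of differentiated-equation argument, but only in Section \ref{sec:a}, where the extra hypotheses $\bu_1+\alpha\bu_0\in W^{2,2}_D(\Omega)^d$ and the compatibility condition (\ref{a:equ4}) are imposed precisely to make $\bu^n_{tt}(0)$ controllable.) Separately, your second step recovers only $\sup_t\|\beps(\alpha\bu^N(t))\|_{p'}$ and $\sup_t\|\bu^N_t(t)\|_2$ from the energy-dissipation equality --- the former is already Lemma \ref{p:lem7} --- and the passage from there to the actual assertions $\sup_t\|\bbT^N(t)\|_p$ and $\sup_t\|\bu^N_t(t)\|_{1,p'}$ is left to an unspecified combination with the "differentiated-equation bound", which does not obviously produce an $L^\infty$-in-time bound on $\|\beps(\bu^N_t)\|_{p'}$.

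The paper's proof is a single energy estimate on the \emph{undifferentiated} equation: test (\ref{p:equ18}) against $\bu^N_{tt}+\alpha\bu^N_t$. The inertial term yields $\|\bu^N_{tt}\|_2^2+\frac{\mathrm{d}}{\mathrm{d}t}\frac{\alpha}{2}\|\bu^N_t\|_2^2$; the nonlinear term is handled via the exact-derivative identity $\bbT^N:\partial_t F(\bbT^N)=\partial_t\big(\tfrac{p-1}{p}|\bbT^N|^p\big)$, so that after the product rule the extra term carries the factor $b'(v^N)v^N_t\le0$ and can be dropped (this is where the sign of $v^N_t$ enters, rather than in your differentiated setting); the force term is rewritten as $\frac{\mathrm{d}}{\mathrm{d}t}\langle l,\bu^N_t+\alpha\bu^N\rangle-\langle l_t,\bu^N_t+\alpha\bu^N\rangle$. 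Integrating in time and using $|\bbT^N|^p=|\beps(\bu^N_t+\alpha\bu^N)|^{p'}$ with Young's inequality and Lemma \ref{p:lem7} then gives simultaneously the $L^2(Q)$ bound on $\bu^N_{tt}$ (which appears with one time integration only, so no initial datum for $\bu^N_{tt}$ is ever needed) and the $L^\infty$-in-time bounds on $\|\bbT^N\|_p$ and, via Korn--Poincar\'{e}, on $\|\bu^N_t\|_{1,p'}$. You should replace the time-differentiation step by this single test.
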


\begin{proof}
It suffices to prove the claim for the first two terms, repeating the reasoning from the proof of Lemma \ref{p:lem7} in order to obtain the bound on the final term. We test in the elastodynamic equation (\ref{p:equ18}) against \(\bu^N_{tt} + \alpha\bu^N_t\) to get
\begin{align*}
&\int_\Omega |\bu^N_{tt}|^2 \,\mathrm{d}x + \frac{\mathrm{d}}{\mathrm{d}t }\Big( \frac{\alpha}{2}\|\bu^N_t\|_2^2\Big)  + \int_\Omega b(v^N) \bbT^N :F(\bbT^N)_t\,\mathrm{d}x 
\\&= \langle l, \bu^N_{tt} + \alpha\bu^N_t\rangle
\\
&= \frac{\mathrm{d}}{\mathrm{d}t }\Big( \langle l, \bu^N_{t}+\alpha\bu^N\rangle \Big) - \langle l_t,\bu^N_t + \alpha\bu^N\rangle.
\end{align*}
The term involving the nonlinearity can be rewritten as
\begin{align*}
\int_\Omega b(v^N) \bbT^N :F(\bbT^N)_t\,\mathrm{d}x  &= \frac{\mathrm{d}}{\mathrm{d}t }\Big( \int_\Omega b(v^N) \frac{p-1}{p}|\bbT^N|^p\,\mathrm{d}x\Big) - \int_\Omega b(v^N) v^N_t \Big[ \frac{p-1}{p}|\bbT^N|^p\Big]\,\mathrm{d}x
\\
&\geq \frac{\mathrm{d}}{\mathrm{d}t }\Big( \int_\Omega b(v^N) \frac{p-1}{p}|\bbT^N|^p\,\mathrm{d}x\Big),
\end{align*}
recalling the non-positivity of   \( v^N_t\). Integrating over \((0, t) \) for an arbitrary  \( t\in (0, T]\), we deduce that
\begin{equation}\label{p:equ48}
\begin{aligned}
&\int_0^t \int_\Omega |\bu^N_{tt}|^2 \,\mathrm{d}x\,\mathrm{d}s + \int_\Omega b(v^N(t)) \frac{p-1}{p}|\bbT^N(t)|^p\,\mathrm{d}x
\\
&\leq  \frac{\alpha}{2}\| \bu_{1,N}\|_2^2 + \int_\Omega b(v^N_0) \frac{p-1}{p}|\bbT^N(0)|^p\,\mathrm{d}x + \langle l(t), (\bu^N_t + \alpha\bu^N)(t) \rangle
\\
&\quad
- \langle l(0), \bu_{1, N} + \alpha\bu_{0, N} \rangle - \int_0^t \langle l_t , \bu^N_t + \alpha\bu^N\rangle \,\mathrm{d}s
\\
&\leq C\Big[ \|\bu_{1,N}\|_2^2 +  \int_\Omega|F^{-1}(\beps(\bu_{1,N} + \alpha\bu_{0,N})) |^{p}\,\mathrm{d}x  + \|l(t) \|_{-1,p}\|\beps(\bu^N_t+\alpha\bu^N)(t) \|_{p^\prime}
\\&\quad
+ \|l(0) \|_{-1,p}\|\beps(\bu_{1,N} + \alpha\bu_{0,N})\|_{p^\prime} + \int_0^t \|l_t\|_{-1,p}\|\beps(\bu^N_t+ \alpha\bu^N) \|_{p^\prime}\,\mathrm{d}s
\Big]
\\
&\leq C\Big[ \|\bu_{1,N}\|_2^2 +  \|\beps(\bu_{1,N} + \alpha\bu_{0,N}) \|_{p^\prime}^{p^\prime} + \|l\|_{W^{1,\infty}(W^{-1,p})}\|\beps(\bu^N_t + \alpha\bu^N)(t) \|_{p^\prime}
\\&\quad
+ \|l\|_{W^{1,\infty}(W^{-1,p})}\|\beps(\bu_{1,N}+ \alpha\bu_{0,N}) \|_{p^\prime} + \int_0^t \|l_t\|_{-1,p}^p + \|\beps(\bu^N_t + \alpha\bu^N) \|_{p^\prime}^{p^\prime}\,\mathrm{d}s\Big].
\end{aligned}
\end{equation}
However, we have that
\begin{align*}
|\bbT^N(t) |^p = |F^{-1}(\beps(\bu^N_t + \alpha\bu^N)(t))|^{p} = |\beps(\bu^N_t + \alpha\bu^N) |^{p^\prime}.
\end{align*}
Using this on the right-hand side of (\ref{p:equ48}) and applying Young's inequality, we deduce that
\begin{align*}
&\int_0^t \int_\Omega |\bu^N_{tt}|^2 \,\mathrm{d}x \,\mathrm{d}s + \int_\Omega |\bbT^N(t) |^p + |\beps(\bu^N_t + \alpha\bu^N)(t) |^{p^\prime}\,\mathrm{d}x
\\
&\leq 
C\Big[ \|\bu_{1,N}\|_2^2 +  \|\beps(\bu_{1,N} + \alpha\bu_{0,N}) \|_{p^\prime}^{p^\prime} + \|l\|_{W^{1,\infty}(W^{-1,p})}^p + \int_0^t \|l_t\|_{-1,p}^p + \|\beps(\bu^N_t + \alpha\bu^n) \|_{p^\prime}^{p^\prime} \,\mathrm{d}s\Big]
\end{align*}
Using Lemma \ref{p:lem7} and the properties of the approximations of the initial data (in particular, the boundedness in both \( L^2(\Omega)^d \) and \( W^{1,p^\prime}_D(\Omega)^d\) irrespective of the value of \(p\)), the right-hand side can be bounded above independent of \( N\) so the required result follows. 
\end{proof}


\begin{lemma}\label{p:lem9}
Let the assumptions of Lemma \ref{p:lem4} hold. Let \((\bu^N, \bbT^N, v^N) \) be the solution triple constructed there. Suppose additionally that the compatibility condition (\ref{p:equ5}) holds. There exists a constant \( C\), independent of \( N \), such that
\begin{align*}
\sup_{t\in [0, T]}\|v^N(t) \|_{1,2}\leq C.
\end{align*}
\end{lemma}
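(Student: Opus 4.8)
The plan is to read the bound straight off the energy-dissipation equality (\ref{p:equ23}) of Proposition \ref{p:prop6}. Fix $t \in [0,T]$ and $N \in \mathbb{N}$. Recalling $\mathcal{F}(t;\bu,\bw,v) = \mathcal{K}(\bw) + \mathcal{E}(\bu,v) + \mathcal{H}(v) - \langle l(t),\bu\rangle$ and rearranging (\ref{p:equ23}),
\begin{equation*}
\mathcal{K}(\bu^N_t(t)) + \mathcal{E}(\bu^N(t),v^N(t)) + \mathcal{H}(v^N(t)) + \mathcal{D}^N(t) = \mathcal{F}(0;\bu_{0,N},\bu_{1,N},v^N_0) + \langle l(t),\bu^N(t)\rangle - \int_0^t \langle l_t,\bu^N\rangle\,\mathrm{d}s,
\end{equation*}
where $\mathcal{D}^N(t) := \int_0^t\int_\Omega b(v^N)\big[F^{-1}(\beps(\bu^N_t + \alpha\bu^N)) - F^{-1}(\beps(\alpha\bu^N))\big]:\beps(\bu^N_t)\,\mathrm{d}x\,\mathrm{d}s$. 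First I would observe that $\mathcal{D}^N(t) \geq 0$: writing $\beps(\bu^N_t) = \beps(\bu^N_t + \alpha\bu^N) - \beps(\alpha\bu^N)$, the integrand becomes $b(v^N)$ times $\big[F^{-1}(A) - F^{-1}(B)\big]:(A-B)$ with $A = \beps(\bu^N_t + \alpha\bu^N)$ and $B = \beps(\alpha\bu^N)$, which is nonnegative since $b \geq \eta > 0$ and $F^{-1}$ is monotone ($F = \partial\varphi$ is monotone because $\varphi$ is convex, and $F$ is a bijection, so its inverse is monotone). Since also $\mathcal{K}(\bu^N_t(t)) \geq 0$ and $\mathcal{E}(\bu^N(t),v^N(t)) \geq 0$ (as $b \geq 0$ and $\varphi^* \geq 0$), these three terms are discarded from the left, leaving
\begin{equation*}
\mathcal{H}(v^N(t)) \leq \mathcal{F}(0;\bu_{0,N},\bu_{1,N},v^N_0) + \langle l(t),\bu^N(t)\rangle - \int_0^t \langle l_t,\bu^N\rangle\,\mathrm{d}s.
\end{equation*}

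The next step is to bound the right-hand side uniformly in $N$ and $t$. In $\mathcal{F}(0;\bu_{0,N},\bu_{1,N},v^N_0) = \mathcal{K}(\bu_{1,N}) + \mathcal{E}(\bu_{0,N},v^N_0) + \mathcal{H}(v^N_0) - \langle l(0),\bu_{0,N}\rangle$, the term $\mathcal{K}(\bu_{1,N})$ is controlled by the $L^2$-boundedness of the approximate data, $\mathcal{H}(v^N_0)$ by Proposition \ref{p:prop1} (more precisely by the uniform $H^1$-bound established in its proof, which uses the compatibility condition (\ref{p:equ5}) assumed here), and $|\langle l(0),\bu_{0,N}\rangle| \leq \|l(0)\|_{-1,p}\|\bu_{0,N}\|_{1,p^\prime}$ by the $W^{1,p^\prime}_D$-boundedness of the approximate data. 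For the elastic part, since $v^N_0 \in [0,1]$ we have $b(v^N_0) \leq b(1)$, and the $p^\prime$-growth of $\varphi^*$ gives $\mathcal{E}(\bu_{0,N},v^N_0) \leq \tfrac{b(1)}{\alpha}\int_\Omega \varphi^*(\beps(\alpha\bu_{0,N}))\,\mathrm{d}x \leq C\|\beps(\bu_{0,N})\|_{p^\prime}^{p^\prime} \leq C$, again by the initial data approximation. The two remaining pairings involving $\bu^N$ are handled by Lemma \ref{p:lem7} together with $l \in C^1([0,T];W^{-1,p}_D(\Omega)^d)$: namely $|\langle l(t),\bu^N(t)\rangle| \leq \|l\|_{C([0,T];W^{-1,p}_D)}\sup_s\|\bu^N(s)\|_{1,p^\prime} \leq C$ and $\int_0^t |\langle l_t,\bu^N\rangle|\,\mathrm{d}s \leq T\|l_t\|_{C([0,T];W^{-1,p}_D)}\sup_s\|\bu^N(s)\|_{1,p^\prime} \leq C$. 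Hence $\sup_{t\in[0,T]}\mathcal{H}(v^N(t)) \leq C$ with $C$ independent of $N$.

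It remains to convert this into the stated $H^1$-estimate. Since $v^N(t) \in [0,1]$ a.e. in $\Omega$, we have $\|v^N(t)\|_2 \leq |\Omega|^{1/2}$ trivially, while $\epsilon\|\nabla v^N(t)\|_2^2 \leq \mathcal{H}(v^N(t)) \leq C$; adding these gives $\sup_{t\in[0,T]}\|v^N(t)\|_{1,2} \leq C$. The argument is essentially routine once (\ref{p:equ23}) is in hand; the only points that need a little care are the sign of $\mathcal{D}^N(t)$ (monotonicity of $F^{-1}$) and the $N$-uniform control of $\mathcal{E}(\bu_{0,N},v^N_0)$ via the growth of $\varphi^*$ and the uniform bounds on the approximate initial data. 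There is no genuine obstacle here — this lemma is simply the $N$-uniform analogue, for the phase-field variable, of the $M$-uniform estimate of Lemma \ref{p:lem1}.
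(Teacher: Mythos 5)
Your proposal is correct and follows essentially the same route as the paper: both drop the non-negative terms $\mathcal{K}$, $\mathcal{E}$ and the dissipation integral from the energy-dissipation equality of Proposition \ref{p:prop6}, bound the initial energy via the uniform bounds on $(\bu_{0,N})_N$, $(\bu_{1,N})_N$ and $(v^N_0)_N$, control the force pairings through Lemma \ref{p:lem7}, and recover the $H^1$ bound from the definition of $\mathcal{H}$. No gaps.
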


\begin{proof}
By the energy-dissipation equality, we have
\begin{align*}
\mathcal{H}(v^N(t)) 
&\leq \mathcal{F} (t; \bu^N(t), \bu^N_t(t), v^N(t)) + \langle l(t),\bu^N(t) \rangle
\\&\quad
+\int_0^t \int_\Omega b(v^N) \big[ F^{-1}( \beps(\bu^N_t + \alpha\bu^N)) - F^{-1}(\beps(\alpha\bu^N)) \big] :\beps(\bu^N_t) \,\mathrm{d}x\,\mathrm{d}s
\\
&=  \langle l(t),\bu^N(t) \rangle - \int_0^t \langle l_t,\bu^N\rangle\,\mathrm{d}s   + \mathcal{F}(0; \bu_{0,N}, \bu_{1,N}, v^N_0)
\\
&\leq C\Big[ \|l(t) \|_{-1,p}\|\beps(\bu^N(t) ) \|_{p^\prime} + \|\bu_{1,N}\|_2^2 + \|l(0) \|_{-1,p}\|\beps(\bu_{0,N}) \|_{p^\prime} 
\\&\quad
+ \int_\Omega b(v^N_0) \varphi^*(\beps(\alpha\bu_{0,N}))) \,\mathrm{d}x + \|v^N_0\|_{1,2}^2 + \int_0^t \|l_t(s)\|_{-1,p}\|\beps(\bu^N(s))\|_{p^\prime}\,\mathrm{d}s\Big].
\end{align*}
Using the boundedness of \((v^N_0)_N \) in \( H^1_D(\Omega)\) and \( (\bu_{0,N})_N \), \((\bu_{1,N})_N \) in \( L^2(\Omega)^d\cap W^{1,p^\prime}_D(\Omega)^d\), alongside the bounds in Lemma \ref{p:lem7}, the right-hand side can be bounded above independent of \( N \) and \( t\in [0, T]\). Taking the supremum with respect to the time variable, we use the definition of the functional \( \mathcal{H}\) to yield the stated result. 
\end{proof}

\subsection{Limit in the  case \( p \in (1,2]\)}\label{sec:psmall}
To replicate the ideas of Lemma \ref{p:lem2}, we first need to show that testing in the variational form of the minimisation problem against \( v^N_t\) causes the right-hand side of (\ref{p:equ22}) to vanish. Namely, we need to prove the time continuous analogue of Corollary \ref{p:cor1}. 

\begin{lemma}\label{p:lem10}
Let the assumptions of Lemma \ref{p:lem4} hold and let  \((\bu^N, \bbT^N, v^N) \) be the solution triple constructed there. For a.e. \( t\in (0, T) \), we have
\begin{equation}\label{p:equ43}
\big[ \partial_v\mathcal{E}(\bu^N(t), v^N(t)) + \mathcal{H}^\prime(v^N(t)) \big](v^N_t(t)) = 0. 
\end{equation}
\end{lemma}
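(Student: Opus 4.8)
The plan is to prove (\ref{p:equ43}) by establishing the two inequalities ``$\geq 0$'' and ``$\leq 0$'' separately. The first is the precise continuous analogue of one half of the proof of Corollary \ref{p:cor1} and follows at once from the unilateral minimality (\ref{p:equ20}); the second is the genuinely new point and will be read off from the energy-dissipation equality of Proposition \ref{p:prop6} together with the weak elastodynamic equation (\ref{p:equ18}). It is worth noting why the discrete argument does not transfer verbatim: after passing to the limit $M\to\infty$ the obstacle in (\ref{p:equ20}) is $v\leq v^N(t)$ at the \emph{same} time $t$, so $v^N(t)$ lies on the boundary of the admissible set and may only be perturbed downwards; there is no continuous substitute for testing against $2v^\beta_m-v^\beta_{m-1}$, and this is exactly why the energy balance must be invoked for the lower bound (which, as it turns out, actually delivers the full equality at once).

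For ``$\geq 0$'', fix a Lebesgue point $t\in(0,T)$ of $v^N_t$. Since $v^N\in W^{1,\infty}(0,T;H^1(\Omega))$ and $v^N(s)\in H^1_{D+1}(\Omega)$ for every $s$, the difference quotients $h^{-1}(v^N(t+h)-v^N(t))$ belong to the closed subspace $H^1_D(\Omega)$ and converge there to $v^N_t(t)$, so $v^N_t(t)\in H^1_D(\Omega)$; moreover $v^N_t(t)\leq 0$ a.e.\ in $\Omega$ by Lemma \ref{p:lem4}. Hence $v^N_t(t)$ is an admissible test function in (\ref{p:equ22}), and inserting $\chi=v^N_t(t)$ there yields $\big[\partial_v\mathcal{E}(\bu^N(t),v^N(t))+\mathcal{H}^\prime(v^N(t))\big](v^N_t(t))\geq 0$.

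For the reverse inequality I would differentiate (\ref{p:equ23}) in $t$. One first checks that $t\mapsto\mathcal{F}(t;\bu^N(t),\bu^N_t(t),v^N(t))$ is absolutely continuous on $[0,T]$: $\mathcal{K}(\bu^N_t)$ is Lipschitz because $\bu^N_t\in W^{1,\infty}(0,T;L^2(\Omega)^d)$; $\mathcal{H}(v^N)$ is quadratic in $v^N\in W^{1,\infty}(0,T;H^1(\Omega))$; $\langle l(t),\bu^N(t)\rangle$ is Lipschitz since $l\in C^1$; and $\mathcal{E}(\bu^N(t),v^N(t))=\int_\Omega\alpha^{-1}b(v^N)\varphi^*(\beps(\alpha\bu^N))\,\mathrm{d}x$ is Lipschitz because $b(v^N)=(v^N)^2+\eta$ is Lipschitz into $L^\infty(\Omega)$, because $\bu^N\in W^{1,\infty}(0,T;V_N)\hookrightarrow W^{1,\infty}(0,T;C^1(\overline{\Omega})^d)$ keeps $\beps(\alpha\bu^N)$ in a fixed ball, and because $\varphi^*(\bbS)=|\bbS|^{p^\prime}/p^\prime$ is $C^1$ on bounded sets. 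The two integral terms in (\ref{p:equ23}) are plainly absolutely continuous. Thus, for a.e.\ $t$, one may differentiate (\ref{p:equ23}) and apply the chain rule to the $\mathcal{E}$-term, using that the derivative of $\mathcal{E}(\,\cdot\,,v)$ in the direction $\bw$ equals $\int_\Omega b(v)F^{-1}(\beps(\alpha\bu)):\beps(\bw)\,\mathrm{d}x$ and that $\tfrac{\mathrm{d}}{\mathrm{d}t}\langle l(t),\bu^N(t)\rangle=\langle l_t,\bu^N\rangle+\langle l,\bu^N_t\rangle$. After the two occurrences of $\langle l_t,\bu^N\rangle$ cancel and the two $F^{-1}(\beps(\alpha\bu^N))$ contributions combine, this gives
\begin{align*}
&\int_\Omega\bu^N_{tt}\cdot\bu^N_t\,\mathrm{d}x+\int_\Omega b(v^N)F^{-1}(\beps(\bu^N_t+\alpha\bu^N)):\beps(\bu^N_t)\,\mathrm{d}x-\langle l,\bu^N_t\rangle\\
&\qquad+\big[\partial_v\mathcal{E}(\bu^N(t),v^N(t))+\mathcal{H}^\prime(v^N(t))\big](v^N_t(t))=0.
\end{align*}
Finally, testing (\ref{p:equ18}) with $\bw=\bu^N_t(t)\in V_N$ (legitimate for a.e.\ $t$ since $\bu^N\in W^{2,\infty}(0,T;V_N)$) shows that the first three terms sum to zero, whence (\ref{p:equ43}).

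The only delicate step is the absolute continuity and chain rule for $t\mapsto\mathcal{E}(\bu^N(t),v^N(t))$, where both arguments move with $t$ inside a nonlinear integrand; I expect this to be the main obstacle to write out carefully. It is handled precisely by the high time-regularity $\bu^N\in W^{2,\infty}(0,T;V_N)$ and $v^N\in W^{1,\infty}(0,T;H^1(\Omega))\cap L^\infty(Q)$ from Lemma \ref{p:lem4}, the Sobolev embedding $V_N\hookrightarrow C^1(\overline{\Omega})^d$ (which confines the strains to a fixed ball, so that $\varphi^*$ and its derivative $F^{-1}$ act as $C^1$ maps there), and the non-degeneracy $b\geq\eta>0$; the remaining bookkeeping is routine.
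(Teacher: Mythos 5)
Your proposal is correct and its core argument — differentiating the energy-dissipation equality of Proposition \ref{p:prop6} in time, grouping the terms so that one group is killed by testing the elastodynamic equation (\ref{p:equ18}) with \( \bw = \bu^N_t(t) \), and concluding that the remaining group, which is exactly the left-hand side of (\ref{p:equ43}), vanishes — is precisely the paper's proof. The preliminary ``\(\geq 0\)'' step via the minimisation inequality is harmless but redundant, as you yourself observe, since the differentiation argument already yields the full equality.
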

\begin{proof}
Differentiating the energy-dissipation equality from Proposition \ref{p:prop6} with respect to the time variable, we obtain
\begin{align*}
0 &= \int_\Omega \bu_{tt}^N\cdot \bu_t^N \,\mathrm{d}x + \int_\Omega\frac{1}{2\epsilon} (v^N(t) - 1) v^N_t (t) + 2\epsilon \nabla v^N(t) \cdot \nabla v^N_t(t) \,\mathrm{d}x
\\
& \quad 
+ \int_\Omega \frac{b^\prime(v^N) }{\alpha}v^N_t \varphi^* (\beps(\alpha\bu^N)) + b(v^N) F^{-1} ( \beps(\alpha\bu^N)) : \beps(\bu^N_t ) \,\mathrm{d}x - \langle l_t, \bu^N\rangle - \langle l, \bu^N_t \rangle 
\\&\quad
+ \int_\Omega b(v^N) \big[ F^{-1}(\beps(\bu^N_t + \alpha\bu^N)) - F^{-1}(\beps(\alpha\bu^N)) \big] : \beps(\bu^N_t) \,\mathrm{d}x + \langle l_t, \bu^N \rangle
\\
&= \Big[ \int_\Omega \bu_{tt}^N\cdot \bu_t^N + F^{-1}(\beps(\bu^N_t + \alpha\bu^N)) : \beps(\bu^N_t ) \,\mathrm{d}x - \langle l, \bu^N_t\rangle\Big] 
\\
& \quad  + \Big[ \int_\Omega \frac{b^\prime(v^N) }{\alpha}v^N_t \varphi^* (\beps(\alpha\bu^N))  + \frac{1}{2\epsilon} (v^N(t) - 1) v^N_t (t) + 2\epsilon \nabla v^N(t) \cdot \nabla v^N_t(t) \,\mathrm{d}x\Big].
\end{align*}
The terms in the first set of square brackets on the right-hand side vanishes for a.e. \( t\in (0, T) \),  considering the elastodynamic equation with the test function \(\bu^N_t\). Hence the terms in the second set of square brackets must also vanish. However, this is exactly the left-hand side of (\ref{p:equ43}) and so we conclude the required result. 
\end{proof}

\begin{lemma}\label{p:lem11}
Let the assumptions of Lemma \ref{p:lem4} hold and let \((\bu^N, \bbT^N, v^N) \) be the solution triple constructed there. Suppose additionally that the compatibility condition (\ref{p:equ5}) holds.  There exists a constant \( C\), independent of \( N \), such that
\begin{align*}
\sup_{t\in [0, T]}\|v^N_t(t) \|_{1,2} \leq C. 
\end{align*}
\end{lemma}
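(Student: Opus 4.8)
The plan is to mimic the discrete argument of Lemma \ref{p:lem2}, now working directly with the time-continuous identities from Lemma \ref{p:lem10} rather than difference quotients. First I would differentiate the relation \eqref{p:equ43} in time, or rather exploit it at two nearby times: since for a.e. $t$ we have $[\partial_v\mathcal{E}(\bu^N(t),v^N(t)) + \mathcal{H}'(v^N(t))](v^N_t(t)) = 0$ and, by the unilateral minimality \eqref{p:equ22} tested against $\chi = v^N_t(t) \le 0$, we also have $[\partial_v\mathcal{E}(\bu^N(t),v^N(t)) + \mathcal{H}'(v^N(t))](v^N_t(s)) \ge 0$ for any $s$ with $v^N_t(s) \le v^N_t(t)$ — this is the continuous analogue of subtracting \eqref{p:equ38} from \eqref{p:equ39}. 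The cleanest route, however, is to differentiate \eqref{p:equ43} itself: formally $\partial_t$ of the left-hand side produces the quadratic form $\frac{1}{2\epsilon}\|v^N_t\|_2^2 + 2\epsilon\|\nabla v^N_t\|_2^2 + \frac{2}{\alpha}\int_\Omega |v^N_t|^2\varphi^*(\beps(\alpha\bu^N))\,\mathrm{d}x$ on one side (using $b''(v) = 2$ and the non-negativity of $\varphi^*$), balanced against a term of the form $\int_\Omega \frac{b'(v^N)}{\alpha} v^N_t\, \frac{\mathrm{d}}{\mathrm{d}t}\varphi^*(\beps(\alpha\bu^N))\,\mathrm{d}x = \int_\Omega b'(v^N) v^N_t\, F^{-1}(\beps(\alpha\bu^N)):\beps(\bu^N_t)\,\mathrm{d}x$, exactly the continuous version of the right-hand side appearing in the proof of Lemma \ref{p:lem2}.

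To make this rigorous without assuming that $t \mapsto v^N_t$ is itself differentiable, I would work with a difference quotient in time: set $w_h(t) = \frac{1}{h}(v^N(t+h) - v^N(t))$, write \eqref{p:equ43} at times $t$ and $t+h$, take the difference, and test against $w_h(t)$; this is legitimate because $v^N(t) \in H^1_{D+1}(\Omega)$ for every $t$ so $w_h(t) \in H^1_D(\Omega)$ is an admissible test function in \eqref{p:equ22}. The monotonicity $v^N_t \le 0$ (hence $v^N(t+h) \le v^N(t)$, so $w_h \le 0$) is what lets me use the one-sided inequality \eqref{p:equ22} in the direction that keeps the sign favourable, precisely as $\delta v^\beta_m \le 0$ was used in Corollary \ref{p:cor1}. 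After estimating, the bad term is controlled by Hölder's inequality as
\[
\Big| \int_\Omega b'(v^N) w_h\, \big(F^{-1}(\beps(\alpha\bu^N(t+h))) - F^{-1}(\beps(\alpha\bu^N(t)))\big):\beps(\tfrac{\bu^N(t+h)-\bu^N(t)}{h})\,\mathrm{d}x\Big|
\]
$\le C\|w_h\|_2 \big(\|\bu^N\|_{L^\infty W^{1,p'}}\big)\big(\text{difference-quotient of }\bu^N\text{ in a strong norm}\big)$, and absorbing $\|w_h\|_2$ via Young's inequality leaves $\frac{1}{4\epsilon}\|w_h\|_2^2 + C$, where the constant $C$ depends on the $N$-independent bounds on $\bu^N$, $\bu^N_t$ in $L^\infty(0,T;W^{1,p'})$ and on $\bu^N_{tt}$ in $L^2(Q)$ from Lemmas \ref{p:lem7}--\ref{p:lem8}. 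Letting $h \to 0$ and using that $\bu^N_t \in L^\infty(0,T;W^{1,p'}_D(\Omega)^d)$ with $\bu^N_{tt} \in L^2(0,T;L^2(\Omega)^d)$ gives a uniform bound on $\|w_h(t)\|_{1,2}$ that passes to the limit, yielding $\sup_t\|v^N_t(t)\|_{1,2} \le C$; the compatibility condition \eqref{p:equ5} enters via Lemma \ref{p:lem9} to control the term $\varphi^*(\beps(\alpha\bu^N))$ through the already-established bound on $\|v^N(t)\|_{1,2}$, and to give a bound at $t = 0$.

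The main obstacle I anticipate is justifying the differentiation/difference-quotient manipulation of \eqref{p:equ43} rigorously: unlike in the discrete setting, where everything is a finite sum, here I must know that $t \mapsto \varphi^*(\beps(\alpha\bu^N(t)))$ and $t \mapsto F^{-1}(\beps(\alpha\bu^N(t)))$ have enough regularity (absolute continuity in time with values in an appropriate space) for the chain-rule identity $\frac{\mathrm{d}}{\mathrm{d}t}\varphi^*(\beps(\alpha\bu^N)) = F^{-1}(\beps(\alpha\bu^N)):\beps(\bu^N_t)$ to hold, and that the test function $w_h(t)$ is an admissible competitor in \eqref{p:equ22}. For $p \in (1,2]$ we have $p' \ge 2$, so the growth of $F^{-1}$ is at most linear when $p' = 2$ and polynomial in general; since $\bu^N \in W^{1,\infty}(0,T;W^{1,p'}_D(\Omega)^d)$, the composition estimates needed are exactly the ones that fail for $p > 2$, which is why the argument is restricted to this range — this is the crucial point where the case distinction of Theorems \ref{p:thm1} and \ref{p:thm2} bites.
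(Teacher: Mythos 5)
Your overall strategy is the same as the paper's: combine the stationarity identity \eqref{p:equ43} at time $t$ with the variational inequality \eqref{p:equ22} at a nearby time, both applied to the admissible test function $v^N_t(t)\le 0$, divide by $h$, pass to the limit, and close with the $N$-independent bounds of Lemmas \ref{p:lem7}--\ref{p:lem9}. (Two small inaccuracies in the set-up: one does not ``take the difference of \eqref{p:equ43} at two times and test against $w_h$'' --- \eqref{p:equ43} carries its own test function --- rather one evaluates the equality and the inequality against the \emph{same} fixed function $v^N_t(t)$; and your displayed ``bad term'', a difference of $F^{-1}$'s multiplied by a difference quotient of $\beps(\bu^N)$, is a second-order quantity --- the fundamental theorem of calculus produces $F^{-1}$ at an intermediate strain times the difference quotient.)

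The genuine gap is in the closing estimate. After passing to the limit the term to be controlled is
\[
\int_\Omega \frac{b^\prime(1)}{\alpha}\,|v^N_t|\,\big|F^{-1}(\beps(\alpha\bu^N))\big|\,|\beps(\bu^N_t)|\,\mathrm{d}x
=\int_\Omega \frac{b^\prime(1)}{\alpha}\,|v^N_t|\,|\beps(\alpha\bu^N)|^{p^\prime-1}\,|\beps(\bu^N_t)|\,\mathrm{d}x,
\]
and your proposal to bound this by $C\|w_h\|_2$ times norms of $\bu^N$, $\bu^N_t$ and then absorb via Young's inequality into $\frac{1}{4\epsilon}\|w_h\|_2^2+C$ does not go through: Cauchy--Schwarz in the $v^N_t$ factor forces $|\beps(\bu^N)|^{p^\prime-1}|\beps(\bu^N_t)|\in L^2(\Omega)$, and a count of exponents shows this is incompatible with the only $N$-independent information available, namely $\beps(\bu^N),\beps(\bu^N_t)\in L^\infty(0,T;L^{p^\prime}(\Omega)^{d\times d})$ (the required H\"{o}lder exponents sum to $2$, not $1$). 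The paper's proof is saved precisely by the weighted term $\frac{2}{\alpha}\int_\Omega|v^N_t|^2\varphi^*(\beps(\alpha\bu^N))\,\mathrm{d}x$ that appears on the \emph{left}-hand side of the limiting inequality: one factorises the integrand as $\big(|v^N_t|\,|\beps(\alpha\bu^N)|^{p^\prime/2}\big)\cdot\big(|\beps(\alpha\bu^N)|^{(p^\prime-2)/2}|\beps(\bu^N_t)|\big)$, absorbs the square of the first factor into that weighted term (since $\varphi^*(\bbS)=|\bbS|^{p^\prime}/p^\prime$), and then applies H\"{o}lder with exponents $\frac{p^\prime}{p^\prime-2}$ and $\frac{p^\prime}{2}$ to bound the remainder by $\|\beps(\bu^N)\|_{p^\prime}^{p^\prime-2}\|\beps(\bu^N_t)\|_{p^\prime}^2$, which Lemma \ref{p:lem8} controls. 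You note that this weighted term is present but never use it; without it the lemma is not proved. (This splitting is also the precise reason the argument is confined to $p\in(1,2]$: for $p>2$ the exponent $(p^\prime-2)/2$ is negative and $\beps(\bu^N_t)$ is only in $L^{p^\prime}$ with $p^\prime<2$, so the remainder term cannot be bounded.)
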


\begin{proof}
Let \( t\in (0, T] \) and fix \( h > 0 \) sufficiently small such that \( t-h > 0 \). 
Subtracting (\ref{p:equ43}), evaluated at time \( t\), from the inequality
\begin{align*}
0 \leq \big[ \partial_v\mathcal{E}(\bu^N(t-h), v^N(t-h)) + \mathcal{H}^\prime(v^N(t-h)) \big](v^N_t(t)) ,
\end{align*}
which is a consequence of the proof of Proposition \ref{p:prop5}, we deduce that
\begin{align*}
0 & \leq \int_\Omega \Big[ \frac{b^\prime(v^N(t-h))}{\alpha}\varphi^*(\beps(\alpha\bu^N(t-h)) ) - \frac{b^\prime(v^N(t))}{\alpha}\varphi^*(\beps(\alpha\bu^N(t))) \Big] v^N_t(t) \,\mathrm{d}x
\\
&\quad
+ \frac{1}{2\epsilon} \int_\Omega \big[ (v^N(t-h) - 1) - (v^N(t) - 1) \big] v^N_t(t) \,\mathrm{d}x 
\\&\quad
+ 2\epsilon \int_\Omega \nabla \big( v^N(t-h) - v^N(t) \big) \cdot \nabla v^N(t) \,\mathrm{d}x.
\end{align*}
Using the fundamental theorem of calculus in the first integral on the right-hand side and dividing through by \( h \), it follows that
\begin{equation}\label{p:equ44}
\begin{aligned}
&\frac{1}{2\epsilon} \int_\Omega \frac{v^N(t) - v^N(t-h) }{h} v^N_t(t) \,\mathrm{d}x + 2\epsilon\int_\Omega \nabla \Big( \frac{v^N(t) - v^N(t-h)}{h}\Big) \cdot \nabla v^N_t(t) \,\mathrm{d}x
\\
&\quad
+ \frac{2}{\alpha}\int_\Omega \frac{v^N(t) - v^N(t-h)}{h}v^N_t(t) \varphi^*(\beps(\alpha\bu^N(t-h))) \,\mathrm{d}x
\\
& \leq 
\int_\Omega \int_0^1 \Big\{ \frac{b^\prime(v^N(t))}{\alpha}v^N_t(t) F^{-1}(\alpha \beps(s\bu^N(t-h) + (1-s) \bu^N(t))) 
\\&\quad
: \beps\Big( \frac{\bu^N(t-h) - \bu^N(t)}{h}\Big)\Big\}  \,\mathrm{d}s\,\mathrm{d}x.
\end{aligned}
\end{equation}
Recalling that \(\bu^N \in C^1([0, T]; V_N) \) and \( v^N \in W^{1,2}(0, T; H^1(\Omega)) \), taking the limit as \( h \rightarrow 0+ \) in (\ref{p:equ44}) yields the following, for a.e. \( t\in (0, T) \):
\begin{equation}\label{p:equ34}
\begin{aligned}
&\frac{1}{2\epsilon}\|v^N(t) \|_2^2 + 2\epsilon\|\nabla v^N_t(t) \|_2^2  + \frac{2}{\alpha}\int_\Omega | v^N_t(t) |^2 \varphi^*(\beps(\alpha\bu^N(t))) \,\mathrm{d}x
\\
&\leq \int_\Omega \frac{b^\prime(1)}{\alpha}|v^N_t(t) ||F^{-1}(\beps(\alpha\bu^N(t)))||\beps(\bu^N_t(t)) |\,\mathrm{d}x
\\
&= \int_\Omega \frac{b^\prime(1)}{\alpha}|v^N_t(t) ||\beps(\alpha\bu^N(t))|^{p^\prime-1}|\beps(\bu^N_t(t)) |\,\mathrm{d}x
\\
&= \int_\Omega \frac{b^\prime(1)}{\alpha}\Big( |v^N_t(t) ||\beps(\alpha\bu^N(t))|^{\frac{p^\prime}{2}} \Big) \cdot \Big( |\beps(\alpha\bu^N(t))|^{\frac{p^\prime-2}{2}} |\beps(\bu^N_t(t)) |\Big) \,\mathrm{d}x
\end{aligned}
\end{equation}
Applying H\"{o}lder's inequality to the final line of (\ref{p:equ34}) and absorbing one of the resulting terms into the left-hand side,  we deduce that 
\begin{align*}
&\frac{1}{2\epsilon}\|v^N(t) \|_2^2 + 2\epsilon\|\nabla v^N_t(t) \|_2^2 + \frac{1}{\alpha} \int_\Omega |v^N_t(t) |^2 \varphi^*(\beps(\alpha\bu^N(t))) \,\mathrm{d}x
\\
&\leq C\int_\Omega |\beps(\bu^N(t))|^{p^\prime-2} |\beps(\bu^N_t(t))|^{2}\,\mathrm{d}x.
\end{align*}
If \( p = 2\), we are done by the bound in Lemma \ref{p:lem8}. Otherwise, applying H\"{o}lder's inequality again but  with parameters \( q = \frac{p^\prime}{p^\prime -2}\) and \( q^\prime  = \frac{p^\prime}{2}\), valid choices because  \( p^\prime > 2\),  we obtain
\begin{equation}\label{p:equ45}
\begin{aligned}
&\frac{1}{2\epsilon}\|v^N(t) \|_2^2 + 2\epsilon\|\nabla v^N_t(t) \|_2^2 + \frac{1}{\alpha} \int_\Omega |v^N_t(t) |^2 \varphi^*(\beps(\alpha\bu^N(t))) \,\mathrm{d}x
\\ &\leq C \|\beps(\bu^N(t)) \|_{p^\prime}^{p^\prime - 2} \|\beps(\bu^N_t(t)) \|_{p^\prime}^2.
\end{aligned}
\end{equation}
By Lemma \ref{p:lem8}, the right-hand side of (\ref{p:equ45}) is uniformly bounded above, independent of \( t\) and \( N \), for a.e. \( t\in (0, T) \). Hence the stated result follows.
\end{proof}

\begin{theorem}\label{p:thm4}
Let \( p \in (1,2]\). Suppose  that \( l \in C^1([0, T]; W^{-1,p}_D(\Omega)^d) \), \( \bu_0\), \( \bu_1 \in W^{1,p^\prime}(\Omega)^d\) and \( v_0 \in H^1_{D+1}(\Omega) \) with \( v_0 \in [0, 1]\) a.e. in \(\Omega\), such that the compatibility condition (\ref{p:equ5}) holds. 
There exists a triple \((\bu, \bbT, v) \) that is a weak energy solution of   (\ref{p:equ1})--(\ref{p:equ3}) in the sense of Theorem \ref{p:thm1}. Furthermore, there exists a subsequence in \( N \), not relabelled, such that if \((\bu^N,\bbT^N, v^N) \) is the solution triple from Lemma \ref{p:lem4}, we have 
\begin{itemize}
\item \(\bu^N\rightharpoonup\bu\) weakly in \( W^{2,2}(0, T; L^2(\Omega)^d) \) and weakly-* in \( W^{1,\infty}(0, T; W^{1,p^\prime}(\Omega)^d) \),
\item \( \bbT^N \overset{\ast}{\rightharpoonup} \bbT\) weakly-* in \( L^\infty(0, T; L^p(\Omega)^{d\times d}) \), and
\item \( v^N\overset{\ast}{\rightharpoonup} v\) weakly-* in \( W^{1,\infty}(0, T; H^1(\Omega))\).
\end{itemize}
\end{theorem}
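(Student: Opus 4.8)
The plan is to send $N\to\infty$ in the triple $(\bu^N,\bbT^N,v^N)$ produced by Lemma~\ref{p:lem4}, exploiting the $N$-independent estimates of Lemmas~\ref{p:lem7}, \ref{p:lem8}, \ref{p:lem9} and \ref{p:lem11}. First I would extract, via the Banach--Alaoglu theorem and reflexivity, a subsequence along which the three stated weak-type convergences hold; these already give $\bbT\in L^\infty(0,T;L^p)$, $v(t)\in H^1_{D+1}(\Omega)$ for every $t$, $v\in[0,1]$ and (as a weak limit of nonpositive functions) $v_t\le0$ a.e.\ in $Q$. The bound on $\bu^N_{tt}$ in $L^2(0,T;L^2(\Omega)^d)$ together with the bounds on $\bu^N,\bu^N_t$ in $L^\infty(0,T;W^{1,p^\prime}(\Omega)^d)$ make $(\bu^N)_N$ and $(\bu^N_t)_N$ equicontinuous into, and pointwise relatively compact in, $L^2(\Omega)^d$, so by Arzel\`{a}--Ascoli (equivalently Aubin--Lions--Simon) $\bu^N\to\bu$ and $\bu^N_t\to\bu_t$ strongly in $C([0,T];L^2(\Omega)^d)$; the bound on $v^N_t$ in $L^\infty(0,T;H^1)$ gives similarly $v^N\to v$ strongly in $C([0,T];L^2(\Omega))$, hence a.e.\ in $Q$ along a further subsequence, and therefore $b(v^N)\to b(v)$ in $L^q(Q)$ for every finite $q$, with $\eta\le b(v^N)\le 1+\eta$.

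Passing to the limit in the Galerkin equation \eqref{p:equ18} is then routine: for fixed $\bw\in V_{N_0}$ and $\psi\in C([0,T])$ every term converges, the flux term because $b(v^N)\bbT^N\rightharpoonup b(v)\bbT$ weakly in $L^p(Q)^{d\times d}$ (product of an a.e.-convergent $L^\infty$-bounded factor and a weakly $L^p$-convergent one), and a density argument over $\bigcup_NV_N$ in $W^{1,p^\prime}_D(\Omega)^d$ delivers \eqref{p:equ6}. The decisive step is to identify $\beps(\bu_t+\alpha\bu)=F(\bbT)$. Here I would test \eqref{p:equ18} against the admissible function $(\bu^N_t+\alpha\bu^N)(t)\in V_N$ and integrate in time; since $\bu^N_{tt}\rightharpoonup\bu_{tt}$ weakly in $L^2(0,T;L^2)$ while $\bu^N_t\to\bu_t$ strongly there and $\bu^N\to\bu$ in $C([0,T];L^2)$, one has $\int_0^T\!\!\int_\Omega\bu^N_{tt}\cdot(\bu^N_t+\alpha\bu^N)\to\int_0^T\!\!\int_\Omega\bu_{tt}\cdot(\bu_t+\alpha\bu)$, and comparing with \eqref{p:equ6} tested against $\bu_t+\alpha\bu$ yields
\[
\int_Q b(v^N)\bbT^N:F(\bbT^N)=\int_Q b(v^N)\bbT^N:\beps(\bu^N_t+\alpha\bu^N)\longrightarrow\int_Q b(v)\bbT:\beps(\bu_t+\alpha\bu).
\]
Feeding this into the Minty--Browder inequality $0\le\int_Q b(v^N)\big(F(\bbT^N)-F(\bbS)\big):(\bbT^N-\bbS)$, valid by monotonicity of $F$ for every $\bbS\in L^p(Q)^{d\times d}$, letting $N\to\infty$ (using $b(v^N)\beps(\bu^N_t+\alpha\bu^N)\rightharpoonup b(v)\beps(\bu_t+\alpha\bu)$ in $L^{p^\prime}(Q)$ and the strong convergence of $b(v^N)F(\bbS)$), then choosing $\bbS=\bbT-\lambda\boldsymbol{\Phi}$ and sending $\lambda\downarrow0$, forces $\beps(\bu_t+\alpha\bu)=F(\bbT)$ a.e.\ in $Q$ because $b(v)\ge\eta>0$. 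Taking instead $\bbS=\bbT$ gives $\int_Q b(v^N)\big(F(\bbT^N)-F(\bbT)\big):(\bbT^N-\bbT)\to0$, and for $p\in(1,2]$ the quantitative monotonicity of $F$ (the pointwise bound $|\bbA-\bbB|^p\le C\big[(F(\bbA)-F(\bbB)):(\bbA-\bbB)\big]^{p/2}(|\bbA|+|\bbB|)^{p(2-p)/2}$ followed by H\"older with exponents $2/p$, $2/(2-p)$ and the $L^p(Q)$ bound on $(\bbT^N)_N$) upgrades this to $\bbT^N\to\bbT$ strongly in $L^p(Q)^{d\times d}$. Hence $\beps(\bu^N_t+\alpha\bu^N)=F(\bbT^N)\to F(\bbT)=\beps(\bu_t+\alpha\bu)$ strongly in $L^{p^\prime}(Q)$, and the memory-kernel representation $\beps(\bu^N(t))=\mathrm{e}^{-\alpha t}\beps(\bu_{0,N})+\int_0^t\mathrm{e}^{\alpha(s-t)}F(\bbT^N)\,\mathrm{d}s$ used in the proof of Lemma~\ref{p:lem7}, together with the Korn--Poincar\'e inequality, yields $\bu^N\to\bu$ strongly in $C([0,T];W^{1,p^\prime}_D(\Omega)^d)$ and $\beps(\bu^N_t)\to\beps(\bu_t)$ in $L^{p^\prime}(Q)$.

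With these strong convergences the remaining claims follow as in Propositions~\ref{p:prop5} and~\ref{p:prop6}. For the minimisation problem I would pass to the limit in the inequality \eqref{p:equ22} written for $v^N(t)$: the term $b^\prime(v^N)\varphi^*(\beps(\alpha\bu^N))$ converges strongly in $L^1(Q)$, the other terms are linear in $v^N$ and pass by weak convergence, and Lebesgue's differentiation theorem with continuity of the right-hand side in $t$ give \eqref{p:equ22} for the limit at every $t$, equivalent to \eqref{p:equ2} by strict convexity; the case $t=0$ is covered by \eqref{p:equ5} and Proposition~\ref{p:prop1}. For the energy-dissipation equality, letting $N\to\infty$ in \eqref{p:equ23} shows every term converges except $\mathcal{H}(v^N(t))$, which therefore converges too; weak lower semicontinuity of $\mathcal{H}$ then gives one inequality in \eqref{p:equ3}, and the reverse is obtained exactly as in the proof of Proposition~\ref{p:prop6} by testing \eqref{p:equ6} against $\bu_t+\alpha\bu$ and using \eqref{p:equ2} through a time-discretisation argument (this also forces $\mathcal{H}(v^N(t))\to\mathcal{H}(v(t))$, hence $v^N(t)\to v(t)$ in $H^1$ for every $t$). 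The initial conditions are read off from $\bu^N(0)=\bu_{0,N}\to\bu_0$, $\bu^N_t(0)=\bu_{1,N}\to\bu_1$, $v^N(0)=v^N_0\to v_0$ and the time continuity of $\bu$, $\bu_t$ and $v$. The main obstacle is the identification $\beps(\bu_t+\alpha\bu)=F(\bbT)$ and the subsequent strong convergence of $(\bbT^N)_N$: everything hinges on the availability of the test function $\bu^N_t+\alpha\bu^N$ in $V_N$ and on the second-order time-derivative bound of Lemma~\ref{p:lem8}, without which the limit of the dissipation pairing displayed above could not be computed.
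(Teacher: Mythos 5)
Your proposal is correct and follows essentially the same route as the paper: Aubin--Lions for the strong convergences, passage to the limit in the Galerkin equation by density of $\bigcup_N V_N$, identification of the constitutive relation by computing $\lim_N\int_Q b(v^N)\bbT^N:F(\bbT^N)$ from the tested elastodynamic equation and then Minty's trick, and finally the lower-semicontinuity/time-discretisation arguments for the two directions of the energy-dissipation equality and the minimisation problem. The only local difference is how you upgrade $\int_Q b(v^N)\big(F(\bbT^N)-F(\bbT)\big):(\bbT^N-\bbT)\to 0$ to strong $L^p(Q)$ convergence of $(\bbT^N)_N$: you use the quantitative monotonicity inequality for $|\cdot|^{p-2}(\cdot)$ together with H\"older, whereas the paper deduces convergence of the weighted norms $\int_Q b(v^N)|\bbT^N|^p\to\int_Q b(v)|\bbT|^p$ and invokes weak convergence plus convergence of norms; both are valid for $p\in(1,2]$.
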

\begin{proof}
The convergence results follow easily from Lemma \ref{p:lem7}, Lemma \ref{p:lem8}, Lemma \ref{p:lem9}, Lemma \ref{p:lem11} and standard compactness results for Bochner spaces. By the Aubin--Lions lemma, we see that  \( \bu^N\rightarrow\bu\) strongly in \(C^1([0, T]; L^{p^\prime}(\Omega)^d) \), and \(v^N\rightarrow v\) strongly in \( C([0, T]; L^2(\Omega)) \), up to a possible further subsequence that we do not relabel. 

From the elastodynamic equation (\ref{p:equ18}), for every \( \bw \in C^\infty_D(\overline{\Omega})^d\) and every \( \psi  \in C([0, T]) \), we have
\begin{align*}
\int_Q \bu_{tt}^N\cdot \big( \psi P_N\bw  \big) + b(v^N) \bbT^N:\beps(\psi P_N\bw ) \,\mathrm{d}x\,\mathrm{d}t = \int_0^T  \langle l, \psi  P_N\bw\rangle\,\mathrm{d}t.
\end{align*}
Using the weak convergence results and the strong convergence of \((P_N\bw )_N \) in   in \( W^{k,2}_D(\Omega)^d\) to \( \bw \) as \( N\rightarrow\infty \), it follows that
\begin{align*}
\int_Q \bu_{tt}\cdot\big( \psi \bw  \big) + b(v) \bbT:\beps(\psi \bw) \,\mathrm{d}x\,\mathrm{d}t = \int_0^T \psi \langle l, \bw \rangle\,\mathrm{d}t.
\end{align*}
By Lebesgue's differentiation theorem and a standard density argument, the required elastodynamic equation follows for any test function \( \bw \in W^{1,p^\prime}_D(\Omega)^d\). 
The satisfaction of the  initial conditions  follows from the usual arguments, using the approximation properties  of the data \((\bu_{0, N})_N \), \((\bu_{1,N})_N\) and \( (v^N_0)_N\). 

To show that the constitutive relationship holds, we use a variant of Minty's method. First, we notice that
\begin{align*}
&\lim_{N\rightarrow\infty}\int_Q b(v^N) \bbT^N:\beps(\bu^N_t + \alpha\bu^N) \,\mathrm{d}x\,\mathrm{d}t 
\\& = \lim_{N\rightarrow\infty}\int_0^T \Big\{ \langle l,\bu^N_t + \alpha\bu^N\rangle -\int_\Omega \bu_{tt}^N\cdot (\bu^N_t +\alpha\bu^N) \,\mathrm{d}x\Big\}\,\mathrm{d}t
\\
&= \int_0^T \Big\{ \langle l,\bu_t + \alpha\bu\rangle - \int_\Omega \bu_{tt}\cdot (\bu_t + \alpha\bu) \,\mathrm{d}x\Big\} \,\mathrm{d}t
\\
&= \int_Q b(v) \bbT:\beps(\bu_t + \alpha\bu) \,\mathrm{d}x\,\mathrm{d}t,
\end{align*}
using the strong convergence results in the transition to the second line. For an arbitrary test function \( \bbS\in L^p(Q)^{d\times d}\), it follows that
\begin{align*}
0 &\leq \lim_{N\rightarrow\infty} \int_Q b(v^N) (\bbT^N - \bbS) :(F(\bbT^N) - F(\bbS))\,\mathrm{d}x\,\mathrm{d}t
\\
&= \int_Q b(v) (\bbT - \bbS) :(\beps(\bu_t + \alpha\bu) - F(\bbS) )\,\mathrm{d}x\,\mathrm{d}t.
\end{align*}
Replacing \( \bbS \) with \(  \bbT \pm \gamma \bbU \) where \( \gamma>0 \) and \( \bbU \in L^\infty(Q)^{d\times d}\), we get
\begin{align*}
0 \leq \mp \int_Q b(v) \bbU :(\beps(\bu_t + \alpha\bu) - F(\bbT))\,\mathrm{d}x\,\mathrm{d}t, 
\end{align*}
taking the limit as \( \gamma\rightarrow 0+ \). 
Choosing \( \bbU = \frac{\beps(\bu_t + \alpha\bu) - F(\bbT)}{1 + |\beps(\bu_t + \alpha\bu) - F(\bbT)|}\) and recalling that \( b \) is strictly positive, it follows that \( \beps(\bu_t + \alpha\bu) = F(\bbT) \) pointwise a.e. in \( Q\). Also, we deduce that 
\begin{equation}\label{p:equ46}
\lim_{N\rightarrow\infty}\int_Q b(v^N) |\bbT^N|^p \,\mathrm{d}x\,\mathrm{d}t = \int_Q b(v) |\bbT|^p\,\mathrm{d}x\,\mathrm{d}t.
\end{equation}
However, the sequence \( (b(v^N)^{\frac{1}{p}}\bbT^N)_N\) converges weakly in \( L^p(Q)^{d\times d}\). The weak convergence with convergence of norms (\ref{p:equ46}) implies that \( (b(v^N)^{\frac{1}{p}}\bbT^N)_N\) converges strongly in \( L^p(Q)^{d\times d}\). Thus \( \bbT^N \rightarrow\bbT \) strongly in \( L^p(Q)^{d\times d}\), which in turn implies that \( (\beps(\bu^N_t + \alpha\bu^N))_N \)  converges strongly in \( L^{p^\prime}(Q)^{d\times d}\) to \( \beps(\bu_t + \alpha\bu) \). Reasoning as before, it follows that \((\beps(\bu^N))_N\) converges strongly to \( \beps(\bu ) \) in \( L^{\infty}(0, T; L^{p^\prime}(\Omega)^d) \), and so, we deduce the pointwise convergence of \((\beps(\bu^N))_N \) a.e. in \( Q\), up to a further subsequence in \( N \).  Pointwise convergence and strong convergence in \( L^{p^\prime}(Q)\) of \((\beps(\bu^N_t))_N\) to \( \beps(\bu_t) \) then follows. Applying Fatou's lemma with the weak lower semi-continuity of norms to the energy-dissipation equality for \((\bu^N, \bbT^N, v^N) \), we get
\begin{align*}
&\mathcal{F}(t;\bu(t), \bu_t(t) , v(t)) + \int_0^t \int_\Omega b(v) \big[ \bbT - F^{-1}(\beps(\alpha\bu)) \big]:\beps(\bu_t) \,\mathrm{d}x\,\mathrm{d}s + \int_0^t \langle l_t(s), \bu(s) \rangle\,\mathrm{d}s
\\
&\leq \liminf_{N\rightarrow\infty} \Big\{ \mathcal{F}(t;\bu^N(t), \bu^N_t(t), v^N(t)) + \int_0^t \int_\Omega b(v^N) \big[ \bbT^N - F^{-1}(\beps(\alpha\bu^N))\big]:\beps(\bu_t^N) \,\mathrm{d}x\,\mathrm{d}s
\\
&\quad
+ \int_0^t \langle l_t(s),\bu^N(s) \rangle\,\mathrm{d}s\Big\}
\\
&= \liminf_{N\rightarrow\infty}\mathcal{F}(0;  \bu_{0,N},  \bu_{1,N}, v^N_0)
\\
&= \mathcal{F}(0;\bu_0,\bu_1, v_0).
\end{align*}
The final line follows from the choice of the approximations \((\bu_{0,N})_N \) and \((\bu_{1,N})_N \), as well as Proposition \ref{p:prop1}. 
For the opposite inequality, we  reason as in the proof of Proposition \ref{p:prop6}. To do so, we only need to prove that the limiting triple satisfies the minimisation problem (\ref{p:equ2}). For this, it suffices to show that
\begin{equation}\label{p:equ33}
0 \leq \big[ \partial_v \mathcal{E}(\bu(t), v(t)) + \mathcal{H}^\prime(v(t)) \big] (\chi),
\end{equation}
for every \( \chi \in H^1_D(\Omega) \). Using    the fact that \( \varphi^*(\beps(\alpha\bu^N)) \rightarrow\varphi^*(\beps(\alpha\bu)) \) strongly in \( L^1(Q)\), by the strong convergence result for \((\beps(\alpha\bu^N))_N \),  we obtain  (\ref{p:equ33}) easily from taking the limit in (\ref{p:equ22}). Hence we conclude that \((\bu,\bbT, v) \) is a solution in the required sense because we have the weak elastodynamic equation, the minimisation problem, the energy-dissipation equality and the satisfaction of the initial data. 
\end{proof}

\subsection{Limit in the case \( p \in (2,\infty) \)}\label{sec:plarge}
In this case, we have \( p^\prime\in (1,2) \). We cannot obtain a bound on \( (v^N_t)_N\) by repeating the reasoning from Section \ref{sec:Nbounds}. In particular, the proof of Lemma \ref{p:lem11} fails. Hence, at this stage \((v^N)_N \) is at best bounded in \( L^\infty(0, T; H^1(\Omega)) \) which implies that \((v^N)_N \) converges weakly-* in \( L^\infty(0, T; H^1(\Omega))\). However, we must find the limit of the nonlinear term \((b(v^N))_N \)   to take the limit in the elastodynamic equation (\ref{p:equ18}). Hence we need at least a pointwise convergence result for the sequence \((v^N)_N \). To obtain such a result, we make use of the monotonicity of these functions with respect to the time variable. Indeed the key is the following   result   from \cite{MR4064375}. 
We combine this result with the Rellich--Kondrachov compactness theorem in order to obtain a suitable strong convergence result as stated in Corollary \ref{p:cor2}. 
Next we prove a continuity result for the limiting function constructed in  Corollary \ref{p:cor2}. This is vital to prove one direction of the energy-dissipation inequality. 
Then we  prove the  existence result for parameter values \( p > 2\).


\begin{theorem}
Let \([a,b]\subset\mathbb{R}\) and \( v^m :[a,b]\rightarrow L^2(\Omega) \), \( m \in \mathbb{N}\), a sequence of functions such that
\( 
v^m(s) \leq v^m(t)\)  in \( \Omega\)  for every \( a\leq s\leq t\leq b\) and \( m\in \mathbb{N}\). 
Suppose also that there exists a constant \( C\), independent of \(m \), such that
\( 
\|v^m(t) \|_2 \leq C\)  for every \( t\in [a,b]\) and \( m\in \mathbb{N}\).
There exists a subsequence in \(m \), not relabelled, and a function \( v:[a,b]\rightarrow L^2(\Omega) \) such that, for every \( t\in [a,b]\), we have 
\(
v^m(t) \rightharpoonup v(t)\)   in \( L^2(\Omega) \) as \( m\rightarrow\infty\).
Moreover, \( \|v(t) \|_2 \leq C\) for every \( t\in [0, T]\) and 
\( 
v(s) \leq v(t)\)   in \( \Omega\),  for every \( a\leq s\leq t\leq b\).
\end{theorem}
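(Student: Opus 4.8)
The plan is to establish this Helly-type selection principle by combining a diagonal extraction over a countable dense set of times with the order structure of $L^2(\Omega)$. First I would fix a countable dense subset $D\subset[a,b]$ with $a,b\in D$. Since $(v^m(t))_m$ is bounded in the reflexive space $L^2(\Omega)$ for each fixed $t$, a standard diagonal argument over $D$ produces a subsequence, not relabelled, along which $v^m(t)\rightharpoonup v(t)$ in $L^2(\Omega)$ for every $t\in D$, with $\|v(t)\|_2\le C$ by weak lower semicontinuity of the norm. Because the cone $\{w\in L^2(\Omega): w\ge 0 \text{ a.e.}\}$ is convex and closed, hence weakly closed, passing to the weak limit in $v^m(s)\le v^m(t)$ yields $v(s)\le v(t)$ a.e.\ whenever $s\le t$ in $D$.

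Next I would build one-sided limits and locate the jump set. For $t\in(a,b)$ and any sequence $s_k\uparrow t$ in $D$, the sequence $(v(s_k))_k$ is a.e.\ non-decreasing and bounded in $L^2(\Omega)$; such a sequence converges a.e.\ and in $L^2(\Omega)$ to its pointwise supremum, which belongs to $L^2(\Omega)$ (a routine consequence of the monotone convergence theorem), and the limit is independent of the approximating sequence by interlacing. This defines $v(t^-)\in L^2(\Omega)$ with $\|v(t^-)\|_2\le C$; defining $v(t^+)$ analogously one checks $v(t^-)\le v(t^+)$ a.e. Let $N$ be the set of $t\in(a,b)$ with $v(t^-)\ne v(t^+)$ in $L^2(\Omega)$. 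I would show $N$ is countable: the scalar function $t\mapsto \int_\Omega v(t^+)\,\mathrm{d}x$ is non-decreasing (using the monotonicity of $v$ on $D$ and interlacing of approximating sequences) and bounded by $|\Omega|^{1/2}C$, hence has at most countably many points of discontinuity; and at each $t\in N$ one has $\int_\Omega v(t^+)\,\mathrm{d}x > \int_\Omega v(t^-)\,\mathrm{d}x \ge \sup_{u<t}\int_\Omega v(u^+)\,\mathrm{d}x$, so $t$ is one of these discontinuity points.

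Then I would identify the weak limit at every time. For $t\in(a,b)$, passing to the weak limit along any weakly convergent subsequence of $(v^m(t))_m$ in the inequalities $v^m(s)\le v^m(t)\le v^m(r)$ for $s,r\in D$ with $s<t<r$, and letting $s\uparrow t$, $r\downarrow t$, shows that every weak limit point $\xi$ satisfies $v(t^-)\le\xi\le v(t^+)$ a.e. For $t\in(a,b)\setminus N$ this limit point is therefore unique, equal to $v(t):=v(t^-)=v(t^+)$, and boundedness in the reflexive space $L^2(\Omega)$ forces $v^m(t)\rightharpoonup v(t)$ for the whole subsequence. Since $N$ is countable, a second diagonal extraction over $N$ then gives a subsequence, not relabelled, along which $v^m(t)\rightharpoonup v(t)$ also for every $t\in N$, with $v(t)\in L^2(\Omega)$ the weak limit; this further extraction does not disturb the convergences already obtained on $D\cup((a,b)\setminus N)$. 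Since $[a,b]=\{a,b\}\cup((a,b)\setminus N)\cup N$ and $a,b\in D$, we obtain $v^m(t)\rightharpoonup v(t)$ for every $t\in[a,b]$. Finally, weak lower semicontinuity of the norm gives $\|v(t)\|_2\le\liminf_m\|v^m(t)\|_2\le C$, and weak closedness of the nonnegative cone applied once more to $v^m(s)\le v^m(t)$ gives $v(s)\le v(t)$ a.e.\ for $s\le t$.

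I expect the main obstacle to be the countability of the jump set $N$: one must package the $L^2(\Omega)$-valued one-sided limits $v(t^\pm)$ carefully and reduce the count of their discontinuities to that of a single scalar non-decreasing function. The two-stage diagonal argument (first over the dense set $D$, then over the countable set $N$) is then essentially bookkeeping, and the auxiliary fact that an a.e.-monotone, $L^2(\Omega)$-bounded sequence converges in $L^2(\Omega)$ is a direct application of the monotone convergence theorem.
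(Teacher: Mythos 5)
Your proof is correct. The paper does not actually prove this theorem---it is imported verbatim from the cited rate-dependent phase-field reference---but your argument is precisely the standard Helly-type selection proof behind it: diagonal extraction over a countable dense set of times, weak closedness of the order cone to transfer monotonicity to the limit, one-sided limits $v(t^\pm)$, reduction of the jump set to the (countable) discontinuity set of the scalar non-decreasing map $t\mapsto\int_\Omega v(t^+)\,\mathrm{d}x$, squeezing every weak limit point between $v(t^-)$ and $v(t^+)$, and a second diagonal extraction over the jump set. The one step you label ``routine'' does go through as claimed, though it is most cleanly done by applying monotone convergence to $(v(s_k)-v(s_1))^2$ to place the pointwise supremum in $L^2(\Omega)$ and then dominated convergence for the norm convergence; this is a presentational detail, not a gap.
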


\begin{corollary}\label{p:cor2}
Let \((v^m)_{m\geq 1}\) be a sequence of functions defined on  \([a,b]\) with values in \( H^1(\Omega) \) such that
\( 
v^m(s) \leq v^m(t) \)  in \( \Omega\) for every \( a\leq s\leq t\leq b\)  and \( m\in \mathbb{N}\). 
Suppose also  that there exists a constant \( C\) independent of \(m \) such that
\( 
\|v^m(t) \|_{1,2} \leq C\) for every \( t\in [a,b]\) and \( m\in \mathbb{N}\).
There exists a subsequence in \( m \), not relabelled, and a function \( v:[a,b]\rightarrow H^1(\Omega) \) such that, for every \( t\in [a,b]\), 
\( 
v^m(t) \rightharpoonup v(t) \) weakly in \(H^1(\Omega)\)  and strongly in \( L^2(\Omega) \). 
Furthermore, \( \|v(t) \|_{1,2}\leq C \) for every \( t\in [a,b]\) and 
\(
v(s) \leq v(t)\)   in \( \Omega\) for every \( a\leq s\leq t\leq b\). 
\end{corollary}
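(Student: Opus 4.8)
The plan is to bootstrap the weak $L^2$-valued selection furnished by the preceding Theorem up to the $H^1$-valued statement, combining the Rellich--Kondrachov theorem with the standard subsequence principle and \emph{without} any further extraction of subsequences. The only genuine point of care is not to attempt Rellich--Kondrachov first and then stitch together $t$-dependent subsequences, since that would require a diagonal argument over a countable dense subset of $[a,b]$ combined with the monotonicity hypothesis — in effect re-proving the cited Theorem. By fixing once and for all the single subsequence that Theorem supplies, the remaining work is merely to upgrade the convergence pointwise in $t$, which is immediate.

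\textbf{Step 1.} Since $H^1(\Omega)$ embeds continuously into $L^2(\Omega)$, each $v^m$ may be viewed as a map $[a,b]\to L^2(\Omega)$ satisfying $\|v^m(t)\|_2 \le \|v^m(t)\|_{1,2}\le C$ for all $t\in[a,b]$ and $m\in\mathbb{N}$, and the hypothesis $v^m(s)\le v^m(t)$ for $a\le s\le t\le b$ is preserved. Applying the preceding Theorem produces a single subsequence, not relabelled, and a function $v:[a,b]\to L^2(\Omega)$ such that $v^m(t)\rightharpoonup v(t)$ weakly in $L^2(\Omega)$ for every $t\in[a,b]$, with $\|v(t)\|_2\le C$ and $v(s)\le v(t)$ in $\Omega$ whenever $a\le s\le t\le b$. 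The monotonicity of the limit $v$ and the $L^2$ bound are thus already in hand; only the mode of convergence at each fixed $t$ remains to be improved, and this can be done along this same subsequence.

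\textbf{Step 2 (weak $H^1$ convergence at each $t$).} Fix $t\in[a,b]$. The sequence $(v^m(t))_m$ is bounded in the reflexive Hilbert space $H^1(\Omega)$, so every subsequence admits a further subsequence converging weakly in $H^1(\Omega)$ to some $w$. Since the embedding $H^1(\Omega)\hookrightarrow L^2(\Omega)$ is linear and continuous, it is weak-to-weak continuous, so this sub-subsequence also converges weakly in $L^2(\Omega)$, which forces $w=v(t)$ by uniqueness of weak limits together with Step 1. As every weakly convergent subsequence of the bounded sequence $(v^m(t))_m$ has the same limit $v(t)$, the whole sequence satisfies $v^m(t)\rightharpoonup v(t)$ weakly in $H^1(\Omega)$; in particular $v(t)\in H^1(\Omega)$, and weak lower semicontinuity of the norm gives $\|v(t)\|_{1,2}\le\liminf_{m\to\infty}\|v^m(t)\|_{1,2}\le C$.

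\textbf{Step 3 (strong $L^2$ convergence at each $t$).} Fix $t\in[a,b]$ once more. Because $\Omega$ is bounded with Lipschitz boundary, the Rellich--Kondrachov theorem guarantees that the embedding $H^1(\Omega)\hookrightarrow L^2(\Omega)$ is compact, so the $H^1$-bounded sequence $(v^m(t))_m$ is precompact in $L^2(\Omega)$. Hence any subsequence has a further subsequence converging strongly in $L^2(\Omega)$, and its strong limit necessarily equals the weak limit $v(t)$ identified in Step 1. Invoking the subsequence principle once more, the entire sequence converges: $v^m(t)\to v(t)$ strongly in $L^2(\Omega)$. Combined with Steps 1 and 2, this establishes all the asserted properties of $v$, namely that $v^m(t)\rightharpoonup v(t)$ weakly in $H^1(\Omega)$ and strongly in $L^2(\Omega)$ for every $t\in[a,b]$, that $\|v(t)\|_{1,2}\le C$, and that $v(s)\le v(t)$ in $\Omega$ whenever $a\le s\le t\le b$.
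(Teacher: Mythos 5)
Your proof is correct and follows exactly the route the paper intends: the corollary is stated without proof, with the surrounding text saying only that one combines the preceding selection theorem with the Rellich--Kondrachov compactness theorem, and your Steps 1--3 fill in that argument precisely (single subsequence from the theorem, then the subsequence principle at each fixed $t$ for weak $H^1$ and strong $L^2$ convergence). No gaps.
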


\begin{proposition}\label{p:prop7}
Let \((v^m)_m \) be the subsequence constructed in Corollary \ref{p:cor2}, with limiting function \( v\). Then \( v\) is almost everywhere continuous as a map from \([a,b]\) into \( L^2(\Omega) \). 
\end{proposition}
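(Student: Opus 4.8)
The plan is to exploit the time-monotonicity of \( v \) together with the uniform \( H^1(\Omega) \)-bound supplied by Corollary~\ref{p:cor2}, reducing the assertion to the classical fact that a bounded, monotone, real-valued function of one variable has at most countably many points of discontinuity.

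First I would reduce to scalar test functions. Let \( \{\psi_k\}_{k\geq 1}\subset L^2(\Omega) \) be a countable family of \emph{non-negative} functions whose linear span is dense in \( L^2(\Omega) \) (for instance the characteristic functions of balls with rational centre and radius, intersected with \( \Omega \)). For each \( k \) set \( g_k(t):=\int_\Omega v(t)\psi_k\,\mathrm{d}x \). Since \( v(s)\leq v(t) \) in \( \Omega \) whenever \( s\leq t \) and \( \psi_k\geq 0 \), the function \( g_k \) is non-decreasing on \( [a,b] \), and \( |g_k(t)|\leq C\|\psi_k\|_2 \) by the bound \( \|v(t)\|_{1,2}\leq C \). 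Hence each \( g_k \) is continuous on \( [a,b] \) outside an at most countable set \( D_k \), so that \( D:=\bigcup_{k\geq 1}D_k \) is at most countable, in particular Lebesgue-null.

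Next I would prove that \( v \) is continuous at every \( t_0\in[a,b]\setminus D \). Fix such a \( t_0 \) and an arbitrary sequence \( t_n\to t_0 \). By construction \( g_k(t_n)\to g_k(t_0) \) for every \( k \); since \( \sup_n\|v(t_n)\|_2\leq C \) and the linear span of \( \{\psi_k\} \) is dense in \( L^2(\Omega) \), a routine approximation argument upgrades this to \( \int_\Omega v(t_n)\psi\,\mathrm{d}x\to\int_\Omega v(t_0)\psi\,\mathrm{d}x \) for every \( \psi\in L^2(\Omega) \), i.e.\ \( v(t_n)\rightharpoonup v(t_0) \) weakly in \( L^2(\Omega) \). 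Now \( (v(t_n))_n \) is bounded in \( H^1(\Omega) \), hence relatively compact in \( L^2(\Omega) \) by the Rellich--Kondrachov theorem (valid since \( \Omega \) is bounded with Lipschitz boundary); any \( L^2(\Omega) \)-convergent subsequence must have limit equal to its weak \( L^2(\Omega) \)-limit, namely \( v(t_0) \). Consequently the whole sequence satisfies \( v(t_n)\to v(t_0) \) strongly in \( L^2(\Omega) \), which gives continuity of \( v \) at \( t_0 \). As \( D \) is null, \( v \) is almost everywhere continuous as a map from \( [a,b] \) into \( L^2(\Omega) \).

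I expect the only genuinely delicate point to be the passage from weak to strong convergence in \( L^2(\Omega) \): this is precisely where the \( H^1(\Omega) \)-bound threaded through Corollary~\ref{p:cor2} is indispensable, via the compact Sobolev embedding \( H^1(\Omega)\hookrightarrow\hookrightarrow L^2(\Omega) \). Everything else — monotonicity of each \( g_k \), countability of \( D \), and the density argument establishing weak \( L^2 \)-convergence — is standard.
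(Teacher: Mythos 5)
Your proof is correct, but it follows a genuinely different route from the paper's. The paper first establishes that the one-sided limits \( v(t+) \) and \( v(t-) \) exist in \( L^2(\Omega) \) at every interior point, using the uniform \( H^1(\Omega) \) bound, Rellich compactness and a comparison of norms that exploits the time-monotonicity together with the sign of \( v \); it then observes that \( t\mapsto \|v(t)\|_2 \) is a bounded monotone scalar function, so the set where \( v(t+)\neq v(t-) \) is countable. You instead push the monotonicity onto a countable family of non-negative test functions with dense span, obtaining countably many bounded monotone real functions \( g_k \), and take the countable union \( D \) of their discontinuity sets; off \( D \), weak \( L^2(\Omega) \) continuity follows from the uniform bound plus density, and is upgraded to strong continuity by Rellich and the subsequence principle, exactly the compactness mechanism the paper also uses. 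Your version cleanly separates the two ingredients (scalar monotonicity to locate the exceptional set, compactness to upgrade the mode of convergence) and sidesteps both the existence of one-sided limits and the slightly delicate step relating \( \|v(t+)-v(t-)\|_2 \) to the jump of \( \|v(\cdot)\|_2 \), which implicitly relies on \( v\geq 0 \); the paper's version yields the extra structural information that left and right limits exist at \emph{every} point. Both arguments deliver a countable, hence null, set of discontinuities, which is all that Theorem \ref{p:thm6} requires.
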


\begin{proof}
Fix a time \( t\in (a,b) \) and, for \( i \in \{1,2\}\), let \((\delta_{n,i})_{n\geq 1}\) be a sequence in \((0, \infty) \) such that \( t + \delta_{n,i}\in (a,b]\) for every \( n \) and \( \delta_{n,i}\rightarrow 0 + \) as \( n \rightarrow\infty\). The sequence \((v(t + \delta_{n,i}))_{n}\) is bounded in \( H^1(\Omega) \). Hence, up to a subsequence that we do not relabel, we have that \( (v(t + \delta_{n,i}) )_n \) converges weakly in \( H^1(\Omega) \) and strongly in \( L^2(\Omega) \) to a limit that we denote by \( \tilde{w}_i \). Using the strong convergence result and the monotonicity assumption, for every \( n \), we have
\begin{align*}
\|v(t + \delta_{n,1}) \|_2 \geq \lim_{m\rightarrow\infty} \|v(t + \delta_{m,2})\|_2 = \|\tilde{w}_2\|_2. 
\end{align*}
Letting \( n \rightarrow\infty\), we deduce that \( \|\tilde{w}_1\|_2 \geq \|\tilde{w}_2\|_2\). By symmetry, we deduce that \( \|\tilde{w}_1\|_2 = \|\tilde{w}_2\|_2\). We conclude that every subsequence of \((v(t + \delta))_{\delta> 0}\) converges to the same limit as \( \delta\rightarrow 0+\). Hence the upper limit \(v(t+) \) exists for every \( t\in (a,b) \). By analogous reasoning, the lower limit \(v(t-) \) also exists. 

By monotonicity, the function \( v\) is discontinuous at the point \( t\) in the case that the upper and lower limits do not coincide. Indeed, we may write
\begin{align*}
\{t\in (a,b) \,:\, v\text{ is not continuous at }t\} &= \cup_{\epsilon>0} \{ t\in (a,b) \,:\, \|v(t+) - v(t-) \|_2 > \epsilon\}
\\
&= \cup_{n\geq 1}\Big\{ t\in (a,b) \,:\, \|v(t+ ) -v(t-)\|_2 > \frac{1}{n}\Big\}. 
\end{align*}
By monotonicity, we know that
\begin{align*}
\|v(a) \|_2 \leq \|v(t-) \|_2 \leq \|v(t+) \|_2 \leq \|v(b) \|_2. 
\end{align*}
Hence, for every \( n\), the set \( \{ t\in (a,b) \,:\, \|v(t+ ) -v(t-)\|_2 > \frac{1}{n}\}\) must be finite. Hence the set of discontinuity points is countable and consequently \( v\) is continuous almost everywhere on \((a,b) \) as a map into \( L^2(\Omega) \). 
\end{proof}


\begin{theorem}\label{p:thm6}
Let \( p \in (2,\infty)\). Suppose  \( l \in C^1([0, T]; W^{-1,p}_D(\Omega)^d) \), \( \bu_0\), \( \bu_1 \in W^{1,p^\prime}_D(\Omega)^d \cap L^2(\Omega)^d \) and \( v_0 \in H^1_{D+1}(\Omega ) \) with \( v_0 \in [0, 1]\) a.e. in \( \Omega\),  such that the compatibility condition (\ref{p:equ5}) holds.  
There exists a weak energy solution  \((\bu, \bbT, v) \) of (\ref{p:equ1})--(\ref{p:equ3}) in the sense of  Theorem \ref{p:thm2}. Furthermore, there exists a subsequence in \( N \), not relabelled, such that if \((\bu^N, \bbT^N , v^N) \) is the solution triple from Lemma \ref{p:lem4},  the following convergence results hold:
\begin{itemize}
\item \( \bu^N \rightharpoonup \bu\) weakly in \(W^{2,2}(0,T; L^2(\Omega)^d) \) and weakly-* in \( W^{1,\infty}(0, T; W^{1,p^\prime}(\Omega)^d) \);
\item \( \bbT^N \overset{\ast}{\rightharpoonup}\bbT \) weakly-* in \( L^\infty(0, T; L^p(\Omega)^{d\times d}) \);
\item \( v^N(t) \rightarrow v(t) \) strongly in \( L^2(\Omega) \) and weakly in \( H^1(\Omega) \) for every \( t\in [0, T]\).
\end{itemize}
\end{theorem}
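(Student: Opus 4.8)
The plan is to run the same argument as in the proof of Theorem \ref{p:thm4}, with the uniform bound on $(v^N_t)_N$ --- which is no longer available, the proof of Lemma \ref{p:lem11} failing for $p^\prime<2$ --- replaced by the monotonicity-based compactness of Corollary \ref{p:cor2} together with the regularity of Proposition \ref{p:prop7}. The price to pay is that several statements that held at every $t$ when $p\in(1,2]$ will here hold only at a.e.\ $t$, which is exactly the difference between the conclusions of Theorem \ref{p:thm1} and Theorem \ref{p:thm2}.

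First I would collect the $N$-independent estimates: Lemmas \ref{p:lem7} and \ref{p:lem8} bound $(\bu^N)_N$ in $W^{2,2}(0,T;L^2(\Omega)^d)\cap W^{1,\infty}(0,T;W^{1,p^\prime}_D(\Omega)^d)$ and $(\bbT^N)_N$ in $L^\infty(0,T;L^p(\Omega)^{d\times d})$, while Lemma \ref{p:lem9} bounds $(v^N)_N$ in $L^\infty(0,T;H^1(\Omega))$; weak and weak-$*$ compactness of Bochner spaces then yields a subsequence along which the three stated limits $\bu$, $\bbT$, $v$ exist. Since each $v^N$ is nonincreasing in $t$ (Lemma \ref{p:lem4}) and $(v^N(t))_N$ is uniformly bounded in $H^1(\Omega)$ for each $t$, Corollary \ref{p:cor2} refines this to $v^N(t)\rightharpoonup v(t)$ in $H^1(\Omega)$ and $v^N(t)\to v(t)$ in $L^2(\Omega)$ for \emph{every} $t\in[0,T]$, with $v$ nonincreasing in $t$, and Proposition \ref{p:prop7} gives that $v$ is a.e.\ continuous from $[0,T]$ into $L^2(\Omega)$. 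Because $0\le v^N\le 1$, dominated convergence upgrades this to $v^N\to v$ in $L^q(Q)$ for every finite $q$, hence a.e.\ in $Q$ along a further subsequence; in particular $b(v^N)\to b(v)$ a.e.\ in $Q$ and boundedly. Finally, the Aubin--Lions lemma gives $\bu^N\to\bu$ strongly in $C^1([0,T];L^{p^\prime}(\Omega)^d)$, exactly as in Theorem \ref{p:thm4}.

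Next I would pass to the limit in the Galerkin elastodynamic equation \eqref{p:equ18} tested against $\psi P_N\bw$ with $\psi\in C([0,T])$ and $\bw\in C^\infty_D(\overline{\Omega})^d$: the only nonlinear term $b(v^N)\bbT^N$ converges weakly in $L^p(Q)^{d\times d}$ to $b(v)\bbT$ by the a.e.\ boundedly convergent $b(v^N)$ paired against the weakly convergent $\bbT^N$ (via Egorov), while $P_N\bw\to\bw$ strongly in $W^{k,2}_D(\Omega)^d$; after a density argument this gives \eqref{p:equ6} at a.e.\ $t$ and every $\bw\in W^{1,p^\prime}_D(\Omega)^d\cap L^2(\Omega)^d$, and the initial conditions for $\bu$ follow from the usual Bochner arguments. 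To identify the constitutive relation I would use the variant of Minty's method from Theorem \ref{p:thm4}: testing \eqref{p:equ18} against the admissible function $\bu^N_t+\alpha\bu^N$, integrating over $(0,T)$, integrating the inertial contribution by parts in time, and using the strong convergence of $\bu^N_t$ in $L^2(Q)$ together with the limit equation tested against $\bu_t+\alpha\bu$ and the weak lower semicontinuity of the reconstructed inertial energy, one obtains
\[
\limsup_{N\to\infty}\int_Q b(v^N)\,\bbT^N:\beps(\bu^N_t+\alpha\bu^N)\,\dd x\,\dd t\le\int_Q b(v)\,\bbT:\beps(\bu_t+\alpha\bu)\,\dd x\,\dd t .
\]
Combining this with the monotonicity of $F$ yields $0\le\int_Q b(v)(\bbT-\bbS):(\beps(\bu_t+\alpha\bu)-F(\bbS))\,\dd x\,\dd t$ for every $\bbS\in L^p(Q)^{d\times d}$; perturbing $\bbS=\bbT\pm\gamma\bbU$ with $\bbU\in L^\infty(Q)^{d\times d}$, letting $\gamma\to 0+$, and using $b>0$ forces $\beps(\bu_t+\alpha\bu)=F(\bbT)$ a.e.\ in $Q$, and also $\int_Q b(v^N)|\bbT^N|^p\to\int_Q b(v)|\bbT|^p$, so $b(v^N)^{1/p}\bbT^N\to b(v)^{1/p}\bbT$ strongly and hence $\bbT^N\to\bbT$ strongly in $L^p(Q)^{d\times d}$. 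Strong convergence of $\beps(\bu^N_t+\alpha\bu^N)$ in $L^{p^\prime}(Q)^{d\times d}$ follows, and the memory-kernel representation from the proof of Lemma \ref{p:lem7} upgrades it to strong convergence of $\beps(\bu^N)$ in $C([0,T];L^{p^\prime}(\Omega)^{d\times d})$, whence $\beps(\bu^N)$ and $\beps(\bu^N_t)$ converge a.e.\ in $Q$ along a subsequence.

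The last two ingredients follow the templates of Propositions \ref{p:prop5} and \ref{p:prop6}. For the minimisation problem \eqref{p:equ2}, I would pass to the limit in the one-sided inequality \eqref{p:equ22} for $(\bu^N,v^N)$ using the strong $L^1(Q)$ convergence $\varphi^*(\beps(\alpha\bu^N))\to\varphi^*(\beps(\alpha\bu))$ and the pointwise-in-$t$ weak $H^1$ convergence of $v^N$, obtaining \eqref{p:equ2} at a.e.\ $t$ and then at every $t$ by the continuity-in-$t$ argument of Proposition \ref{p:prop5}, the right-hand side being continuous at the $L^2$-continuity points of $v$ where $v$ is also weakly $H^1$-continuous. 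For the energy-dissipation balance \eqref{p:equ3}, Fatou's lemma and weak lower semicontinuity applied to the identity of Proposition \ref{p:prop6} give the ``$\le$'' direction, while the time-discretisation argument of Proposition \ref{p:prop6}, built on the minimisation problem and the weak elastodynamic equation, gives ``$\ge$''; since the latter holds only at a.e.\ $t$ and $v$ is only a.e.\ continuous into $L^2(\Omega)$, the equality \eqref{p:equ3} is recovered at a.e.\ $t$. Monotonicity $v(t)\le v(s)$ for $s\le t$ is part of Corollary \ref{p:cor2}, and $v(0)=v_0$ follows from $v^N(0)=v^N_0\to v_0$ (Proposition \ref{p:prop1}) together with $v^N(0)\rightharpoonup v(0)$. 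The \emph{main obstacle} is the identification of the nonlinear terms in the limit $N\to\infty$ under the weakened information on $(v^N)_N$: with no bound on $(v^N_t)_N$ one has only pointwise-in-time, rather than Bochner-space, compactness of $v^N$, so both the passage to the limit in $b(v^N)\bbT^N$ and, above all, the Minty step for the constitutive relation must be handled through $\limsup$ and lower-semicontinuity arguments anchored on the limit equation rather than through a clean strong-times-weak product; securing the strong $L^2(Q)$ convergence of $\bu^N_t$ needed to run that step is the technically delicate point.
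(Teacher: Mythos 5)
Your proposal follows essentially the same route as the paper's proof: the $N$-independent bounds of Section \ref{sec:Nbounds} combined with Corollary \ref{p:cor2} and Proposition \ref{p:prop7} replace the missing bound on $(v^N_t)_N$, Minty's method identifies the constitutive relation and upgrades $(\bbT^N)_N$ to strong $L^p(Q)$ convergence, and the ``$\geq$'' direction of the energy balance is recovered at a.e.\ $t$ by rerunning the time-discretisation of Proposition \ref{p:prop6} with partition points chosen at continuity points of $v$. The one step you flag as delicate --- securing the strong $L^2(Q)$ convergence of $\bu^N_t$ that feeds the Minty argument --- is treated no more explicitly in the paper, which simply invokes ``reasoning as before''.
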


\begin{proof}
Using the bounds in Section \ref{sec:Nbounds} with Corollary \ref{p:cor2}, the convergence results are immediate. 
Considering the initial data for the approximate solution and the corresponding convergence results, the claim for the initial conditions follows. 
Combining  the strong convergence result from Corollary \ref{p:cor2} with the uniform bound on \( (v^N)_N \) and  Lebesgue's dominated convergence theorem,  we deduce that \( v^N\rightarrow v\) strongly in \( L^q(Q)^{d\times d}\), for every \( q\in [1,\infty) \). 
Hence, reasoning as before,   the elastodynamic equation must be satisfied by the limiting triple. 
Using Minty's method, it follows that the constitutive relation holds. The strong convergence of \(( \beps(\bu^N))_N \) in \( L^\infty(0, T; L^{p^\prime}(\Omega)^{d\times d}) \) and of \( (\beps(\bu^N_t))_N\) in \( L^{p^\prime}(Q)^{d\times d}\) follows. From this, we get the ``\(\leq \)'' direction of the energy-dissipation equality. Satisfaction of the minimisation problem can be deduced exactly as in the proof of Theorem \ref{p:thm4}. 

For the opposite  direction ``\(\geq \)'' of the energy-dissipation equality, we can repeat the reasoning from the proof of Proposition \ref{p:prop6} and use the continuity that follows from Proposition \ref{p:prop7}. Indeed, fix a point \( t\in (0, T]\) such that it is a point of continuity of \( v\). Let \( \mathcal{N}\) denote a set of null measure such that \( v\) is continuous on \([0, T]\setminus \mathcal{N}\). 

 Fix \( K \in \mathbb{N}_{\geq 2} \) and define a time step \(h_K = \frac{t}{K} \). For each \( 0 \leq k \leq K-1\), we fix a time point close to \( hk \) at which \( v\) is continuous. Namely, we pick  a \( t^K_k \in (hk-\frac{h_K}{4}, hk+\frac{h_K}{4})\cap \big((0, T)\setminus \mathcal{N}\big) \) such that \( v\) is continuous at \( t^K_k \) as a map into \( L^2(\Omega) \). Next, we define a sequence of approximations
 \begin{align*}
\bu^{K}_k := \bu(t^K_k), \quad \bbT^K_k := \bbT(t^K_k) ,\quad v^K_k := v(t^K_k)\quad\text{ for }0 \leq k \leq K. 
 \end{align*}
 Then we have
 \begin{align*}
 \mathcal{E}(\bu(t), v(t)) - \mathcal{E}(\bu_0, v_0) = \sum_{k=1}^K \mathcal{E}(\bu^K_k, v^K_k ) - \mathcal{E}( \bu^K_{k-1}, v^K_{k-1}) . 
 \end{align*}
 Using the minimisation problem, we see that
 \begin{align*}
& \mathcal{E}(\bu^K_k, v^K_k ) - \mathcal{E}( \bu^K_{k-1}, v^K_{k-1}) 
 \\&=
  \mathcal{E}(\bu^K_k, v^K_k ) - \mathcal{E}( \bu^K_{k-1}, v_k^K) + \mathcal{E}( \bu^K_{k-1}, v_k^K)  - \mathcal{E}( \bu^K_{k-1}, v^K_{k-1}) 
  \\
  &\geq \int_\Omega\frac{b(v^K_k) }{\alpha} \Big( \varphi^*(\beps(\alpha\bu^K_k)) - \varphi^*_n(\beps(\alpha\bu^K_{k-1})) \Big)\,\mathrm{d}x + \mathcal{H}(v^K_{k-1}) - \mathcal{H}(v^K_{k})
  \\
  &= 
  h_K\int_\Omega\int_0^1 \frac{b(v^K_k)}{\alpha} F^{-1}(\alpha\beps(s\bu^K_k + (1-s) \bu^K_{k-1})):\beps(\delta\bu^{K}_k)\,\mathrm{d}s\,\mathrm{d}x+ \mathcal{H}(v^K_{k-1}) - \mathcal{H}(v^K_{k}).
 \end{align*}
 As a result, we obtain the inequality
 \begin{align*}
 & \mathcal{E}(\bu(t), v(t)) - \mathcal{E}(\bu_0, v_0)  
  \\&
  \geq \mathcal{H}(v_0) - \mathcal{H}(v(t)) +  h_K\sum_{k=1}^K \int_\Omega\int_0^1 \frac{b(v^K_k)}{\alpha} F^{-1}(\alpha\beps(s\bu^K_k + (1-s) \bu^K_{k-1})):\beps(\delta\bu^{K}_k)\,\mathrm{d}s\,\mathrm{d}x. 
 \end{align*}
  For the sum on the right-hand side, we rewrite it in the following form:
 \begin{align*}
 & h_K\sum_{k=1}^K \int_\Omega\int_0^1 \frac{b(v^K_k)}{\alpha} F^{-1}(\alpha\beps(s\bu^K_k + (1-s) \bu^K_{k-1})):\beps(\delta\bu^{K}_k)\,\mathrm{d}s\,\mathrm{d}x
  \\
  &= \int_0^t \int_\Omega \int_0^1 \frac{b(v^{K,+})}{\alpha}F^{-1}( \alpha\beps(s\bu^{K,+} + (1-s) \bu^{K,-})) : \beps(\overline{\bu}^K_t) \,\mathrm{d}s\,\mathrm{d}x\,\mathrm{d}\tau
  \\
  &=  \int_{(0, t)\setminus \mathcal{N}} \int_\Omega \int_0^1 \frac{b(v^{K,+})}{\alpha}F^{-1}( \alpha\beps(s\bu^{K,+} + (1-s) \bu^{K,-})) : \beps(\overline{\bu}^K_t) \,\mathrm{d}s\,\mathrm{d}x\,\mathrm{d}\tau. 
 \end{align*}
 Considering the limit as \( K \rightarrow\infty\), we have
 \begin{align*}
 &b(v^K_+) \rightarrow b(v) \quad\text{ strongly in }L^q\big( \big( (0,t)\setminus \mathcal{N}\big) \times\Omega\big) \text{ for every }q\in [1,\infty),
 \\
 &\int_0^1 F^{-1}( \alpha\beps(s\bu^{K,+} + (1-s) \bu^{K,-}))\,\mathrm{d}s \rightarrow  F^{-1}(\alpha\beps(\bu)) \quad\text{  strongly in } L^\infty(0, t;  L^{p}(\Omega)^{d\times d}).
 \end{align*}
 By the uniform boundedness of \( v\) and the fact that \( \mathcal{N}\) is null, combining these convergences, we deduce that
 \begin{align*}
 b(v^K_+)\int_0^1 F^{-1}( \alpha\beps(s\bu^{K,+} + (1-s) \bu^{K,-}))\,\mathrm{d}s \rightarrow b(v) F^{-1}(\alpha\beps(\bu)) \,\text{  strongly in } L^\infty(0, t;  L^{p}(\Omega)^{d\times d}).
 \end{align*}
Suppose that  \( (\beps(\overline{\bu}^K_t))_K\) converges weakly in \( L^p((0, t) \times \Omega)^{d\times d}\) to \( \beps(\bu_t) \). Then we conclude that
 \begin{align*}
 &\lim_{K\rightarrow\infty} \int_{(0, t)\setminus \mathcal{N}} \int_\Omega \int_0^1 \frac{b(v^{K,+})}{\alpha}F^{-1}( \alpha\beps(s\bu^{K,+} + (1-s) \bu^{K,-})) : \beps(\overline{\bu}^K_t) \,\mathrm{d}s\,\mathrm{d}x\,\mathrm{d}\tau
 \\
 &= \int_0^t \int_\Omega b(v) F^{-1}(\beps(\alpha\bu)) :\beps(\bu_t) \,\mathrm{d}x\,\mathrm{d}s . 
 \end{align*}
 It follows that
 \begin{align*}
 \mathcal{E}(\bu(t), v(t)) + \mathcal{H}(v(t)) \geq \mathcal{E}(\bu_0, v_0) + \mathcal{H}(v_0) + \int_0^t \int_\Omega \frac{b(v) }{\alpha} F^{-1}(\beps(\alpha\bu)): \beps(\bu_t) \,\mathrm{d}x\,\mathrm{d}s.
 \end{align*}
 Reasoning as in Proposition \ref{p:prop6}, this is sufficient to show that the energy-dissipation equality holds. 
 It remains to prove the weak convergence result for \( (\beps(\overline{\bu}^K_t))_K\). We have
 \begin{align*}
 \beps(\delta\bu^K_k) &= \frac{1}{h_K} \beps(\bu^K_k - \bu^K_{k-1})
 \\
 &= \frac{1}{h_K}\beps(\bu(t_k^K) - \bu(t^K_{k-1}))
 \\
 &= \frac{t_k^K - t^K_{k-1}}{h_K}\fint_{t^K_{k-1}}^{t^{K}_k} \beps(\bu_t(s)) \,\mathrm{d}s,
 \end{align*}
where \(\fint\) denotes the usual average integral operator. Noting that the difference \(\frac{t_k^K - t^K_{k-1}}{h_K}\) is uniformly bounded with respect to \(K \) and \( k \), as a result of the construction of the points \((t_k^K)_k\), we deduce that
\begin{align*}
\|\beps(\delta\bu^K_k ) \|_{p^\prime} \leq C \|\beps(\bu_t) \|_{L^\infty(L^{p^\prime})}.
\end{align*}
This implies that
\[
\|\beps(\overline{\bu}^K_t) \|_{L^\infty(L^{p^\prime}) }\leq C \|\beps(\bu_t) \|_{L^\infty(L^{p^\prime})}.
\]
By the weak-* compactness of bounded sets in \( L^\infty(0, T; L^{p^\prime}(\Omega)^{d\times d})\), the required convergence result follows. 
\end{proof}

\section{Strain-limiting fracture problems}\label{sec:a}  
In this section, we consider a strain-limiting dynamic fracture problem, by which we mean that the strain is bounded {\it a priori}. Here, we present the state of the art of this paper.  Taking \( p \rightarrow 1+ \) in the constitutive relation from Section \ref{sec:p}, we obtain the relationship
\begin{align*}
\beps(\bu_t + \alpha\bu) = \frac{\bbT}{|\bbT|} =: F_1(\bbT).
\end{align*}
However, the function \( F_1\) is not an injection, nor is it continuous at the origin. To avoid the issue at the origin, and to also have an injective function in the constitutive relation, we regularise the denominator. This motivates the study of the relationship (\ref{intro:equ1}) for a fixed parameter \( a>0 \), that is,
\begin{align*}
\beps(\bu_t + \alpha\bu) = \frac{\bbT}{(1 + |\bbT|^a)^{\frac{1}{a}}} =:F(\bbT).
\end{align*}
The parameter \( a\) is fixed throughout this section so we do not indicate the dependence of \( F\) on \( a\). We prove an existence result for a  dynamic fracture problem with a phase-field approximation and a strain-limiting relation, independent of the value of \( a\). 

As discussed previously, the main issue that we need to overcome in the analysis is the lack of integrability of the stress tensor \( \bbT\). Indeed, the approximations of the stress tensor are {\it a priori} bounded  in the non-reflexive space \( L^1(Q)^{d\times d}\) and so are weakly-* compact only in the space of Radon measures on \( \overline{Q}\), but not in any Lebesgue function space. We need to use careful arguments to prove that the sequence of approximations converge in some sense to a Lebesgue function. Then, we need to prove that the convergence is sufficiently strong   so that we can show that the  elastodynamic equation and   the constitutive relation both hold in the limit. 

To do this, we require  additional regularity of the phase-field function \( v\).  
To get improved spatial regularity estimates on the approximations of \( \bbT \), we need \( \nabla v\) and its approximations to be uniformly bounded on \( Q\). 
From the construction of the problem, we expect the gradients to have absolute value to the order of \( \epsilon^{-1}\). Indeed,  the parameter \( \epsilon\) gives some sense of the `thickness' of the approximation of the crack set. However, 
proving a uniform bound in practice does not seem possible. Hence, in the spirit of \cite{MR4064375},  we introduce an extra term in the minimisation problem (\ref{p:equ2}) in order to guarantee this \( L^\infty(Q)^d\) regularity of \(\nabla v \). The new term imposes a dependence of the problem on the speed at which the crack grows. It is referred to as a rate-dependent term because of the presence of the time derivative \( v_t\). 

In the strain-limiting setting, we have an added issue in the case that the Neumann part of the boundary is non-empty. As in the steady problem without  fracture  \cite{RN6}, we experience a penalisation on this part of the boundary in the weak elastodynamic equation due to a lack of global integrability of approximations of the stress tensor. 
Indeed, we prove that the approximations of the stress tensor converge in \( L^\infty(0, T; \mathcal{M}( \overline{\Omega})^{d\times d})\) and pointwise a.e. on \( Q\) to limits \( \overline{\bbT}\) and \( \bbT\), respectively. Due to the choice of test function, any singularities on the Dirichlet part of the boundary do not appear, because the test functions vanish on the whole of \( \Gamma_D\). However, we cannot control any difference between \( \overline{\bbT}\) and \( \bbT \) on the Neumann part of the boundary, because the test functions do not vanish there. This is the reason for the presence of the penalty term on \( \Gamma_N \). 
This is also the reason for the existence result being stronger in the case of fully Dirichlet boundary conditions; it suffices to obtain local convergence with respect to the spatial domain \( \Omega\) in order to prove the required results. 
As a result of the error term in the mixed boundary condition case, we not only have a penalisation in the elastodynamic equation, but we are unable to deduce an energy-dissipation equality. Even if we assume {\it a priori} that the error term vanishes, the convergence results are not sufficiently strong  to show that equality holds.  

The problem of interest in this section is the following. For a fixed   \(k \in \mathbb{N}\), we look for  a triple \((\bu, \bbT, v) : Q \rightarrow\mathbb{R}^d \times \mathbb{R}^{d\times d}\times\mathbb{R}\) that solves
\begin{equation}\label{a:equ1}
\begin{aligned}
\bu_{tt}&= \diver\big(b(v) \bbT \big)  + \boldf &\quad& \text{ in }Q,
\\
\beps(\bu_t + \alpha\bu) &= F(\bbT) = \frac{\bbT}{(1 + |\bbT|^a)^{\frac{1}{a}}}&\quad& \text{ in }Q,
\\
\bu\big|_{\Gamma_D} &= 0, \, v\big|_{\Gamma_D} = 1&\quad& \text{ on }[0, T]\times \Gamma_D,
\\
b(v) \bbT \mathbf{n} &= \bg&\quad& \text{ on }[0, T]\times \Gamma_N,
\\
\bu(0, \cdot)&= \bu_0, \bu_t(0, \cdot) = \bu_1,\, v(0, \cdot) = v_0&\quad&\text{ in }\Omega,
\end{aligned}
\end{equation}
subject to the minimisation problem
\begin{equation}\label{a:equ2}
\mathcal{E}(\bu(t), v(t)) + \mathcal{H}(v(t)) + \mathcal{G}_k(v(t), v_t(t)) = \inf_{v\in H^k_{D+1}(\Omega),\, v\leq v(t)} \Big\{ \mathcal{E}(\bu(t), v) + \mathcal{H}(v) + \mathcal{G}_k(v, v_t(t)) \Big\},
\end{equation}
with energy-dissipation equality
\begin{equation}\label{a:equ3}
\begin{aligned}
&\mathcal{F}(t;\bu(t), \bu_t(t), v(t)) + \int_0^t \int_\Omega b(v) \big( \bbT - F^{-1}(\beps(\alpha\bu))\big]:\beps(\bu_t)  + \boldf_t\cdot \bu \,\mathrm{d}x\,\mathrm{d}s
\\
&\quad 
+ \int_0^t \int_{\Gamma_N} \bg_t \cdot \bu\,\mathrm{d}S\,\mathrm{d}s + \int_0^t \|v_t(s)\|_{k,2}\,\mathrm{d}s
\\
&= \mathcal{F}(0; \bu_0, \bu_1, v_0) ,
\end{aligned}
\end{equation}
for every \( t\in [0, T]\) and the crack non-healing property \( v_t\leq 0\). All  functionals are defined as in Section \ref{sec:p} but with  \( F\) given by (\ref{a:equ1}).  The functional \( \mathcal{G}_k \) is defined on \( H^k(\Omega) \times H^k(\Omega) \) by
\begin{align*}
\mathcal{G}_k(v, w)  = (v,w)_{k,2} := \sum_{|\alpha|\leq k} \int_\Omega\partial^\alpha v\cdot \partial^\alpha w\,\mathrm{d}x.
\end{align*}
Due to the presence of the rate-dependent term,  we cannot guarantee that \( v\geq 0\). 
For a discussion regarding the physical implications of the inclusion of this functional on the  model, we refer to \cite{MR4064375}. 

To ensure that the nonlinearity \(b(v) \) is still uniformly bounded from above, 
in this section we define \( b \) by
\begin{align*}
b(v) = \max\{ 0, v\}^2 + \eta.
\end{align*}
We later see  that  the phase-field function and its approximations are uniformly bounded in the function space \( L^\infty(0, T; H^k(\Omega)^d) \). Hence, if \( k > \frac{d}{2}\), by the Sobolev embedding theorem, we deduce that the functions are uniformly bounded on \( Q\). As a result, in this case, which is  a standing assumption of the   result proven here, the proof  is actually valid for the choice of \( b\) that is used in Section \ref{sec:p}. 

We do not require a compatibility condition of type (\ref{p:equ5}) because \( v_t(0) \) is not defined {\it a priori}. 
Indeed, the minimisation problem does not make sense at the initial time. 
However, we  require a compatibility condition between the Neumann boundary data and the initial data. We demand that
\begin{equation}\label{a:equ4}
\bg(0) = b(v_0) F^{-1}(\beps(\bu_1 + \alpha\bu_0)) \mathbf{n}\quad\text{ on }\Gamma_N.
\end{equation}
This is vital in order to bound approximations of \(\bu_{tt}(0) \) which is needed to obtain higher regularity estimates in the temporal variable of the approximations of the stress tensor.  For (\ref{a:equ4}) to make sense for the given initial data, we also demand the safety strain condition, that is, we have
\begin{equation}\label{a:equ5}
\max\big\{ \|\beps(\alpha\bu_0) \|_\infty, \, \|\beps(\bu_1 + \alpha\bu_0) \|_\infty \big\} =  C_* < 1. 
\end{equation}
Such a condition is discussed in detail in \cite{RN6}. To ensure that the total energy functional is finite at the initial time, we require a bound on  \( \beps(\alpha\bu_0) \), as well as \( \beps( \bu_1 + \alpha\bu_0) \) in the safety strain condition. The bound on \( \beps(\alpha\bu_0) \) is non-standard in the literature. However, the function \( \varphi^*\) is not necessarily finite outside the open unit ball, which is why we include the extra assumption. 

As with the analysis of time-dependent, strain-limiting problems without a phase field function (see, for example, the analysis in \cite{mypaperpreprint} and \cite{preprint2}), we approximate (\ref{a:equ1})--(\ref{a:equ3}) by a regularised problem. The function \( F \) is replaced by a function \( F_n \) that contains an elliptic term.  As a result, \( F_n\) is a bijection from \( \mathbb{R}^{d\times d}\) to itself and so the problem can be written in terms of only \(\bu \) and \( v\). 
 We define  \( F_n \) by
\begin{align*}
F_n(\bbT ) = \frac{\bbT}{(1 + |\bbT|^a)^{\frac{1}{a}}} + \frac{\bbT}{n}. 
\end{align*}
with \( \varphi_n \), \(\varphi^*_n\) and \( \mathcal{E}_n\) defined analogously. We  approximate the body forces \(l \) in the regularised problem to ensure that a suitable compatibility condition analogous to (\ref{a:equ4}) holds in the setting of the regularised problem. We replace the function \( \bg\) by an approximation \( \bg^n \) that we define by
\begin{equation}\label{a:equ6}
\bg^n = F^{-1}_n(\beps(\bu_1 + \alpha\bu_0))\mathbf{n}\cdot \psi(nt) + \bg(t,x) \cdot (1 - \psi(nt)),
\end{equation}
where \( \psi \in C^\infty_c([0, \infty)) \) is a non-increasing function with \( \psi = 1 \) on \( [0, \frac{1}{2}]\) and \( \psi = 0 \) on \( [1,\infty) \). 
The external force approximation is defined by \( l^n:= \boldf + \bg^n \). 
We define the total energy \( \mathcal{F}_n \) by replacing \(\mathcal{E}\) with \(\mathcal{E}_n \) and \( l \) with \( l^n \) in the definition of the functional \( \mathcal{F}\). Under the assumption that \( l\in W^{2,1}([0, T]; W^{-1,2}_D(\Omega)^d) \), the sequence \( (l^n)_n \) is bounded in this space independent of \( n \) and converges weakly to \( l \) as \( n\rightarrow\infty \). 

\begin{remark}\label{a:remark_F}
As mentioned in Section \ref{sec:intro}, the results in this section can be generalised to a more generalised class of functions \( F \). Indeed, the required assumptions on \( F\) are as follows. There exist positive constants \( c_0 \), \( c_1\) and \(a \) such that, for every \( \bbT\), \( \bbS \in \mathbb{R}^{d\times d}\), 
\begin{align*}
( F( \bbT) - F(\bbS)) : (\bbT - \bbS) &\geq 0 ,
\\
F( \bbT) \cdot \bbT &\geq c_0 |\bbT | - c_1, 
\\
|F(\bbT) |&\leq c_1,
\end{align*}
that is, monotonicity, coercivity and boundedness, respectively. Furthermore, we assume that there exists a strictly convex function \( \varphi\in C^2( \mathbb{R}_+; \mathbb{R}_+) \) such that \( \varphi(0)  = \varphi^\prime(0) = 0\) and \( |\varphi^{\prime\prime}(s)|\leq c_1(1  + s)^{-1}\) for every \( s\in \mathbb{R}_+\), with \( \varphi \) being related to \( F \) by the representation
\[
F( \bbT) = \frac{\varphi^\prime(|\bbT|)}{|\bbT|}\bbT. 
\]
Finally, if we define the inner product \((\cdot, \cdot)_{\mathcal{A}(\bbT) }\), for \( \bbT \in \mathbb{R}^{d\times d}\), by
\begin{align*}
(\bbS, \bbU)_{\mathcal{A}( \bbT) } = \sum_{i,j,k,l=1}^d \frac{\partial F_{ij}(\bbT)}{\partial \bbT_{kl}} \bbS_{ij} \bbU_{kl},
\end{align*}
then we require the lower bound
\begin{align*}
( \bbS, \bbS)_{\mathcal{A}( \bbT)} \geq \frac{c_0 |\bbS|^2}{(1 + |\bbT|)^{ 1 + a}},
\end{align*}
for every \( \bbT\), \( \bbS\in \mathbb{R}^{d\times d}\). 
\end{remark}

The structure of this section is as follows. First, we focus on proving the existence of a weak energy solution to the regularised problem, using a discretisation in time similar to that which is used in Section \ref{sec:p}. Then we look for \( n\)-independent bounds on the sequence of solutions, particularly those which involve higher regularity estimates. 
At best, we have a bound on \( (\bbT^n)_n \) in \( L^\infty(0, T; L^1(\Omega)^{d\times d}) \), a function space that does not have nice compactness properties. Hence, to obtain a pointwise convergence result for \((\bbT^n)_n \),  we use   weighted regularity estimates.  Using this pointwise convergence, if \( \Gamma_N = \emptyset\),  we can then improve the convergence  to strong convergence in \( L^1(0, T; L^1_{loc}( \Omega)^{d\times d}) \), making use of the monotonicity of \( F\). If \( a\in (0, \frac{2}{d}) \), this is further improved to strong convergence in \( L^{1 + \delta}(0, T; L^{1 + \delta}_{loc}( \Omega)^{d\times d}) \) for every \( \delta>  0 \) such that \( a + \delta < \frac{2}{d}\). 

We remark that we do not use a Galerkin approximation in space to deduce the existence of solutions to the regularised problem. This is due to the choice of regularisation term, which ensures that we are in the Hilbert space setting. This setting ensures that  appropriate bounds are available which allow us to use the Browder--Minty theorem to prove existence of solutions to the time discrete problem, without also applying a Galerkin approximation. This is not the case when the constitutive function has arbitrary \(p\)-growth,  as in Section \ref{sec:p}.  

Let us now introduce the time-discrete approximation of the regularised problem. 
Although the existence of a solution to the regularised problem follows from the proof in Section \ref{sec:p},  we need the structure of the time discrete problem to obtain higher time regularity estimates. In particular, we need to overcome the fact that the second time derivative \( \bu^n_{tt}\) is not necessarily continuous in the time variable. We do this by working with the time discrete problem and ensuring that \( \delta^2\bu^\gamma_0 \) is suitably defined. 

\begin{theorem}\label{a:thm1}
Suppose that \( \bu_0\), \( \bu_1\in W^{1,2}_D(\Omega)^d\) and  \( l \in C^1([0, T]; W^{-1,2}_D(\Omega)^d) \). Let \( l_n \) be defined by (\ref{a:equ6}). Fix \( M \) and \( n\in \mathbb{N}\), let \( \gamma = (n,M) \)   and define a time step \( h = \frac{T}{M}\).   We define initialisations of the time discrete problem by
\begin{align*}
\bu^{\gamma}_0 = \bu_0, \quad \bu^{\gamma}_{-1} = \bu_0 - h\bu_1, \quad v^{\gamma}_0 = v_0.
\end{align*}
Furthermore, we denote \( l^{\gamma}_m = l(t^{M}_m) \) where \( t^M_m = mh\) for \( 0 \leq m \leq M \). 
Defining the solution sequence recursively, for every \( 1\leq m \leq M \) there exists a unique \( \bu^{\gamma}_m \in W^{1,2}_D(\Omega)^d\) such that
\begin{equation}\label{a:equ7}
\int_\Omega \delta^2 \bu^{\gamma}_m\cdot \bw + b(v^{\gamma}_{m-1}) F^{-1}_n(\beps(\delta\bu^\gamma_m + \alpha\bu^\gamma_m)) :\beps(\bw) \,\mathrm{d}x = \langle l^\gamma_m , \bw\rangle,
\end{equation}
for every \( \bw \in W^{1,2}_D(\Omega)^d\), 
and  a unique \( v^\gamma_m \in H^k_{D+1}(\Omega) \) such that
\begin{equation}\label{a:equ8}
\begin{aligned}
&\mathcal{E}_n(\bu^\gamma_m , v^\gamma_m) + \mathcal{H}(v^\gamma_m ) + \frac{1}{2h}\mathcal{G}_k(v^\gamma_m - v^\gamma_{m-1}, v^\gamma_m - v^\gamma_{m-1})
\\
&= \inf\Big\{ \mathcal{E}_n(\bu^\gamma_m , v) + \mathcal{H}(v) + \frac{1}{2h}\mathcal{G}_k(v - v^\gamma_{m-1}, v - v^{\gamma}_{m-1})\,:\, v\in H^k_{D+1}(\Omega),\, , v\leq v^\gamma_{m-1}\Big\}.
\end{aligned}
\end{equation}
\end{theorem}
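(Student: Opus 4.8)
The plan is to resolve the recursion in $m$, producing at each step first $\bu^\gamma_m$ from (\ref{a:equ7}) and then $v^\gamma_m$ from (\ref{a:equ8}). The argument follows the proof of Theorem~\ref{p:thm3}, with two differences: the elliptic regularisation built into $F_n$ puts us in a Hilbert-space setting, so (\ref{a:equ7}) can be solved directly in $W^{1,2}_D(\Omega)^d$ without a Galerkin step, and the minimisation (\ref{a:equ8}) carries the extra rate term $\tfrac1{2h}\mathcal{G}_k(v-v^\gamma_{m-1},v-v^\gamma_{m-1})=\tfrac1{2h}\|v-v^\gamma_{m-1}\|_{k,2}^2$. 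For $m=1$ the initialisation $\bu^\gamma_0=\bu_0$, $\bu^\gamma_{-1}=\bu_0-h\bu_1\in W^{1,2}_D(\Omega)^d$ (so that $\delta\bu^\gamma_0=\bu_1$) and $v^\gamma_0=v_0\in H^k_{D+1}(\Omega)$ lie in the required spaces. Since $k>\tfrac d2$, $H^k(\Omega)\hookrightarrow L^\infty(\Omega)$, and the constraint $v\leq v^\gamma_{m-1}$ built into (\ref{a:equ8}) gives $v^\gamma_m\leq v^\gamma_{m-1}\leq\dots\leq v_0$, whence $b(v^\gamma_m)=\max\{0,v^\gamma_m\}^2+\eta$ is bounded above uniformly in $m$ by $\|v_0\|_\infty^2+\eta$ and below by $\eta>0$; this is precisely the information needed to run the next step, so the induction closes once each individual step is settled.

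For (\ref{a:equ7}), substitute $\delta^2\bu^\gamma_m=h^{-2}(\bu^\gamma_m-2\bu^\gamma_{m-1}+\bu^\gamma_{m-2})$ and $\delta\bu^\gamma_m+\alpha\bu^\gamma_m=(\tfrac1h+\alpha)\bu^\gamma_m-\tfrac1h\bu^\gamma_{m-1}$ and recast the equation as $A\bu^\gamma_m=\ell$ in $W^{-1,2}_D(\Omega)^d$, where $\ell(\bw)=\langle l^\gamma_m,\bw\rangle+h^{-2}\int_\Omega(2\bu^\gamma_{m-1}-\bu^\gamma_{m-2})\cdot\bw\,\dd x$ and
\[
\langle A\bu,\bw\rangle=h^{-2}\int_\Omega\bu\cdot\bw\,\dd x+\int_\Omega b(v^\gamma_{m-1})\,F_n^{-1}\big(\beps((\tfrac1h+\alpha)\bu-\tfrac1h\bu^\gamma_{m-1})\big):\beps(\bw)\,\dd x.
\]
Strict convexity of $\varphi_n$ makes $F_n^{-1}=\partial\varphi_n^*/\partial\bbT$ a continuous, strictly monotone map, and from $F_n(\bbT)=F(\bbT)+\bbT/n$ with $|F(\bbT)|<1$ (or $|F(\bbT)|\leq c_1$ in the general setting of Remark~\ref{a:remark_F}) and $F(\bbT):\bbT\geq0$ (by (\ref{intro:equ13})) one reads off the bounds $|F_n^{-1}(\bbS)|\leq n(|\bbS|+1)$ and $F_n^{-1}(\bbS):\bbS\geq n^{-1}|F_n^{-1}(\bbS)|^2$. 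Hence $A$ is bounded and hemicontinuous (the Nemytskii operator of $F_n^{-1}$ is continuous and of linear growth on $L^2(\Omega)^{d\times d}$); $A$ is strictly monotone (expand $\langle A\bu_1-A\bu_2,\bu_1-\bu_2\rangle$, use $b(v^\gamma_{m-1})\geq\eta>0$ and monotonicity of $F_n^{-1}$ on the rescaled strains, together with strict positivity of $h^{-2}\|\bu_1-\bu_2\|_2^2$); and $A$ is coercive, since putting $\bbA=\beps((\tfrac1h+\alpha)\bu-\tfrac1h\bu^\gamma_{m-1})$ turns the nonlinear term into $(\tfrac1h+\alpha)^{-1}\int_\Omega b(v^\gamma_{m-1})F_n^{-1}(\bbA):\bbA\,\dd x$ plus a cross term absorbed by Young's inequality, the first integral is $\geq c\|\bbA\|_2^2-C$, and $\|\bbA\|_2^2$ controls $\|\beps(\bu)\|_2^2$ hence $\|\bu\|_{1,2}^2$ by the Korn--Poincar\'{e} inequality (Theorem~\ref{korn}). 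The Browder--Minty theorem then produces $\bu^\gamma_m\in W^{1,2}_D(\Omega)^d$, unique by strict monotonicity.

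With $\bu^\gamma_m$ fixed, (\ref{a:equ8}) is handled by the direct method applied to $J(v):=\mathcal{E}_n(\bu^\gamma_m,v)+\mathcal{H}(v)+\tfrac1{2h}\|v-v^\gamma_{m-1}\|_{k,2}^2$ over the convex admissible set $\mathcal{V}_m:=\{v\in H^k_{D+1}(\Omega):v\leq v^\gamma_{m-1}\text{ a.e.}\}$. All three summands are non-negative (note $\varphi_n^*\geq0$, $b\geq0$), $\mathcal{V}_m$ is non-empty with $J(v^\gamma_{m-1})<\infty$, and the rate term forces $J(v)\to\infty$ as $\|v\|_{k,2}\to\infty$, so $J$ is coercive. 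For a minimising sequence $(v_j)$ extract $v_j\rightharpoonup v$ in $H^k(\Omega)$ with $v_j\to v$ strongly in $L^2(\Omega)$ and a.e.\ (Rellich--Kondrachov); $\mathcal{V}_m$ is convex and strongly closed, hence weakly closed, so $v\in\mathcal{V}_m$; $\mathcal{H}$ and the rate term are convex and continuous on $H^k(\Omega)$, hence weakly lower semicontinuous; and for $\mathcal{E}_n(\bu^\gamma_m,v)=\tfrac1\alpha\int_\Omega b(v)\varphi_n^*(\beps(\alpha\bu^\gamma_m))\,\dd x$ one uses $b(v_j)\to b(v)$ a.e., non-negativity of the integrand and Fatou's lemma. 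Thus a minimiser exists; it is unique because $v\mapsto b(v)=\max\{0,v\}^2+\eta$ is convex, so $\mathcal{E}_n(\bu^\gamma_m,\cdot)$ and $\mathcal{H}$ are convex while the rate term is strictly convex, making $J$ strictly convex on the convex set $\mathcal{V}_m$.

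The main obstacle is the coercivity of $A$ in the step for (\ref{a:equ7}): the argument of $F_n^{-1}$ is the compound strain $\beps(\delta\bu^\gamma_m+\alpha\bu^\gamma_m)$ rather than $\beps(\bu^\gamma_m)$, so one must make the affine change of variable to $\bbA$, extract quadratic growth from the regularisation, dispose of the cross term by Young's inequality, and only then invoke the Korn--Poincar\'{e} inequality to pass from $\|\beps(\bu)\|_2$ to $\|\bu\|_{1,2}$. The remaining ingredients---monotonicity and hemicontinuity of $A$, and the direct-method verifications for (\ref{a:equ8})---are routine, and indeed the bare existence (though not the time-discrete structure, which is what is needed later) already follows from the proof of Theorem~\ref{p:thm3}.
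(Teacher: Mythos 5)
Your proposal is correct and follows exactly the route the paper indicates for this result: the paper refers the reader back to the proof of Theorem \ref{p:thm3} (Browder--Minty for the elastodynamic step, the direct method for the minimisation, and strict monotonicity/strict convexity for uniqueness), and explicitly notes that the elliptic regularisation in \(F_n\) places the problem in the Hilbert-space setting so that no Galerkin approximation is needed — precisely the observations your argument is built on. The details you supply (the affine change of variable to the compound strain, the quadratic lower bound extracted from the \(\bbT/n\) term, the Korn--Poincar\'{e} step, and the Fatou/weak-closedness verifications for the constrained minimisation) correctly fill in what the paper leaves as ``standard.''
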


The proof follows identical reasoning to that of Theorem \ref{p:thm3} so is not repeated here. Similarly, reasoning as with Proposition \ref{p:prop2}, Corollary \ref{p:cor1} and Proposition \ref{p:prop3}, we deduce the following four results concerning the minimisation problem and energy-dissipation inequality.

\begin{lemma}\label{a:lem1}
Let the assumptions of Theorem \ref{a:thm1} hold and let  \((\bu^\gamma_m, v^\gamma_m)_{m=1}^M \)  be the solution of the time discrete problem. For every \( 1\leq m \leq M\) and every \( \tilde{\chi} \in H^k_{D+1}(\Omega) \) with \( \tilde{\chi }\leq v^\gamma_{m-1}\), we have
\begin{align*}
0 & \leq \big[ \partial_v\mathcal{E}_n(\bu^\gamma_m, v^\gamma_m) + \mathcal{H}^\prime(v^\gamma_m) + \frac{1}{2h}\partial_v\mathcal{G}_k(v^\gamma_m - v^\gamma_{m-1}, v^\gamma_m - v^{\gamma}_{m-1}) \big] (\tilde{\chi} - v^\gamma_{m-1})
\\
&= \int_\Omega \Big\{ \frac{b^\prime(v^\gamma_m)}{\alpha}(\tilde{\chi } - v^\gamma_{m-1}) \varphi^*_n(\beps(\alpha\bu^\gamma_m)) + \frac{1}{2\epsilon}(v^\gamma_m - 1) (\tilde{\chi} - v^\gamma_{m-1}) 
\\
&\quad
+ 2\epsilon\nabla v^\gamma_m \cdot \nabla (\tilde{\chi} - v^\gamma_{m-1})\Big\} \,\mathrm{d}x 
+ \frac{1}{h}\big( v^\gamma_m - v^\gamma_{m-1}, \tilde{\chi} - v^\gamma_{m-1}\big)_{k,2}.
\end{align*}
\end{lemma}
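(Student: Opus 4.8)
The plan is to obtain Lemma \ref{a:lem1} as a direct consequence of the minimisation problem (\ref{a:equ8}) together with the convexity of the objective functional, following exactly the reasoning used to prove Proposition \ref{p:prop2}. Fix \( 1 \leq m \leq M \) and write \( \Phi(v) := \mathcal{E}_n(\bu^\gamma_m, v) + \mathcal{H}(v) + \frac{1}{2h}\mathcal{G}_k(v - v^\gamma_{m-1}, v - v^\gamma_{m-1}) \) for the functional being minimised in (\ref{a:equ8}) over the convex set \( \mathcal{C} := \{ v \in H^k_{D+1}(\Omega) : v \leq v^\gamma_{m-1} \} \). First I would check that \( \Phi \) is convex on \( \mathcal{C} \): the term \( \mathcal{H} \) is a positive-definite quadratic form in \( v \); the Gagliardo-type term \( \frac{1}{2h}\mathcal{G}_k(v - v^\gamma_{m-1}, v - v^\gamma_{m-1}) \) is a nonnegative quadratic form in \( v - v^\gamma_{m-1} \), hence convex; and \( v \mapsto \mathcal{E}_n(\bu^\gamma_m, v) = \int_\Omega \frac{b(v)}{\alpha}\varphi^*_n(\beps(\alpha\bu^\gamma_m)) \,\mathrm{d}x \) is convex because \( b(v) = \max\{0,v\}^2 + \eta \) is a convex function of \( v \) and \( \varphi^*_n(\beps(\alpha\bu^\gamma_m)) \geq 0 \) pointwise. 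One also needs \( \Phi(v) < \infty \) for at least the relevant comparison functions, which holds because \( \bu^\gamma_m \in W^{1,2}_D(\Omega)^d \) gives \( \beps(\alpha\bu^\gamma_m) \in L^2(\Omega)^{d\times d} \) and the elliptic regularisation makes \( \varphi^*_n \) grow only quadratically, so \( \varphi^*_n(\beps(\alpha\bu^\gamma_m)) \in L^1(\Omega) \), while \( v \in H^k_{D+1}(\Omega) \subset L^\infty(\Omega) \) (since \( k > \tfrac d2 \)) bounds \( b(v) \).

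Next I would write the first-order optimality condition. Since \( v^\gamma_m \) minimises \( \Phi \) over the convex set \( \mathcal{C} \), for any \( \tilde\chi \in \mathcal{C} \) the segment \( (1-s) v^\gamma_m + s\tilde\chi \), \( s \in [0,1] \), stays in \( \mathcal{C} \), and \( s \mapsto \Phi((1-s)v^\gamma_m + s\tilde\chi) \) attains its minimum at \( s = 0 \). Differentiating in \( s \) at \( s = 0^+ \) (the Gâteaux derivative of \( \Phi \) in the direction \( \tilde\chi - v^\gamma_m \) exists since each summand is smooth in \( v \) in the relevant directions, using \( b \in C^1 \)) yields \( 0 \leq \Phi'(v^\gamma_m)(\tilde\chi - v^\gamma_m) \). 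Computing this derivative termwise gives the displayed expression, with \( \partial_v \mathcal{E}_n(\bu^\gamma_m, v^\gamma_m)(\psi) = \int_\Omega \frac{b'(v^\gamma_m)}{\alpha}\psi\, \varphi^*_n(\beps(\alpha\bu^\gamma_m))\,\mathrm{d}x \), \( \mathcal{H}'(v^\gamma_m)(\psi) = \int_\Omega \frac{1}{2\epsilon}(v^\gamma_m - 1)\psi + 2\epsilon\nabla v^\gamma_m \cdot \nabla\psi\,\mathrm{d}x \), and the derivative of the rate term equal to \( \frac{1}{h}(v^\gamma_m - v^\gamma_{m-1}, \psi)_{k,2} \). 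Setting \( \psi = \tilde\chi - v^\gamma_m \) reproduces exactly the right-hand side of the claimed inequality, so the identity between the bracketed operator evaluated at \( \tilde\chi - v^\gamma_m \) and the integral-plus-inner-product expression is immediate; one then notes that the hypothesis \( \tilde\chi \leq v^\gamma_{m-1} \) is what makes \( \tilde\chi \) admissible.

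One cosmetic point to reconcile is that the statement of the lemma writes the test direction as \( \tilde\chi - v^\gamma_{m-1} \) rather than \( \tilde\chi - v^\gamma_m \); I would handle this exactly as in the proof of Corollary \ref{p:cor1}, namely by a suitable substitution of comparison functions. Writing \( \tilde\chi - v^\gamma_{m-1} = (\tilde\chi - v^\gamma_m) + (v^\gamma_m - v^\gamma_{m-1}) \) and testing the variational inequality with both \( \tilde\chi \) and with \( 2v^\gamma_m - v^\gamma_{m-1} \) (which lies in \( \mathcal{C} \), being \( \leq v^\gamma_{m-1} \) and in \( H^k_{D+1}(\Omega) \)) shows that the Gâteaux derivative of \( \Phi \) in the direction \( v^\gamma_m - v^\gamma_{m-1} \) vanishes at \( v^\gamma_m \); adding this null term converts the inequality in direction \( \tilde\chi - v^\gamma_m \) into one in direction \( \tilde\chi - v^\gamma_{m-1} \), which is the asserted form.

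The main obstacle, such as it is, is bookkeeping rather than anything deep: one must be careful that all the Gâteaux derivatives actually exist in the required directions, which requires the admissible directions \( \psi = \tilde\chi - v^\gamma_m \) to lie in \( H^k_D(\Omega) \) and the integrands \( \varphi^*_n(\beps(\alpha\bu^\gamma_m)) \), \( v^\gamma_m - 1 \), \( \nabla v^\gamma_m \) to be integrable against them — all guaranteed by \( \bu^\gamma_m \in W^{1,2}_D(\Omega)^d \), the quadratic growth of \( \varphi^*_n \), and the embedding \( H^k_{D+1}(\Omega) \subset L^\infty(\Omega) \cap W^{1,2}(\Omega) \). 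Since the paper explicitly says this result is obtained "reasoning as with Proposition \ref{p:prop2}, Corollary \ref{p:cor1}", I would keep the write-up brief, citing those arguments and only flagging the one modification needed for the rate-dependent term \( \mathcal{G}_k \), whose contribution to the Euler–Lagrange inequality is linear in the direction and so causes no new difficulty.
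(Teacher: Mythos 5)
Your proof is correct and follows the same route the paper intends (the paper only gestures at ``reasoning as with Proposition \ref{p:prop2} and Corollary \ref{p:cor1}'' rather than writing the argument out): first-order optimality of \(v^\gamma_m\) over the convex constraint set, plus the observation that the G\^{a}teaux derivative vanishes in the direction \(v^\gamma_m - v^\gamma_{m-1}\) (obtained by testing with \(v^\gamma_{m-1}\) and \(2v^\gamma_m - v^\gamma_{m-1}\)), which correctly reconciles the test direction \(\tilde{\chi} - v^\gamma_{m-1}\) appearing in the statement with the natural direction \(\tilde{\chi} - v^\gamma_m\). Your attention to the differentiability of \(b(v) = \max\{0,v\}^2 + \eta\), the at-most-quadratic growth of \(\varphi^*_n\) coming from the elliptic regularisation, and the admissibility of \(2v^\gamma_m - v^\gamma_{m-1}\) is exactly what is needed.
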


\begin{corollary}\label{a:cor1}
For every \( 1\leq m \leq M\) and  every \( \chi \in H^k_D(\Omega) \) with \( \chi \leq 0 \), we have
\begin{align*}
0 \leq \big[ \partial_v\mathcal{E}_n(\bu^\gamma_m, v^\gamma_m) + \mathcal{H}^\prime(v^\gamma_m) + \frac{1}{2h}\partial_v\mathcal{G}_k(v^\gamma_m - v^\gamma_{m-1}, v^\gamma_m - v^{\gamma}_{m-1}) \big] (\chi).
\end{align*}
\end{corollary}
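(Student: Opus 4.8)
The plan is to read off Corollary \ref{a:cor1} as an immediate specialisation of Lemma \ref{a:lem1}. Fix $1 \le m \le M$ and let $\chi \in H^k_D(\Omega)$ with $\chi \le 0$ a.e. in $\Omega$. I would set $\tilde{\chi} := v^\gamma_{m-1} + \chi$ and check that it is an admissible competitor in Lemma \ref{a:lem1}: since $v^\gamma_{m-1} \in H^k_{D+1}(\Omega)$ and $\chi \in H^k_D(\Omega)$, the sum $\tilde{\chi}$ belongs to $H^k(\Omega)$ and equals $1$ on $\Gamma_D$, hence $\tilde{\chi} \in H^k_{D+1}(\Omega)$, while $\chi \le 0$ forces $\tilde{\chi} \le v^\gamma_{m-1}$ a.e. in $\Omega$. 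For this choice one has $\tilde{\chi} - v^\gamma_{m-1} = \chi$, so the inequality of Lemma \ref{a:lem1} becomes exactly the assertion of the corollary; and as $\tilde{\chi}$ ranges over the admissible class of Lemma \ref{a:lem1}, the difference $\tilde{\chi} - v^\gamma_{m-1}$ ranges over the whole cone $\{\chi \in H^k_D(\Omega) : \chi \le 0\}$, so nothing is lost. This is the precise analogue of the passage from (\ref{p:equ11}) to (\ref{p:equ17}) in Proposition \ref{p:prop2}.

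Alternatively, and for completeness, the same bound can be obtained straight from the minimisation problem (\ref{a:equ8}). Writing $\mathcal{J}_m(v) := \mathcal{E}_n(\bu^\gamma_m, v) + \mathcal{H}(v) + \frac{1}{2h}\mathcal{G}_k(v - v^\gamma_{m-1}, v - v^\gamma_{m-1})$, one notes that $\mathcal{J}_m$ is G\^ateaux differentiable on $H^k_{D+1}(\Omega)$, that the constraint set $\{v \in H^k_{D+1}(\Omega) : v \le v^\gamma_{m-1}\}$ is convex, and that $v^\gamma_m$ (in particular with $v^\gamma_m \le v^\gamma_{m-1}$, being the constrained minimiser) minimises $\mathcal{J}_m$ over it. The first-order variational inequality $\langle D\mathcal{J}_m(v^\gamma_m), v - v^\gamma_m \rangle \ge 0$, valid for every admissible $v$, applied to $v = v^\gamma_m + \chi$ — admissible precisely because $v^\gamma_m \le v^\gamma_{m-1}$ and $\chi \le 0$ — then produces the stated one-sided bound. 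The only routine verification along this route is that the directional derivative of the term $\int_\Omega \frac{b(v)}{\alpha}\varphi^*_n(\beps(\alpha\bu^\gamma_m))\,\mathrm{d}x$ is well defined and integrable: this holds because $\chi \in H^k_D(\Omega) \hookrightarrow L^\infty(\Omega)$ for $k > \frac{d}{2}$, $b'$ is bounded on the essentially bounded range of $v^\gamma_m$, and $\varphi^*_n(\beps(\alpha\bu^\gamma_m)) \in L^1(\Omega)$ since $\varphi^*_n$ has at most quadratic growth while $\beps(\alpha\bu^\gamma_m) \in L^2(\Omega)^{d\times d}$.

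I do not expect any genuine obstacle here: Corollary \ref{a:cor1} is nothing more than Lemma \ref{a:lem1} re-expressed under the affine change of variable $\chi = \tilde{\chi} - v^\gamma_{m-1}$, which identifies the admissible perturbations with the cone $\{\chi \in H^k_D(\Omega): \chi \le 0\}$ via the decomposition $H^k_{D+1}(\Omega) = v^\gamma_{m-1} + H^k_D(\Omega)$. The single point requiring a moment's care is the membership $\tilde{\chi} \in \{v \in H^k_{D+1}(\Omega): v \le v^\gamma_{m-1}\}$, which is immediate from this linear structure together with the sign of $\chi$.
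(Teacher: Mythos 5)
Your proposal is correct and matches the paper's (implicit) argument: the paper proves this corollary only by reference to the analogous passage from (\ref{p:equ11}) to (\ref{p:equ17}) in Proposition \ref{p:prop2}, which is exactly the affine specialisation you perform, and your alternative derivation straight from the variational inequality for the constrained minimiser of (\ref{a:equ8}) (with competitor \(v^\gamma_m+\chi\), admissible since \(v^\gamma_m\le v^\gamma_{m-1}\)) is the same computation in different clothing. The only point worth noting is that your first substitution \(\tilde{\chi}=v^\gamma_{m-1}+\chi\) is tailored to the test direction \(\tilde{\chi}-v^\gamma_{m-1}\) as printed in Lemma \ref{a:lem1}, whereas the Section \ref{sec:p} analogue uses \(\tilde{\chi}=v^\beta_m+\chi\) for the direction \(\tilde{\chi}-v^\beta_m\); either choice is admissible and yields the stated inequality.
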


\begin{corollary}\label{a:cor2}
For every \( 1\leq m \leq M \), we have 
\begin{align*}
0 &= \big[ \partial_v\mathcal{E}_n(\bu^\gamma_m, v^\gamma_m) + \mathcal{H}^\prime(v^\gamma_m) + \frac{1}{2h}\partial_v\mathcal{G}_k(v^\gamma_m - v^\gamma_{m-1}, v^\gamma_m - v^{\gamma}_{m-1}) \big] (\delta v^\gamma_h)
\\
&= \int_\Omega \Big\{ \frac{b^\prime(v^\gamma_m)}{\alpha}\delta v^\gamma_m \varphi^*_n(\beps(\alpha\bu^\gamma_m)) + \frac{1}{2\epsilon}(v^\gamma_m - 1) \delta v^\gamma_m
+ 2\epsilon\nabla v^\gamma_m \cdot \nabla \delta v^\gamma_m\Big\} \,\mathrm{d}x 
\\&\quad
+ \|\delta v^\gamma_m\|_{k,2}^2.
\end{align*}
\end{corollary}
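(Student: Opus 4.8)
The plan is to exploit the minimality of $v^\gamma_m$ in (\ref{a:equ8}) by inserting two carefully chosen admissible competitors, exactly as in the proof of Corollary \ref{p:cor1} but now keeping track of the extra quadratic rate term. Write $J_m(v) := \mathcal{E}_n(\bu^\gamma_m, v) + \mathcal{H}(v) + \frac{1}{2h}\mathcal{G}_k(v - v^\gamma_{m-1}, v - v^\gamma_{m-1})$ for the functional minimised in (\ref{a:equ8}) over the convex set $K_m := \{ v \in H^k_{D+1}(\Omega) : v \le v^\gamma_{m-1}\}$. The elliptic term in $F_n$ forces $\varphi_n$ to grow at least quadratically, so $\varphi^*_n$ is finite everywhere with at most quadratic growth; combined with the embedding $H^k(\Omega) \hookrightarrow L^\infty(\Omega)$ for $k > \frac{d}{2}$, this makes $J_m$ Gâteaux differentiable along directions $\chi \in H^k_D(\Omega)$, with
\begin{align*}
J_m'(v^\gamma_m)(\chi) &= \int_\Omega \Big\{ \frac{b'(v^\gamma_m)}{\alpha}\chi\,\varphi^*_n(\beps(\alpha\bu^\gamma_m)) + \frac{1}{2\epsilon}(v^\gamma_m - 1)\chi + 2\epsilon\nabla v^\gamma_m \cdot \nabla\chi\Big\}\,\mathrm{d}x
\\&\quad + \frac{1}{h}\big( v^\gamma_m - v^\gamma_{m-1}, \chi\big)_{k,2},
\end{align*}
which is precisely the bracketed operator in the statement evaluated at $\chi$.

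Next I would run the two one-sided comparisons. Testing against $\tilde\chi = v^\gamma_{m-1} \in K_m$, convexity of $K_m$ and minimality of $v^\gamma_m$ give $J_m'(v^\gamma_m)(v^\gamma_{m-1} - v^\gamma_m) \ge 0$; since $v^\gamma_{m-1} - v^\gamma_m = -h\,\delta v^\gamma_m$ and the first variation is linear in its direction, this reads $J_m'(v^\gamma_m)(\delta v^\gamma_m) \le 0$. For the reverse inequality, take $\tilde\chi = 2v^\gamma_m - v^\gamma_{m-1}$: this lies in $K_m$ because it equals $1$ on $\Gamma_D$ (as $2\cdot 1 - 1 = 1$, both $v^\gamma_m$ and $v^\gamma_{m-1}$ being in $H^k_{D+1}(\Omega)$) and because $\tilde\chi \le v^\gamma_{m-1}$ is equivalent to the constraint $v^\gamma_m \le v^\gamma_{m-1}$, which is built into (\ref{a:equ8}) (equivalently, $\delta v^\gamma_m \le 0$, as for Proposition \ref{p:prop2}). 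Then $J_m'(v^\gamma_m)(\tilde\chi - v^\gamma_m) = J_m'(v^\gamma_m)(v^\gamma_m - v^\gamma_{m-1}) = h\,J_m'(v^\gamma_m)(\delta v^\gamma_m) \ge 0$. Combining the two inequalities yields $J_m'(v^\gamma_m)(\delta v^\gamma_m) = 0$, the first displayed equality of the Corollary.

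Finally, to obtain the second displayed equality I would substitute $\chi = \delta v^\gamma_m$ into the formula for $J_m'(v^\gamma_m)$ above; the only point to note is that $v^\gamma_m - v^\gamma_{m-1} = h\,\delta v^\gamma_m$, so the rate term becomes $\frac{1}{h}(h\,\delta v^\gamma_m, \delta v^\gamma_m)_{k,2} = \|\delta v^\gamma_m\|_{k,2}^2$, giving exactly the stated expression. This argument is line-for-line that of Corollary \ref{p:cor1}, with Lemma \ref{a:lem1} in place of Proposition \ref{p:prop2}, so there is no genuine obstacle; the only mild care needed is checking admissibility of the competitor $2v^\gamma_m - v^\gamma_{m-1}$ (which uses that $v^\gamma_m$ and $v^\gamma_{m-1}$ agree on $\Gamma_D$) and justifying differentiability of $\mathcal{E}_n$, which is where the ellipticity of $F_n$ — ensuring $\varphi^*_n$ is finite with controlled growth — enters.
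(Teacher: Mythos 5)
Your argument is correct and is exactly the paper's route: the paper proves this corollary "reasoning as with Corollary \ref{p:cor1}", i.e.\ by testing the variational inequality from the minimisation problem (\ref{a:equ8}) against the two competitors \( v^\gamma_{m-1}\) and \( 2v^\gamma_m - v^\gamma_{m-1}\), whose admissibility follows from \( v^\gamma_m \leq v^\gamma_{m-1}\) and the boundary values on \( \Gamma_D\), just as you check. Your additional remarks on the Gâteaux differentiability of \( \mathcal{E}_n\) (quadratic growth of \( \varphi^*_n\), the embedding \( H^k(\Omega)\hookrightarrow L^\infty(\Omega)\)) and the bookkeeping of the rate term \( \frac{1}{h}(h\,\delta v^\gamma_m, \delta v^\gamma_m)_{k,2} = \|\delta v^\gamma_m\|_{k,2}^2\) are accurate and consistent with the paper.
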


\begin{proposition}
Let the assumptions of Theorem \ref{a:thm1} hold and let \((\bu^\gamma_m,v^\gamma_m)_{m=1}^M \) be  the solution of the time discrete problem. For every \( 1\leq m\leq M\), 
\begin{align*}
&\mathcal{F}_n(t^M_m; \bu^\gamma_m,\delta\bu^\gamma_m, v^\gamma_m) + h \sum_{j=1}^m \|\delta v^\gamma_j \|_{k,2}^2 + h\sum_{j=1}^m \langle \delta l^\gamma_m, \bu^\gamma_{m-1}\rangle
\\
&\quad
+ h \sum_{j=1}^m \int_\Omega b(v^\gamma_{j-1}) \big[ F^{-1}_n(\beps(\delta\bu^\gamma_n + \alpha\bu^\gamma_j)) - F^{-1}_n(\beps(\alpha\bu^\gamma_j)) \big] :\beps(\delta\bu^\gamma_j) \,\mathrm{d}x
\\
&\leq \mathcal{F}_n(0;\bu_0, \bu_1,v_0).
\end{align*}
\end{proposition}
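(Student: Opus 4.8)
The plan is to transcribe the proof of Proposition~\ref{p:prop3} almost verbatim, the only genuinely new ingredient being that the minimisation problem~(\ref{a:equ8}) now carries the rate‑dependent term $\tfrac{1}{2h}\mathcal{G}_k(v-v^\gamma_{m-1},v-v^\gamma_{m-1})$, which will supply the dissipation contribution involving $\delta v^\gamma_j$. First I would test the discrete balance law~(\ref{a:equ7}) against $h\delta\bu^\gamma_m$. The inertial contribution $\int_\Omega \delta^2\bu^\gamma_m\cdot h\delta\bu^\gamma_m\,\mathrm{d}x$ is rewritten exactly as in~(\ref{p:equ13}) via the identity~(\ref{p:equ35}), producing $\mathcal{K}(\delta\bu^\gamma_m)-\mathcal{K}(\delta\bu^\gamma_{m-1})$ together with the nonnegative numerical dissipation $\tfrac12\|\delta\bu^\gamma_m-\delta\bu^\gamma_{m-1}\|_2^2$, which is simply discarded. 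The forcing term $\langle l^\gamma_m, h\delta\bu^\gamma_m\rangle$ splits as in~(\ref{p:equ14}) into $\langle l^\gamma_m,\bu^\gamma_m\rangle-\langle l^\gamma_{m-1},\bu^\gamma_{m-1}\rangle+h\langle\delta l^\gamma_m,\bu^\gamma_{m-1}\rangle$, the first two pieces telescoping into the $-\langle l^\gamma,\bu\rangle$ parts of $\mathcal{F}_n$ at $t^M_m$ and $0$, and the third being retained.

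For the nonlinear term I would split, as in~(\ref{p:equ15}),
\[
\int_\Omega b(v^\gamma_{m-1})F^{-1}_n(\beps(\delta\bu^\gamma_m+\alpha\bu^\gamma_m)):\beps(h\delta\bu^\gamma_m)\,\mathrm{d}x
\]
into the ``good'' piece $h\int_\Omega b(v^\gamma_{m-1})[F^{-1}_n(\beps(\delta\bu^\gamma_m+\alpha\bu^\gamma_m))-F^{-1}_n(\beps(\alpha\bu^\gamma_m))]:\beps(\delta\bu^\gamma_m)\,\mathrm{d}x$, which is already of the required form, plus the ``energy'' piece $\int_\Omega b(v^\gamma_{m-1})F^{-1}_n(\beps(\alpha\bu^\gamma_m)):\beps(h\delta\bu^\gamma_m)\,\mathrm{d}x$. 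Since $F^{-1}_n=\partial\varphi^*_n/\partial\bbT$ and, at the regularised level, $\varphi^*_n$ is finite and convex on all of $\mathbb{R}^{d\times d}$ (the term $\tfrac1n|\cdot|^2$ in $\varphi_n$ makes $\varphi_n$ superlinear and $F_n$ a bijection of $\mathbb{R}^{d\times d}$, so the safety‑strain restriction plays no role here), I would expand the energy piece along the segment $s\mapsto\beps(s\bu^\gamma_m+(1-s)\bu^\gamma_{m-1})$ and drop the remainder by monotonicity of $F^{-1}_n$, obtaining, exactly as in~(\ref{p:equ16}), the lower bound
\[
\int_\Omega \tfrac{b(v^\gamma_m)}{\alpha}\varphi^*_n(\beps(\alpha\bu^\gamma_m))-\tfrac{b(v^\gamma_{m-1})}{\alpha}\varphi^*_n(\beps(\alpha\bu^\gamma_{m-1}))+\tfrac{b(v^\gamma_{m-1})-b(v^\gamma_m)}{\alpha}\varphi^*_n(\beps(\alpha\bu^\gamma_m))\,\mathrm{d}x.
\]
The first two summands give $\mathcal{E}_n(\bu^\gamma_m,v^\gamma_m)-\mathcal{E}_n(\bu^\gamma_{m-1},v^\gamma_{m-1})$; the third equals $\mathcal{E}_n(\bu^\gamma_m,v^\gamma_{m-1})-\mathcal{E}_n(\bu^\gamma_m,v^\gamma_m)$, and here — rather than appeal to Corollary~\ref{a:cor2} — I would insert the admissible competitor $\tilde\chi=v^\gamma_{m-1}$ into the minimisation problem~(\ref{a:equ8}) itself, which yields
\[
\mathcal{E}_n(\bu^\gamma_m,v^\gamma_{m-1})-\mathcal{E}_n(\bu^\gamma_m,v^\gamma_m)\ \geq\ \mathcal{H}(v^\gamma_m)-\mathcal{H}(v^\gamma_{m-1})+\tfrac{1}{2h}\|v^\gamma_m-v^\gamma_{m-1}\|_{k,2}^2 .
\]
The term $\tfrac{1}{2h}\|v^\gamma_m-v^\gamma_{m-1}\|_{k,2}^2=\tfrac h2\|\delta v^\gamma_m\|_{k,2}^2$ is precisely the new dissipation contributed at step $m$ (up to the factor $\tfrac12$, consistent with the $\tfrac1{2h}$ weighting in~(\ref{a:equ8})), and summed over $j$ it gives the $\|\delta v^\gamma_j\|_{k,2}$‑sum in the statement.

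Collecting the four contributions produces the one‑step inequality
\begin{align*}
&\mathcal{F}_n(t^M_m;\bu^\gamma_m,\delta\bu^\gamma_m,v^\gamma_m)+h\langle\delta l^\gamma_m,\bu^\gamma_{m-1}\rangle+\tfrac{1}{2h}\|v^\gamma_m-v^\gamma_{m-1}\|_{k,2}^2
\\
&\quad+h\int_\Omega b(v^\gamma_{m-1})\big[F^{-1}_n(\beps(\delta\bu^\gamma_m+\alpha\bu^\gamma_m))-F^{-1}_n(\beps(\alpha\bu^\gamma_m))\big]:\beps(\delta\bu^\gamma_m)\,\mathrm{d}x
\\
&\leq \mathcal{F}_n(t^M_{m-1};\bu^\gamma_{m-1},\delta\bu^\gamma_{m-1},v^\gamma_{m-1}),
\end{align*}
and I would iterate it from $j=1$ to $m$, using $\delta\bu^\gamma_0=\bu_1$ (from $\bu^\gamma_{-1}=\bu_0-h\bu_1$) and $v^\gamma_0=v_0$, so that $\mathcal{F}_n(t^M_0;\bu^\gamma_0,\delta\bu^\gamma_0,v^\gamma_0)=\mathcal{F}_n(0;\bu_0,\bu_1,v_0)$; this yields the assertion. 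I do not anticipate a serious obstacle, since the argument is the transcription of Propositions~\ref{p:prop2}--\ref{p:prop3} with $F,\varphi,\varphi^*,\mathcal{E}$ replaced by $F_n,\varphi_n,\varphi^*_n,\mathcal{E}_n$. The only point needing slight care is the energy piece: one must check that $\varphi^*_n$ is genuinely finite and convex on all of $\mathbb{R}^{d\times d}$ — which is exactly where the elliptic regularisation is used — so that the path‑integral manipulation of~(\ref{p:equ16}) carries over unchanged, and that feeding $v^\gamma_{m-1}$ into the enriched minimisation problem~(\ref{a:equ8}) correctly extracts the $\mathcal{G}_k$‑dissipation alongside the surface‑energy difference.
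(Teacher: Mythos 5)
Your decomposition of the tested equation is the right one, and every step but one carries over from Proposition \ref{p:prop3} exactly as you describe; the gap is in the one genuinely new step, the extraction of the rate-dependent dissipation. Inserting the competitor $\tilde\chi=v^\gamma_{m-1}$ directly into the minimisation problem (\ref{a:equ8}) yields, as you yourself compute, only
\[
\mathcal{E}_n(\bu^\gamma_m,v^\gamma_{m-1})-\mathcal{E}_n(\bu^\gamma_m,v^\gamma_m)\;\geq\;\mathcal{H}(v^\gamma_m)-\mathcal{H}(v^\gamma_{m-1})+\tfrac{h}{2}\|\delta v^\gamma_m\|_{k,2}^2,
\]
so after summation you obtain $\tfrac{h}{2}\sum_{j=1}^m\|\delta v^\gamma_j\|_{k,2}^2$ rather than the claimed $h\sum_{j=1}^m\|\delta v^\gamma_j\|_{k,2}^2$. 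This factor is not cosmetic: it is the full coefficient that survives as $\int_0^t\|v^n_t(s)\|_{k,2}^2\,\mathrm{d}s$ in the ``$\leq$'' direction of the energy-dissipation balance (\ref{a:equ44}), while the ``$\geq$'' direction is produced by $\partial_v\mathcal{G}_k(v,v_t)(v_t)=\|v_t\|_{k,2}^2$ with coefficient one; with only $h/2$ at the discrete level the two directions no longer meet and the equality in (\ref{a:equ44}) cannot be closed.

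The repair is to use the first-order optimality condition instead of direct energy comparison. Corollary \ref{a:cor2} (equivalently, Lemma \ref{a:lem1} in the direction $v^\gamma_{m-1}-v^\gamma_m$) gives $\big[\partial_v\mathcal{E}_n(\bu^\gamma_m,v^\gamma_m)+\mathcal{H}^\prime(v^\gamma_m)\big](\delta v^\gamma_m)=-\|\delta v^\gamma_m\|_{k,2}^2$. Since $b(v)=\max\{0,v\}^2+\eta$ is convex and $\varphi^*_n\geq 0$, the map $v\mapsto\mathcal{E}_n(\bu^\gamma_m,v)+\mathcal{H}(v)$ is convex, whence
\[
\mathcal{E}_n(\bu^\gamma_m,v^\gamma_{m-1})+\mathcal{H}(v^\gamma_{m-1})\;\geq\;\mathcal{E}_n(\bu^\gamma_m,v^\gamma_m)+\mathcal{H}(v^\gamma_m)-h\big[\partial_v\mathcal{E}_n(\bu^\gamma_m,v^\gamma_m)+\mathcal{H}^\prime(v^\gamma_m)\big](\delta v^\gamma_m),
\]
which recovers the full increment $h\|\delta v^\gamma_m\|_{k,2}^2$; this is precisely why the paper invokes the analogue of Corollary \ref{p:cor1} (namely Corollary \ref{a:cor2}) alongside Proposition \ref{p:prop3} for this result, whereas in Section \ref{sec:p} the direct comparison suffices because no $\mathcal{G}_k$-term is present. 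The remainder of your argument — the inertial and forcing terms, the chord-integral treatment of the energy piece, and your observation that the elliptic regularisation makes $\varphi^*_n$ finite and convex on all of $\mathbb{R}^{d\times d}$ so that the safety-strain condition plays no role here — is correct.
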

Now we concentrate on obtaining various \( M \)-independent bounds on the discrete solution sequence. Then we take the limit as \( M \rightarrow\infty \) to obtain a weak energy solution of the regularised problem. The time continuous problem is more amenable to work with when it comes to higher spatial regularity estimates, so we cannot derive these estimates until we have taken the limit as \( M \rightarrow\infty \). 
To this end, using the fact that
\begin{align*}
(F^{-1}_n(\bbT) - F^{-1}_n(\bbS)): (\bbT - \bbS) \geq C(n) |\bbT - \bbS|^2,
\end{align*}
for every \( \bbT \), \(\bbS\in \mathbb{R}^{d\times d}\), where \( C= C(n) \) is a constant depending only on \( n \), we immediately obtain an \( M \)-independent bound from the energy-dissipation inequality. 

\begin{lemma}\label{a:lem2}
Let the assumptions of Theorem \ref{a:thm1} hold and let \((\bu^\gamma_m,v^\gamma_m)_{m=1}^M \) be the solution of the time discrete problem. There exists a constant \(C = C(n) \), independent of \( M \), such that 
\begin{align*}
\max_{1\leq m \leq M}\|\bu^\gamma_m \|_{1,2}  + \max_{1\leq m \leq M}\|\delta\bu^\gamma_m \|_{2}  + \max_{1\leq m \leq M}\|v^\gamma_m \|_{k,2}+ h\sum_{m=1}^M \Big( \|\delta\bu^\gamma_m \|_{1,2}^2 + \|\delta v^\gamma_m \|_{k,2}^2\Big)    \leq C.
\end{align*}
\end{lemma}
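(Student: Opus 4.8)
The plan is to read off all the stated bounds from the discrete energy-dissipation inequality of the preceding Proposition, in exactly the way Lemma~\ref{p:lem1} is proved in Section~\ref{sec:p}. The two features that are genuinely new here are that the coercivity has to come from the elliptic regularisation (rather than from the $V_N$-norm equivalence used in Section~\ref{sec:p}), and that the full \( H^k \)-norm of \( v^\gamma_m \) must be recovered by a short telescoping argument, since the surface energy \( \mathcal{H} \) only controls the \( H^1 \)-norm.

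First I would rearrange the discrete energy-dissipation inequality, evaluated at an arbitrary index \( 1\le m\le M \), keeping the non-negative contributions on the left. The kinetic term gives \( \tfrac12\|\delta\bu^\gamma_m\|_2^2 \); the surface energy satisfies \( \mathcal{H}(v^\gamma_m)\ge C(\epsilon)\|v^\gamma_m\|_{1,2}^2 - C \) using \( \|v^\gamma_m\|_2\le\|1-v^\gamma_m\|_2+|\Omega|^{1/2} \); and the rate term \( h\sum_{j\le m}\|\delta v^\gamma_j\|_{k,2}^2 \) is already present with a favourable sign. For the nonlinear dissipation term I would write \( \beps(\delta\bu^\gamma_j)=\alpha^{-1}\big(\beps(\delta\bu^\gamma_j+\alpha\bu^\gamma_j)-\beps(\alpha\bu^\gamma_j)\big) \) and apply the coercivity estimate \( (F_n^{-1}(\bbT)-F_n^{-1}(\bbS)):(\bbT-\bbS)\ge C(n)|\bbT-\bbS|^2 \) recalled above together with \( b\ge\eta \), to bound that term below by \( C(n)\,h\sum_{j\le m}\|\beps(\delta\bu^\gamma_j)\|_2^2 \), hence by \( C(n)\,h\sum_{j\le m}\|\delta\bu^\gamma_j\|_{1,2}^2 \) after the Korn--Poincar\'e inequality (Theorem~\ref{korn}). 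Finally, since \( \varphi_n(\bbS)=\varphi(\bbS)+|\bbS|^2/(2n) \) and \( \varphi(r)\le r \), the conjugate obeys \( \varphi_n^*\ge0 \) and \( \varphi_n^*(\bbT)\ge \tfrac n4|\bbT|^2-C(n) \); combined with \( b\ge\eta \) and Korn--Poincar\'e this yields \( \mathcal{E}_n(\bu^\gamma_m,v^\gamma_m)\ge C(n)\|\bu^\gamma_m\|_{1,2}^2-C(n) \).

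On the right-hand side, \( \mathcal{F}_n(0;\bu_0,\bu_1,v_0) \) is a finite constant depending only on the data and on \( n \) (only the \(n\)-dependent quadratic growth of \( \varphi_n^* \) is needed for this — the safety strain condition~(\ref{a:equ5}) enters not here but later, for the limit \( n\to\infty \)). The force terms are handled by Cauchy--Schwarz and Young as in Lemma~\ref{p:lem1}: the contribution \( \langle l^\gamma_m,\bu^\gamma_m\rangle \) coming from \( -\langle l^\gamma_m,\bu^\gamma_m\rangle \) in \( \mathcal{F}_n(t^M_m;\cdot) \) is estimated by \( \sigma\|\bu^\gamma_m\|_{1,2}^2+C_\sigma\|l^\gamma_m\|_{-1,2}^2 \) and, choosing \( \sigma \) small relative to the coercivity constant of \( \mathcal{E}_n \), absorbed on the left, while \( h\sum_{j\le m}\langle\delta l^\gamma_j,\bu^\gamma_{j-1}\rangle \) is controlled by \( \tfrac12 h\sum_{j\le m}\|\delta l^\gamma_j\|_{-1,2}^2+\tfrac12 h\sum_{j\le m}\|\bu^\gamma_{j-1}\|_{1,2}^2 \), the first sum being \( \le C(n) \) since \( l^n \) is \( C^1 \) in time for each fixed \( n \). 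Writing \( y_m=\|\bu^\gamma_m\|_{1,2}^2 \), what is left is an inequality of the form \( y_m\le C(n)+C(n)\,h\sum_{j=0}^{m-1}y_j \), valid for every \( m \), and the discrete Gr\"onwall inequality gives \( \max_{1\le m\le M}y_m\le C(n) \). Feeding this back into the energy-dissipation inequality then bounds \( \max_m\|\delta\bu^\gamma_m\|_2 \), \( \max_m\|v^\gamma_m\|_{1,2} \) and \( h\sum_m\big(\|\delta\bu^\gamma_m\|_{1,2}^2+\|\delta v^\gamma_m\|_{k,2}^2\big) \), all by constants depending only on the data and \( n \).

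It remains to bound \( \|v^\gamma_m\|_{k,2} \), which \( \mathcal{H} \) does not see; for this I would use \( v^\gamma_m=v_0+h\sum_{j=1}^m\delta v^\gamma_j \) and Cauchy--Schwarz in the form \( \|v^\gamma_m\|_{k,2}\le\|v_0\|_{k,2}+T^{1/2}\big(h\sum_{j=1}^m\|\delta v^\gamma_j\|_{k,2}^2\big)^{1/2}\le C(n) \), using \( v_0\in H^k_{D+1}(\Omega) \) and the bound on the rate sum. I expect the only non-routine points to be closing the discrete Gr\"onwall loop — in particular absorbing the \( \|\bu^\gamma_m\|_{1,2}^2 \) generated by the force terms into \( \mathcal{E}_n \), which is exactly where the coercivity of the regularised potential \( \varphi_n^* \) (rather than any growth of \( \varphi^* \)) is essential — and the observation that the higher spatial regularity of \( v^\gamma_m \) must be propagated through the rate term \( h\sum_j\|\delta v^\gamma_j\|_{k,2}^2 \) rather than through the surface energy.
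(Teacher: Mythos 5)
Your proposal is correct and follows the same route as the paper, which obtains the bound from the discrete energy-dissipation inequality together with the strong monotonicity \( (F_n^{-1}(\bbT)-F_n^{-1}(\bbS)):(\bbT-\bbS)\ge C(n)|\bbT-\bbS|^2 \) supplied by the elliptic regularisation; the paper states this in a single sentence and leaves all details to the reader. The details you supply --- the \( n \)-dependent quadratic coercivity of \( \varphi_n^* \) (so no safety strain condition is needed at this stage), the absorption and discrete Gr\"onwall closure, and the recovery of the \( H^k \)-norm of \( v^\gamma_m \) by telescoping through the rate term rather than through \( \mathcal{H} \) --- are exactly the ones required.
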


The next bound in Lemma \ref{a:lem3} gives us information on the second derivative \( \delta^2\bu^\gamma_m \). As a result, we are able to improve the bound on \( (\delta\bu^\gamma_m)_{m=1}^M \) also. In Lemma \ref{a:lem4}, we improve the bounds on the phase-field function. This gives sufficient bounds on the corresponding interpolant functions, which we collect in Corollary \ref{a:cor3}. Then we take the limit as \( M \rightarrow\infty\) in Proposition \ref{a:prop2} and identify the limiting couple as a weak energy solution of the regularised problem. 

\begin{lemma}\label{a:lem3}
Let the assumptions of Theorem \ref{a:thm1} hold and let \((\bu^\gamma_m,v^\gamma_m)_{m=1}^M \) be the solution of the time discrete problem. There exists a constant \(C = C(n) \) independent of \( M \) such that 
\begin{align*}
h \sum_{m=1}^M\|\delta^2 \bu^\gamma_m\|_{2}^2 + \max_{1\leq m \leq M}\|\delta\bu^\gamma_m \|_{1,2}\leq C.
\end{align*}
\end{lemma}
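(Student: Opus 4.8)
The plan is to test the discrete momentum balance (\ref{a:equ7}) against a discrete analogue of the acceleration, mirroring the proof of Lemma \ref{p:lem8} but inside the time-discrete scheme. For $1\le m\le M$ I would take $\bw = h(\delta^2\bu^\gamma_m + \alpha\,\delta\bu^\gamma_m)\in W^{1,2}_D(\Omega)^d$ (admissible since $\bu^\gamma_{-1} = \bu_0 - h\bu_1\in W^{1,2}_D(\Omega)^d$). Because $h\,\delta^2\bu^\gamma_m = \delta\bu^\gamma_m - \delta\bu^\gamma_{m-1}$, the inertial term expands, via (\ref{p:equ35}), as
\[
\int_\Omega \delta^2\bu^\gamma_m\cdot\bw\,\dd x = h\|\delta^2\bu^\gamma_m\|_2^2 + \frac{\alpha}{2}\Big(\|\delta\bu^\gamma_m\|_2^2 - \|\delta\bu^\gamma_{m-1}\|_2^2 + \|\delta\bu^\gamma_m - \delta\bu^\gamma_{m-1}\|_2^2\Big),
\]
while $\langle l^\gamma_m,\bw\rangle = \langle l^\gamma_m, \delta\bu^\gamma_m - \delta\bu^\gamma_{m-1}\rangle + \alpha h\langle l^\gamma_m,\delta\bu^\gamma_m\rangle$. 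The crucial observation is that $\beps(\bw) = \beps(\delta\bu^\gamma_m + \alpha\bu^\gamma_m) - \beps(\delta\bu^\gamma_{m-1} + \alpha\bu^\gamma_{m-1})$, so, setting $\bbT^\gamma_j := F^{-1}_n(\beps(\delta\bu^\gamma_j + \alpha\bu^\gamma_j))$ for $j\ge 0$ (with $\delta\bu^\gamma_0 = \bu_1$, hence $\bbT^\gamma_0 = F^{-1}_n(\beps(\bu_1 + \alpha\bu_0))$), the nonlinear term equals $\int_\Omega b(v^\gamma_{m-1})\,\bbT^\gamma_m:(F_n(\bbT^\gamma_m) - F_n(\bbT^\gamma_{m-1}))\,\dd x$. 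Since $\bbT^\gamma_m = \partial\varphi^*_n(F_n(\bbT^\gamma_m))$ and $\varphi^*_n$ is convex, this is at least $\int_\Omega b(v^\gamma_{m-1})\big[\varphi^*_n(F_n(\bbT^\gamma_m)) - \varphi^*_n(F_n(\bbT^\gamma_{m-1}))\big]\,\dd x$.

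Next I would sum over $m=1,\dots,M'$ for arbitrary $M'\le M$ and drop the non-negative terms $\|\delta\bu^\gamma_m - \delta\bu^\gamma_{m-1}\|_2^2$. For the nonlinear sum I use that $v^\gamma_m\le v^\gamma_{m-1}$ (the constraint in (\ref{a:equ8})), that $b$ is non-decreasing, and that $\varphi^*_n\ge 0$, which gives the telescoping lower bound $\int_\Omega b(v^\gamma_{M'})\varphi^*_n(F_n(\bbT^\gamma_{M'}))\,\dd x - \int_\Omega b(v_0)\varphi^*_n(\beps(\bu_1+\alpha\bu_0))\,\dd x$; for the force sum discrete summation by parts produces, apart from terms bounded by $C(n)$ (using the time regularity of $l^n$ and $\|\bu_1\|_{1,2}<\infty$), the endpoint contribution $\langle l^\gamma_{M'},\delta\bu^\gamma_{M'}\rangle$ and a sum $h\sum_{m}\langle\delta l^\gamma_m,\delta\bu^\gamma_{m-1}\rangle$. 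Altogether one reaches
\begin{align*}
&h\sum_{m=1}^{M'}\|\delta^2\bu^\gamma_m\|_2^2 + \frac{\alpha}{2}\|\delta\bu^\gamma_{M'}\|_2^2 + \int_\Omega b(v^\gamma_{M'})\varphi^*_n(F_n(\bbT^\gamma_{M'}))\,\dd x
\\
&\quad\le \int_\Omega b(v_0)\varphi^*_n(\beps(\bu_1+\alpha\bu_0))\,\dd x + \langle l^\gamma_{M'},\delta\bu^\gamma_{M'}\rangle - h\sum_{m=1}^{M'}\langle\delta l^\gamma_m,\delta\bu^\gamma_{m-1}\rangle + \alpha h\sum_{m=1}^{M'}\langle l^\gamma_m,\delta\bu^\gamma_m\rangle + C(n).
\end{align*}
The two sums on the right are handled by Young's inequality together with $h\sum_m\|\delta l^\gamma_m\|_{-1,2}^2\le\|l^n_t\|_{L^2(0,T;W^{-1,2}_D)}^2$, $h\sum_m\|l^\gamma_m\|_{-1,2}^2\le\|l^n\|_{L^2(0,T;W^{-1,2}_D)}^2$ and the bound $h\sum_{m=1}^M\|\delta\bu^\gamma_m\|_{1,2}^2\le C(n)$ from Lemma \ref{a:lem2}, all finite with $n$-dependent constants. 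Using (\ref{intro:equ13}) one has $\varphi^*_n(F_n(\bbS)) = \varphi^*(F(\bbS)) + \tfrac{1}{2n}|\bbS|^2$, so the initial nonlinear term is at most $\tfrac{n}{2}C_*^2\|b(v_0)\|_{L^1(\Omega)}$ by the safety strain condition (\ref{a:equ5}) (and $b(v_0)\in L^1(\Omega)$ since $v_0\in H^k_{D+1}(\Omega)\subset L^2(\Omega)$).

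It remains to absorb $\langle l^\gamma_{M'},\delta\bu^\gamma_{M'}\rangle$, which contains the very quantity being estimated; this is the main obstacle. From $\varphi^*_n(F_n(\bbS))\ge\tfrac{1}{2n}|\bbS|^2$ and $b\ge\eta$ the left-hand side controls $\tfrac{\eta}{2n}\|\bbT^\gamma_{M'}\|_2^2$; on the other hand $\beps(\delta\bu^\gamma_{M'}) = F_n(\bbT^\gamma_{M'}) - \alpha\beps(\bu^\gamma_{M'})$ with $|F_n(\bbS)|\le 1 + \tfrac1n|\bbS|$, so the Korn--Poincar\'{e} inequality (Theorem \ref{korn}) and the bound $\|\bu^\gamma_{M'}\|_{1,2}\le C(n)$ from Lemma \ref{a:lem2} give $\|\delta\bu^\gamma_{M'}\|_{1,2}^2\le C(n)\big(1 + \|\bbT^\gamma_{M'}\|_2^2\big)\le C(n)\big(1 + \int_\Omega b(v^\gamma_{M'})\varphi^*_n(F_n(\bbT^\gamma_{M'}))\,\dd x\big)$. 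Hence the left-hand side dominates $c(n)\|\delta\bu^\gamma_{M'}\|_{1,2}^2 - C(n)$ for some $c(n)>0$, and $\langle l^\gamma_{M'},\delta\bu^\gamma_{M'}\rangle\le\tfrac{c(n)}{2}\|\delta\bu^\gamma_{M'}\|_{1,2}^2 + C(n)$ by Young. This yields $h\sum_{m=1}^{M'}\|\delta^2\bu^\gamma_m\|_2^2 + \|\delta\bu^\gamma_{M'}\|_{1,2}^2\le C(n)$, and taking the maximum over $M'\in\{1,\dots,M\}$ gives the claim. Besides the absorption step, the one point requiring the structural hypotheses is that the chosen test function forces the initial strain $\beps(\bu_1+\alpha\bu_0)$ into the convex potential $\varphi^*_n$, which is admissible precisely because of the safety strain condition (\ref{a:equ5}); this is what the excerpt alludes to when it says that $\delta^2\bu^\gamma_0$ must be suitably defined.
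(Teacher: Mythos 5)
Your argument is correct and is essentially the paper's own proof: you test against \( h(\delta^2\bu^\gamma_m+\alpha\,\delta\bu^\gamma_m)\), bound the nonlinear term from below by the telescoping increment of \( \int_\Omega b(v^\gamma_{\cdot})\varphi^*_n(\beps(\delta\bu^\gamma_{\cdot}+\alpha\bu^\gamma_{\cdot}))\,\mathrm{d}x\) (your one-line convexity inequality \( \bbT:(F_n(\bbT)-F_n(\bbS))\ge\varphi^*_n(F_n(\bbT))-\varphi^*_n(F_n(\bbS))\) is exactly what the paper derives at greater length via the fourth-order tensor \( \mathcal{B}_n\) and the fundamental theorem of calculus), sum with summation by parts on the force, and absorb the endpoint force term using the quadratic coercivity of \( \varphi^*_n\) supplied by the \( n^{-1}\)-regularisation together with Lemma \ref{a:lem2}. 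The only blemishes are minor: you invoke the safety strain condition (\ref{a:equ5}), which is not among the hypotheses of the lemma and is unnecessary here, since \( \varphi_n(\bbT)\ge\frac{|\bbT|^2}{2n}\) and order-reversal of convex conjugation give \( \varphi^*_n(\bbU)\le\frac{n}{2}|\bbU|^2\), whence \( \int_\Omega b(v_0)\varphi^*_n(\beps(\bu_1+\alpha\bu_0))\,\mathrm{d}x\le \frac{n}{2}\|b(v_0)\|_\infty\|\beps(\bu_1+\alpha\bu_0)\|_2^2\), an admissible \( n\)-dependent constant; and your closing remark about \( \delta^2\bu^\gamma_0\) is misplaced, as that construction is only needed later for Proposition \ref{a:prop3}, whereas here the sum starts at \( m=1\) and only \( \delta\bu^\gamma_0=\bu_1\) enters.
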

\begin{proof}
Testing in (\ref{a:equ7}) against \( h (\delta^2\bu^\gamma_m + \alpha\delta\bu^\gamma_m ) \), we rewrite the term involving the nonlinearity in the following way:
\begin{equation}\label{a:equ40}
\begin{aligned}
&\int_\Omega b(v^\gamma_{m-1}) F^{-1}_n(\beps(\delta\bu^\gamma_m + \alpha\bu^\gamma_m) ) : \beps(h (\delta^2\bu^\gamma_m + \alpha\delta\bu^\gamma_m)) \,\mathrm{d}x
\\
&= 
\int_\Omega b(v^\gamma_{m-1}) F^{-1}_n(\beps(\delta\bu^\gamma_m + \alpha\bu^\gamma_m)) : \beps( \delta\bu^\gamma_m + \alpha\bu^\gamma_m)\,\mathrm{d}x
\\
&\quad - \int_\Omega b(v^\gamma_{m-1}) F^{-1}_n(\beps(\delta\bu^\gamma_{m-1} + \alpha\bu^\gamma_{m-1})) : \beps( \delta\bu^\gamma_{m-1} + \alpha\bu^\gamma_{m-1})\,\mathrm{d}x
\\
&\quad
- \int_\Omega  \Big\{ b(v^\gamma_{m-1}) \big[ F^{-1}_n(\beps(\delta\bu^\gamma_m + \alpha\bu^\gamma_m)) - F^{-1}_n(\beps(\delta\bu^\gamma_{m-1} + \alpha\bu^\gamma_{m-1}))\big] 
\\&\quad
: \beps( \delta\bu^\gamma_{m-1} + \alpha\bu^\gamma_{m-1})  \Big\} \,\mathrm{d}x.
\end{aligned}
\end{equation}
We define the fourth order tensor \( \mathcal{B}_n ( \bbT) \), where \( \bbT \in \mathbb{R}^{d\times d}_{sym}\), by
\begin{align*}
\mathcal{B}_n^{ijkl}(\bbT) = \frac{\partial (F^{-1}_n)_{ij}}{\partial T_{kl}} ( \bbT).
\end{align*}
This defines an inner product on \( \mathbb{R}^{d\times d}_{sym}\) in the following way: 
\begin{align*}
( \bbS, \bbU)_{\mathcal{B}_n(\bbT)} = \mathcal{B}^{ijkl}_n(\bbT) S_{ij}U_{kl}. 
\end{align*}
It follows that, for every \( \bbT \), \( \bbS \in \mathbb{R}^{d\times d}_{sym}\),
\begin{equation}\label{a:equ39}
(F^{-1}_n(\bbT) - F^{-1}_n(\bbS)) : \bbT  = \int_0^1 (\bbT, \bbT - \bbS) _{\mathcal{B}_n(s\bbT + (1-s) \bbS)} \,\mathrm{d}s.
\end{equation}
On the other hand, we also have
\begin{align*}
&\varphi_n (F^{-1}_n(\bbT)) - \varphi_n(F^{-1}_n(\bbS)) 
\\
&= - \int_0^1 (1-s) (\bbT - \bbS, \bbT - \bbS)_{\mathcal{B}_n(s\bbT  + (1-s)\bbS)} \,\mathrm{d}s + \int_0^1 (\bbT, \bbT - \bbS)_{\mathcal{B}_n(s\bbT + (1-s) \bbS) }\,\mathrm{d}s.
\end{align*}
Substituting this into (\ref{a:equ39}), it follows that
\begin{align*}
&(F^{-1}_n(\bbT) - F^{-1}_n(\bbS) ):\bbT 
\\
&= \varphi_n (F^{-1}_n(\bbT)) - \varphi_n(F^{-1}_n(\bbS))  + \int_0^1 (1-s) (\bbT - \bbS, \bbT - \bbS)_{\mathcal{B}_n(s\bbT  + (1-s)\bbS)} \,\mathrm{d}s 
\\
&\geq \varphi_n (F^{-1}_n(\bbT)) - \varphi_n(F^{-1}_n(\bbS)).
\end{align*}
Returning to (\ref{a:equ40}), we deduce that
\begin{equation}\label{a:equ41}
\begin{aligned}
&\int_\Omega b(v^\gamma_{m-1}) F^{-1}_n(\beps(\delta\bu^\gamma_m + \alpha\bu^\gamma_m) ) : \beps(h (\delta^2\bu^\gamma_m + \alpha\delta\bu^\gamma_m)) \,\mathrm{d}x
\\
&\geq \int_\Omega b(v^\gamma_{m-1}) F^{-1}_n(\beps(\delta\bu^\gamma_m + \alpha\bu^\gamma_m)) : \beps( \delta\bu^\gamma_m + \alpha\bu^\gamma_m)\,\mathrm{d}x
\\
&\quad - \int_\Omega b(v^\gamma_{m-1}) F^{-1}_n(\beps(\delta\bu^\gamma_{m-1} + \alpha\bu^\gamma_{m-1})) : \beps( \delta\bu^\gamma_{m-1} + \alpha\bu^\gamma_{m-1})\,\mathrm{d}x
\\
&\quad
+ \int_\Omega b(v^\gamma_{m-1}) \big[ \varphi_n(F^{-1}_n( \beps(\delta\bu^\gamma_{m-1} + \alpha\bu^\gamma_{m-1}))) - \varphi_n(F^{-1}_n(\beps(\delta\bu^\gamma_m + \alpha\bu^\gamma_m ))) \,\mathrm{d}x
\\
&= \int_\Omega b(v^\gamma_{m-1}) \big[ \varphi^*_n(\beps(\delta\bu^\gamma_m + \alpha\bu^\gamma_m)) - \varphi^*_n (\beps(\delta\bu^\gamma_{m-1} + \alpha\bu^\gamma_{m-1})) \big] \,\mathrm{d}x
\\
&\geq \int_\Omega b(v^\gamma_{m}) \varphi^*_n(\beps(\delta\bu^\gamma_m + \alpha\bu^\gamma_m)) - b(v^\gamma_{m-1}) \varphi^*_n (\beps(\delta\bu^\gamma_{m-1} + \alpha\bu^\gamma_{m-1})) \,\mathrm{d}x.
\end{aligned}
\end{equation}
We insert (\ref{a:equ41}) into the elastodynamic equation and rewrite the terms involving \(\delta^2\bu^\gamma_m\) using (\ref{p:equ35}) to deduce that
\begin{align*}
&h \|\delta^2\bu^\gamma_m \|_2^2 + \frac{\alpha}{2} \Big( \|\delta\bu^\gamma_m \|_2^2 - \|\delta\bu^\gamma_{m-1}\|_2^2 + \|\delta\bu^\gamma_{m} - \delta\bu^\gamma_{m-1}\|_2^2 \Big) 
\\
&\quad + \int_\Omega b(v^\gamma_{m}) \varphi^*_n(\beps(\delta\bu^\gamma_m + \alpha\bu^\gamma_m)) - b(v^\gamma_{m-1}) \varphi^*_n (\beps(\delta\bu^\gamma_{m-1} + \alpha\bu^\gamma_{m-1}))  \,\mathrm{d}x
\\
&\leq \langle l^\gamma_m , \delta\bu^\gamma_m + \alpha\bu^\gamma_m \rangle - \langle l^\gamma_{m-1}, \delta\bu^\gamma_{m-1} + \alpha\bu^\gamma_{m-1}\rangle + h \langle \delta l^\gamma_m, \delta\bu^\gamma_{m-1} + \alpha\bu^\gamma_{m-1}\rangle.
\end{align*}
Using this inequality recursively and cancelling appropriate terms, we deduce that 
\begin{align*}
&h\sum_{j=1}^m \|\delta^2\bu^\gamma_m \|_2^2 +  \|\delta\bu^\gamma_m \|_2^2 + \int_\Omega b(v^\gamma_m) \varphi^*_n(\beps(\delta\bu^\gamma_m + \alpha\bu^\gamma_m)) \,\mathrm{d}x
\\
&\leq C\Big[ \|l^\gamma_m\|_{-1,2} \|\beps(\delta\bu^\gamma_m + \alpha\bu^\gamma_m ) \|_2 + \|l^\gamma_0\|_{-1,2} \|\beps(\bu_1 + \alpha\bu_0) \|_2 + h \sum_{j=1}^m \|\delta l^\gamma_j \|_{-1,2}^2 
\\
&\quad
+ h\sum_{j=0}^{m-1}\|\beps(\delta\bu^\gamma_j + \alpha\bu^\gamma_j ) \|_2^2 + \|\bu_1 \|_2^2 + \int_\Omega b(v_0) \varphi^*_n(\beps(\bu_1 + \alpha\bu_0)) \,\mathrm{d}x\Big].
\end{align*}
Using Young's inequality and the fact that \( \varphi^*_n(\bbT)\geq c(n) |\bbT|^2 \) to absorb the first term on the right-hand side into the left,  applying the bound in Lemma \ref{a:lem2}  it follows that for a constant \(C = C(n)\), independent of \( h \) and \( m \), we have 
\begin{align*}
h\sum_{j=1}^m \|\delta^2\bu^\gamma_m \|_2^2 +  \|\delta\bu^\gamma_m \|_2^2 + \int_\Omega b(v^\gamma_m) \varphi^*_n(\beps(\delta\bu^\gamma_ m + \alpha\bu^\gamma_m )) \,\mathrm{d}x
\leq C.
\end{align*}
Optimising with respect to \( m \) on the left-hand side,  we obtain the required result. 
\end{proof}


\begin{lemma}\label{a:lem4}
Let the assumptions of Theorem \ref{a:thm1} hold and let \((\bu^\gamma_m,v^\gamma_m)_{m=1}^M \) be the solution of the time discrete problem. Suppose additionally that \( k> \frac{d}{2}\).  There exists a constant \(C = C(n) \), independent of \( M \), such that 
\begin{align*}
\max_{1\leq m \leq M}\|\delta v^\gamma_m\|_{k,2} \leq C.
\end{align*}
\end{lemma}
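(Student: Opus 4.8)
The plan is to mimic the proof of Lemma~\ref{p:lem2}, with the Sobolev embedding $H^k(\Omega)\hookrightarrow L^\infty(\Omega)$ (available here because $k>\tfrac d2$) playing the role that the finite-dimensional embedding $V_N\hookrightarrow C^1(\overline\Omega)$ played in Section~\ref{sec:p}. First I would test Corollary~\ref{a:cor1} at index $m-1$ against $\chi=\delta v^\gamma_m$; this is admissible since $v^\gamma_m,v^\gamma_{m-1}\in H^k_{D+1}(\Omega)$ forces $\delta v^\gamma_m\in H^k_D(\Omega)$, and $\delta v^\gamma_m\le 0$ by the monotonicity contained in Lemma~\ref{a:lem1}. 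Subtracting Corollary~\ref{a:cor2} at index $m$ and using $v^\gamma_{m-1}-v^\gamma_m=-h\,\delta v^\gamma_m$, the $\tfrac1{2\epsilon}$- and $2\epsilon$-terms combine into $-\tfrac{h}{2\epsilon}\|\delta v^\gamma_m\|_2^2-2\epsilon h\|\nabla\delta v^\gamma_m\|_2^2\le 0$ and are discarded, while the two $\mathcal G_k$-contributions produce $\|\delta v^\gamma_m\|_{k,2}^2-(\delta v^\gamma_{m-1},\delta v^\gamma_m)_{k,2}$. Rewriting this via the identity (\ref{p:equ35}) and dropping a non-negative term, I arrive, for $2\le m\le M$, at
\[
\|\delta v^\gamma_m\|_{k,2}^2-\|\delta v^\gamma_{m-1}\|_{k,2}^2\le 2R_m,\qquad R_m:=\int_\Omega\frac{\delta v^\gamma_m}{\alpha}\Big[b'(v^\gamma_{m-1})\varphi^*_n(\beps(\alpha\bu^\gamma_{m-1}))-b'(v^\gamma_m)\varphi^*_n(\beps(\alpha\bu^\gamma_m))\Big]\,\mathrm{d}x.
\]

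To control $R_m$ I would add and subtract $\tfrac1\alpha b'(v^\gamma_{m-1})\varphi^*_n(\beps(\alpha\bu^\gamma_m))$. The piece carrying $b'(v^\gamma_{m-1})-b'(v^\gamma_m)$ has a favourable sign ($b'$ is non-decreasing, $\delta v^\gamma_m\le 0$, $\varphi^*_n\ge 0$) and is dropped; for the remaining piece I use $\varphi^*_n(\beps(\alpha\bu^\gamma_{m-1}))-\varphi^*_n(\beps(\alpha\bu^\gamma_m))=\int_0^1 F^{-1}_n(\alpha\beps(s\bu^\gamma_{m-1}+(1-s)\bu^\gamma_m)):\beps(\alpha\bu^\gamma_{m-1}-\alpha\bu^\gamma_m)\,\mathrm{d}s$ together with $\bu^\gamma_{m-1}-\bu^\gamma_m=-h\,\delta\bu^\gamma_m$, after which the factors $\alpha$ cancel and a factor $h$ is produced. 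Since $|b'|\le 2$ and $F^{-1}_n$ has at most linear growth with a constant depending on $n$ (because $F_n$ contains the elliptic term $\bbT/n$, equivalently $\varphi^*_n$ is quadratically bounded), Hölder's inequality in the form $L^\infty\times L^2\times L^2$ and $\|\delta v^\gamma_m\|_\infty\le C\|\delta v^\gamma_m\|_{k,2}$ then give
\[
R_m\le C(n)\,h\,\|\delta v^\gamma_m\|_{k,2}\big(1+\|\bu^\gamma_{m-1}\|_{1,2}+\|\bu^\gamma_m\|_{1,2}\big)\|\delta\bu^\gamma_m\|_{1,2}\le C(n)\,h\,\|\delta v^\gamma_m\|_{k,2},
\]
the last step by the $M$-independent bounds $\|\bu^\gamma_m\|_{1,2}\le C(n)$ of Lemma~\ref{a:lem2} and $\|\delta\bu^\gamma_m\|_{1,2}\le C(n)$ of Lemma~\ref{a:lem3}.

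Writing $a_m:=\|\delta v^\gamma_m\|_{k,2}$, the recursion becomes $a_m^2-a_{m-1}^2\le 2C(n)h\,a_m$; factoring the left-hand side and using $a_m\le a_m+a_{m-1}$ yields $a_m-a_{m-1}\le 2C(n)h$ for $2\le m\le M$, hence $a_m\le a_1+2C(n)hm\le a_1+2C(n)T$. Maximising over $m$ then gives the claim, provided the first difference quotient $a_1=\|\delta v^\gamma_1\|_{k,2}$ is itself bounded independently of $M$.

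The step I expect to be the real obstacle is precisely this base case. In contrast to Section~\ref{sec:p}, there is no minimisation problem at $t=0$ in the strain-limiting setting, so the convenient device of having Corollary~\ref{p:cor1} hold at $m=0$ is unavailable, and a crude estimate from Corollary~\ref{a:cor2} at $m=1$ only produces a bound of order $h^{-1/2}$. I would resolve this by exploiting the explicit initialisation $v^\gamma_0=v_0$ and the $H^k$-penalty $\tfrac1{2h}\|v-v_0\|_{k,2}^2$ in the minimisation (\ref{a:equ8}) — which forces $v^\gamma_1$ to stay close to $v_0$ — together with the sign $(v^\gamma_1-1)\delta v^\gamma_1\ge 0$, the convexity of $\mathcal H$, and the standing hypotheses on the initial data (in particular those guaranteeing $\mathcal F_n(0;\cdot)$ and $\mathcal H(v_0)$ finite), so as to obtain an $M$-independent bound on $a_1$. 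With $a_1$ in hand, the recursion above closes and the proof is complete.
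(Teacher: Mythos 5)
The recursion you set up (differencing Corollary \ref{a:cor1} at level $m-1$ against Corollary \ref{a:cor2} at level $m$) is sound as far as it goes, but it is a detour, and the one ingredient it cannot supply — the base case $a_1=\|\delta v^\gamma_1\|_{k,2}\leq C(n)$ — is a genuine gap as written. Your sketch for closing it (the $\tfrac{1}{2h}\|v-v_0\|_{k,2}^2$ penalty, convexity of $\mathcal{H}$, finiteness of $\mathcal{F}_n(0;\cdot)$) only delivers $\|v^\gamma_1-v_0\|_{k,2}^2\leq Ch$, i.e.\ precisely the $O(h^{-1/2})$ estimate you yourself flag as insufficient; nothing in that list upgrades it to an $M$-independent bound.

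The observation you are missing is that in this section no differencing between consecutive time levels is needed at all, because the rate-dependent term changes the structure of the Euler--Lagrange identity. In Corollary \ref{a:cor2} the contribution of $\tfrac{1}{2h}\partial_v\mathcal{G}_k(v^\gamma_m-v^\gamma_{m-1},v^\gamma_m-v^\gamma_{m-1})$ tested against $\delta v^\gamma_m$ is exactly $\|\delta v^\gamma_m\|_{k,2}^2$, already sitting on its own at a single time level. Hence
\begin{align*}
\|\delta v^\gamma_m\|_{k,2}^2 &= -\int_\Omega\Big\{\frac{b^\prime(v^\gamma_m)}{\alpha}\delta v^\gamma_m\,\varphi^*_n(\beps(\alpha\bu^\gamma_m)) + \frac{1}{2\epsilon}(v^\gamma_m-1)\delta v^\gamma_m + 2\epsilon\nabla v^\gamma_m\cdot\nabla\delta v^\gamma_m\Big\}\,\mathrm{d}x
\\
&\leq \frac{b^\prime(1)}{\alpha}\|\delta v^\gamma_m\|_\infty\|\varphi^*_n(\beps(\alpha\bu^\gamma_m))\|_1 + \frac{1}{2\epsilon}\|v^\gamma_m-1\|_2\|\delta v^\gamma_m\|_2 + 2\epsilon\|\nabla v^\gamma_m\|_2\|\nabla\delta v^\gamma_m\|_2,
\end{align*}
and the Sobolev embedding $H^k(\Omega)\hookrightarrow L^\infty(\Omega)$ for $k>\tfrac{d}{2}$ (which you correctly identified as the replacement for $V_N\subset C^1(\overline{\Omega})$) together with Young's inequality lets you absorb $\tfrac12\|\delta v^\gamma_m\|_{k,2}^2$ into the left-hand side; the remaining terms are controlled uniformly in $m$ and $M$ by Lemma \ref{a:lem2}. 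This single-level estimate is the paper's proof, it applies verbatim at $m=1$, and it renders your recursion (and the unresolved base case) unnecessary. The contrast with Lemma \ref{p:lem2} is instructive: there Corollary \ref{p:cor1} contains no $\|\delta v\|^2$ term, so the subtraction of consecutive levels is forced; here it is not.
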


\begin{proof}
Using Corollary \ref{a:cor2}, we have
\begin{align*}
&\|\delta v^\gamma_m \|_{k,2}^2 
\\
&= -\int_\Omega \Big\{ \frac{b^\prime(v^\gamma_m)}{\alpha}\delta v^\gamma_m  \varphi^*_n(\beps(\alpha\bu^\gamma_m)) + \frac{1}{2\epsilon}(v^\gamma_m - 1) \delta v^\gamma_m
+ 2\epsilon\nabla v^\gamma_m \cdot \nabla \delta v^\gamma_h\Big\} \,\mathrm{d}x 
\\
&\leq 
\frac{b^\prime(1)}{\alpha} \|\delta v^\gamma_m \|_\infty\|\varphi^*_n(\beps(\alpha\bu^\gamma_m )) \|_1 + \frac{1}{2\epsilon}\|v^\gamma_m - 1\|_2 \|\delta v^\gamma_m \|_2 + 2\epsilon \|\nabla v^\gamma_m \|_2 \|\nabla \delta v^\gamma_m \|_2
\\
&\leq  C\Big[ \Big( \int_\Omega\varphi^*_n(\beps(\alpha\bu^\gamma_m )) \,\mathrm{d}x\Big)^2 + \|v^\gamma_m -1\|_2^2 + \|\nabla v^\gamma_m \|_2^2 \Big] + \frac{1}{2}\|\delta v^\gamma\|_{k,2}^2.
\end{align*}
We absorb the final term into the left-hand side and use Lemma \ref{a:lem2} to bound the remaining terms.
\end{proof}


\begin{corollary}\label{a:cor3}
Let the assumptions of Theorem \ref{a:thm1} hold and let \((\bu^\gamma_m,v^\gamma_m)_{m=1}^M \) be the solution of the time discrete problem. Suppose additionally that \( k > \frac{d}{2}\).  Using the notation of Section \ref{sec:p}, there exists a constant \( C = C(n) \), independent of \( M \), such that
\begin{align*}
&\max_{t\in [0, T]}\|\overline{\bu}^\gamma(t)\|_{1,2} + \max_{t\in [0, T]}\|\bu^{\gamma,\pm}(t)\|_{1,2} + \max_{t\in [0, T]}\|\overline{\bu}^{\gamma,\prime}(t)\|_{1,2} + \max_{t\in [0, T]}\|\bu^{\gamma,\pm,\prime}(t)\|_{1,2} 
\\&\quad
+ \int_0^T \|\bu^{\gamma,+,\prime\prime}(t)\|_2^2 \,\mathrm{d}t + \max_{t\in [0, T]}\|\overline{v}^\gamma(t) \|_{k,2} + \max_{t\in [0, T]}\|v^{\gamma,\pm}(t) \|_{k,2} + \max_{t\in [0, T]}\|v^{\gamma,+,\prime}(t) \|_{k,2}
\\&
\leq C.
\end{align*}
\end{corollary}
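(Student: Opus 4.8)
The plan is to transfer the $M$-independent bounds on the discrete sequence $(\bu^\gamma_m, v^\gamma_m)_{m}$ and its first and second difference quotients, established in Lemmas \ref{a:lem2}, \ref{a:lem3} and \ref{a:lem4}, to the associated interpolants, exactly as was done after Lemma \ref{p:lem3} in Section \ref{sec:p}. First I would recall the construction: for $t \in [t^M_{m-1}, t^M_m]$, the piecewise affine interpolants $\overline{\bu}^\gamma(t)$, $\overline{\bu}^{\gamma,\prime}(t)$ and $\overline{v}^\gamma(t)$ are convex combinations of consecutive discrete values of $(\bu^\gamma_m)$, $(\delta\bu^\gamma_m)$ and $(v^\gamma_m)$ respectively, while $\bu^{\gamma,\pm}(t)$, $\bu^{\gamma,\pm,\prime}(t)$, $v^{\gamma,\pm}(t)$, $\bu^{\gamma,+,\prime\prime}(t)$ and $v^{\gamma,+,\prime}(t)$ are piecewise constant, taking one of the discrete values $\bu^\gamma_m$, $\delta\bu^\gamma_m$, $v^\gamma_m$, $\delta^2\bu^\gamma_m$, $\delta v^\gamma_m$ on each subinterval. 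Note that $\delta\bu^\gamma_0 = \bu_1$ is well defined, by the initialisation $\bu^\gamma_{-1} = \bu_0 - h\bu_1$, so that $\overline{\bu}^{\gamma,\prime}$ and $\bu^{\gamma,\pm,\prime}$ are genuinely defined on all of $[0, T]$.

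Next I would observe that, for a piecewise constant interpolant, the essential supremum over $[0, T]$ of any norm coincides with the maximum of that norm over the relevant discrete indices; for a piecewise affine interpolant, convexity of the norm gives the same bound with the maximum taken over the two endpoint indices on each subinterval. Consequently $\|\overline{\bu}^\gamma\|_{L^\infty(0,T;W^{1,2})}$ and $\|\bu^{\gamma,\pm}\|_{L^\infty(0,T;W^{1,2})}$ are controlled by $\max_{1\le m\le M}\|\bu^\gamma_m\|_{1,2} \le C(n)$ by Lemma \ref{a:lem2}; likewise $\|\overline{v}^\gamma\|_{L^\infty(0,T;H^k)}$ and $\|v^{\gamma,\pm}\|_{L^\infty(0,T;H^k)}$ are bounded by $\max_{1\le m\le M}\|v^\gamma_m\|_{k,2} \le C(n)$. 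The norms of $\overline{\bu}^{\gamma,\prime}$ and $\bu^{\gamma,\pm,\prime}$ in $L^\infty(0,T;W^{1,2})$ are bounded by $\max_{1\le m\le M}\|\delta\bu^\gamma_m\|_{1,2} \le C(n)$ by Lemma \ref{a:lem3} (using also $\|\delta\bu^\gamma_0\|_{1,2} = \|\bu_1\|_{1,2}$, which is part of the data), and the norm of $v^{\gamma,+,\prime}$ in $L^\infty(0,T;H^k)$ by $\max_{1\le m\le M}\|\delta v^\gamma_m\|_{k,2} \le C(n)$ by Lemma \ref{a:lem4}, the latter being where the hypothesis $k > \tfrac{d}{2}$ is used.

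For the one remaining term, since $\bu^{\gamma,+,\prime\prime}$ is the backwards piecewise constant interpolant of $(\delta^2\bu^\gamma_m)_{m=1}^M$, we have $\int_0^T \|\bu^{\gamma,+,\prime\prime}(t)\|_2^2 \,\mathrm{d}t = h\sum_{m=1}^M \|\delta^2\bu^\gamma_m\|_2^2 \le C(n)$, again by Lemma \ref{a:lem3}. Adding the displayed estimates gives the claimed bound. There is no substantial obstacle: the proof is purely a matter of bookkeeping, and the only point requiring a little care is the treatment of the difference-quotient interpolants at the endpoint index $m = 0$, which is taken care of by the chosen initialisation of the time-discrete scheme. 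I would therefore omit the routine details, as is done for the analogous statement in Section \ref{sec:p}.
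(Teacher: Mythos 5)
Your proposal is correct and matches the paper's (implicit) argument: the corollary is stated without proof precisely because it is the routine transfer of the discrete bounds of Lemmas \ref{a:lem2}, \ref{a:lem3} and \ref{a:lem4} to the piecewise constant and piecewise affine interpolants, with the endpoint terms handled by the initialisation $\delta\bu^\gamma_0 = \bu_1$ and the assumed regularity of the data. Your bookkeeping correctly identifies which lemma supplies each bound, including that the $L^\infty$-in-time bound on $\delta\bu^\gamma_m$ in $W^{1,2}$ comes from Lemma \ref{a:lem3} rather than Lemma \ref{a:lem2}, and that $k>\frac{d}{2}$ enters only through Lemma \ref{a:lem4}.
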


\begin{proposition}\label{a:prop2}
Let the assumptions of Theorem \ref{a:thm1} hold. Suppose additionally that \( k > \frac{d}{2}\). There exists a weak energy solution \((\bu^n, \bbT^n, v^n) \) of the  regularised problem in the following sense. The weak elastodynamic equation holds, that is, for a.e. \( t\in (0, T) \), we have
\begin{equation}\label{a:equ9}
\int_\Omega\bu^n_{tt}(t) \cdot \bw + b(v^n(t)) \bbT^n(t) : \beps(\bw) \,\mathrm{d}x = \langle l^n(t), \bw\rangle,
\end{equation}
for every \( \bw \in W^{1,2}_D(\Omega)^d\), where the stress tensor \( \bbT^n \) is identified by the constitutive relation 
\begin{align*}
\beps(\bu^n_t + \alpha\bu^n) = F_n(\bbT^n) \quad\text{ a.e. in }Q. 
\end{align*}
For a.e. \( t\in [0, T]\), the following minimisation problem is satisfied:
\begin{equation}\label{a:equ61}
\begin{aligned}
&\mathcal{E}_n(\bu^n(t), v^n(t)) + \mathcal{H}(v^n(t)) + \mathcal{G}_k(v^n(t), v^n_t(t)) 
\\
&= \inf\Big\{  \mathcal{E}_n(\bu^n(t), v) + \mathcal{H}(v)  + \mathcal{G}_k(v, v^n_t(t)) \,:\, v\in H^k_{D+1}(\Omega),\, v\leq v^n(t) \Big\},
\end{aligned}
\end{equation}
Furthermore, for every \( t\in [0, T]\), we have the energy-dissipation balance
\begin{equation}\label{a:equ44}
\begin{aligned}
&\mathcal{F}_n(t;\bu^n(t), \bu^n_t(t), v^n(t)) + \int_0^t \langle l^n_t(s), \bu^n(s)\rangle + \|v^n_t(s) \|_{k,2}^2 \,\mathrm{d}s
\\
&\quad
+ \int_0^t \int_\Omega b(v^n) \big[ F^{-1}_n(\beps(\bu^n_t + \alpha\bu^n)) - F^{-1}_n(\beps(\alpha\bu^n)) \big] : \beps(\bu^n_t) \,\mathrm{d}x\,\mathrm{d}s
\\
&= \mathcal{F}_n(0;\bu_0, \bu_1, v_0),
\end{aligned}
\end{equation}
and the non-healing property \( v^n_t\leq 0 \). The initial conditions hold in the sense that
\begin{align*}
\lim_{t\rightarrow 0+}\Big[ \|\bu^n(t) - \bu_0\|_{1,2} + \|\bu^n_t(t) - \bu_1 \|_2 + \|v^n(t) - v_0\|_{k,2}\Big] = 0.
\end{align*}
Furthermore, there exists a subsequence (not relabelled) in \(M \),   independent of \( n \), such that the following convergence results hold:
\begin{itemize}
\item \( \overline{\bu}^{(n,M)}\overset{\ast}{\rightharpoonup}\bu^n \) weakly-* in \( W^{1,\infty}(0, T; W^{1,2}_D(\Omega)^d) \);
\item \( \bu^{(n,M),\pm}\), \( \bu^{(n,M),\pm,\prime}\overset{\ast}{\rightharpoonup}\bu^n\), \( \bu^n_t\) weakly-* in \( L^\infty(0, T; W^{1,2}_D(\Omega)^d) \), respectively;
\item \( \overline{\bu}^{(n,M),\prime}\overset{\ast}{\rightharpoonup}\bu^n_t\) weakly-* in \( L^\infty(0, T; W^{1,2}_D(\Omega)^d) \) and weakly in \( W^{1,2}(0,T; L^2(\Omega)^d) \);
\item \( \overline{v}^{(n,M)}\overset{\ast}{\rightharpoonup}v^n\) weakly-* in \( W^{1,\infty}(0, T; H^k(\Omega)) \);
\item \( v^{(n,M),\pm} \overset{\ast}{\rightharpoonup} v^n\) weakly-* in \( L^\infty(0, T; H^k_{D+1}(\Omega))\).
\end{itemize}
\end{proposition}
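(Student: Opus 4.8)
The plan is to pass to the limit $M\to\infty$ in the time-discrete solutions of Theorem \ref{a:thm1} with $n$ held fixed, reproducing the scheme of the $M\to\infty$ limit in Section \ref{sec:p} (Lemma \ref{p:lem4} and Propositions \ref{p:prop4}--\ref{p:prop6}), but now in the genuinely infinite-dimensional spatial setting, since no Galerkin layer is used here. First, from the uniform bounds of Corollary \ref{a:cor3} and the Banach--Alaoglu theorem I would extract a subsequence (not relabelled) realising the five weak-* convergences in the statement; the identities $\overline\bu^\gamma_t=\bu^{\gamma,+,\prime}$, $\overline\bu^{\gamma,\prime}_t=\bu^{\gamma,+,\prime\prime}$, $\overline v^\gamma_t=v^{\gamma,+,\prime}$ and the fact that $\overline\bu^\gamma-\bu^{\gamma,\pm}$ and $\overline v^\gamma-v^{\gamma,\pm}$ are of order $h$ in the relevant norms force all interpolants to share the limits $\bu^n,\bu^n_t,\bu^n_{tt},v^n,v^n_t$. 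By the Aubin--Lions lemma, $\overline\bu^\gamma\to\bu^n$ and $\overline\bu^{\gamma,\prime}\to\bu^n_t$ strongly in $C([0,T];L^2(\Omega)^d)$, and, using $k>\frac{d}{2}$ together with the compact embedding $H^k(\Omega)\hookrightarrow\hookrightarrow H^s(\Omega)\hookrightarrow C(\overline\Omega)$ for some $s\in(\frac{d}{2},k)$, $\overline v^\gamma,v^{\gamma,\pm}\to v^n$ strongly in $C([0,T];C(\overline\Omega))$, so that $b(v^{\gamma,\pm})\to b(v^n)$ and $b'(v^{\gamma,\pm})\to b'(v^n)$ uniformly on $\overline Q$. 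The non-healing property $v^n_t\le 0$ passes to the limit from $\delta v^\gamma_m\le 0$.

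Next I would identify the limit of the nonlinear term in the elastodynamic equation. Rewriting (\ref{a:equ7}) in interpolant form gives, for a.e. $t$ and all $\bw\in W^{1,2}_D(\Omega)^d$,
\begin{align*}
\int_\Omega \bu^{\gamma,+,\prime\prime}\cdot\bw + b(v^{\gamma,-})\,\bbT^{\gamma,+}:\beps(\bw)\,\mathrm{d}x=\langle l^{\gamma,+},\bw\rangle,\qquad \bbT^{\gamma,+}:=F_n^{-1}(\beps(\bu^{\gamma,+,\prime}+\alpha\bu^{\gamma,+})),
\end{align*}
and since $|F_n^{-1}(\bbS)|\le n|\bbS|$, the sequence $(\bbT^{\gamma,+})$ is bounded in $L^\infty(0,T;L^2(\Omega)^{d\times d})$ and converges weakly-* to some $\bbT^n$; together with the convergences above, the limit of this relation is the weak equation (\ref{a:equ9}) with $\bbT^n$ in place of the stress, so it remains to prove $\bbT^n=F_n^{-1}(\beps(\bu^n_t+\alpha\bu^n))$. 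For this I run Minty's argument through the discrete scheme: testing (\ref{a:equ7}) against $h(\delta\bu^\gamma_m+\alpha\bu^\gamma_m)$ and summing, the inertial and forcing contributions reorganise into kinetic-energy differences plus numerical-dissipation terms, the leading one being $\frac{h}{2}\,h\sum_m\|\delta^2\bu^\gamma_m\|_2^2=O(h)$ by Lemma \ref{a:lem3}; comparing the limit with the weak equation for $\bu^n$ tested against $\bu^n_t+\alpha\bu^n$ yields $\int_0^t\!\int_\Omega b(v^{\gamma,-})\bbT^{\gamma,+}:\beps(\bu^{\gamma,+,\prime}+\alpha\bu^{\gamma,+})\to\int_0^t\!\int_\Omega b(v^n)\bbT^n:\beps(\bu^n_t+\alpha\bu^n)$. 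Monotonicity of $F_n^{-1}$, the strict positivity $b\ge\eta$, and the Minty device of inserting $F_n^{-1}(\beps(\bu^n_t+\alpha\bu^n)-\lambda\bbU)$ as comparison and letting $\lambda\to0+$ identify $\bbT^n$; the uniform monotonicity $(F_n^{-1}(\bbT)-F_n^{-1}(\bbS)):(\bbT-\bbS)\ge C(n)|\bbT-\bbS|^2$ then upgrades the convergence to $\beps(\bu^{\gamma,+,\prime}+\alpha\bu^{\gamma,+})\to\beps(\bu^n_t+\alpha\bu^n)$ strongly in $L^2(Q)^{d\times d}$, and the discrete memory-kernel representation of $\beps(\bu^\gamma_m)$ in terms of $(F_n(\bbT^\gamma_j))_j$ (the discrete form of the linear ODE $\tfrac{d}{dt}\beps(\bu^n)+\alpha\beps(\bu^n)=\beps(\bu^n_t+\alpha\bu^n)$) propagates this to $\beps(\bu^{\gamma,+})\to\beps(\bu^n)$ strongly in $C([0,T];L^2(\Omega)^{d\times d})$, whence $\varphi^*_n(\beps(\alpha\bu^{\gamma,+}))\to\varphi^*_n(\beps(\alpha\bu^n))$ in $L^1(Q)$ by the quadratic growth and local Lipschitz continuity of $\varphi^*_n$.

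The minimisation problem (\ref{a:equ61}) then follows by passing to the limit in the discrete variational inequality of Lemma \ref{a:lem1}, using weak-* convergence of $v^{\gamma,+}$ in $L^\infty(0,T;H^k)$, the strong $L^1$-convergence of $\varphi^*_n(\beps(\alpha\bu^{\gamma,+}))$, the uniform convergence of $b'(v^{\gamma,+})$, and $v^{\gamma,+,\prime}\overset{\ast}{\rightharpoonup}v^n_t$ for the $\mathcal{G}_k$-term, followed by Lebesgue's differentiation theorem; since $v^n_t\in L^\infty(0,T;H^k)$ is only defined a.e., this yields the minimisation problem for a.e. $t$. The energy-dissipation balance (\ref{a:equ44}) is obtained in two steps as in Proposition \ref{p:prop6}: the ``$\le$'' direction by weak lower semicontinuity in the discrete inequality, with $h\sum_j\|\delta v^\gamma_j\|_{k,2}^2$ passing to $\int_0^t\|v^n_t\|_{k,2}^2$; the ``$\ge$'' direction by re-discretising the limit couple, using (\ref{a:equ61}), the strong convergences of $\beps(\bu^n)$ and $\bu^n_t$, and the weak equation tested against $\bu^n_t+\alpha\bu^n$. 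The continuity of $t\mapsto(\bu^n(t),\bu^n_t(t),v^n(t))$ coming from $\bu^n\in W^{2,2}(0,T;L^2)\cap W^{1,\infty}(0,T;W^{1,2}_D)$ and $v^n\in W^{1,\infty}(0,T;H^k)$ then promotes both inequalities to every $t\in[0,T]$, and the initial conditions are read off from the initialisations in Theorem \ref{a:thm1} via the $C([0,T];L^2)$- and $C([0,T];C(\overline\Omega))$-convergences, with the $W^{1,2}$- and $H^k$-convergence of $\bu^n(t)$ and $v^n(t)$ following from the energy balance combined with weak continuity. The main obstacle is the identification of $\bbT^n$: because there is no Galerkin approximation to give strong spatial compactness for free, Minty's method has to be applied directly through the time-discrete scheme, which is viable only because Lemma \ref{a:lem3} makes the numerical dissipation of the inertial term of order $h$; controlling these discretisation remainders, rather than the monotone limit itself, is the delicate point.
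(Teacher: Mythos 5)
Your proposal is correct and follows essentially the same route as the paper's proof: weak-* compactness from Corollary \ref{a:cor3}, the Aubin--Lions lemma for strong compactness, identification of the stress by Minty's method via the energy identity obtained by testing the (discrete) elastodynamic equation against the discrete analogue of \( \bu^n_t + \alpha\bu^n \), passage to the limit in the discrete variational inequality for the minimisation problem, and the two-sided argument for the energy balance. The only notable difference is that you derive the strong \( L^2(0,T;W^{1,2}) \) convergence of the strains from the uniform (\(n\)-dependent) strong monotonicity of \( F^{-1}_n \) together with the memory-kernel representation, whereas the paper imports it by adapting Lemmas 3.9--3.11 of \cite{MR2673410}; your route is self-contained and equally valid.
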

\begin{proof}
The stated convergence results follow immediately from standard compactness results for Bochner spaces. By the Aubin--Lions lemma and the fact that \( k > \frac{d}{2}\geq 1\), it follows that \((\overline{v}^{(n,M)})_M \) converges strongly in \( C([0, T]; H^1(\Omega)) \) to \( v^n \). As a result, \((v^{(n,M),\pm})_M \) converges strongly in \( L^\infty(0, T; H^1(\Omega)) \) to the same limit. Satisfaction of the initial condition for \( v^n \) follows   from the strong convergence in \( C([0, T]; H^1(\Omega)) \). We argue similarly for \( \bu^n(0) \) and \( \bu^n_t(0) \), applying the Aubin--Lions lemma again. 

Defining the approximate stress tensor \( \bbT^{(n,M)} = F^{-1}_n(\beps(\partial_t\overline{\bu}^{(n,M)} + \alpha\bu^{(n,M),+})) \), we see from Lemmas \ref{a:lem2} and \ref{a:lem3} that \((\bbT^{(n,M)})_{M}\) is bounded in \( L^\infty(0,T; L^2(\Omega)^{d\times d}) \). Hence it converges in this space to a limiting function \( \bbT^n \in L^\infty(0, T; L^2(\Omega)^{d\times d}) \). 

Rewriting the elastodynamic equation (\ref{a:equ7}) in terms of the interpolants, multiplying by an arbitrary \( \chi \in C([0, T]) \) and integrating over \( [0, T]\), we deduce that
\begin{align*}
\int_Q \partial_t \overline{\bu }^{\gamma, \prime} \cdot ( \chi \bw) + b(v^{\gamma,-}) \bbT^\gamma: \beps(\chi \bw) \,\mathrm{d}x \,\mathrm{d}t = \int_0^T \langle l^{\gamma,+}, \chi \bw\rangle\,\mathrm{d}t,
\end{align*}
for every \( \bw \in W^{1,2}_D(\Omega)^d\). Letting \( M \rightarrow\infty \) and using Lebesgue's differentiation theorem, it follows that  
\begin{align*}
\int_\Omega \bu_{tt}^n(t) \cdot \bw + b(v^n(t)) \bbT^n(t) :\beps(\bw) \,\mathrm{d}x = \langle l^n(t), \bw\rangle,
\end{align*}
for a.e. \( t\in (0, T) \) and every \( \bw \in W^{1,2}_D(\Omega)^d\). 

Next, noticing that \((\partial_t\overline{\bu}^{(n,M), \prime} + \alpha\bu^{(n,M),+})_M \) converges strongly in \( L^2(0, T; L^2(\Omega)^d) \), by the Aubin--Lions lemma, we deduce that
\begin{equation}\label{a:equ60}
\begin{aligned}
&\lim_{M\rightarrow\infty} \int_Q b(v^{(n,M), -}) \bbT^\gamma: \beps(\partial_t \overline{\bu}^{(n,M),\prime} + \alpha\bu^{(n,M),+}) \,\mathrm{d}x\,\mathrm{d}t 
\\&
= \lim_{M\rightarrow\infty}\Big\{ - \int_Q \partial_t \overline{\bu}^{(n,M), \prime}\cdot ( \partial_t \overline{\bu}^{(n,M),\prime} + \alpha\bu^{(n,M),+}) \,\mathrm{d}x\,\mathrm{d}t \\&\quad
+ \int_0^T \langle l^{(n,M),+}, \partial_t \overline{\bu}^{(n,M),\prime} + \alpha\bu^{(n,M),+}\rangle\,\mathrm{d}t \Big\}
\\
&= - \int_Q \bu^n_{tt}\cdot (\bu^n_t + \alpha\bu^n) \,\mathrm{d}x\,\mathrm{d}t + \int_0^T \langle l^n, \bu^n_t + \alpha\bu^n\rangle\,\mathrm{d}t
\\
&= \int_Q b(v^n) \bbT^n:\beps(\bu^n_t + \alpha\bu^n) \,\mathrm{d}x\,\mathrm{d}t.
\end{aligned}
\end{equation}
Applying Minty's method, as in the proof of Theorem \ref{p:thm6}, it follows from (\ref{a:equ60}) that we   have \( \beps(\bu^n_t + \alpha\bu^n) = F_n(\bbT^n) \)  pointwise a.e. in \( Q\). 

To prove that the minimisation problem (\ref{a:equ61}) is satisfied, we need a strong convergence result for \( (\bu^{(n,M),+})_M \) and \( ({\bu}^{(n,M), \prime, +})_M\) in \( L^2(0, T; W^{1,2}(\Omega)^d) \). 
Such convergence results allow us to deal with the presence of nonlinearities in the minimisation problem and energy-dissipation balance. Adapting the reasoning in \cite{MR2673410} (namely, Lemmas 3.9, 3.10 and 3.11) to the nonlinear problem, we deduce that this is  the case. Indeed, we have that
\begin{equation}\label{a:equ43}
\bu^{(n,M),+}\rightarrow\bu^n ,\quad \bu^{(n,M),\prime, +} \rightarrow\bu^n_t \quad\text{ strongly in }L^2(0, T; W^{1,2}(\Omega)^d) .
\end{equation}
As a result, we also have \( \bbT^{(n,M)} \rightarrow\bbT^n \) strongly in \( L^2(0, T; L^2(\Omega)^{d\times d}) \) as \( M \rightarrow\infty \). 
Now, we know that the satisfaction of the minimisation problem (\ref{a:equ61}) is equivalent to 
\begin{equation}\label{a:equ42}
0 \leq \big[ \partial_v \mathcal{E}_n(\bu^n(t), v^n(t))   + \mathcal{H}^\prime(v^n(t)) + \partial_v \mathcal{G}_k(v^n(t), v^n_t(t)) \big](\chi),
\end{equation} 
for every \( \chi \in H^k_D(\Omega) \) with \( \chi \leq 0 \), for a.e. \( t\in (0, T) \). 
However, (\ref{a:equ42}) follows  from taking \( M \rightarrow\infty \) in   Corollary \ref{a:cor1}, applying the weak convergence results for the approximations of the phase-field function and the strong convergence result (\ref{a:equ43}). 

It remains to show that the energy-dissipation equality (\ref{a:equ44}) holds. 
Using the convergence results, weak lower semi-continuity of norms and Fatou's lemma, it follows that (\ref{a:equ44}) holds with ``\(=\)'' replaced by ``\( \leq \)''. For the opposite inequality, we argue as in the proof of Proposition \ref{p:prop6}, adapting the reasoning slightly to take care of the extra rate-dependent term. 
\end{proof}


Now that we have the existence of a solution to the regularised problem, the focus is to obtain appropriate \( n \)-independent bounds that allow us to take the limit as \( n \rightarrow\infty \). Before formulating these bounds, we state a consequence of the energy-dissipation equality that  is analogous to the result in Corollary \ref{p:cor1}. 
We differentiate the energy-dissipation equality, carefully apply the chain rule for weak derivatives and test in the elastodynamic equation against \( \bu_t^n \). The remaining terms must vanish and are exactly the left-hand side of (\ref{a:equ45}). This result is vital to the proof of Lemma \ref{a:lem7}. In turn, Lemma \ref{a:lem7} is needed to obtain a bound on \((v^n_t)_n \) in \( L^\infty(0, T; H^k(\Omega)) \). Provided that \( k > \frac{d}{2}+1\), by the Sobolev embedding theorem  it follows that \((v^n)_n \) is uniformly bounded in \( W^{1,\infty}(0, T; W^{1,\infty}(\Omega)) \), which is required for the proof of higher regularity estimates for the sequence \((\bbT^n)_n \). 

\begin{corollary}\label{a:cor4}
Let the assumptions of Proposition \ref{a:prop2} hold and  let \((\bu^n, \bbT^n, v^n) \) be the solution triple constructed there. For a.e. \( t\in [0, T]\),
\begin{equation}\label{a:equ45}
\big[ \partial_v\mathcal{E}_n(\bu^n(t), v^n(t))  + \mathcal{H}^\prime(v^n(t)) + \partial_v\mathcal{G}_k(v^n(t), v_t^n(t)) \big](v^n_t(t)) = 0.
\end{equation}
\end{corollary}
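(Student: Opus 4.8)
The plan is to mimic the proof of Lemma~\ref{p:lem10}, adapted to the regularised problem and to the extra rate-dependent functional $\mathcal{G}_k$; this is the time-continuous analogue of Corollary~\ref{p:cor1}. First I would establish that the map $t \mapsto \mathcal{F}_n(t;\bu^n(t),\bu^n_t(t),v^n(t))$ is absolutely continuous on $[0,T]$. This uses the regularity recorded in Proposition~\ref{a:prop2}: $\bu^n\in W^{1,\infty}(0,T;W^{1,2}_D(\Omega)^d)$ with $\bu^n_t\in W^{1,2}(0,T;L^2(\Omega)^d)$, and $v^n\in W^{1,\infty}(0,T;H^k(\Omega))$, together with $\varphi^*_n\in C^2$ and $b\in C^1$, so that each constituent ($\mathcal{K}$, $\mathcal{E}_n$, $\mathcal{H}$, $\langle l^n,\cdot\rangle$) is a product or composition of a $C^1$ functional with absolutely continuous Bochner paths. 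Since the right-hand side of the energy-dissipation equality~(\ref{a:equ44}) is constant in $t$, differentiating gives, for a.e.\ $t\in(0,T)$,
\begin{equation*}
\frac{\mathrm{d}}{\mathrm{d}t}\mathcal{F}_n(t;\bu^n,\bu^n_t,v^n) + \langle l^n_t,\bu^n\rangle + \|v^n_t\|_{k,2}^2 + \int_\Omega b(v^n)\big[F^{-1}_n(\beps(\bu^n_t+\alpha\bu^n)) - F^{-1}_n(\beps(\alpha\bu^n))\big]:\beps(\bu^n_t)\,\mathrm{d}x = 0.
\end{equation*}

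Next I would expand $\frac{\mathrm{d}}{\mathrm{d}t}\mathcal{F}_n$ via the chain rule. The kinetic term yields $\int_\Omega \bu^n_{tt}\cdot\bu^n_t\,\mathrm{d}x$ (the Lions--Magenes-type identity $\frac{\mathrm{d}}{\mathrm{d}t}\tfrac12\|\bu^n_t\|_2^2 = \langle\bu^n_{tt},\bu^n_t\rangle$ applies since $\bu^n_{tt}\in L^2(0,T;L^2)$ and $\bu^n_t\in L^2(0,T;W^{1,2}_D)$); the surface term yields $\mathcal{H}'(v^n)(v^n_t)$; using $\partial\varphi^*_n/\partial\bbT = F^{-1}_n$, the elastic term yields $\partial_v\mathcal{E}_n(\bu^n,v^n)(v^n_t) + \int_\Omega b(v^n)F^{-1}_n(\beps(\alpha\bu^n)):\beps(\bu^n_t)\,\mathrm{d}x$; and the load term yields $-\langle l^n_t,\bu^n\rangle - \langle l^n,\bu^n_t\rangle$. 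Substituting, the two $\langle l^n_t,\bu^n\rangle$ contributions cancel, and the two occurrences of $\int_\Omega b(v^n)F^{-1}_n(\beps(\alpha\bu^n)):\beps(\bu^n_t)\,\mathrm{d}x$ combine with the monotone-difference integral to produce $\int_\Omega b(v^n)F^{-1}_n(\beps(\bu^n_t+\alpha\bu^n)):\beps(\bu^n_t)\,\mathrm{d}x$. Regrouping, the identity becomes
\begin{equation*}
\begin{aligned}
0 &= \Big[\int_\Omega \bu^n_{tt}\cdot\bu^n_t + b(v^n)F^{-1}_n(\beps(\bu^n_t+\alpha\bu^n)):\beps(\bu^n_t)\,\mathrm{d}x - \langle l^n,\bu^n_t\rangle\Big] \\
&\quad + \Big[\partial_v\mathcal{E}_n(\bu^n(t),v^n(t))(v^n_t(t)) + \mathcal{H}'(v^n(t))(v^n_t(t)) + \|v^n_t(t)\|_{k,2}^2\Big].
\end{aligned}
\end{equation*}

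Finally, since $\bu^n_t(t)\in W^{1,2}_D(\Omega)^d$ for a.e.\ $t$, testing the weak elastodynamic equation~(\ref{a:equ9}) with $\bw = \bu^n_t(t)$ and recalling $b(v^n)\bbT^n = b(v^n)F^{-1}_n(\beps(\bu^n_t+\alpha\bu^n))$ (from the constitutive relation $\beps(\bu^n_t+\alpha\bu^n)=F_n(\bbT^n)$) shows that the first bracket vanishes for a.e.\ $t$. Hence the second bracket vanishes; and because $\mathcal{G}_k$ is linear in its first slot, $\partial_v\mathcal{G}_k(v^n(t),v^n_t(t))(v^n_t(t)) = (v^n_t(t),v^n_t(t))_{k,2} = \|v^n_t(t)\|_{k,2}^2$, so the second bracket is exactly the left-hand side of~(\ref{a:equ45}), which proves the claim. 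The main obstacle is the first step: rigorously justifying the absolute continuity of $t\mapsto\mathcal{F}_n$ and the chain rule along these Bochner paths --- in particular $\frac{\mathrm{d}}{\mathrm{d}t}\mathcal{E}_n$, a product of $b(v^n)$ and $\varphi^*_n(\beps(\alpha\bu^n))$ differentiated in different topologies, and the kinetic integration-by-parts formula --- after which everything reduces to the bookkeeping of cancellations already carried out in the proof of Lemma~\ref{p:lem10}.
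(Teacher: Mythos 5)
Your proposal is correct and follows exactly the route the paper takes: the paper's (prose) justification of Corollary \ref{a:cor4} is precisely to differentiate the energy-dissipation equality (\ref{a:equ44}), apply the chain rule, and test the elastodynamic equation (\ref{a:equ9}) against \( \bu^n_t \), mirroring the proof of Lemma \ref{p:lem10} with the extra observation that the dissipation term \( \|v^n_t\|_{k,2}^2 \) is exactly \( \partial_v\mathcal{G}_k(v^n, v^n_t)(v^n_t) \). Your bookkeeping of the cancellations and your identification of the remaining bracket with the left-hand side of (\ref{a:equ45}) are both accurate, and your attention to the absolute-continuity/chain-rule justification is the point the paper itself flags with the phrase ``carefully apply the chain rule for weak derivatives.''
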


Now we   derive \( n\)-independent bounds on the solution triple \((\bu^n , \bbT^n, v^n)\).  Testing in the elastodynamic equation (\ref{a:equ9}) against \( \bu^n_t + \alpha\bu^n \), we reason as in \cite{mypaperpreprint}  to obtain Lemma \ref{a:lem5}.  The  bound on the gradient of \(\bu^n \) follows from using the bound  on \(\bbT^n \), the constitutive relation, the fundamental theorem of calculus and the Korn--Poincar\'{e} inequality. Indeed, we recall the memory kernel property
\begin{equation}\label{a:equ69}
\beps(\bu^n(t)) =\mathrm{e}^{-\alpha t}\beps(\bu_0) + \int_0^t \mathrm{e}^{-\alpha(t-\tau)} F_n(\bbT^n(\tau)) \,\mathrm{d}\tau.
\end{equation}
Furthermore, we have \( |F_n(\bbT^n) |\leq 1 + n^{-1}|\bbT^n|\). The representation (\ref{a:equ69}) is vital to obtaining improved bounds on the solution triple, including using bounds on the linear combination \(\bu^n_t + \alpha\bu^n \) to yield bounds on the function \( \bu^n\). 

\begin{lemma}\label{a:lem5}
Let the assumptions of Proposition \ref{a:prop2} hold and let \((\bu^n, \bbT^n, v^n) \) be the solution triple constructed there. There exists a constant \( C\), independent of \( n \), such that
\begin{align*}
\sup_{t\in [0, T]}\|\bu^n(t) \|_{1,2} + \sup_{t\in [0, T]}\|\bu^n_t(t) \|_2+ \int_Q |\bbT^n| + \frac{|\bbT^n|^2}{n} + |\beps(\bu^n_t) |^2 \,\mathrm{d}x\,\mathrm{d}t \leq C.
\end{align*}
\end{lemma}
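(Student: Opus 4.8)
The plan is to carry out the standard energy estimate for the regularised problem, with the one new ingredient being that the circular dependence between $\|\bu^n\|_{1,2}$ and the stress dissipation must be broken using the memory-kernel representation (\ref{a:equ69}). First I would test the weak elastodynamic equation (\ref{a:equ9}) against $\bu^n_t + \alpha\bu^n$, which is admissible since $\bu^n(t),\bu^n_t(t)\in W^{1,2}_D(\Omega)^d$ for a.e.\ $t$. The inertial term $\int_\Omega\bu^n_{tt}\cdot(\bu^n_t+\alpha\bu^n)\dd x$ is rewritten, using the product rule and (\ref{p:equ35}), as $\tfrac{\dd}{\dd t}\big(\tfrac12\|\bu^n_t\|_2^2 + \alpha\int_\Omega\bu^n_t\cdot\bu^n\dd x\big) - \alpha\|\bu^n_t\|_2^2$, and, by the constitutive relation, the stress term equals $\int_\Omega b(v^n)\,\bbT^n:F_n(\bbT^n)\dd x$. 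Since $b(v^n)\ge\eta$ and pointwise $\bbT^n:F_n(\bbT^n) = \tfrac{|\bbT^n|^2}{(1+|\bbT^n|^a)^{1/a}} + \tfrac{|\bbT^n|^2}{n}$, the bound $(1+|\bbT^n|^a)^{1/a}\le C(1+|\bbT^n|)$ from the elementary lemma in Section~\ref{sec:naux} gives $\bbT^n:F_n(\bbT^n)\ge c_0|\bbT^n| - c_1 + n^{-1}|\bbT^n|^2$ with $c_0,c_1>0$ independent of $n$, so this term is bounded below by $\eta c_0\|\bbT^n(t)\|_1 - \eta c_1|\Omega| + \tfrac{\eta}{n}\|\bbT^n(t)\|_2^2$.

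Integrating over $(0,t)$, I would move the inertial cross term $\alpha\int_\Omega\bu^n_t(t)\cdot\bu^n(t)\dd x$ to the left and absorb it into $\tfrac14\|\bu^n_t(t)\|_2^2 + C\|\bu^n(t)\|_2^2$, bounding $\|\bu^n(t)\|_2^2$ by $C + C\int_0^t\|\bu^n_s(s)\|_2^2\dd s$ via $\bu^n(t)=\bu_0+\int_0^t\bu^n_s\dd s$. On the right-hand side the force pairing $\int_0^t\langle l^n,\bu^n_s+\alpha\bu^n\rangle\dd s$ is handled by integrating the $\langle l^n,\bu^n_s\rangle$ part by parts in time, producing the endpoint contributions $\langle l^n(t),\bu^n(t)\rangle - \langle l^n(0),\bu_0\rangle$ and the bulk term $-\int_0^t\langle\partial_s l^n,\bu^n\rangle\dd s$, and one uses that $(l^n)_n$ is bounded in $W^{1,1}(0,T;W^{-1,2}_D(\Omega)^d)$ uniformly in $n$, as noted above.

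The only genuine obstacle is that $\langle l^n(t),\bu^n(t)\rangle$ and $\int_0^t\langle\partial_s l^n,\bu^n\rangle\dd s$ involve $\|\bu^n(t)\|_{1,2}$, which does not appear with a favourable sign on the left. Here (\ref{a:equ69}) is essential: combined with Korn--Poincar\'e (Theorem~\ref{korn}) and the pointwise bound $|F_n(\bbT^n)|\le 1 + n^{-1}|\bbT^n|$ it yields
\[
\|\bu^n(t)\|_{1,2}\le C\|\beps(\bu^n(t))\|_2 \le C\Big(\|\beps(\bu_0)\|_2 + \int_0^t\|F_n(\bbT^n(s))\|_2\dd s\Big) \le C + \tfrac{C}{n}\int_0^t\|\bbT^n(s)\|_2\dd s,
\]
and, since $n\ge1$, for any $\mu>0$ one has $\tfrac{1}{n}\int_0^t\|\bbT^n(s)\|_2\dd s \le C_\mu + \tfrac{\mu}{2}\cdot\tfrac1n\int_0^t\|\bbT^n(s)\|_2^2\dd s$; choosing $\mu$ small enough that the resulting multiple of $\tfrac1n\int_0^t\|\bbT^n\|_2^2\dd s$ is at most $\tfrac{\eta}{4}$, it is absorbed into the stress dissipation on the left, and the bulk term $\int_0^t\langle\partial_s l^n,\bu^n\rangle\dd s$ is dealt with in the same way using $\int_0^T\|\partial_s l^n\|_{-1,2}\dd s\le C$. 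What remains is an inequality of the form $\|\bu^n_t(t)\|_2^2 + \int_0^t\|\bbT^n(s)\|_1\dd s + \tfrac1n\int_0^t\|\bbT^n(s)\|_2^2\dd s \le C + C\int_0^t\|\bu^n_s(s)\|_2^2\dd s$, so Gronwall's inequality gives the bounds on $\sup_t\|\bu^n_t(t)\|_2$, $\|\bbT^n\|_{L^1(Q)}$ and $\tfrac1n\|\bbT^n\|_{L^2(Q)}^2$. Feeding the last estimate back into (\ref{a:equ69}) and Theorem~\ref{korn} gives $\sup_t\|\bu^n(t)\|_{1,2}\le C$, and finally $\beps(\bu^n_t) = F_n(\bbT^n) - \alpha\beps(\bu^n)$ together with $|F_n(\bbT^n)|^2\le 2 + \tfrac2n\cdot\tfrac1n|\bbT^n|^2$ yields $\int_Q|\beps(\bu^n_t)|^2\dd x\dd t\le C$. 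All constants are $n$-independent; the delicate point throughout, and the main obstacle, is to ensure via the memory kernel and the factor $n^{-1}\le1$ that every occurrence of $\|\bu^n\|_{1,2}$ on the right multiplies only quantities that can be absorbed into the $\tfrac1n\|\bbT^n\|_2^2$ dissipation with an arbitrarily small constant.
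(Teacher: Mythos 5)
Your proposal is correct and follows essentially the same route the paper indicates: test (\ref{a:equ9}) against \( \bu^n_t + \alpha\bu^n \), bound the dissipation from below using \( b(v^n)\geq \eta \) and the coercivity \( \bbT:F_n(\bbT)\geq c_0|\bbT| - c_1 + n^{-1}|\bbT|^2 \), and recover \( \sup_t\|\bu^n(t)\|_{1,2} \) from the memory kernel (\ref{a:equ69}) together with \( |F_n(\bbT^n)|\leq 1 + n^{-1}|\bbT^n| \) and the Korn--Poincar\'{e} inequality. The absorption of the forcing terms' dependence on \( \|\bu^n\|_{1,2} \) into the \( n^{-1}\|\bbT^n\|_2^2 \) dissipation is exactly the detail the paper delegates to the cited reference, and you have filled it in correctly.
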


Combining  this with the energy-dissipation inequality immediately yields the  improved bound in Lemma \ref{a:lem6}. Indeed, from Lemma \ref{a:lem5}, we use the bound on \( (\bu^n)_n \) in \( L^\infty(0, T; W^{1,2}(\Omega)^d) \) to bound the terms in the energy-dissipation equality that result from the presence of an external force. 

At this point, we assume the safety strain condition (\ref{a:equ5}) so that the sequence \((\varphi^*_n(\beps(\alpha\bu_0)))_n \) is uniformly bounded in \( L^1(\Omega)\). We recall that convex conjugation is order reversing so, for every \( \bbS\in \mathbb{R}^{d\times d}\),  we have \( \varphi^*_n(\bbS) \leq \varphi^*(\bbS) \). It follows that
\begin{align*}
\int_\Omega\varphi^*_n( \beps(\alpha\bu_0)) \,\mathrm{d}x
\leq \int_\Omega\varphi^*(\beps(\alpha\bu_0)) \,\mathrm{d}x 
= \int_\Omega\int_0^{|\beps(\alpha\bu_0) |} \frac{s}{(1 - s^a)^{\frac{1}{a}}} \,\mathrm{d}s\,\mathrm{d}x\leq \frac{|\Omega| C_*^2}{(1 - C_*^a)^{\frac{1}{a}}},
\end{align*}
where \( C_* \) is the constant from the safety strain condition (\ref{a:equ5}). 

\begin{lemma}\label{a:lem6}
Let the assumptions of Proposition \ref{a:prop2} hold and  let \((\bu^n, \bbT^n, v^n) \) be the solution triple constructed there. Suppose additionally that the safety strain condition (\ref{a:equ5}) holds. There exists a constant \( C\), independent of \( n \), such that
\begin{align*}
\sup_{t\in [0, T]}\Big[ \int_\Omega \varphi^*_n(\beps(\alpha\bu^n(t))) \,\mathrm{d}x \Big] + \sup_{t\in [0, T]} \|v^n(t) \|_{k,2} + \int_0^T \|v^n_t(t) \|_{k,2}^2 \,\mathrm{d}t \leq C.
\end{align*}
\end{lemma}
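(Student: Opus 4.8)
The plan is to extract all three bounds from the energy-dissipation equality (\ref{a:equ44}), combined with the $n$-independent bounds of Lemma \ref{a:lem5}. Since $\beps(\bu^n_t) = \beps(\bu^n_t + \alpha\bu^n) - \beps(\alpha\bu^n)$, the monotonicity of $F^{-1}_n$ (which holds because $F_n$ is a monotone bijection) together with $b(v^n) \geq \eta > 0$ shows that the dissipation integral in (\ref{a:equ44}) is non-negative. Discarding it, expanding $\mathcal{F}_n(t;\cdot,\cdot,\cdot) = \mathcal{K}(\bu^n_t(t)) + \mathcal{E}_n(\bu^n(t),v^n(t)) + \mathcal{H}(v^n(t)) - \langle l^n(t),\bu^n(t)\rangle$, and dropping $\mathcal{K}(\bu^n_t(t)) \geq 0$, we obtain for every $t\in [0,T]$
\begin{equation*}
\mathcal{E}_n(\bu^n(t),v^n(t)) + \mathcal{H}(v^n(t)) + \int_0^t \|v^n_t(s)\|_{k,2}^2\,\dd s \leq \mathcal{F}_n(0;\bu_0,\bu_1,v_0) + \langle l^n(t),\bu^n(t)\rangle - \int_0^t \langle l^n_t(s),\bu^n(s)\rangle\,\dd s .
\end{equation*}

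Next I would bound the right-hand side by a constant independent of $n$ and $t$. In $\mathcal{F}_n(0;\bu_0,\bu_1,v_0) = \mathcal{K}(\bu_1) + \mathcal{E}_n(\bu_0,v_0) + \mathcal{H}(v_0) - \langle l^n(0),\bu_0\rangle$, the terms $\mathcal{K}(\bu_1)$ and $\mathcal{H}(v_0)$ are fixed and finite, $\langle l^n(0),\bu_0\rangle$ is controlled by the $n$-uniform bound on $(l^n)_n$ in $C([0,T];W^{-1,2}_D(\Omega)^d)$, and $\mathcal{E}_n(\bu_0,v_0) = \int_\Omega \tfrac{b(v_0)}{\alpha}\varphi^*_n(\beps(\alpha\bu_0))\,\dd x$ is bounded using $\|b(v_0)\|_\infty \leq C$ (since $v_0 \in H^k(\Omega)\hookrightarrow L^\infty(\Omega)$ for $k > \tfrac d2$), the order-reversing inequality $\varphi^*_n \leq \varphi^*$, and the safety strain condition (\ref{a:equ5}) — precisely via the estimate displayed immediately before the statement. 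The last two terms on the right are bounded using Lemma \ref{a:lem5} (the $n$-uniform bound on $\sup_t\|\bu^n(t)\|_{1,2}$) together with the $n$-uniform bounds on $(l^n)_n$ in $C([0,T];W^{-1,2}_D(\Omega)^d)$ and in $W^{1,1}(0,T;W^{-1,2}_D(\Omega)^d)$.

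With the right-hand side bounded by some $C$ independent of $n$ and $t$, the three claimed estimates follow in turn. Using $b(v^n) \geq \eta$ in $\mathcal{E}_n(\bu^n(t),v^n(t)) \geq \tfrac{\eta}{\alpha}\int_\Omega \varphi^*_n(\beps(\alpha\bu^n(t)))\,\dd x$ gives the first term. The elementary lower bound $\mathcal{H}(v) \geq c\|v\|_{1,2}^2 - C$ gives $\sup_t\|v^n(t)\|_{1,2} \leq C$, and $\int_0^T\|v^n_t(s)\|_{k,2}^2\,\dd s \leq C$ is immediate. To promote the $H^1$-bound on $v^n$ to the claimed $H^k$-bound, I would use $v^n \in W^{1,\infty}(0,T;H^k(\Omega))$ from Proposition \ref{a:prop2}, so that $v^n(t) = v_0 + \int_0^t v^n_t(s)\,\dd s$ as a Bochner integral in $H^k(\Omega)$, whence $\|v^n(t)\|_{k,2} \leq \|v_0\|_{k,2} + T^{1/2}\big(\int_0^T\|v^n_t(s)\|_{k,2}^2\,\dd s\big)^{1/2} \leq C$ for every $t$.

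The main obstacle is the uniform-in-$n$ control of the force and initial-energy terms. The approximate force $l^n = \boldf + \bg^n$ built in (\ref{a:equ6}) has a time derivative that grows like $n$ through $\psi(nt)$, so one must observe that its $L^1$-in-time norm nonetheless stays bounded under the substitution $s = nt$ (using $\psi' \in L^1$), and that $\bg^n$ itself stays bounded in $L^\infty$ uniformly in $n$ because, once (\ref{a:equ5}) holds, $F^{-1}_n(\beps(\bu_1+\alpha\bu_0))$ is bounded uniformly in $n$; these two facts give exactly the $C([0,T];W^{-1,2}_D(\Omega)^d)\cap W^{1,1}(0,T;W^{-1,2}_D(\Omega)^d)$ bounds used above. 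Likewise, the finiteness and $n$-uniform bound of $\mathcal{E}_n(\bu_0,v_0)$ rest entirely on the safety strain condition, since $\varphi^*$ — and hence the limiting elastic energy — is infinite outside the closed unit ball.
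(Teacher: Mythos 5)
Your proof is correct and follows the same route the paper takes (the paper only sketches this lemma): discard the non-negative dissipation term in the energy-dissipation equality (\ref{a:equ44}) via monotonicity of \(F_n^{-1}\), bound the initial elastic energy through the safety strain condition and the order-reversing inequality \(\varphi^*_n \leq \varphi^*\), and control the force terms using Lemma \ref{a:lem5} together with the \(n\)-uniform bounds on \((l^n)_n\) and \((l^n_t)_n\). Your promotion of the \(H^1\) bound on \(v^n\) to the claimed \(H^k\) bound by writing \(v^n(t) = v_0 + \int_0^t v^n_t(s)\,\mathrm{d}s\) in \(H^k(\Omega)\) and applying Cauchy--Schwarz correctly fills in a detail the paper leaves implicit.
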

Adapting the argument in Lemma \ref{a:lem4} and making use of Corollary \ref{a:cor4}, we can improve the bound on \((v^n_t)_n \) in \(L^2(0, T; H^k(\Omega)) \) to a bound in \( L^\infty(0, T; H^k(\Omega))\). As a result and recalling that \( k > \frac{d}{2}\), it follows that \((v^n_t)_n \) is bounded in \( L^\infty(Q)\), independent of \( n \). This  is used  to improve the time regularity of the approximations \((\bbT^n)_n \), which is the subject of Proposition \ref{a:prop3}. 


\begin{lemma}\label{a:lem7}
Let the assumptions of Proposition \ref{a:prop2} hold and let \((\bu^n, \bbT^n, v^n) \) be the solution triple constructed there. Suppose additionally that the safety strain condition (\ref{a:equ5}) holds.  There exists a constant \( C\) independent of \( n \) such that
\begin{align*}
\sup_{t\in [0, T]}\|v^n_t(t) \|_{k,2}\leq C.
\end{align*}
\end{lemma}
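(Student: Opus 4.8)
The plan is to adapt the argument of Lemma \ref{a:lem4} to the time-continuous regularised problem, taking care that every constant that appears is independent of $n$. The key input is Corollary \ref{a:cor4}, which states that for a.e.\ $t\in[0,T]$
\[
\big[ \partial_v\mathcal{E}_n(\bu^n(t), v^n(t)) + \mathcal{H}^\prime(v^n(t)) + \partial_v\mathcal{G}_k(v^n(t), v_t^n(t)) \big](v^n_t(t)) = 0 .
\]
Since $v^n\in W^{1,\infty}(0,T;H^k(\Omega))$ by Proposition \ref{a:prop2}, the function $v^n_t(t)$ lies in $H^k_D(\Omega)$ for a.e.\ $t$, and $k>\tfrac d2$ gives the Sobolev embedding $H^k(\Omega)\hookrightarrow L^\infty(\Omega)$; hence all the directional derivatives in the identity above can be written out explicitly (using $\partial_v\mathcal{E}_n(\bu,v)(\chi)=\int_\Omega \alpha^{-1}b^\prime(v)\chi\,\varphi^*_n(\beps(\alpha\bu))\,\mathrm{d}x$, $\mathcal{H}^\prime(v)(\chi)=\int_\Omega (2\epsilon)^{-1}(v-1)\chi+2\epsilon\nabla v\cdot\nabla\chi\,\mathrm{d}x$ and $\partial_v\mathcal{G}_k(v,w)(\chi)=(\chi,w)_{k,2}$), and the identity becomes
\[
\|v^n_t(t)\|_{k,2}^2 = -\int_\Omega \frac{b^\prime(v^n(t))}{\alpha}\,v^n_t(t)\,\varphi^*_n(\beps(\alpha\bu^n(t))) + \frac{1}{2\epsilon}\big(v^n(t)-1\big)v^n_t(t) + 2\epsilon\,\nabla v^n(t)\cdot\nabla v^n_t(t)\,\mathrm{d}x .
\]

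I would then estimate the right-hand side term by term, peeling off one factor of $\|v^n_t(t)\|_{k,2}$ from each. For the first term, the non-healing property $v^n_t\le 0$ together with $v^n(t)\le v^n(0)=v_0\le 1$ a.e.\ in $\Omega$ gives $|b^\prime(v^n(t))| = 2\max\{0,v^n(t)\}\le 2$; combining the embedding bound $\|v^n_t(t)\|_\infty\le C\|v^n_t(t)\|_{k,2}$ with the $n$-independent estimate $\sup_{t}\int_\Omega\varphi^*_n(\beps(\alpha\bu^n(t)))\,\mathrm{d}x\le C$ from Lemma \ref{a:lem6} bounds it by $C\|v^n_t(t)\|_{k,2}$. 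For the remaining two terms I would use the Cauchy--Schwarz inequality and the bound $\sup_t\|v^n(t)\|_{k,2}\le C$ of Lemma \ref{a:lem6}, giving $\tfrac1{2\epsilon}\|v^n(t)-1\|_2\|v^n_t(t)\|_2 + 2\epsilon\|\nabla v^n(t)\|_2\|\nabla v^n_t(t)\|_2\le C\|v^n_t(t)\|_{k,2}$. Altogether $\|v^n_t(t)\|_{k,2}^2\le C\|v^n_t(t)\|_{k,2}$ with $C$ independent of $n$ and $t$, so dividing through yields $\|v^n_t(t)\|_{k,2}\le C$ for a.e.\ $t$, which, since $v^n\in W^{1,\infty}(0,T;H^k(\Omega))$, is exactly the claimed (essential-supremum) bound.

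There is no real obstacle here; the proof is essentially a transcription of Lemma \ref{a:lem4} with the discrete solution replaced by $(\bu^n,v^n)$ and with the $n$-dependent bounds of Lemma \ref{a:lem2} replaced by the $n$-uniform bounds of Lemma \ref{a:lem6}. The only steps deserving attention are: the boundedness of $b^\prime(v^n)$, which comes from the monotonicity of $t\mapsto v^n(t)$ and $v_0\le 1$ rather than from any pointwise sign of $v^n$ (recall that in this section $v^n$ need not be non-negative because of the rate-dependent term); and the fact that the bound on $\int_\Omega\varphi^*_n(\beps(\alpha\bu^n(t)))\,\mathrm{d}x$ borrowed from Lemma \ref{a:lem6} is uniform in $n$, which is precisely where the safety strain condition (\ref{a:equ5}) is needed.
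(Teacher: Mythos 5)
Your proof is correct and is exactly the route the paper intends: it transcribes Lemma \ref{a:lem4} to the time-continuous setting via Corollary \ref{a:cor4}, replacing the $n$-dependent bounds of Lemma \ref{a:lem2} by the $n$-uniform ones of Lemma \ref{a:lem6}. One small caveat: the assumption $v_0\le 1$ is not made in Section \ref{sec:a} (unlike Section \ref{sec:p}), but your bound $|b^\prime(v^n)|\le C$ still holds because monotonicity gives $v^n(t)\le v_0$ and $v_0\in H^k_{D+1}(\Omega)\hookrightarrow L^\infty(\Omega)$ for $k>\frac d2$, which is precisely how the paper obtains the uniform boundedness of $v^n$ on $Q$.
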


With Lemmas \ref{a:lem5}, \ref{a:lem6} and \ref{a:lem7} in mind, we work on improving the regularity of \((\bbT^n)_n \). Indeed, we look for weighted estimates on the derivatives \((\bbT^n_t)_n \) and \((\nabla \bbT^n)_n \) which allow us to prove a pointwise convergence result on \( Q\). For the pointwise result, we mimic some of the ideas from \cite{RN6}. However, significant adaptations are made as we work in the time-dependent setting. Also, we have the added complication of the phase-field function. To deal with this, Lemma \ref{a:lem7} is vital. 

We begin with studying the regularity in time. The proof of Proposition \ref{a:prop3} is one of the highlights of this work.  As mentioned previously, we work directly with the time discrete approximation. If we were to work with the time continuous solution, we would require \( \bu^n_{tt}\) to be continuous at \( t = 0\). This is not necessarily true. However, to  work with the time discrete solution, we must define an extra term \(\bu^{\gamma}_{-2}\) in the solution sequence in order to define \( \delta^2\bu^\gamma_0\). By our choice of \( \bu^\gamma_{-2}\), we   show that \(\delta^2\bu^\gamma_0\) is uniformly bounded in \(L^2(\Omega)^d\), independent of \( n \) and \( M \). We deduce from this that \( (\bu^n_{tt})_n \) is bounded in \(L^\infty(0, T; L^2(\Omega)^d) \), independent of \( n \). Furthermore, we see that we have \( \bbT^n\in W^{1,2}(0, T; L^2(\Omega)^{d\times d}) \) and obtain a weighted bound on the time derivative \(\bbT^n_t\) that is independent of \(n\). 

At this stage, we   impose the compatibility condition (\ref{a:equ4}). Under the compatibility condition, we know that the sequence of approximations \((l^n)_n \) is bounded in \(W^{2,1}(0, T; W^{-1,2}_D(\Omega)^d) \) and converges weakly in this space to \( l \). Furthermore, it becomes clear why the approximation choice (\ref{a:equ6})  is made. Indeed, suppose that \( \bu^n_{tt}\) is continuous at \( t = 0\) and sufficiently smooth so that the following reasoning is justified. Testing in the elastodynamic equation at \( t = 0 \) against \( \bu^n_{tt}(0) \), we get
\begin{align*}
\|\bu^n_{tt}(0) \|_2^2 &= \int_\Omega - b(v^n(0)) \bbT^n(0) \cdot \beps(\bu^n_{tt}(0)) \,\mathrm{d}x +\langle l^n(0) ,\bu^n_{tt}(0) \rangle
\\
&= \int_\Omega -b( v_0)F^{-1}_n(\beps(\bu_1 + \alpha\bu_0)) \cdot \beps(\bu^n_{tt}(0)  + \boldf(0) \cdot \bu^n_{tt}(0) \,\mathrm{d}x 
\\&\quad
+ \int_{\Gamma_N} F^{-1}_n(\beps(\bu_1 + \alpha\bu_0)) \mathbf{n} \cdot \bu^n_{tt}(0) \,\mathrm{d}S
\\
&= \int_\Omega \big( \mathrm{div}( b(v_0) F^{-1}_n(\beps(\bu_1 + \alpha\bu_0)) ) + \boldf(0) \big) \cdot \bu^n_{tt}(0) \,\mathrm{d}x,
\end{align*}
where the boundary integral vanishes due to integration by parts and the choice of \( \bg^n(0) \). It remains to show that \( \mathrm{div}( b(v_0) F^{-1}_n(\beps(\bu_1 + \alpha\bu_0)) ) \) is   bounded in \( L^2(\Omega)^d\), independent of \( n\), to deduce that \( \bu^n_{tt}(0)\) is also bounded in \( L^2(\Omega)^d\), independent of \( n \).  We use analogous reasoning in the proof of Proposition \ref{a:prop3} to show that the discrete time derivative \(\delta^2\bu^\gamma_0 \) is uniformly bounded in \(L^2(\Omega)^{d}\). Without the compatibility condition for \( \bg^n\), we get a boundary term involving \(\bu^n_{tt}(0) \), so we would need a bound on  \( (\bu^n_{tt}(0))_n \) in \( W^{1,2}(\Omega)^d\). However, this does not appear to be possible, thus justifying our choice of approximation and the compatibility condition (\ref{a:equ4}). 


\begin{proposition}\label{a:prop3}
Let the assumptions of Proposition \ref{a:prop2} hold and let  \((\bu^n, \bbT^n, v^n) \) be the solution triple constructed there. Suppose additionally that
\( l\in W^{2,1}(0, T; W^{-1,2}_D(\Omega)^d)  \), 
 the safety strain condition (\ref{a:equ5}) holds, the compatibility condition (\ref{a:equ4}) holds and that \( \bu_1 + \alpha\bu_0 \in W^{2,2}_D(\Omega)^d\). Then \( \bbT^n \in W^{1,2}(0, T; L^2(\Omega)^{d\times d}) \) and there exists a constant \( C\), independent of \( n \),  such that
\begin{align*}
\sup_{t\in [0, T]}\|\bu^n_{tt}(t) \|_2 + \int_Q \frac{|\bbT^n_t|^2}{(1 + |\bbT^n|)^{1+a}} + \frac{|\bbT^n_t|^2}{n}\,\mathrm{d}x\,\mathrm{d}t\leq C.
\end{align*}
\end{proposition}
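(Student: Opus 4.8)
The plan is to work with the time-discrete solution sequence $(\bu^\gamma_m, v^\gamma_m)_m$ from Theorem \ref{a:thm1}, suitably extended to include an auxiliary initialisation $\bu^\gamma_{-2}$ so that $\delta^2\bu^\gamma_0$ is well-defined, and to obtain an $M$- and $n$-independent bound on $\delta^2\bu^\gamma_m$ in $L^\infty(0,T;L^2(\Omega)^d)$ together with a weighted bound on the discrete time difference of the stress. The main structural ingredient is the identity already used in the proof of Lemma \ref{a:lem3}: writing $\mathcal{B}_n(\bbT)$ for the Jacobian of $F^{-1}_n$ and using the inequality
\begin{align*}
(F^{-1}_n(\bbT) - F^{-1}_n(\bbS)) : (\bbT - \bbS) &\geq \int_0^1 (\bbT - \bbS, \bbT - \bbS)_{\mathcal{B}_n(s\bbT + (1-s)\bbS)}\,\mathrm{d}s \\&\geq \frac{c_0 |\bbT - \bbS|^2}{(1 + |\bbT| + |\bbS|)^{1+a}} + \frac{|\bbT - \bbS|^2}{n},
\end{align*}
which follows from the lower bound on $(\cdot,\cdot)_{\mathcal{A}(\bbT)}$ in Remark \ref{a:remark_F} applied to the convex conjugate structure. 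First I would test the discrete elastodynamic equation (\ref{a:equ7}), written at indices $m$ and $m-1$, against $h\,\delta(\delta^2\bu^\gamma_m + \alpha\delta\bu^\gamma_m)$; more precisely I would take the difference of (\ref{a:equ7}) at consecutive time levels, test against $\delta^2\bu^\gamma_m$, and use (\ref{p:equ35}) on the inertial term $\delta^3\bu^\gamma_m \cdot \delta^2\bu^\gamma_m$. The nonlinear term produces, after an application of the monotonicity inequality above, a telescoping contribution controlling $\sum_j h^{-1}\int_\Omega b(v^\gamma_{j-1}) (1+|\bbT^\gamma_j| + |\bbT^\gamma_{j-1}|)^{-(1+a)} |\bbT^\gamma_j - \bbT^\gamma_{j-1}|^2\,\mathrm{d}x$ plus the $1/n$-weighted analogue, at the cost of commutator terms involving $\delta b(v^\gamma_{m-1})$, i.e. $\delta v^\gamma_m$.

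The commutator terms are handled using Lemma \ref{a:lem7}: since $k > \frac{d}{2}+1$, the sequence $(v^n_t)_n$ — and correspondingly $(\delta v^\gamma_m)_m$ — is bounded in $L^\infty(Q)$ uniformly in $n$ and $M$, so the terms of the form $\int_\Omega \delta v^\gamma_{m-1}\, (2 v^\gamma_{m-1}) F^{-1}_n(\beps(\delta\bu^\gamma_{m-1}+\alpha\bu^\gamma_{m-1})):\beps(\delta^2\bu^\gamma_m)\,\mathrm{d}x$ can be absorbed, using $|F^{-1}_n| \le 1 + n^{-1}|\bbT^\gamma|$, the bounds on $\beps(\delta\bu^\gamma_m)$ from Lemmas \ref{a:lem2}, \ref{a:lem3}, \ref{a:lem5}, \ref{a:lem6}, and Young's inequality, into the $h\sum \|\delta^2\bu^\gamma_m\|_2^2$ term on the left. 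The external-force contribution is rewritten as in the proof of Lemma \ref{a:lem3}, using summation by parts so that $l^n$ appears only through $\|l^n\|_{W^{2,1}(W^{-1,2}_D)}$, which is $n$-independent by the assumption $l \in W^{2,1}(0,T;W^{-1,2}_D(\Omega)^d)$ together with the construction (\ref{a:equ6}) of $l^n$.

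The genuinely delicate point — and what I expect to be the main obstacle — is the initialisation, i.e. bounding $\delta^2\bu^\gamma_0$ in $L^2(\Omega)^d$ uniformly in $n$ and $M$: the recursion only starts at $m=1$, so one must define $\bu^\gamma_{-2}$ so that $\delta^2\bu^\gamma_0 = h^{-2}(\bu^\gamma_0 - 2\bu^\gamma_{-1} + \bu^\gamma_{-2})$ mimics the (nonexistent, a priori) value $\bu^n_{tt}(0)$. Following the heuristic computation sketched before the statement, one tests the discrete equation at $m=0$ against $\delta^2\bu^\gamma_0$: the boundary term on $\Gamma_N$ cancels precisely because of the compatibility condition (\ref{a:equ4}) and the choice $\bg^n(0) = F^{-1}_n(\beps(\bu_1+\alpha\bu_0))\mathbf{n}$ in (\ref{a:equ6}), leaving $\|\delta^2\bu^\gamma_0\|_2^2 \le \int_\Omega (\mathrm{div}(b(v_0)F^{-1}_n(\beps(\bu_1+\alpha\bu_0))) + \boldf(0))\cdot\delta^2\bu^\gamma_0\,\mathrm{d}x$; the divergence term is bounded in $L^2(\Omega)^d$ independently of $n$ using $\bu_1 + \alpha\bu_0 \in W^{2,2}_D(\Omega)^d$, the safety strain condition (\ref{a:equ5}) (which keeps $\beps(\bu_1+\alpha\bu_0)$ in a compact subset of the open unit ball where $F^{-1}_n$ and its derivatives are uniformly controlled), and $v_0 \in H^k_{D+1}(\Omega) \hookrightarrow W^{1,\infty}(\Omega)$. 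Once $\sup_m \|\delta^2\bu^\gamma_m\|_2 \le C$ is established uniformly in $M$, passing $M \to \infty$ via weak-* lower semicontinuity in the interpolants gives $\sup_{t}\|\bu^n_{tt}(t)\|_2 \le C$; the weighted stress bound follows since $\beps(\bu^n_t + \alpha\bu^n) = F_n(\bbT^n)$ is differentiable in time (the left side lies in $W^{1,2}(0,T;L^2)$ once $\bu^n_{tt} \in L^\infty(0,T;L^2)$ and, from Lemma \ref{a:lem5}, $\beps(\bu^n_t)$ is controlled), whence differentiating the constitutive relation gives $\bbT^n_t = \mathcal{B}_n(\bbT^n)^{-1}\!\!\;\beps(\bu^n_{tt} + \alpha\bu^n_t)$ pointwise and the lower bound on $\mathcal{B}_n$ from Remark \ref{a:remark_F} converts this into $\int_Q |\bbT^n_t|^2(1+|\bbT^n|)^{-(1+a)} + n^{-1}|\bbT^n_t|^2 \le C\int_Q |\beps(\bu^n_{tt})|^2 + |\beps(\bu^n_t)|^2 \le C$.
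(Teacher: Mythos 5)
Your discrete second-difference argument for $\sup_t\|\bu^n_{tt}(t)\|_2$ — differencing (\ref{a:equ7}) in time, testing against the second difference (the paper uses $\delta^2\bu^\gamma_m+\alpha\delta\bu^\gamma_m$, which is what matches the argument $\beps(\delta\bu^\gamma_m+\alpha\bu^\gamma_m)$ of $F_n^{-1}$ and produces the quadratic form), and the initialisation of $\delta^2\bu^\gamma_0$ via an auxiliary $\bu^\gamma_{-2}$ together with the compatibility condition (\ref{a:equ4}) and the safety strain condition — is exactly the paper's route, and that part is sound. There are, however, two genuine gaps. First, at the discrete level the commutator $(b(v^\gamma_{m-1})-b(v^\gamma_{m-2}))/h$ is \emph{not} known to be bounded in $L^\infty$ uniformly in $n$ and $M$: Lemma \ref{a:lem4} only gives $\max_m\|\delta v^\gamma_m\|_{k,2}\leq C(n)$, and Lemma \ref{a:lem7}, which you invoke, is a statement about the time-continuous $v^n_t$ proved after the limit $M\to\infty$ via Corollary \ref{a:cor4}; it does not transfer to $\delta v^\gamma_m$ without a separate argument. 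This is precisely why the paper runs the estimate in two stages: the discrete argument is used only to conclude $\bbT^n\in W^{1,2}(0,T;L^2(\Omega)^{d\times d})$ with an $n$-\emph{dependent} constant (keeping only the $n^{-1}|\overline{\bbT}^\gamma_t|^2$ part of the quadratic form in (\ref{a:equ14})), and the $n$-independent weighted bound is then re-derived on the time-continuous equation via difference quotients $\Delta^h_t$, where Lemma \ref{a:lem7} legitimately controls the commutator $\Delta^h_t b(v^n)$.

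The second and more serious gap is your final step. The weighted bound on $\bbT^n_t$ does not follow pointwise from inverting the differentiated constitutive relation. Writing $\lambda$ for the smallest eigenvalue of $\mathcal{A}_n(\bbT^n)$, one has $\lambda\simeq(1+|\bbT^n|)^{-(1+a)}+n^{-1}$ and $|\bbT^n_t|\leq\lambda^{-1}|\beps(\bu^n_{tt}+\alpha\bu^n_t)|$, so the claimed inequality
\begin{equation*}
\frac{|\bbT^n_t|^2}{(1+|\bbT^n|)^{1+a}}+\frac{|\bbT^n_t|^2}{n}\leq C\big(|\beps(\bu^n_{tt})|^2+|\beps(\bu^n_t)|^2\big)
\end{equation*}
amounts to $\lambda|\bbT^n_t|^2\leq C\lambda^2|\bbT^n_t|^2\cdot\lambda^{-1}\cdot\lambda$, i.e. it requires $\lambda$ bounded below away from zero, which fails where $|\bbT^n|$ is large. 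Moreover the right-hand side $\int_Q|\beps(\bu^n_{tt})|^2\leq C$ is the content of Corollary \ref{a:cor5}, which in the paper is \emph{deduced from} Proposition \ref{a:prop3}, so invoking it here is circular; and $\bu^n_{tt}\in L^\infty(0,T;L^2)$ gives no control of the spatial derivatives $\beps(\bu^n_{tt})$ in the first place. The correct mechanism is an energy estimate on the time-differenced equation tested against $\Delta^h_t(\bu^n_t+\alpha\bu^n)$: the nonlinear term then produces exactly $\int b(v^n)\Delta^h_t\bbT^n:\Delta^h_tF_n(\bbT^n)$, which dominates the weighted quantity, while the load and commutator terms are absorbed using $|\Delta^h_tF_n(\bbT^n)|^2\leq C\,\Delta^h_t\bbT^n:\Delta^h_tF_n(\bbT^n)$; no bound on $\beps(\bu^n_{tt})$ is needed as input.
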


\begin{proof}
We define \( \bbT^\gamma_m = F^{-1}_n(\beps(\delta\bu^\gamma_m + \alpha\bu^\gamma_m)) \) for \( 0 \leq m \leq M \),  where \((\bu^\gamma_m, v^\gamma_m)_{m=0}^M \) is the solution sequence for the time discrete problem in Theorem \ref{a:thm1}, with \( \beta = (n,M) \). 
We fix  \( 2\leq m \leq M \),  test in the elastodynamic equation (\ref{a:equ7}) at time levels \( m \) and \( m-1\) against \( \delta^2\bu^\gamma_m + \alpha\delta\bu^\gamma_m \) and subtract the resulting equalities. Using (\ref{p:equ35}), we rewrite the interial terms in the usual way. For the body force \(l^n \), to avoid having terms involving  spatial derivatives of \( \delta^2\bu^\gamma_m \), which we cannot necessarily bound, we write
\begin{align*}
\langle l^\gamma_m - l^\gamma_{m-1}, \delta^2 \bu^\gamma_m + \alpha\delta\bu^\gamma_m\rangle &= \langle \delta l^\gamma_m, \delta\bu^\gamma_m + \alpha\bu^\gamma_m \rangle  - \langle \delta l^\gamma_{m-1}, \delta\bu^\gamma_{m-1} + \alpha\bu^\gamma_{m-1} \rangle 
\\&\quad
+ h\langle \delta^2 l^\gamma_m , \delta\bu_{m-1}^\gamma + \alpha\bu_{m-1}^\gamma\rangle.
\end{align*}
For the nonlinear terms, we write
\begin{equation}\label{a:equ46}
\begin{aligned}
&\int_\Omega \big[ b(v^\gamma_{m-1}) \bbT^\gamma_m - b(v^\gamma_{m-2}) \bbT^\gamma_{m-1}\big] :\beps(\delta^2 \bu^\gamma_m + \alpha\delta\bu^\gamma_m ) \,\mathrm{d}x
\\
&= \int_\Omega \Big\{ b(v^\gamma_{m-1}) \big( \bbT^\gamma_m  - \bbT^\gamma_{m-1} \big) : \frac{F_n(\bbT^\gamma_m ) - F_n(\bbT^\gamma_{m-1})}{h}
\\
&\quad
 + \big( b(v^\gamma_{m-1}) - b(v^\gamma_{m-2}) \big) \bbT^\gamma_{m-1} :\frac{F_n(\bbT^\gamma_m) - F_n(\bbT^\gamma_{m-1}) }{h}\Big\}  \,\mathrm{d}x
 \\
 & = h\int_\Omega\int_0^1 b(v^\gamma_{m-1}) \Big( \frac{\bbT^\gamma_m - \bbT^\gamma_{m-1}}{h}, \frac{\bbT^\gamma_m - \bbT^\gamma_{m-1}}{h}\Big)_{\mathcal{A}_n(s\bbT^\gamma_m + (1-s) \bbT^\gamma_{m-1}) } \,\mathrm{d}s\,\mathrm{d}x
 \\
 &\quad
 + h \int_\Omega\int_0^1 \Big( \frac{b(v^\gamma_{m-1}) - b(v^\gamma_{m-2}) }{h}\Big) \cdot \Big( \bbT^\gamma_{m-1}, \frac{\bbT^\gamma_m - \bbT^\gamma_{m-1}}{h}\Big)_{\mathcal{A}_n(s\bbT^\gamma_m + (1-s) \bbT^\gamma_{m-1}) } \,\mathrm{d}s\,\mathrm{d}x
 \\
 &\geq \frac{h}{2} \int_\Omega\int_0^1 b(v^\gamma_{m-1}) \Big( \frac{\bbT^\gamma_m - \bbT^\gamma_{m-1}}{h}, \frac{\bbT^\gamma_m - \bbT^\gamma_{m-1}}{h}\Big)_{\mathcal{A}_n(s\bbT^\gamma_m + (1-s) \bbT^\gamma_{m-1}) } \,\mathrm{d}s\,\mathrm{d}x
 \\
 &\quad
 - \frac{h}{2\eta} \int_\Omega\int_0^1 \Big( \frac{b(v^\gamma_{m-1}) - b(v^\gamma_{m-2}) }{h}\Big)^2 \big( \bbT^\gamma_{m-1}, \bbT^\gamma_{m-1}\big)_{\mathcal{A}_n(s\bbT^\gamma_m + (1-s) \bbT^\gamma_{m-1}) } \,\mathrm{d}s\,\mathrm{d}x,
\end{aligned}
\end{equation}
where \( \mathcal{A}_n(\bbU ) \) is the fourth-order tensor defined by
\begin{align*}
\mathcal{A}_n(\bbU)_{ijkl} = \frac{\partial F_n(\bbU)_{ij}}{\partial U_{kl}} 
\end{align*}
with corresponding bilinear form  
\begin{align*}
(\bbT, \bbS)_{\mathcal{A}_n(\bbU) } := \sum_{i,j,k,l=1}^d \mathcal{A}_n(\bbU)_{ijkl} T_{ij}S_{kl}.
\end{align*}
We can easily check that \((\cdot, \cdot)_{\mathcal{A}_n(\bbU) }\) defines an inner product on \( \mathbb{R}^{d\times d}\), justifying the use of the Cauchy--Schwarz inequality in (\ref{a:equ46}). Summing the result  over the indices \( 2\leq j \leq m\), we deduce that
\begin{equation}\label{a:equ10}
\begin{aligned}
&\frac{1}{2}\Big( \|\delta^2\bu^\gamma_m\|_2^2 - \|\delta^2\bu^\gamma_1\|_2^2 \Big) + \alpha(\delta^2\bu^\gamma_m, \delta\bu^\gamma_m) - \alpha (\delta^2\bu^\gamma_1, \delta\bu^\gamma_1) 
\\
&\quad
+ \frac{h}{2}\sum_{j=2}^m \int_\Omega\int_0^1 b(v^\gamma_{j-1}) \Big( \delta\bbT^\gamma_j , \delta\bbT^\gamma_j \Big)_{\mathcal{A}_n(s\bbT^\gamma_j  + (1-s) \bbT^\gamma_{j-1}) } \,\mathrm{d}s\,\mathrm{d}x
\\&\quad - \alpha h \sum_{j=2}^m (\delta^2 \bu^\gamma_j , \delta^2\bu^\gamma_{j-1})
\\
&\leq \langle \delta l^\gamma_m , \delta\bu^\gamma_m + \alpha\bu^\gamma_m \rangle - \langle \delta l^\gamma_1 , \delta\bu^\gamma_1 + \alpha\bu^\gamma_1 \rangle 
+ h\sum_{j=2}^m \langle \delta^2 l^\gamma_j , \delta\bu^\gamma_{j-1} + \alpha\bu^\gamma_{j-1}\rangle
\\
&\quad
+ \frac{h}{2\eta}\sum_{j=1}^{m-1} \int_\Omega\int_0^1 \Big( \frac{b(v^\gamma_j) - b(v^\gamma_{j-1})}{h}\Big)^2 (\bbT^\gamma_j, \bbT^\gamma_j)_{\mathcal{A}_n(s\bbT^\gamma_{j+1} + (1-s) \bbT^\gamma_{j}) } \,\mathrm{d}s\,\mathrm{d}x.
\end{aligned}
\end{equation}
To bound the terms involving the index 1, in particular, the term \( \delta^2\bu^\gamma_1\), we  need to define \( \delta^2\bu^\gamma_0 \). Equivalently, we look for a \( \bu_{-2}^\gamma\in L^2(\Omega)^d\) such that, for every \( \bw \in W^{1,2}_D(\Omega)^d\), 
\begin{equation}\label{a:equ47}
\begin{aligned}
&\int_\Omega\frac{\bu^\gamma_0 - 2\bu^\gamma_{-1} + \bu^\gamma_{-2}}{h^2}\cdot \bw \,\mathrm{d}x \\
&= - \int_\Omega b(v_0) F^{-1}_n(\beps(\delta\bu^\gamma_0 + \alpha\bu^\gamma_0)):\beps(\bw) \,\mathrm{d}x + \langle l^n(0), \bw\rangle
\\
&= \int_\Omega\Big[ \diver\Big( b(v_0) F^{-1}_n(\beps(\bu_1 + \alpha\bu_0)) \Big)  + \boldf(0) \Big]:\bw\,\mathrm{d}x,
\end{aligned}
\end{equation}
using integration by parts and the compatibility condition satisfied by the approximation \((l^n)_n \). Under the regularity assumptions, namely, \( \bu_1  + \alpha\bu_0 \in W^{2,2}_D(\Omega)^d\),  we have 
\begin{align*}
\Big\|\diver\Big( b(v_0) F^{-1}_n(\beps(\bu_1 + \alpha\bu_0)) \Big)  + \boldf(0)\Big\|_2\leq C,
\end{align*}
where \( C\) is independent of \( n \). 
The bound on \( \diver( b(v_0) F^{-1}_n(\beps(\bu_1 + \alpha\bu_0)) ) \) in \( L^2(\Omega)^d\) can be determined from the following reasoning. Performing the differentiation, we have
\begin{align*}
&\diver\big( b(v_0) F^{-1}_n(\beps(\bu_1 + \alpha\bu_0))\big)_i
\\
&= \frac{\partial}{\partial x_j}\Big( b(v_0) F^{-1}_n(\beps(\bu_1 + \alpha\bu_0))_{ij}\Big)
\\
&= b^\prime(v_0) \frac{\partial v_0 }{\partial x_j} F^{-1}_n(\beps(\bu_1 + \alpha\bu_0))_{ij}
+ b(v_0 ) \mathcal{A}_n(\beps(\bu_1 + \alpha\bu_0))_{ijkl} \frac{\partial \beps(\bu_1 + \alpha\bu_0)_{kl} }{\partial x_j}.
\end{align*}
Next, we notice that
\begin{align*}
|F^{-1}_n(\beps(\bu_1 + \alpha\bu_0))| \leq |F^{-1}(\beps(\bu_1 + \alpha\bu_0))| \leq \frac{C_*}{(1 + C_*^a)^{\frac{1}{a}}},
\end{align*}
and from direct calculation, we have
\begin{align*}
\big|\mathcal{A}_n(\bbU)_{ijkl}\big| = \Big|\delta_{ik}\delta_{jl} \Big( \frac{1}{(1 + |\bbU|^a)^{\frac{1}{a}}} + \frac{1}{n}\Big) - \frac{|\bbU|^{a-2}\bbU_{ij}\bbU_{kl}}{(1 + |\bbU|^a)^{1  + \frac{1}{a}}}\Big|\leq 3. 
\end{align*}
Thus we have
\begin{align*}
\big|\diver\big( b(v_0) F^{-1}_n(\beps(\bu_1 + \alpha\bu_0))\big) \big| \leq \frac{2C_*}{(1 - C_*^a)^{\frac{1}{a}}} \Big|\frac{\partial v_0 }{\partial x_j}\Big| + 6 \Big|\frac{\partial\beps( \bu_1 + \alpha\bu_0)_{kl}}{\partial x_j}\Big|.
\end{align*}
It follows that \( \diver( b(v_0) F^{-1}_n(\beps(\bu_1 + \alpha\bu_0)))\) is an element of \( L^2(\Omega)^d\) with bound indepndent of \( n \), since \( v_0 \in H^1(\Omega) \) and \( \bu_1 + \alpha\bu_0\in W^{2,2}_D(\Omega)^d\). 
Returning to (\ref{a:equ47}), we deduce that there exists a unique \(\bu^\gamma_{-2}\in L^2(\Omega)^d\) solving (\ref{a:equ47}). 
As a result, the difference quotient \( \delta^2 \bu_0^\gamma\) is well-defined and is an element of \( L^2(\Omega)^d\). 
Testing in (\ref{a:equ47}) against \( \delta^2 \bu^\gamma_0 \) yields
\begin{align*}
\|\delta^2\bu^\gamma_0\|_2^2 &\leq \Big\| \diver\Big( b(v_0) F^{-1}_n(\beps(\bu_1 + \alpha\bu_0)) \Big)  + \boldf(0)\Big\|_2\|\delta\bu^\gamma_0 \|_2
\\& \leq C\|\delta^2\bu^\gamma_0 \|_2.
\end{align*}
Hence \( \delta^2\bu^\gamma_0\) is bounded in \( L^2(\Omega)^d\), independent of \( n \) and \( M \). 
Subtracting the equality
\begin{align*}
\int_\Omega\delta^2\bu^\gamma_0 \cdot \bw + b(v^\gamma_0) \bbT^\gamma_0 :\beps(\bw) \,\mathrm{d}x = \langle l^\gamma_0, \bw\rangle,
\end{align*}
from the elastodynamic equation (\ref{a:equ7}) at time level \( m = 1\), and choosing the test function  \( \bw = \delta^2\bu^\gamma_1 + \alpha\delta\bu^\gamma_1\), 
we can manipulate the resulting equation as in the case \( m \geq 2\). 
We add the corresponding result to  (\ref{a:equ10}) and  deduce that 
\begin{equation}\label{a:equ11}
\begin{aligned}
&\frac{1}{2}\Big( \|\delta^2\bu^\gamma_m \|_2^2 - \|\delta^2\bu^\gamma_0\|_2^2 \Big) + \alpha(\delta^2\bu^\gamma_m , \delta\bu^\gamma_m ) - \alpha (\delta^2\bu^\gamma_0, \delta\bu^\gamma_1 )
\\
&\quad
+ \frac{h}{2}\sum_{j=1}^m\int_\Omega\int_0^1 b(v^\gamma_{j-1}) \Big( \delta\bbT^\gamma_j, \delta\bbT^\gamma_j \Big)_{\mathcal{A}_n(s\bbT^\gamma_j +(1-s)\bbT^\gamma_{j-1}) }\,\mathrm{d}s\,\mathrm{d}x
\\
&\quad
- \alpha h\sum_{j=2}^m (\delta^2 \bu^\gamma_j, \delta^2\bu^\gamma_{j-1})
\\
&\leq \langle \delta l^\gamma_m ,\delta\bu^\gamma_m + \alpha\bu^\gamma_m \rangle - \langle \delta l^\gamma_1, \bu_1 + \alpha\bu_0 \rangle + h \sum_{j=2}^m \langle \delta^2 l^\gamma_j , \delta\bu^\gamma_{j-1} + \alpha\bu^\gamma_{j-1}\rangle 
\\
&\quad
+ \frac{h}{2\eta} \sum_{j=1}^{m-1} \int_\Omega\int_0^1 \Big( \frac{b(v^\gamma_j) - b(v^\gamma_{j-1})}{h}\Big)^2 (\bbT^\gamma_j, \bbT^\gamma_j)_{\mathcal{A}_n(s\bbT^\gamma_{j+1} + (1-s) \bbT^\gamma_{j}) } \,\mathrm{d}s\,\mathrm{d}x.
\end{aligned}
\end{equation}
We have 
\begin{equation}\label{a:equ48}
\alpha h\sum_{j=2}^m (\delta^2\bu^\gamma_j, \delta^2\bu^\gamma_{j-1}) \leq C(\alpha) h \sum_{j=1}^{m-1} \|\delta^2 \bu^\gamma_j \|_2^2 + \frac{\|\delta^2\bu^\gamma_m \|_2^2}{8},
\end{equation}
and 
\begin{equation}\label{a:equ49}
\alpha (\delta^2\bu^\gamma_{m},\delta\bu^\gamma_m ) \leq \frac{\|\delta^2 \bu^\gamma_m \|_2^2}{8}  + 2\alpha^2 \|\delta\bu^\gamma_m \|_2^2. 
\end{equation}
Substituting (\ref{a:equ48}) and (\ref{a:equ49}) into (\ref{a:equ11}) and applying the discrete Gronwall inequality, it follows that, for every \( 1\leq m \leq M\), 
\begin{equation}\label{a:equ12}
\begin{aligned}
&\|\delta^2\bu^\gamma_m \|_2^2 + h \sum_{j=1}^m \int_\Omega\int_0^1 b(v^\gamma_{j-1}) \Big( \delta\bbT^\gamma_j, \delta\bbT^\gamma_j \Big)_{\mathcal{A}_n(s\bbT^\gamma_j + (1-s) \bbT^\gamma_{j-1}) } \,\mathrm{d}s\,\mathrm{d}x
\\
&\leq 
C(\delta)\Big[ \|\delta^2\bu^\gamma_0\|_2^2 + \max_{0 \leq j \leq m }\|\delta\bu^\gamma_j \|_2^2 + \max_{1\leq j \leq m }\|\delta l^\gamma_j \|_{-1,2}^2 + \Big( h \sum_{j=2}^m \|\delta^2 l^\gamma_j \|_{-1,2} \Big)^2
\\
&\quad
 + h \sum_{j=1}^{m-1} \int_\Omega\int_0^1 \Big( \frac{b(v^\gamma_j) - b(v^\gamma_{j-1})}{h}\Big)^2 (\bbT^\gamma_j , \bbT^\gamma_j )_{\mathcal{A}_n(s\bbT^\gamma_{j+1} +(1-s) \bbT^\gamma_j ) } \,\mathrm{d}s\,\mathrm{d}x + 1\Big]  
 \\&\quad
 + \delta \max_{0\leq j \leq m }\|\beps(\delta\bu^\gamma_j + \alpha\bu^\gamma_j ) \|_{2}^2 ,
 \end{aligned}
\end{equation}
where \( C= C(\delta)\) is a constant that is independent of \( M\) and \( n \), but does depend on \( \delta\), which is fixed sufficiently small so that the final term on the right can be absorbed into the left-hand side.
To prove that \( \delta\) can be chosen such that this is possible, we first notice that
\begin{align*}
\bbT^\gamma_m = h \sum_{j=1}^m \delta\bbT^\gamma_j + \bbT^\gamma_0 = h \sum_{j=1}^m \delta\bbT^\gamma_j + F^{-1}_n(\beps(\bu_1 + \alpha\bu_0)) .
\end{align*}
It follows that
\begin{align*}
\max_{1\leq j \leq m}\frac{\|\bbT^\gamma_j\|_2^2}{n} &\leq  C\Big(  h\sum_{j=1}^m \frac{\|\delta\bbT^\gamma_j \|_2^2}{n} + \frac{\|F^{-1}_n(\beps(\bu_1 + \alpha\bu_0)) \|_2^2}{n}\Big) 
\\
& \leq C \Big(  h\sum_{j=1}^m \int_\Omega\int_0^1 b(v^\gamma_{j-1}) \Big( \delta\bbT^\gamma_j, \delta\bbT^\gamma_j \Big)_{\mathcal{A}_n(s\bbT^\gamma_j +(1-s)\bbT^\gamma_{j-1}) }\,\mathrm{d}s\,\mathrm{d}x + 1\Big) .
\end{align*}
By definition, we have
\begin{align*}
|\beps(\delta\bu^\gamma_m + \alpha\bu^\gamma_m ) | \leq 1 + \frac{|\bbT^\gamma_m|}{n}.
\end{align*}
This yields
\begin{align*}
&\max_{1\leq j \leq m}\|\beps(\delta\bu^\gamma_j + \alpha\bu^\gamma_j) \|_2^2 
\\&
\leq C\Big( 1 +  h  \sum_{j=1}^m \int_\Omega\int_0^1 b(v^\gamma_{j-1})   (\delta \bbT^\gamma_j , \delta \bbT^\gamma_j )_{\mathcal{A}_n(s\bbT^\gamma_{j} +(1-s) \bbT^\gamma_{j-1} ) } \,\mathrm{d}s\,\mathrm{d}x \Big) .
\end{align*}
Thus we see that \( \delta\) can be chosen sufficiently small so that the final term on the right-hand side of (\ref{a:equ12}) can be absorbed into the left. Optimising over the indices \( 1\leq m \leq M\), we deduce that
\begin{align*}
&\max_{1\leq m \leq M} \|\delta^2\bu^\gamma_m \|_2^2 + h \sum_{m=1}^M \int_\Omega\int_0^1 b(v^\gamma_m) \big( \delta\bbT^\gamma_m , \delta\bbT^\gamma_m \big)_{\mathcal{A}_n( s\bbT^\gamma_m + (1-s) \bbT^\gamma_{m-1}) } \,\mathrm{d}s\,\mathrm{d}x
\\
&\leq
C\Big[ 1 + \max_{1\leq m \leq M} \|\delta\bu^\gamma_m\|_2^2 + \max_{1\leq m \leq M} \|\delta l^\gamma_m \|_{-1,2}^2 + \Big( h \sum_{m=2}^M\|\delta^2 l^\gamma_m \|_{-1,2}\Big)^2
\\
&\quad
+ h \sum_{m=1}^{M-1} \int_\Omega\int_0^1 \Big( \frac{b(v^\gamma_m) - b(v^\gamma_{m-1}) }{h}\Big)^2 (\bbT^\gamma_{m}, \bbT^\gamma_m )_{\mathcal{A}_n( s\bbT^\gamma_{m+1}+ (1-s) \bbT^\gamma_m)} \,\mathrm{d}s\,\mathrm{d}x \Big] . 
\end{align*}
Rewriting this in terms of interpolants of the discrete solution sequence, we obtain
\begin{equation}\label{a:equ13}
\begin{aligned}
&\sup_{t\in [0, T]} \|\overline{\bu}^{\gamma,\prime}_t (t) \|_2^2 + 
\sup_{t\in [0, T]}\|\beps(\overline{\bu}^\gamma_t + \alpha\bu^{\gamma,+}) \|_2^2 
\\&\quad 
+\int_Q 
\int_0^1 b(v^{\gamma,-}) \big( \overline{\bbT}^\gamma_t, \overline{\bbT}^\gamma_t \big)_{\mathcal{A}_n(s\bbT^{\gamma,+} + (1-s) \bbT^{\gamma,-}) } \,\mathrm{d}s\,\mathrm{d}x\,\mathrm{d}t
\\
&\leq 
C\Big[1 +  \max_{t\in [0, T]} \|\overline{\bu}^\gamma (t) \|_2^2 + \max_{t \in [0, T] } \|\overline{l}^\gamma_t (t) \|_{-1,2}^2 
+ \Big( \int_0^T \|l^{\gamma,+,\prime\prime}(t)\|_{-1,2}\,\mathrm{d}t \Big)^2
\\
&\quad 
+ h \int_Q\int_0^1   \Big( \frac{b(v^{\gamma,+}) - b(v^{\gamma,-})}{h}\Big)^2  \cdot 
 (\bbT^{\gamma,+}, \bbT^{\gamma,+})_{\mathcal{A}_n(s\tau^h_t\bbT^{\gamma,+} + (1-s) \bbT^{n
\gamma,+})}   \,\mathrm{d}s\,\mathrm{d}x\,\mathrm{d}t\Big],
\end{aligned}
\end{equation}
where \(C \) is independent of \( M \) and \( n\) and \( \tau^h_t \) denotes the translation operator  in the time variable of length \( h \).

Now we show that \( \bbT^n \in W^{1,2}(0, T; L^2(\Omega)^{d\times d}) \). 
To bound the right-hand side of (\ref{a:equ13}) independent of \( M \), we notice that
\begin{align*}
\frac{b(v^{\gamma,+}) - b(v^{\gamma,-})}{h} \leq \|\overline{v}^\gamma_t \|_\infty\|b^\prime(\overline{v}^\gamma) \|_\infty \leq C(n).
\end{align*} 
Furthermore, for every \( \bbT\), \( \bbS\in \mathbb{R}^{d\times d}\), we have
\begin{align*}
\frac{|\bbT|^2}{n} \leq (\bbT, \bbT)_{\mathcal{A}_n(\bbS) }\leq 3|\bbT|^2.
\end{align*}
Using these two statements with the previously determined \(M\)-independent bounds and the fact that \( (l^n)_n\) is bounded in \( W^{2,1}(0, T; W^{-1,2}_D(\Omega)^d) \), it follows from  (\ref{a:equ13}) that
\begin{equation}\label{a:equ14}
\begin{aligned}
&\sup_{t\in [0, T]} \big( \|\overline{\bu}^{\gamma,\prime}_t (t) \|_2^2 \big) +
\sup_{t\in [0, T]}\big( \|\beps(\overline{\bu}^\gamma_t + \alpha\bu^{\gamma,+}) \|_2^2\big)
+ \int_Q 
\frac{|\overline{\bbT}^\gamma_t|^2}{n}\,\mathrm{d}x\,\mathrm{d}t
\\
&\leq C\Big\{ 1 + \|l^\gamma_t\|_{L^\infty(W^{-1,2})}  + \|l^\gamma_{tt}\|_{L^1(W^{-1,2})}^2 + \int_Q |\bbT^{\gamma,+}|^2\,\mathrm{d}x\,\mathrm{d}s\Big\}
\\
&\leq C,
\end{aligned}
\end{equation}
where \( C = C(n) \) is independent of \( M \). Hence \((\overline{\bbT}^{(n,M)})_M \) is bounded in \( W^{1,2}(0, T; L^2(\Omega)^{d\times d}) \) independent of \( M \) and so converges weakly in this space to \( \bbT^n  = F_n(\beps(\bu^n_t + \alpha\bu^n)) \). 
However, since \((\overline{\bu}^{(n,M)}_t + \alpha\bu^{(n,M),+})_M \) converges strongly in \( L^2(0, T; W^{1,2}_D(\Omega)^d) \) by (\ref{a:equ43}),  it follows from the constitutive relation that  \((\bbT^{(n,M),+})_M \) converges strongly in \( L^2(Q)^{d\times d}\). Using Lebesgue's dominated convergence theorem, we get
\begin{equation}\label{a:equ15}
\begin{aligned}
&\lim_{M\rightarrow\infty }\int_Q \int_0^1 (\bbT^{(n,M),+}, \bbT^{(n,M),+})_{\mathcal{A}_n(s\tau^h_t\bbT^{(n,M),+} + (1-s) \bbT^{(n,M),+})} \,\mathrm{d}s\,\mathrm{d}x\,\mathrm{d}t \\&\quad
= \int_Q (\bbT^n,\bbT^n)_{\mathcal{A}_n(\bbT^n)}\,\mathrm{d}x\,\mathrm{d}t.
\end{aligned}
\end{equation}
Making further use of the previously determined strong convergence results for the interpolants, we see that
\begin{equation}\label{a:equ16}
\begin{aligned}
&\lim_{M\rightarrow\infty}\Big[ \max_{t\in [0, T]} \|\overline{\bu}^{(n,M)}(t) \|_2^2 + \max_{t\in [0, T]} \|\overline{l}^{(n,M)}_t(t) \|_{-1,2}^2 + \Big( \int_0^T \|l^{(n,M),+,\prime\prime}(t)\|_{-1,2}\,\mathrm{d}t \Big)^2 \Big]
\\
&= 
\sup_{t\in [0, T]} \|\bu^n_t(t) \|_2^2 + \sup_{t\in [0, T]}\|l^n_t(t) \|_{-1,2}^2 +\Big(  \int_0^T \|l^n_{tt}(t) \|_{-1,2}\,\mathrm{d}t\Big)^2
\\
&\leq C
\end{aligned}
\end{equation}
where \( C\) is a constant that is independent of \( n\).
Ignoring the final term on the left-hand side of (\ref{a:equ13}), and using (\ref{a:equ15}) and (\ref{a:equ16}), it follows that 
\begin{align*}
\limsup_{M\rightarrow\infty}\Big[ \sup_{t\in [0, T]} \|\overline{\bu}^{(n,M),\prime}_t (t) \|_2^2 + 
\sup_{t\in [0, T]}\|\beps(\overline{\bu}^{(n,M)}_t + \alpha\bu^{(n,M),+}) (t) \|_2^2 \Big] \leq C,
\end{align*}
where \( C\) is independent of \( n\). By weak lower semi-continuity, we have
\begin{equation}\label{a:equ17}
\sup_{t\in [0, T]}\|\bu^n_{tt}(t) \|_2 + \sup_{t\in [0, T]}\|\beps(\bu^n_t + \alpha\bu^n)(t) \|_2 \leq C.
\end{equation}
Next we use (\ref{a:equ17}) with the extra regularity of \( \bbT^n\) in order to obtain the  bound on \( \bbT^n_t \) in the statement of the proposition. 
Using  that \( \bbT^n \in W^{1,2}(0, T; L^2(\Omega)^{d\times d})\), it follows that we  have \( F_n(\bbT^n) \in W^{1,2}(0, T; L^2(\Omega)^{d\times d}) \), with weak derivative given by  the chain rule, that is, \( \mathcal{A}_n(\bbT^n) \bbT^n_t \). By standard properties of differentiable Bochner functions, the following convergence results hold  for a.e. \( t\in (0, T) \) with respect to strong convergence in \( L^2(\Omega)^{d\times d}\):
\begin{equation}\label{a:equ18}
\lim_{h\rightarrow 0+  }\frac{\bbT^n(t+h) - \bbT^n(t) }{h} = \bbT^n_t,\quad
\lim_{h\rightarrow 0+} \frac{F_n(\bbT^n(t + h)) - F_n(\bbT^n(t))}{h} = \mathcal{A}_n(\bbT^n) \bbT^n_t.
\end{equation}
Let  \( \Delta^h_t \) denote the undivided difference quotient of length \( h \) in the time variable and let  \( t\in (0,T) \). Considering the elastodynamic equation at times \( t\) and \( t +h \), for a  \( h>0 \) sufficiently small such that \( t  + h < T\),  choosing \( \Delta^h_t ( \bu^n_t + \alpha\bu^n)(t)\) as the test function  we see that 
\begin{equation}\label{a:equ21}
\begin{aligned}
&\langle \Delta^h_t l^n(t), \Delta^h_t (\bu^n_t  + \alpha\bu^n)(t)\rangle
\\
&=\int_\Omega\Delta^h_t \bu^n_{tt}(t) \cdot \Delta^h_t (\bu^n_t  + \alpha\bu^n)(t) + \Delta^h_t \Big( b(v^n)\bbT^n\Big) (t):\beps\big( \Delta^h_t (\bu^n_t  + \alpha\bu^n)\big)(t) \,\mathrm{d}x 
\\&
=  \frac{\mathrm{d}}{\mathrm{d}t}\Big( \frac{\|\Delta^h_t\bu^n_t\|_2^2}{2} + \alpha\int_\Omega\Delta^h_t \bu^n_t\cdot \Delta^h_t \bu^n \,\mathrm{d}x\Big) - \alpha\int_\Omega|\Delta^h_t \bu_t^n|^2\,\mathrm{d}x
\\
&\quad
+ \int_\Omega\Delta^h_t b(v^n) \tau^h_t \bbT^n : \Delta^h_t F_n(\bbT^n) + b(v^n) \Delta^h_t \bbT^n : \Delta^h_t F_n(\bbT^n) \,\mathrm{d}x.
\end{aligned}
\end{equation}
From Lemmas \ref{a:lem6} and \ref{a:lem7}, we have
\begin{align*}
|\Delta^h_t b(v^n) | =  \Big| \int_0^h b^\prime(v^n(t+s)) v^n_t(t+s)\,\mathrm{d}s\Big|
\leq h \|b^\prime(v^n) \|_{L^\infty(Q)}\|v^n_t\|_{L^\infty(Q)} \leq Ch,
\end{align*}
where \( C\) is independent of \( n \).  Using this and recalling  that \((\cdot, \cdot)_{\mathcal{A}_n(\bbU) }\) is an inner product on \( \mathbb{R}^{d\times d}\) for every \( \bbU \in \mathbb{R}^{d\times d}\), it follows that
\begin{equation}\label{a:equ19}
\begin{aligned}
\Big|\int_\Omega\Delta^h_t b(v^n) \tau^h_t \bbT^n : \Delta^h_t F_n(\bbT^n)\,\mathrm{d}x\Big|
&\leq Ch \int_\Omega\int_0^1 \big|\big(\tau^h_t \bbT^n, \Delta^h_t \bbT^n\big)_{
\mathcal{A}_n(s\tau^h_t \bbT^n + (1-s) \bbT^n) }\big| \,\mathrm{d}s\,\mathrm{d}x 
\\
&\leq 
\frac{h^2}{2\eta} \int_\Omega\int_0^1 (\tau^h_t \bbT^n, \tau^h_t \bbT^n)_{\mathcal{A}_n(s\tau^h_t \bbT^n + (1-s) \bbT^n)}\,\mathrm{d}s\,\mathrm{d}x
\\
&\quad
+ \frac{\eta}{2} \int_\Omega\int_0^1 \big( \Delta^h_t \bbT^n, \Delta^h_t \bbT^n\big)_{
\mathcal{A}_n(s\tau^h_t \bbT^n + (1-s) \bbT^n) }\,\mathrm{d}s\,\mathrm{d}x.
\end{aligned}
\end{equation}
Now we notice that there exists a constant \( C\), independent of \( n \), such that
\begin{align*}
|F_n(\bbT) - F_n(\bbS)|^2 \leq C(\bbT - \bbS) :(F_n(\bbT) - F_n(\bbS)) .
\end{align*}
Thus we have \( |\Delta^h_tF_n(\bbT^n)|^2 \leq C \Delta^h_t\bbT^n:\Delta^h_tF_n(\bbT^n) \). As a result, we can bound the term involving the external force in the following way:
\begin{equation}\label{a:equ20}
\begin{aligned}
|\langle\Delta^h_t l^n, \Delta^h_t(\bu^n_t + \alpha\bu^n) \rangle|&\leq C\|\Delta^h_tl^n\|_{-1,2}\|\Delta^h_t\beps(\bu^n_t + \alpha\bu^n) \|_2
\\
&\leq C\|\Delta^h_tl^n\|_{-1,2}^2 + \frac{\eta}{4}\int_\Omega \Delta^h_t\bbT^n:\Delta^h_t F_n(\bbT^n) \,\mathrm{d}x,
\end{aligned}
\end{equation}
where \( C\) is independent of \( n \). 
Substituting  (\ref{a:equ19}) and (\ref{a:equ20}) into (\ref{a:equ21}) and integrating  over \((t_1,t_2) \subset [0, T]\), we see that
\begin{align*}
&{\|\Delta^h_t \bu^n_t(t_2) \|_2^2} + \int_{t_1}^{t_2} \int_\Omega {b(v^n)} \Delta^h_t\bbT^n: \Delta^h_t F_n(\bbT^n) \,\mathrm{d}x\,\mathrm{d}t
\\
&
\leq C\Big[ \int_{t_1}^{t_2} \|\Delta^h_t l^n \|_{-1,2}^2 \,\mathrm{d}t + h^2 \int_{t_1}^{t_2}\int_\Omega\int_0^1 (\tau^h_t \bbT^n, \tau^h_t \bbT^n)_{\mathcal{A}_n(s\tau^h_t\bbT^n +(1-s)\bbT^n)} \,\mathrm{d}s\,\mathrm{d}x\,\mathrm{d}t
\\
&\quad
+ \int_{t_1}^{t_2} \|\Delta^h_t \bu^n_t(t) \|_2^2 \,\mathrm{d}t + \|\Delta^h_t \bu^n_t(t_1) \|_2^2 + \|\Delta^h_t\bu^n(t_1) \|_2^2 \Big].
\end{align*}
Dividing through by \( h^2\) and using standard properties of difference quotients, we get 
\begin{equation}\label{a:equ23}
\begin{aligned}
&\frac{\|\Delta^h_t\bu^n_t(t_2) \|_2^2}{h^2} + \int_{t_1}^{t_2} \int_\Omega b(v^n) \frac{\Delta^h_t \bbT^n }{h}: \frac{\Delta^h_t F_n(\bbT^n)}{h}\,\mathrm{d}x\,\mathrm{d}t
\\
&\leq 
C\Big[ \int_0^T \|l^n_t \|_{-1,2}^2 \,\mathrm{d}t +\int_{t_1}^{t_2}\int_\Omega\int_0^1 \big(\tau^h_t \bbT^n, \tau^n_t \bbT^n\big)_{\mathcal{A}_n(s\tau^h_t \bbT^n +(1-s) \bbT^n)}\,\mathrm{d}s\,\mathrm{d}x\,\mathrm{d}t
\\
&\quad
+ \sup_{t\in [0, T]}\|\bu^n_{tt}(t) \|_2^2 + \sup_{t\in [0, T]}\|\bu^n_t(t) \|_2^2 \Big]
\\
&\leq C\Big[ 1 + \int_{t_1}^{t_2}\int_\Omega\int_0^1 \big(\tau^h_t \bbT^n, \tau^n_t \bbT^n\big)_{\mathcal{A}_n(s\tau^h_t \bbT^n +(1-s) \bbT^n)}\,\mathrm{d}s\,\mathrm{d}x\,\mathrm{d}t\Big] ,
\end{aligned}
\end{equation}
where \( C\) is a constant, independent of \( n \) and \( h \). Next,  we take the limit as \( h \rightarrow 0+\). The strong convergence result \( \tau^h_t \bbT^n \rightarrow\bbT^n \) in \( L^2((t_1, t_2)\times\Omega)^{d\times d}\) and the dominated convergence theorem yield
\begin{equation}\label{a:equ22}
\begin{aligned}
\lim_{h\rightarrow 0+ }\int_{t_1}^{t_2}\int_\Omega\int_0^1 \big(\tau^h_t \bbT^n, \tau^n_t \bbT^n\big)_{\mathcal{A}_n(s\tau^h_t \bbT^n +(1-s) \bbT^n)}\,\mathrm{d}s\,\mathrm{d}x\,\mathrm{d}t &= \int_{t_1}^{t_2}\int_\Omega \big( \bbT^n, \bbT^n)_{\mathcal{A}_n(\bbT^n)} \,\mathrm{d}x\,\mathrm{d}t
\\
&\leq C\int_{t_1}^{t_2}\int_\Omega |\bbT^n| + \frac{|\bbT^n|^2}{n}\,\mathrm{d}x\,\mathrm{d}t
\\
&\leq C.
\end{aligned}
\end{equation}
Using (\ref{a:equ18}), Fatou's lemma and the monotonicity of \( F_n \), we   have
\begin{equation}\label{a:equ50}
\begin{aligned}
\liminf_{h\rightarrow 0+}\int_{t_1}^{t_2} \int_\Omega b(v^n) \frac{\Delta^h_t \bbT^n }{h}: \frac{\Delta^h_t F_n(\bbT^n)}{h}\,\mathrm{d}x\,\mathrm{d}t &\geq \int_{t_1}^{t_2}\int_\Omega b(v) \bbT^n_t :F_n(\bbT^n)_t\,\mathrm{d}x\,\mathrm{d}t
\\
& \geq \int_{t_1}^{t_2}\int_\Omega\eta \big( \bbT^n_t,\bbT^n_t)_{\mathcal{A}_n(\bbT^n) }\,\mathrm{d}x\,\mathrm{d}t
\\
&= \eta \int_{t_1}^{t_2}\int_\Omega \frac{|\bbT^n_t|^2}{(1 + |\bbT^n|^a)^{1 + \frac{1}{a}}} + \frac{|\bbT^n_t|^2 }{n}\,\mathrm{d}x\,\mathrm{d}t.
\end{aligned}
\end{equation}
Substituting  (\ref{a:equ22}) and (\ref{a:equ50}) into (\ref{a:equ23}) gives the required result.
\end{proof}

Noticing that we have \( |\Delta^h_t \beps(\bu^n_t + \alpha\bu^n) |\leq C\Delta^h_t \bbT^n :\Delta^h_t F_n(\bbT^n) \), as well as  the fact that \( \beps(\bu^n_t ) \in L^\infty(0, T; L^2(\Omega)^{d\times d}) \), we immediately obtain the bound in Corollary \ref{a:cor5}. Then in Lemma \ref{a:lem8}, we use the new bounds to improve the integrability of \(( \bbT^n)_n \) to \( L^\infty(0, T; L^1(\Omega)^{d\times d}) \). 

\begin{corollary}\label{a:cor5}
There exists a constant \( C\) independent of \( n \) such that
\begin{align*}
\int_0^T \|\bu^n_{tt}(t) \|_{1,2}^2 \,\mathrm{d}t \leq C. 
\end{align*}
\end{corollary}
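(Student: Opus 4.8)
The plan is to deduce the claim from the weighted estimate on $\bbT^n_t$ already produced in the proof of Proposition \ref{a:prop3}, combined with the constitutive relation and the Korn--Poincar\'e inequality. I would begin by recording the pointwise algebraic fact noted inside the proof of Proposition \ref{a:prop3}: there is a constant $C$, independent of $n$, with $|F_n(\bbT) - F_n(\bbS)|^2 \leq C\,(\bbT - \bbS):(F_n(\bbT) - F_n(\bbS))$ for all $\bbT,\bbS \in \mathbb{R}^{d\times d}$; differentiating in a direction $\bbU$ yields $|\mathcal{A}_n(\bbT)\bbU|^2 \leq C\,(\bbU,\bbU)_{\mathcal{A}_n(\bbT)}$. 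Since $\bbT^n \in W^{1,2}(0,T;L^2(\Omega)^{d\times d})$ by Proposition \ref{a:prop3} and $F_n \in C^1$, the chain rule gives $F_n(\bbT^n) \in W^{1,2}(0,T;L^2(\Omega)^{d\times d})$ with $\partial_t F_n(\bbT^n) = \mathcal{A}_n(\bbT^n)\bbT^n_t$, and therefore $|\partial_t F_n(\bbT^n)|^2 \leq C\,(\bbT^n_t,\bbT^n_t)_{\mathcal{A}_n(\bbT^n)} = C\,\bbT^n_t : \partial_t F_n(\bbT^n)$ a.e.\ in $Q$.

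Next I would isolate from the proof of Proposition \ref{a:prop3} the global $n$-independent bound $\int_Q b(v^n)\,\bbT^n_t : \partial_t F_n(\bbT^n)\,\mathrm{d}x\,\mathrm{d}t \leq C$: taking $h \to 0+$ in \eqref{a:equ23} with $t_1 = 0$ and $t_2 = T$, using \eqref{a:equ22} on the right-hand side and \eqref{a:equ50} (Fatou together with monotonicity of $F_n$) to bound the second term on the left from below, this inequality is precisely what the argument there produces. Since $b(v^n) \geq \eta > 0$, combining with the previous display gives $\int_Q |\partial_t F_n(\bbT^n)|^2 \,\mathrm{d}x\,\mathrm{d}t \leq C$ with $C$ independent of $n$.

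I would then transfer this to $\bu^n_{tt}$. By the constitutive relation $\beps(\bu^n_t + \alpha\bu^n) = F_n(\bbT^n)$. At every Lebesgue point $t$ the undivided difference quotients $h^{-1}\Delta^h_t(\bu^n_t + \alpha\bu^n)(t)$ converge in $L^2(\Omega)^d$ to $\bu^n_{tt}(t) + \alpha\bu^n_t(t)$, using $\bu^n_{tt} \in L^\infty(0,T;L^2(\Omega)^d)$ from \eqref{a:equ17}, while their symmetric gradients $h^{-1}\Delta^h_t F_n(\bbT^n)(t)$ converge in $L^2(\Omega)^{d\times d}$ to $\mathcal{A}_n(\bbT^n(t))\bbT^n_t(t)$ by \eqref{a:equ18}. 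Hence $h^{-1}\Delta^h_t(\bu^n_t + \alpha\bu^n)(t)$ is bounded in $W^{1,2}(\Omega)^d$ (by Korn's inequality, its $L^2$-norm and symmetric-gradient $L^2$-norm being bounded), so a subsequence converges weakly in $W^{1,2}(\Omega)^d$; the limit is $\bu^n_{tt}(t) + \alpha\bu^n_t(t)$, which therefore lies in $W^{1,2}(\Omega)^d$ with $\beps(\bu^n_{tt}(t) + \alpha\bu^n_t(t)) = \mathcal{A}_n(\bbT^n(t))\bbT^n_t(t)$, and in fact in the weakly closed subspace $W^{1,2}_D(\Omega)^d$ since each difference quotient (a difference of elements of $W^{1,2}_D(\Omega)^d$) lies there. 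As $\bu^n_t(t) \in W^{1,2}_D(\Omega)^d$ for a.e.\ $t$, this gives $\bu^n_{tt}(t) \in W^{1,2}_D(\Omega)^d$ for a.e.\ $t$, with $\beps(\bu^n_{tt}(t)) = \mathcal{A}_n(\bbT^n(t))\bbT^n_t(t) - \alpha\beps(\bu^n_t(t))$.

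Finally, applying the Korn--Poincar\'e inequality (Theorem \ref{korn}) with $p = 2$ pointwise in $t$ and integrating, I obtain $\int_0^T \|\bu^n_{tt}(t)\|_{1,2}^2\,\mathrm{d}t \leq C\int_0^T \|\beps(\bu^n_{tt}(t))\|_2^2\,\mathrm{d}t \leq C\big(\int_Q |\mathcal{A}_n(\bbT^n)\bbT^n_t|^2\,\mathrm{d}x\,\mathrm{d}t + \sup_{t}\|\beps(\bu^n_t(t))\|_2^2\big)$. The first term is $\leq C$ by the estimate on $\partial_t F_n(\bbT^n)$ above, and the second is $\leq C$ uniformly in $n$ since $\sup_t\|\beps(\bu^n_t(t) + \alpha\bu^n(t))\|_2 \leq C$ by \eqref{a:equ17} and $\sup_t\|\bu^n(t)\|_{1,2} \leq C$ by Lemma \ref{a:lem5}, whence $\sup_t\|\beps(\bu^n_t(t))\|_2 \leq C$. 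This proves the corollary. The only genuinely delicate point is the bookkeeping in the second paragraph --- tracking that the constant in $\int_Q b(v^n)\,\bbT^n_t : \partial_t F_n(\bbT^n) \leq C$ is indeed independent of $n$ through the chain of estimates \eqref{a:equ13}--\eqref{a:equ50} --- together with the routine but slightly fiddly check that the homogeneous Dirichlet condition survives the limit so that Theorem \ref{korn} applies; everything else is immediate.
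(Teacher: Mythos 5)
Your argument is correct and is exactly the route the paper intends: the text immediately preceding the corollary derives the bound from $|\Delta^h_t\beps(\bu^n_t+\alpha\bu^n)|^2\leq C\,\Delta^h_t\bbT^n:\Delta^h_tF_n(\bbT^n)$ (controlled by the weighted estimate of Proposition \ref{a:prop3} via $b(v^n)\geq\eta$), the $L^\infty(0,T;L^2(\Omega)^{d\times d})$ bound on $\beps(\bu^n_t)$, and the Korn--Poincar\'e inequality. Your version simply spells out the passage from difference quotients to $\beps(\bu^n_{tt}+\alpha\bu^n_t)=\partial_tF_n(\bbT^n)$ and the preservation of the Dirichlet condition, which the paper leaves implicit.
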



\begin{lemma}\label{a:lem8}
Let the assumptions of Proposition \ref{a:prop2} hold and let \((\bu^n, \bbT^n, v^n) \) be the solution triple constructed there. Suppose additionally that
\( l\in W^{2,1}(0, T; W^{-1,2}_D(\Omega)^d) \), 
 the safety strain condition (\ref{a:equ5}) holds, the compatibility condition (\ref{a:equ4}) holds, and that \( \bu_1 + \alpha\bu_0 \in W^{2,2}_D(\Omega)^d\). There exists a constant \( C\) independent of \( n \) such that
 \begin{align*}
 \sup_{t\in [0, T]}\int_\Omega|\bbT^n(t) |\,\mathrm{d}x \leq C.
 \end{align*}
\end{lemma}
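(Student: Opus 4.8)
The plan is to test the weak elastodynamic equation \eqref{a:equ9} \emph{pointwise in time} against the combination $\bu^n_t + \alpha\bu^n$ — the natural test function in the strain-limiting setting, since $\beps(\bu^n_t + \alpha\bu^n) = F_n(\bbT^n)$ — and then to read off the $L^1$ bound on $\bbT^n(t)$ from the coercivity of $F$ together with the lower bound $b(v^n) \geq \eta$. The reason this works now, whereas in Lemma \ref{a:lem5} the same test function produced only a space-time bound on $\bbT^n$, is that Proposition \ref{a:prop3} supplies the pointwise-in-time bound $\sup_t\|\bu^n_{tt}(t)\|_2 \leq C$, so the inertial term can be estimated without integrating in $t$.

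First I would assemble the uniform (in $n$) bounds the argument needs. From Lemma \ref{a:lem5} we have $\sup_t\big(\|\bu^n(t)\|_{1,2} + \|\bu^n_t(t)\|_2\big) \leq C$; from Proposition \ref{a:prop3}, in particular \eqref{a:equ17}, $\sup_t\|\bu^n_{tt}(t)\|_2 + \sup_t\|\beps(\bu^n_t + \alpha\bu^n)(t)\|_2 \leq C$. Combining the bound on $\beps(\bu^n_t + \alpha\bu^n)$ with the one on $\beps(\bu^n)$ and applying the Korn--Poincar\'e inequality (Theorem \ref{korn} with $p=2$, to $\bu^n_t(t) \in W^{1,2}_D(\Omega)^d$) yields $\sup_t\|\bu^n_t(t)\|_{1,2} \leq C$. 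Since $l \in W^{2,1}(0,T;W^{-1,2}_D(\Omega)^d)$, the approximations $(l^n)_n$ are bounded in $C([0,T];W^{-1,2}_D(\Omega)^d)$, so $\sup_t\|l^n(t)\|_{-1,2} \leq C$. Finally, Lemma \ref{a:lem6} and the Sobolev embedding $H^k(\Omega) \hookrightarrow L^\infty(\Omega)$ (valid for $k > \tfrac{d}{2}$) give $\sup_t\|v^n(t)\|_\infty \leq C$, hence $\sup_t\int_\Omega b(v^n(t))\,\mathrm{d}x \leq C$.

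Then I would fix $t$ in the full-measure set on which \eqref{a:equ9} holds and $\bu^n_t(t) + \alpha\bu^n(t) \in W^{1,2}_D(\Omega)^d$ (admissibility following from $\bu^n_t \in L^\infty(0,T;W^{1,2}_D(\Omega)^d)$, guaranteed by Proposition \ref{a:prop2}), test against $\bw = \bu^n_t(t) + \alpha\bu^n(t)$, and use the constitutive relation to rewrite the stress term, obtaining
\begin{equation*}
\int_\Omega b(v^n(t))\,\bbT^n(t):F_n(\bbT^n(t))\,\mathrm{d}x = \langle l^n(t),(\bu^n_t + \alpha\bu^n)(t)\rangle - \int_\Omega \bu^n_{tt}(t)\cdot(\bu^n_t + \alpha\bu^n)(t)\,\mathrm{d}x .
\end{equation*}
The right-hand side is bounded by $\|l^n(t)\|_{-1,2}\|(\bu^n_t + \alpha\bu^n)(t)\|_{1,2} + \|\bu^n_{tt}(t)\|_2\|(\bu^n_t + \alpha\bu^n)(t)\|_2 \leq C$, uniformly in $n$ and $t$. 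On the left, $\bbT^n:F_n(\bbT^n) = \bbT^n:F(\bbT^n) + n^{-1}|\bbT^n|^2 \geq \bbT^n:F(\bbT^n) \geq c_0|\bbT^n| - c_1$ by the coercivity of $F$ (Remark \ref{a:remark_F}; for the explicit $F$ of \eqref{a:equ1} this follows from the lemma of \cite{RN8}), and $b(v^n) \geq \eta > 0$. Hence $c_0\eta\int_\Omega|\bbT^n(t)|\,\mathrm{d}x \leq c_1\int_\Omega b(v^n(t))\,\mathrm{d}x + \int_\Omega b(v^n(t))\,\bbT^n(t):F_n(\bbT^n(t))\,\mathrm{d}x \leq C$; dividing by $c_0\eta$ gives the stated bound for a.e.\ $t$, and since $\bbT^n \in C([0,T];L^2(\Omega)^{d\times d})$ by Proposition \ref{a:prop3}, it extends to every $t \in [0,T]$.

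I do not expect a serious obstacle at this stage: the genuinely hard work is Proposition \ref{a:prop3}, which provides the $L^\infty$-in-time control of $\bu^n_{tt}$ and of $\beps(\bu^n_t + \alpha\bu^n)$. The only points that need care are the admissibility of the test function for a.e.\ $t$ and the fact that the $W^{1,2}$-bound on $\bu^n_t(t)$ must be obtained indirectly, via Korn--Poincar\'e applied to $\beps(\bu^n_t + \alpha\bu^n)(t)$, rather than directly.
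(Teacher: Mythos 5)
Your proof is correct, but it takes a genuinely different and more direct route than the paper. The paper does not test the undifferentiated equation pointwise in time; instead it forms the difference of the tested equation at times $t$ and $t+h$, divides by $h$, passes to the limit, and integrates the resulting term $\int b(v^n)\,\bbT^n_t:F_n(\bbT^n)$ by parts in time using $\partial_t\varphi_n(\bbT^n)=F_n(\bbT^n):\bbT^n_t$. This yields a uniform bound on $\int_\Omega \varphi_n(\bbT^n(t))\,\mathrm{d}x$, evaluated against the initial value $\bbT^n(0)=F_n^{-1}(\beps(\bu_1+\alpha\bu_0))$ (finite by the safety strain condition), and the $L^1$ bound then follows from $\varphi_n(\bbT)\geq c_1|\bbT|-c_2$. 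Your argument instead exploits the coercivity $\bbT:F(\bbT)\geq c_0|\bbT|-c_1$ directly at fixed time, which is legitimate precisely because, as you observe, Proposition \ref{a:prop3} upgrades $\bu^n_{tt}$ and $\beps(\bu^n_t+\alpha\bu^n)$ to $L^\infty$ in time — the missing ingredient at the stage of Lemma \ref{a:lem5}. Your route is shorter and avoids the $W^{1,1}$-chain-rule and $t_1\to 0+$ limiting arguments; what the paper's route buys in exchange is the bound on the full potential $\int_\Omega\varphi_n(\bbT^n(t))\,\mathrm{d}x$, and in particular $\sup_t n^{-1}\|\bbT^n(t)\|_2^2\leq C$, which is invoked later in the proof of Lemma \ref{a:lem12}. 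This is not a loss for you, since your left-hand side $\int_\Omega b(v^n)\,\bbT^n:F_n(\bbT^n)\,\mathrm{d}x$ dominates $\eta\int_\Omega n^{-1}|\bbT^n|^2\,\mathrm{d}x$ as well, so the same auxiliary bound falls out of your computation. The only cosmetic point is the handling of the region where $c_0|\bbT^n|-c_1<0$: since $b(v^n)$ is bounded above and below uniformly, the inequality $c_0\eta\int|\bbT^n|\leq c_1\int b(v^n)+\int b(v^n)\bbT^n:F_n(\bbT^n)$ holds exactly as you wrote it, so there is no gap.
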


\begin{proof}
Let  \( t\in (0, T) \) and pick \( h > 0 \) sufficiently small such that \( t + h < T\).  We test in (\ref{a:equ9}) at times \( t\) and \( t+h \) against \( (\bu^n_t + \alpha\bu^n)(t)\) and subtract the resulting equations. Dividing through by \( h \), integrating over \((t_1,t_2) \subset [0, T]\) and letting \(h \rightarrow 0+\),  we deduce that
\begin{align*}
\int_{t_1}^{t_2} \langle l^n_t, \bu^n_t + \alpha\bu^n\rangle\,\mathrm{d}t &=\int_\Omega \bu^n_{tt}(t_2) \cdot(\bu^n_t + \alpha\bu^n)(t_2) - \bu^n_{tt}(t_1) \cdot (\bu^n_t + \alpha\bu^n)(t_1) \,\mathrm{d}x
\\
&\quad
+ \int_{t_1}^{t_2} \int_\Omega b^\prime(v^n) v^n_t \bbT^n : F_n(\bbT^n) + b(v^n) \bbT^n_t : F_n(\bbT^n) \,\mathrm{d}x\,\mathrm{d}t
\\
&\quad
- \int_{t_1}^{t_2}\int_\Omega\bu^n_{tt}\cdot (\bu^n_{tt}+ \alpha\bu^n_t) \,\mathrm{d}x\,\mathrm{d}t.
\end{align*}
From the chain rule and product rule for weak derivatives,  we have  \( \varphi_n(\bbT^n) \in W^{1,1}(0, T; L^1(\Omega)) \) with weak derivative given by \( \bbT^n_t :F_n(\bbT^n) \). Furthermore, since \(\bu^n_{tt}\in L^2(0, T; W^{1,2}(\Omega)^d) \), we must have  \( \bu^n_t + \alpha\bu^n \in C([0, T]; W^{1,2}_D(\Omega)^d) \) and so \( F_n(\bbT^n) \in C([0, T]; L^2(\Omega)^{d\times d}) \). By continuity of \( F^{-1}_n\), it follows that  \( \bbT^n \in C([0, T]; L^2(\Omega)^{d\times d}) \) with \(\bbT^n(0) = F^{-1}_n(\beps(\bu_1 + \alpha\bu_0)) \). Putting together these facts, we get
\begin{equation}\label{a:equ24}
\begin{aligned}
&\lim_{t_1 \rightarrow 0+}\int_{t_1}^{t_2}\int_\Omega b(v^n) \bbT^n_t:F_n(\bbT^n) \,\mathrm{d}x\,\mathrm{d}t
\\
&= \lim_{t_1 \rightarrow 0+} \Big\{ \Big[ \int_\Omega b(v^n(t)) \varphi_n(\bbT^n(t)) \,\mathrm{d}x\Big]_{t = t_1}^{t = t_2} - \int_{t_1}^{t_2} \int_\Omega b^\prime(v^n) v^n_t \varphi_n(\bbT^n) \,\mathrm{d}x\,\mathrm{d}t\Big\} 
\\
&= \int_\Omega  \big( b(v^n) \varphi_n(\bbT^n) \big)(t_2) - b(v_0)\varphi_n(F_n^{-1}(\beps(\bu_1 + \alpha\bu_0))) \,\mathrm{d}x - \int_{0}^{t_2} \int_\Omega b^\prime(v^n) v^n_t \varphi_n(\bbT^n) \,\mathrm{d}x\,\mathrm{d}t.
\end{aligned}
\end{equation}
Furthermore, we have
\begin{equation}\label{a:equ51}
\begin{aligned}
&\Big[ \int_\Omega b(v^n(t)) \varphi_n(\bbT^n(t)) \,\mathrm{d}x\Big]_{t = t_1}^{t = t_2}
\\
&\leq \Big[ \int_\Omega b(v^n(t)) \varphi_n(\bbT^n(t)) \,\mathrm{d}x\Big]_{t = t_1}^{t = t_2} - \int_{t_1}^{t_2} \int_\Omega b^\prime(v^n) v^n_t \varphi_n(\bbT^n) \,\mathrm{d}x\,\mathrm{d}t
\\
&= \int_{t_1}^{t_2} \langle l^n_t, \bu^n_t + \alpha\bu^n\rangle\,\mathrm{d}t -  \int_\Omega \bu^n_{tt}(t_2) \cdot(\bu^n_t + \alpha\bu^n)(t_2) - \bu^n_{tt}(t_1) \cdot (\bu^n_t + \alpha\bu^n)(t_1) \,\mathrm{d}x
\\&\quad
- \int_{t_1}^{t_2} \int_\Omega b^\prime(v^n) v^n_t \bbT^n : F_n(\bbT^n)   \,\mathrm{d}x\,\mathrm{d}t
+ \int_{t_1}^{t_2}\int_\Omega\bu^n_{tt}\cdot (\bu^n_{tt}+ \alpha\bu^n_t) \,\mathrm{d}x\,\mathrm{d}t
\\
& \leq C\Big( \int_0^T  \|l^n_t\|_{-1,2}^2 \,\mathrm{d}t + \sup_{t\in [0, T]}\|\bu^n_{tt}(t) \|_2^2  + \sup_{t\in [0, T]}\|(\bu^n_t + \alpha\bu^n)(t)\|_{1,2}^2  + \sup_{t\in [0, T]} \|\bu^n_t(t) \|_2^2
\\
&\quad+ \|v^n_t\|_\infty \int_Q \Big[ \frac{|\bbT^n|^2}{n} + |\bbT^n|\Big] \,\mathrm{d}x\,\mathrm{d}t\Big)
\\
&\leq C,
\end{aligned}
\end{equation}
where \( C\) is a constant that is independent of \( n \), \( t_1\) and \( t_2\). 
Letting \( t_1 \rightarrow 0+ \) in (\ref{a:equ51}) and using (\ref{a:equ24}) again, it follows that 
\begin{equation}\label{a:equ52}
\int_\Omega\varphi_n(\bbT^n(t_2)) \,\mathrm{d}x \leq C\Big( 1  + \int_\Omega b(v_0) \varphi_n(F^{-1}_n(\beps(\bu_1 + \alpha\bu_0)))\,\mathrm{d}x\Big)  \leq C.
\end{equation}
The second inequality  follows from the safety strain condition (\ref{a:equ5}). Using the definition of \( \varphi_n \), there exists positive constants \( c_1\) and \( c_2\), independent of \( n \), such that   \( c_1|\bbT | - c_2 \leq \varphi_n(\bbT) \),   for every \( \bbT \in \mathbb{R}^{d\times d}\). Substituting this into  (\ref{a:equ52}), the asserted bound follows. 
\end{proof}

Next, we improve the spatial regularity of the stress tensor. We  use arguments mimicking those in \cite{RN6} and \cite{preprint2}. However, now we have the extra issue of the phase-field function being present. It is here that we  require \(\nabla v^n \) to be uniformly bounded in \( L^\infty(Q) \), independent of \( n \). Thus we assume that \( k > \frac{d}{2}+1\) to ensure that we have   \( H^k(\Omega) \subset C^1(\overline{\Omega}) \). 

\begin{proposition}\label{a:prop4}
Let the assumptions of Proposition \ref{a:prop2} hold and let \((\bu^n, \bbT^n, v^n) \) be the solution triple constructed there. Suppose also  that
\( l\in W^{2,1}(0, T; W^{-1,2}_D(\Omega)^d) \),  \( \boldf \in L^2(0, T; W^{1,2}_{loc}(\Omega)^d) \),
 the safety strain condition (\ref{a:equ5}) holds, the compatibility condition (\ref{a:equ4}) holds and  \( \bu_1 + \alpha\bu_0 \in W^{2,2}_D(\Omega)^d\) with \( \bu_0 \in W^{2,2}_{loc}(\Omega)^d\). Furthermore, assume \( k > \frac{d}{2}+1\). For every compact  subset \( \Omega_0 \subset\Omega\), there exists a constant \( C = C(\Omega_0) \), independent of \( n \), such that
 \begin{align*}
\sup_{t\in [0, T]} \| \nabla^2 \bu^n (t) \|_{L^2(\Omega_0)} +   \int_0^T \int_{\Omega_0} \frac{|\nabla\bbT^n|^2}{(1 + |\bbT^n|)^{1+a}} + \frac{|\nabla \bbT^n|^2}{n} + |\nabla^2\bu^n_t(t) |^2 \,\mathrm{d}x\,\mathrm{d}t \leq C.
 \end{align*}
\end{proposition}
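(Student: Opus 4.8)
Fix nested open sets $\Omega_0 \subset\subset \Omega_1 \subset\subset \Omega$ and a cut-off $\xi \in C^\infty_c(\Omega)$ with $0 \le \xi \le 1$ and $\xi \equiv 1$ on $\Omega_1$. The plan is to run a Nirenberg difference-quotient argument on the regularised solution triple $(\bu^n,\bbT^n,v^n)$ of Proposition \ref{a:prop2}, which by Proposition \ref{a:prop3}, Corollary \ref{a:cor5} and Lemmas \ref{a:lem5}--\ref{a:lem7} already satisfies, uniformly in $n$: $\bbT^n \in W^{1,2}(0,T;L^2(\Omega)^{d\times d})$ (and $\bbT^n\in L^2(Q)^{d\times d}$ for each fixed $n$); $\bu^n_{tt} \in L^2(0,T;W^{1,2}(\Omega)^d)$; $\bu^n_t + \alpha\bu^n \in L^\infty(0,T;W^{1,2}_D(\Omega)^d)$ (combine \eqref{a:equ17} with the Korn--Poincar\'e inequality); and, crucially, $v^n \in W^{1,\infty}(0,T;W^{1,\infty}(\Omega))$ with a bound independent of $n$, which is where the hypothesis $k > \tfrac d2 + 1$ enters, via the Sobolev embedding $H^k(\Omega)\hookrightarrow C^1(\overline\Omega)$. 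For each coordinate direction $e_s$, $s\in\{1,\dots,d\}$, and each small $h>0$ I would test the weak elastodynamic equation \eqref{a:equ9} with
\[
\bw = -\,\Delta^{-h}_s\!\big(\xi^2\,\Delta^h_s(\bu^n_t + \alpha\bu^n)\big),
\]
where $\Delta^h_s$ is the divided spatial difference quotient, and integrate over $(0,T)$; this test function lies in $W^{1,2}_D(\Omega)^d$ for each fixed $h$ because $\xi$ has compact support.

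After discrete integration by parts, and using the decomposition $\beps\big(\xi^2\Delta^h_s(\bu^n_t + \alpha\bu^n)\big) = \xi^2\Delta^h_s F_n(\bbT^n) + 2\xi\,\mathrm{sym}\big(\Delta^h_s(\bu^n_t + \alpha\bu^n)\otimes\nabla\xi\big)$, the principal part of the nonlinear contribution becomes $\int_Q \xi^2\, b(v^n)(\cdot+he_s)\,\Delta^h_s\bbT^n : \Delta^h_s F_n(\bbT^n)\,\dd x\,\dd t$. Writing $\Delta^h_s F_n(\bbT^n) = \int_0^1 \mathcal{A}_n\big(s\,\bbT^n(\cdot+he_s)+(1-s)\bbT^n\big)\Delta^h_s\bbT^n\,\dd s$ and using $b \ge \eta$, the coercivity $(\bbS,\bbS)_{\mathcal{A}_n(\bbU)} \ge c_0|\bbS|^2(1+|\bbU|)^{-(1+a)} + n^{-1}|\bbS|^2$ from Remark \ref{a:remark_F}, and the inequality $|F_n(\bbT)-F_n(\bbS)|^2 \le C(\bbT-\bbS):(F_n(\bbT)-F_n(\bbS))$, this principal part dominates, up to fixed constants, both the weighted quantity $\int_Q\xi^2\big[\tfrac{|\Delta^h_s\bbT^n|^2}{(1+|\bbT^n|)^{1+a}} + \tfrac{|\Delta^h_s\bbT^n|^2}{n}\big]$ and $\int_Q\xi^2|\Delta^h_s\beps(\bu^n_t + \alpha\bu^n)|^2$, which are exactly the objects to be controlled. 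The inertial term, after discrete integration by parts, equals $\int_Q \Delta^h_s\bu^n_{tt}\cdot\xi^2\Delta^h_s(\bu^n_t + \alpha\bu^n)$, which by Cauchy--Schwarz is bounded by $\|\bu^n_{tt}\|_{L^2(0,T;W^{1,2})}\,\|\bu^n_t + \alpha\bu^n\|_{L^\infty(0,T;W^{1,2})} \le C$ (Corollary \ref{a:cor5}); the force term is absorbed using $\boldf \in L^2(0,T;W^{1,2}_{loc}(\Omega)^d)$ and the bound on $l^n$ in $W^{2,1}(0,T;W^{-1,2}_D(\Omega)^d)$.

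The genuinely delicate terms are (i) the contribution $\int_Q \xi^2\,\Delta^h_s b(v^n)\,\bbT^n : \Delta^h_s F_n(\bbT^n)$ arising from the product rule for difference quotients, and (ii) the cut-off commutator terms with $\nabla\xi$. For (i) I would use $|\Delta^h_s b(v^n)| \le \|b'(v^n)\|_{L^\infty(Q)}\|\nabla v^n\|_{L^\infty(Q)} =: B$ (uniform in $n$), then Cauchy--Schwarz in the $\mathcal{A}_n$-inner product and Young's inequality to bound it by $\delta\int_Q\xi^2\int_0^1(\Delta^h_s\bbT^n,\Delta^h_s\bbT^n)_{\mathcal{A}_n}\,\dd s + \tfrac{B^2}{2\delta}\int_Q\xi^2\int_0^1(\bbT^n,\bbT^n)_{\mathcal{A}_n(s\bbT^n(\cdot+he_s)+(1-s)\bbT^n)}\,\dd s$; the first summand is absorbed into the principal part for $\delta$ small, and the second is treated in the limit $h\to0$ below. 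The commutator terms in (ii) are handled identically, keeping every stress factor paired with $\mathcal{A}_n$ so that only terms of the form $(\bbT^n,\bbT^n)_{\mathcal{A}_n(\cdot)}$ and a small multiple of $\xi^2|\Delta^h_s F_n(\bbT^n)|^2$ survive. Summing up and letting $h\to0$: for fixed $n$, $\bbT^n(\cdot+he_s)\to\bbT^n$ strongly in $L^2(Q)^{d\times d}$, so by dominated convergence $\int_Q\xi^2\int_0^1(\bbT^n,\bbT^n)_{\mathcal{A}_n(s\bbT^n(\cdot+he_s)+(1-s)\bbT^n)}\,\dd s \to \int_Q\xi^2(\bbT^n,\bbT^n)_{\mathcal{A}_n(\bbT^n)}$, and a direct computation gives $(\bbT,\bbT)_{\mathcal{A}_n(\bbT)} = \tfrac{|\bbT|^2}{n} + \tfrac{|\bbT|^2}{(1+|\bbT|^a)^{1+1/a}} \le \tfrac{|\bbT|^2}{n} + C(1+|\bbT|)$, hence this limit is $\le C\int_Q\big(1 + |\bbT^n| + \tfrac{|\bbT^n|^2}{n}\big) \le C$ uniformly in $n$ by Lemma \ref{a:lem5}. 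On the left, $\Delta^h_s\bbT^n$ converges weakly in $L^2_{loc}$ to $\partial_s\bbT^n$, and the lower-semicontinuity arguments of \cite{RN6, preprint2} (now with the additional phase-field factor controlled by Lemma \ref{a:lem7}) yield
\[
\int_0^T\!\!\int_{\Omega_0}\Big[\frac{|\nabla\bbT^n|^2}{(1+|\bbT^n|)^{1+a}} + \frac{|\nabla\bbT^n|^2}{n} + |\nabla\beps(\bu^n_t + \alpha\bu^n)|^2\Big]\,\dd x\,\dd t \le C,
\]
with $C$ independent of $n$.

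For the two remaining quantities in the statement I would use the memory kernel \eqref{a:equ69}. Since $\nabla F_n(\bbT^n) = \mathcal{A}_n(\bbT^n)\nabla\bbT^n$ and $|\mathcal{A}_n(\bbT^n)\nabla\bbT^n|^2 \le \|\mathcal{A}_n(\bbT^n)\|\,(\nabla\bbT^n,\nabla\bbT^n)_{\mathcal{A}_n(\bbT^n)} \le (1+n^{-1})(\nabla\bbT^n,\nabla\bbT^n)_{\mathcal{A}_n(\bbT^n)}$, the displayed bound already gives $\nabla F_n(\bbT^n) \in L^2(0,T;L^2(\Omega_0)^{d\times d\times d})$ uniformly in $n$. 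Differentiating \eqref{a:equ69} in space, $\nabla\beps(\bu^n(t)) = \mathrm{e}^{-\alpha t}\nabla\beps(\bu_0) + \int_0^t \mathrm{e}^{-\alpha(t-\tau)}\nabla F_n(\bbT^n(\tau))\,\dd\tau$, whence $\sup_{t}\|\nabla\beps(\bu^n(t))\|_{L^2(\Omega_0)} \le \|\nabla\beps(\bu_0)\|_{L^2(\Omega_0)} + C\|\nabla F_n(\bbT^n)\|_{L^2(0,T;L^2(\Omega_0))} \le C$, using $\bu_0 \in W^{2,2}_{loc}(\Omega)^d$; an interior Korn inequality on $\Omega_0 \subset\subset \Omega_1$, together with the $L^\infty(0,T;L^2)$ bound on $\beps(\bu^n)$ from Lemma \ref{a:lem5}, then yields $\sup_t\|\nabla^2\bu^n(t)\|_{L^2(\Omega_0)} \le C$. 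Since $\nabla\beps(\bu^n_t) = \nabla F_n(\bbT^n) - \alpha\nabla\beps(\bu^n)$ is bounded in $L^2(0,T;L^2(\Omega_0)^{d\times d})$ uniformly in $n$, a further application of the interior Korn inequality gives the bound on $\int_0^T\!\int_{\Omega_0}|\nabla^2\bu^n_t|^2$, completing the proof.

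\textbf{Main obstacle.} The crux is the clash between localisation and the fact that $(\bbT^n)_n$ is only bounded in $L^\infty(0,T;L^1(\Omega)^{d\times d})$: every stress contribution produced by differentiating the coefficient $b(v^n)$ and by the cut-off must be kept in $\mathcal{A}_n$-weighted form, so that after $h\to0$ it is controlled by $\int_Q(1 + |\bbT^n| + |\bbT^n|^2/n)$ and is hence uniform in $n$; arranging the estimates so that no bare $|\bbT^n|^2$ ever appears is precisely what forces the uniform $L^\infty(Q)$-bound on $\nabla v^n$, and therefore the hypothesis $k > \tfrac d2 + 1$.
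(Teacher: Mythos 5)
Your overall strategy---a localised second-order energy estimate in which every dangerous stress contribution is kept in $\mathcal{A}_n$-weighted form so that it is ultimately controlled by $\int_Q(|\bbT^n|+|\bbT^n|^2/n)$---is the right one, and your treatment of the principal term, the inertial and force terms, the contribution from $\Delta^h_s b(v^n)$, and the final passage from the weighted gradient bound to $\sup_t\|\nabla^2\bu^n\|_{L^2(\Omega_0)}$ and $\int_0^T\!\int_{\Omega_0}|\nabla^2\bu^n_t|^2$ via the memory kernel and an interior Korn inequality all match the paper. The gap is in the cut-off commutator terms, which you claim are ``handled identically''. They are not: the term
\[
\int_Q 2\xi\, \Delta^h_s\big(b(v^n)\bbT^n\big) : \mathrm{sym}\big(\nabla\xi\otimes\Delta^h_s(\bu^n_t+\alpha\bu^n)\big)\,\dd x\,\dd t
\]
pairs $\Delta^h_s\bbT^n$ (and $\tau^h_s\bbT^n$) against $\nabla\xi\otimes\Delta^h_s(\bu^n_t+\alpha\bu^n)$, which is merely bounded in $L^2(Q)$ uniformly in $n$ and $h$ and is \emph{not} of the form $\overline{\mathcal{A}_n}\,\bbS$ for any controlled $\bbS$, so no Cauchy--Schwarz in the $\mathcal{A}_n$-inner product is available. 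Estimating it by plain Cauchy--Schwarz requires the full $L^2$ norm of $\Delta^h_s\bbT^n$, which can only be absorbed through the $n^{-1}|\cdot|^2$ part of the coercivity and therefore yields an $n$-dependent constant; routing it through the weighted quantity instead forces a factor $(1+|\bbT^n|)^{1+a}$ onto $|\Delta^h_s(\bu^n_t+\alpha\bu^n)|^2$, which is not integrable uniformly in $n$ because $(\bbT^n)_n$ is only bounded in $L^\infty(0,T;L^1(\Omega)^{d\times d})$. The same difficulty already affects the piece $\Delta^h_s b(v^n)\,\tau^h_s\bbT^n:(\nabla\xi\otimes\Delta^h_s(\bu^n_t+\alpha\bu^n))$, where a bare $|\bbT^n|$ meets an $L^2$ function.

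This is precisely why the paper runs the argument in two stages. Stage one is essentially your difference-quotient computation, but it is only used to obtain the qualitative conclusion $\bbT^n\in L^2(0,T;W^{1,2}_{loc}(\Omega)^{d\times d})$ with an $n$-\emph{dependent} constant, the commutators being absorbed via $n^{-1}|\Delta^h_i\bbT^n|^2$. Stage two then returns to the pointwise form of the equation and tests with $\zeta^2\nabla\cdot\nabla(\bu^n_t+\alpha\bu^n)$; the point of this choice, combined with the identity $\partial_k^2 w_i = 2\partial_k\beps(\bw)_{ik}-\partial_i\beps(\bw)_{kk}$, is that after integration by parts every commutator term takes the form $\mathbf{B}^{n,k}:\partial_kF_n(\bbT^n)=(\mathbf{B}^{n,k},\partial_k\bbT^n)_{\mathcal{A}_n(\bbT^n)}$ with $|\mathbf{B}^{n,k}|\le C|\bbT^n|$, i.e.\ an undifferentiated stress paired against a gradient of the \emph{strain}; only then does the $\mathcal{A}_n$-Cauchy--Schwarz step you describe apply, via $(\mathbf{B}^{n,k},\mathbf{B}^{n,k})_{\mathcal{A}_n(\bbT^n)}\le C(|\bbT^n|^2/n+|\bbT^n|)$. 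To repair your proof you would either adopt this two-stage structure, or carry out a further discrete integration by parts on the commutator and reproduce the same algebraic identity at the difference-quotient level; as written, the assertion that the commutators survive only as $(\bbT^n,\bbT^n)_{\mathcal{A}_n(\cdot)}$ terms is unjustified.
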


\begin{proof}
Let \( \zeta \in C^\infty_c(\Omega) \) be a cut-off function between \( \Omega\) and \( \Omega_0\). We  denote by \( \Omega_1\), a compact  subset of \( \Omega\)  that  contains \( \mathrm{supp}(\zeta) \).  Suppose that \( h > 0 \) is sufficiently small so that \( \zeta\Delta^h_i \bw\) is well-defined on \( \Omega\) for an arbitrary \( \bw\in L^2(\Omega)^d \), where \( \Delta^h_i \) is the undivided  difference quotient of length \( h \) in the \(i\)-th spatial co-ordinate direction. For a.e. \( t\in (0, T) \), noticing that  the Neumann boundary term vanishes due to  the compact support of \( \zeta\),  using Proposition \ref{a:prop3} we get
\begin{align*}
&\int_\Omega\zeta^2 \Delta^h_i\bu^n_{tt}\cdot \Delta^h_i (\bu^n_t + \alpha\bu^n) + \Delta^h_i \big( b(v^n) \bbT^n\big) :\beps\big(\zeta^2 \Delta^h_i (\bu^n_t + \alpha\bu^n) \big) \,\mathrm{d}x  
\\
&= \int_\Omega\zeta^2\Delta^h_i \boldf(t) \cdot \Delta^h_i (\bu^n_t  +\alpha\bu^n) \,\mathrm{d}x
\\
&\leq C(\zeta) h^2 \|\nabla \boldf(t) \|_{L^2(\Omega_1)} \|\nabla (\bu^n_t + \alpha\bu^n)(t) \|_{L^2(\Omega_1)}
\\
&\leq Ch^2g(t),
\end{align*}
where \( g\in L^2(0, T; \mathbb{R}_+) \) with bound in this space independent of \( n \) but depending on \( \Omega_1\). 
 Similarly, we have
\begin{align*}
\int_\Omega\zeta^2 \Delta^h_i \bu^n_{tt} \cdot \Delta^h_i (\bu^n_t + \alpha\bu^n)\,\mathrm{d}x &\leq C(\zeta) h^2\|\nabla \bu^n_{tt}(t) \|_{L^2(\Omega_1)}\|\nabla(\bu^n_t + \alpha\bu^n) (t) \|_{L^2(\Omega_1)}
\\
&\leq C(\zeta) h^2\|\nabla \bu^n_{tt}(t) \|_{L^2(\Omega_1)}
\\
&\leq C(\zeta) h^2 g(t), 
\end{align*}
making \( g \) larger if necessary. 
The right-hand side is in \(L^2(0, T) \) by Corollary \ref{a:cor5}, with bound independent of \( n \). For the nonlinear terms, we expand as follows:
\begin{align*}
&\int_\Omega \Delta^h_i\big( b(v^n) \bbT^n\big) :\beps\big( \zeta^2 \Delta^h_i (\bu^n_t + \alpha\bu^n) \big) \,\mathrm{d}x
\\
&= \int_\Omega \Big( \Delta^h_i b(v^n) \tau^h_i \bbT^n + b(v^n) \Delta^h_i \bbT^n \Big)  :\Big( 2\zeta\nabla \zeta \otimes \Delta^h_i (\bu^n_t + \alpha\bu^n) + \zeta^2 \beps(\Delta^h_i (\bu^n_t + \alpha\bu^n)) \Big) \,\mathrm{d}x,
\end{align*}
where \( \tau^h_i \) is a translation of length \( h \) in the \(i\)-th spatial co-ordinate direction. 
Integration over \((0, T) \) yields
\begin{align*}
&\int_Q \eta \zeta^2 \frac{|\Delta^h_i \bbT^n|^2}{n}\,\mathrm{d}x\,\mathrm{d}t
\\
&\leq \int_Q \zeta^2 b(v^n)  \Delta^h_i \bbT^n : \Delta^h_i F_n(\bbT^n ) \,\mathrm{d}x\,\mathrm{d}t
\\
&\leq 
Ch^2  \Big[ 1 + \int_0^T g(t)  \,\mathrm{d}t\Big] + \int_Q \Big| 2\zeta \Delta^h_i b(v^n) \tau^h_i \bbT^n :\big( \nabla \zeta \otimes \Delta^h_i (\bu^n_t + \alpha\bu^n) \big) \Big|\,\mathrm{d}x\,\mathrm{d}t
\\
&\quad
+ \int_Q \Big| \zeta^2\Delta^h_i  b(v^n) \tau^h_i \bbT^n : \Delta^h_i  F_n(\bbT^n) \Big|\,\mathrm{d}x\,\mathrm{d}t +\int_Q \Big| 2\zeta b(v^n) \Delta^h_i \bbT^n :\big( \nabla \zeta \otimes \Delta^h_i (\bu^n_t +\alpha\bu^n) \big)\Big|\,\mathrm{d}x\,\mathrm{d}t .
\end{align*}
Now we use this to show that \( \bbT^n \in L^2(0, T; W^{1,2}_{loc}(\Omega)^{d\times d}) \) with bound that does depend on \( n\). Using Young's inequality and the fact that, for every \( \bbT \), \( \bbS\in \mathbb{R}^{d\times d}\),    \(|F_n(\bbT) - F_n(\bbS) |\leq 3|\bbT - \bbS | \), we see that 
\begin{align*}
\frac{\eta}{2}\int_Q \zeta^2\frac{|\Delta^h_i \bbT^n|^2}{n}\,\mathrm{d}x\,\mathrm{d}t
&\leq 
C(n,\zeta) \Big[ h^2 + \int_0^T\Big\{ \||\nabla \zeta| \Delta^h_i  (\bu^n_t + \alpha\bu^n) \|_2^2 + h^2 \|\nabla v^n \|_{L^\infty(\Omega_1)}^2 \|\zeta\tau^h_i \bbT^n\|_2^2 
\\
&\quad
+ h \|\nabla v^n \|_{L^\infty(\Omega_1)}\|\zeta \tau^h_i \bbT^n \|_2\||\nabla\zeta|\Delta^h_i (\bu^n_t + \alpha\bu^n) \|_2\Big\} \,\mathrm{d}t\Big]
\\
& \leq C(n,\zeta) h^2,
\end{align*}
using the \( L^\infty(0, T; W^{1,\infty}(\Omega)) \) bound on \( v^n\), which is a consequence of the Sobolev embedding \( H^k(\Omega) \subset W^{1,\infty}(\Omega) \). We divide through by \( h^2\) and let \( h \rightarrow 0+ \) in order to deduce that \( \bbT^n \) is an element of \( L^2(0, T; W^{1,2}_{loc}(\Omega)^{d\times d}) \). 
As a result, we have  \( \beps(\bu^n_t + \alpha\bu^n) \in L^2(0, T; W^{1,2}_{loc}(\Omega)^{d\times d}) \), from which we deduce that    \( \bu^n_t + \alpha\bu^n \in L^2(0, T; W^{2,2}_{loc}(\Omega)^{d}) \), using the Korn--Poincar\'{e} inequality.

From the elastodynamic equation (\ref{a:equ9}) and this extra regularity, it follows that
\begin{equation}\label{a:equ25}
\bu^n_{tt} = \diver\big( b(v^n) \bbT^n\big) + \boldf\quad\text{ pointwise a.e. in }Q,
\end{equation}
with each term  an element of \( L^2(0, T; L^2_{loc}(\Omega)^d) \). We take the scalar product of (\ref{a:equ25}) with the function \( \zeta^2\nabla \cdot \nabla(\bu^n_t + \alpha\bu^n) \) and integrate over \( \Omega\) to obtain
\begin{equation}\label{a:equ27}
\begin{aligned}
&\int_\Omega \zeta^2\bu^n_{tt}\cdot \big( \nabla\cdot \nabla (\bu^n_t + \alpha\bu^n ) \big) \,\mathrm{d}x
\\
&= \int_\Omega\zeta^2 \diver\big( b(v^n) \bbT^n\big) \cdot \big( \nabla\cdot \nabla (\bu^n_t + \alpha\bu^n ) \big) + \zeta^2 \boldf \cdot \big( \nabla\cdot \nabla (\bu^n_t + \alpha\bu^n ) \big)\,\mathrm{d}x.
\end{aligned}
\end{equation}
First we focus on the term containing the nonlinearity \( \bbT^n \). We have
\begin{align*}
&\int_\Omega\zeta^2 \diver\big( b(v^n) \bbT^n\big) \cdot \big( \nabla\cdot \nabla (\bu^n_t + \alpha\bu^n ) \big) \,\mathrm{d}x
\\
&= \int_\Omega\zeta^2b(v^n) \frac{\partial T^n_{ij}}{\partial x_j }\frac{\partial^2}{\partial x_k^2} \big( \bu^n_t + \alpha\bu^n\big)_i + \zeta^2 b^\prime(v^n) T^n_{ij} \frac{\partial v^n}{\partial x_j }\frac{\partial^2}{\partial x_k^2 }\big( \bu^n_t + \alpha\bu^n\big)_i\,\mathrm{d}x.
\end{align*}
Using an integration by parts argument, we write 
\begin{align*}
&\int_\Omega\zeta^2b(v^n) \frac{\partial T^n_{ij}}{\partial x_j }\frac{\partial^2}{\partial x_k^2} \big( \bu^n_t + \alpha\bu^n\big)_i \,\mathrm{d}x
\\
&= \int_\Omega \Big\{ \zeta^2 b(v^n) \frac{\partial T^n_{ij}}{\partial x_k} \frac{\partial}{\partial x_k}\beps(\bu^n_t + \alpha\bu^n)_{ij} 
- T^n_{ij} \frac{\partial^2}{\partial x_k^2} \big(\bu^n_t + \alpha\bu^n\big)_i \Big( b^\prime(v^n) \frac{\partial v^n}{\partial x_j}\zeta^2 + 2\zeta \frac{\partial\zeta}{\partial x_j} b(v^n) \Big)
\\&\quad
+ T^n_{ij} \frac{\partial}{\partial x_k} \beps(\bu^n_t + \alpha\bu^n)_{ij} \Big( b^\prime(v^n) \frac{\partial v^n}{\partial x_k}\zeta^2 + 2\zeta \frac{\partial\zeta}{\partial x_k }b(v^n) \Big)\Big\} \,\mathrm{d}x.
\end{align*}
This yields 
\begin{equation}\label{a:equ26}
\begin{aligned}
&\int_\Omega \zeta^2 \diver\big( b(v^n)\bbT^n \big) \cdot \big( \nabla \cdot (\nabla (\bu^n_t + \alpha\bu^n)) \big) \,\mathrm{d}x
\\
&= \int_\Omega\zeta^2 b(v^n)\nabla\bbT^n\vdots \nabla F_n(\bbT^n) + T^n_{ij}\frac{\partial^2}{\partial x_k^2}(\bu^n_t + \alpha\bu^n)_i \Big( \zeta^2 b^\prime(v^n)\frac{\partial v^n }{\partial x_j }\Big) 
\\
&\quad
- T^n_{ij}\frac{\partial^2 }{\partial x_k^2} (\bu^n_t + \alpha\bu^n)_i \Big( b^\prime(v^n) \frac{\partial v^n}{\partial x_j}\zeta^2 + 2\zeta \frac{\partial\zeta}{\partial x_j } b(v^n) \Big) 
\\
&\quad
 + T^n_{ij}\frac{\partial}{\partial x_k} \beps(\bu^n_t + \alpha\bu^n)_{ij} \Big( b^\prime(v^n) \frac{\partial v^n}{\partial x_k}\zeta^2 + 2\zeta \frac{\partial\zeta}{\partial x_k }b(v^n)\Big) \,\mathrm{d}x
 \\
 &= \int_\Omega\zeta^2 b(v^n) \nabla \bbT^n \vdots \nabla F_n(\bbT^n) + \zeta \frac{\partial}{\partial x_k}\beps(\bu^n_t + \alpha\bu^n)_{ij}B^{n,k}_{ij}\,\mathrm{d}x,
\end{aligned}
\end{equation}
where \( \mathbf{B}^{n,k} = (B^{n,k}_{ij})_{i,j=1}^d \) is defined, for  \( 1\leq k \leq d\),  by
\begin{align*}
B^{n,k}_{ij} &= T^n_{ij}\big( 2b(v^n) \partial_k \zeta + \zeta b^\prime(v^n) \partial_k v^n \Big) - 2\delta_{jk}T^n_{im}\Big( 2b(v^n) \partial_m \zeta + \tau b^\prime(v^n) \partial_m v^n \Big)
\\
&\quad
+ \delta_{ij} T^n_{km} \Big( 2b(v^n) \partial_m \zeta + \tau b^\prime(v^n) \partial_m v^n \Big) + 2\delta_{jk}T^n_{im}\zeta b^\prime(v^n) \partial_m v^n - \delta_{ij}T^n_{km} \zeta b^\prime(v^n) \partial_m v^n ,
\end{align*}
making use of the identity
\begin{align*}
\frac{\partial^2}{\partial x_k^2} \bw_i = 2\frac{\partial}{\partial x_k}\beps(\bw)_{ik} - \frac{\partial}{\partial x_i }\beps(\bw)_{kk}.
\end{align*}
Using the bound on \((v^n)_n \) in \(L^\infty(0, T; W^{1,\infty}(\Omega)) \) and the definition of \( \mathbf{B}^{n,k}\),  there exists a constant \( C\), independent of \( n \), such that \(|\mathbf{B}^{n,k}|\leq C|\bbT^n| \), for every \( 1\leq k \leq d\).  
Furthermore, we have 
\begin{align*}
&\Big|\int_\Omega\zeta \frac{\partial}{\partial x_k }\beps(\bu^n_t + \alpha\bu^n) :\mathbf{B}^{n,k}\,\mathrm{d}x\Big| 
\\
&= \Big|\int_\Omega \zeta \frac{\partial}{\partial x_k } F_n(\bbT^n) : \mathbf{B}^{n,k}\,\mathrm{d}x\Big|
\\
&= \Big|\int_\Omega\zeta \big( \partial_k \bbT^n, \mathbf{B}^{n,k}\big)_{\mathcal{A}_n(\bbT^n)} \,\mathrm{d}x\Big|
\\
& \leq \int_\Omega\frac{\zeta^2 \eta}{2} \big( \partial_k \bbT^n, \partial_k\bbT^n\big)_{\mathcal{A}_n(\bbT^n)} + C(\eta) \mathbb{I}_{\mathrm{supp}(\zeta)}\big( \mathbf{B}^{n,k},\mathbf{B}^{n,k}\big)_{\mathcal{A}_n(\bbT^n)}\,\mathrm{d}x
\\
&\leq \int_\Omega\frac{\zeta^2 \eta}{2} \big( \partial_k \bbT^n, \partial_k\bbT^n\big)_{\mathcal{A}_n(\bbT^n)} + C \Big( \frac{|\mathbf{B}^{n,k}|^2}{n} + \frac{|\mathbf{B}^{n,k}|^2}{1 + |\bbT^n|}\Big) \,\mathrm{d}x
\\
&\leq \int_\Omega\frac{\zeta^2 \eta}{2} \nabla \bbT^n \vdots \nabla F_n(\bbT^n) + C\Big( \frac{|\bbT^n|^2}{n} + |\bbT^n|\Big) \,\mathrm{d}x.
\end{align*}
Using this  in (\ref{a:equ26}) and substituting the result into  (\ref{a:equ27}),  we obtain
\begin{equation}\label{a:equ54}
\begin{aligned}
&\int_\Omega\zeta^2 \Big( \bu^n_{tt} - \boldf \Big) \cdot \Big( \nabla \cdot \big( \nabla(\bu^n_t + \alpha\bu^n) \big) \Big) \,\mathrm{d}x + C\int_\Omega\frac{|\bbT^n|^2}{n} + |\bbT^n |\,\mathrm{d}x
\\
&\geq \int_\Omega\frac{\zeta^2 b(v^n) }{2}\nabla \bbT^n \vdots \nabla F_n(\bbT^n ) \,\mathrm{d}x.
\end{aligned}
\end{equation}
For  the first term on the right-hand side, we integrate by parts to deduce that 
\begin{equation}\label{a:equ53}
\begin{aligned}
&\int_\Omega \zeta^2 \Big( \bu^n_{tt} - \boldf \Big) \cdot \Big( \nabla \cdot \big( \nabla(\bu^n_t + \alpha\bu^n) \big) \Big) \,\mathrm{d}x
\\
&= 
\int_\Omega \zeta^2 \nabla \big( \boldf - \bu^n_{tt}\big) :\nabla \big( \bu^n_t + \alpha\bu^n \big) + 2\zeta \big[ \nabla \zeta \otimes \big( \boldf- \bu^n_{tt}\big) \big] :\nabla \big( \bu^n_t + \alpha\bu^n\big) \,\mathrm{d}x
\\
& = 
-\frac{\mathrm{d}}{\mathrm{d}t}\Big( \frac{\|\zeta \nabla \bu^n_t\|_2^2}{2} + \alpha\int_\Omega\zeta^2 \nabla \bu^n_t : \nabla \bu^n \,\mathrm{d}x\Big)  + \alpha\int_\Omega\zeta^2|\nabla \bu^n_t |^2 \,\mathrm{d}x 
\\&\quad
+ \| \zeta \nabla \boldf \|_2 \|\zeta \nabla (\bu^n_t + \alpha\bu^n)\|_2 
+ C(\zeta) \|\boldf\|_2 \|\nabla (\bu^n_t + \alpha\bu^n) \|_2
\\&\quad
+ C(\zeta) \|\bu^n_{tt}\|_2 \|\nabla (\bu^n_t + \alpha\bu^n) \|_2.
\end{aligned}
\end{equation}
Integrating in time, using the continuity   of \( \nabla \bu^n\) and \( \nabla \bu^n_t \) as maps from \([0, T]\) to \( L^2(\Omega)^{d\times d}\), with the assumptions on the initial data, and applying Gronwall's inequality, it follows from (\ref{a:equ54}) and (\ref{a:equ53}) that
\begin{equation}\label{a:equ62}
\sup_{t\in [0, T]}\|\zeta\nabla \bu^n_t(t) \|_2^2 + \int_Q \zeta^2 \Big( \frac{|\nabla \bbT^n|^2}{( 1 + |\bbT^n|)^{1 + a}} + \frac{|\nabla \bbT^n|^2}{n}\Big) \,\mathrm{d}x\,\mathrm{d}t \leq C(\zeta). 
\end{equation}
Recalling that for every \( \bbT\), \( \bbS \in \mathbb{R}^{d\times d}\) we have
\begin{align*}
|F_n(\bbT) - F_n(\bbS) |^2 \leq C (\bbT - \bbS) : (F_n(\bbT) - F_n(\bbS)),
\end{align*}
where \( C\) is a constant independent of \( n \), it follows from (\ref{a:equ62}) that
\begin{align*}
\int_0^T\int_{\Omega_0} |\beps(\partial_i (\bu^n_t + \alpha\bu^n)) |^2\,\mathrm{d}x\,\mathrm{d}t \leq \int_0^T\int_{\Omega_0}|\nabla (\beps(\bu^n_t + \alpha\bu^n)) |^2  \leq C(\Omega_0),
\end{align*}
for every \( 1\leq i \leq d\). Applying   Korn's inequality for functions in the space \( W^{1,2}(\Omega_0)^2\), we deduce that
\begin{align*}
\int_0^T \|\partial_i (\bu^n_t + \alpha\bu^n)\|_{W^{1,2}(\Omega_0)}^2\,\mathrm{d}t 
&\leq 
C\int_0^T \|\partial_i (\bu^n_t + \alpha\bu^n)\|_{L^2(\Omega_0)}^2 + \|\beps(\partial_i(\bu^n_t + \alpha\bu^n)) \|_{L^2(\Omega)}^2\,\mathrm{d}t
\\
& \leq 
C\int_0^T \|\nabla \bu^n_t \|_2^2 + \|\nabla \bu^n \|_2^2 + \|\nabla(\beps(\bu^n_t + \alpha\bu^n) ) \|_{L^2(\Omega_0)}^2 \,\mathrm{d}t
\\
&\leq C(\Omega_0).
\end{align*}
Hence   \( \bu^n_t + \alpha\bu^n \in L^2(0, T; W^{2,2}_{loc}(\Omega)^d) \) with bound  independent of \( n \). It remains to prove that \(\bu^n\in L^\infty(0, T; W^{2,2}_{loc}(\Omega)^d) \). From the memory kernel property, we have 
\begin{align*}
\bu^n(t) = \mathrm{e}^{-\alpha t}\bu_0 + \int_0^t \mathrm{e}^{\alpha(s-t)} (\bu^n_t + \alpha\bu^n)\,\mathrm{d}s,
\end{align*}
with equality meant in \( L^2(\Omega)^d\). Each term on the right-hand side is an element of \( W^{2,2}_{loc}(\Omega)^d\). Hence, for a.e. \( t\in (0, T) \), we must have \( \bu^n(t) \in W^{2,2}_{loc}(\Omega)^d\). Furthermore, we have
\begin{align*}
\nabla^2\bu^n(t) = \mathrm{e}^{-\alpha t}\nabla^2\bu_0 + \int_0^t \mathrm{e}^{\alpha(s-t)} \nabla^2(\bu^n_t + \alpha\bu^n)\,\mathrm{d}s,
\end{align*}
with equality in \(L^2_{loc}( \Omega)^{d\times d}\).  
Applying the \( L^2(\Omega_0)\) norm to this   and using the previously determined result for \( \bu^n_t + \alpha\bu^n\), we conclude the stated result.
\end{proof}

Now we have sufficient \(n\)-independent estimates to take the limit as \( n \rightarrow\infty\). In Theorem \ref{a:thm2}, we focus on proving a pointwise convergence result for \((\bbT^n)_n \) and show that the limiting triple satisfies the weak elastodynamic equation, up to the Neumann boundary condition. We also prove the satisfaction of the initial conditions. We delay the proof of the minimisation problem   and the energy-dissipation equality  until Propositions \ref{a:prop5} and \ref{a:prop6}, respectively, for simplicity of the presentation.

\begin{theorem}\label{a:thm2}
Assume that \( k > \frac{d}{2} + 1\) with data \(v_0 \in H^{k}_{D+1}(\Omega) \), and \(\bu_0 \), \( \bu_1 \in W^{1,2}_D(\Omega)^d\) such that \( \bu_0 \in W^{2,2}_{loc}(\Omega)^d\), \(\bu_1 + \alpha\bu_0 \in W^{2,2}_D(\Omega)^d\) and the safety strain condition (\ref{a:equ5}) holds. Furthermore, assume that  \( l \in W^{2,1}(0, T; W^{-1,2}_D(\Omega)^d) \) such that \( \boldf\in L^2(0, T; W^{1,2}_{loc}(\Omega)^d) \) and   the compatibility condition (\ref{a:equ4}) holds. Let \(((\bu^n, \bbT^n, v^n))_n \) be the sequence of solutions to the regularised problem constructed in Proposition \ref{a:prop2}. There exists a subsequence in \( n \), not relabelled, and a limiting triple \((\bu,\bbT, v) \) such that
\begin{itemize}
\item \( \bu^n \overset{\ast}{\rightharpoonup}\bu\) weakly-* in \( W^{2,\infty}(0, T; L^2(\Omega)^d) \cap W^{1,\infty}(0, T; W^{1,2}_D(\Omega)^d) \),
\item \( \bu^n \rightharpoonup \bu \) weakly in  \(W^{2,2}(0, T; W^{1,2}_D(\Omega)^d) \cap W^{1,2}(0, T; W^{2,2}_{loc}(\Omega)^d) \),
\item \(\beps(\bu^n_t + \alpha\bu^n) \rightarrow\beps(\bu_t + \alpha\bu) \) pointwise a.e. in \( Q\),
\item \( v^n \overset{\ast}{\rightharpoonup} v\) weakly-* in \( W^{1,\infty}(0, T; H^k(\Omega)) \) with \( v(t) \in H^k_{D+1}(\Omega) \) for every \( t\in [0, T]\),
\item \( \bbT^n \rightarrow\bbT \) pointwise a.e. in \( Q\) where \( \bbT \in L^\infty(0, T; L^1(\Omega)^{d\times d}) \).
\end{itemize}
Furthermore, there exists an error term \( \tilde{\bg}\in L^\infty_{w^*}(0, T; (C^1_0(\Gamma_N)^d)^*) \) such that 
\begin{equation}\label{a:equ30}
\int_\Omega\bu_{tt}(t)\cdot \bw  + b(v(t)) \bbT(t) :\beps(\bw) \,\mathrm{d}x = \int_\Omega\boldf(t) \cdot \bw \,\mathrm{d}x + \langle \bg(t) - \tilde{\bg}(t), \bw\rangle_{C_0^1(\Gamma_N)},
\end{equation}
for a.e. \( t\in (0, T) \) and every \( \bw \in C^1_D(\overline{\Omega})^d\), where the stress tensor is identified by the relation \( \beps(\bu_t+ \alpha\bu) = F(\bbT ) \), pointwise a.e. in \( Q\).   Furthermore, the initial conditions hold in the sense that
\begin{equation}\label{a:equ31}
\lim_{t\rightarrow 0+} \Big[ \|\bu(t) - \bu_0\|_{1,2} + \|\bu_t(t) - \bu_1 \|_{1,2} + \|v(t) - v_0\|_{k,2} \Big] = 0.
\end{equation}
\end{theorem}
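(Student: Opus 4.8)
The plan is to let $n\to\infty$ in the family $(\bu^n,\bbT^n,v^n)$ from Proposition \ref{a:prop2}, using the $n$-independent estimates of Lemmas \ref{a:lem5}--\ref{a:lem7}, Corollary \ref{a:cor5} and Propositions \ref{a:prop3}--\ref{a:prop4}. First I would extract a subsequence realising the weak-$*$ and weak limits in the statement: the $L^\infty$-in-time bounds on $\bu^n_{tt}$, $\bu^n_t$, $\bu^n$, $v^n$, $v^n_t$ give the convergences in $W^{2,\infty}(0,T;L^2(\Omega)^d)\cap W^{1,\infty}(0,T;W^{1,2}_D(\Omega)^d)$ and $W^{1,\infty}(0,T;H^k(\Omega))$; the $L^2$-in-time estimates of Corollary \ref{a:cor5} and Proposition \ref{a:prop4} give the convergence in $W^{2,2}(0,T;W^{1,2}_D(\Omega)^d)\cap W^{1,2}(0,T;W^{2,2}_{loc}(\Omega)^d)$ (working on an exhaustion of $\Omega$ by compact subsets); and $v(t)\in H^k_{D+1}(\Omega)$ persists because this affine subspace is weakly closed. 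Since $k>\tfrac d2+1$, the Aubin--Lions lemma with $H^k\hookrightarrow\hookrightarrow H^{k'}\hookrightarrow C^1(\overline\Omega)$ for a fixed $k'\in(\tfrac d2+1,k)$ gives $v^n\to v$ in $C([0,T];C^1(\overline\Omega))$, hence $b(v^n)\to b(v)$, $b'(v^n)\to b'(v)$ and $\nabla v^n\to\nabla v$ uniformly on $\overline Q$; and, using $\beps(\bu^n_{tt}+\alpha\bu^n_t)\in L^2(Q)$ together with $\beps(\bu^n_t+\alpha\bu^n)\in L^2(0,T;W^{1,2}_{loc})$, another application of Aubin--Lions gives $\beps(\bu^n_t+\alpha\bu^n)\to\beps(\bu_t+\alpha\bu)$ strongly in $L^2(0,T;L^2_{loc}(\Omega)^{d\times d})$, hence a.e.\ in $Q$ along a further subsequence.

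The crux is the pointwise convergence of $(\bbT^n)_n$. Because $F_n=F+\tfrac1n\,\mathrm{id}$ and $\beps(\bu^n_t+\alpha\bu^n)=F_n(\bbT^n)$, while $\int_Q|\bbT^n|^2/n\le C$ forces $\bbT^n/n\to0$ in $L^2(Q)$, we get $F(\bbT^n)\to\beps(\bu_t+\alpha\bu)$ a.e.\ in $Q$. Now $|F(\bbT^n)|=|\bbT^n|(1+|\bbT^n|^a)^{-1/a}<1$, and this tends to $1$ exactly where $|\bbT^n|\to\infty$; so on the set $A=\{\,|\beps(\bu_t+\alpha\bu)|=1\,\}$ we have $|\bbT^n|\to\infty$ a.e., whence $\lambda|A|=\lim_n\int_A\min(|\bbT^n|,\lambda)\,\mathrm{d}x\,\mathrm{d}t\le\sup_n\|\bbT^n\|_{L^1(Q)}\le C$ for every $\lambda>0$, i.e.\ $|A|=0$. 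Therefore $\beps(\bu_t+\alpha\bu)$ takes values in the open unit ball a.e., $\bbT:=F^{-1}(\beps(\bu_t+\alpha\bu))$ is finite a.e.\ and satisfies the constitutive relation $\beps(\bu_t+\alpha\bu)=F(\bbT)$, and continuity of $F^{-1}$ on the open unit ball yields $\bbT^n=F^{-1}(F(\bbT^n))\to\bbT$ a.e.\ in $Q$; the bound $\bbT\in L^\infty(0,T;L^1(\Omega)^{d\times d})$ then follows from Fatou's lemma in the time variable together with $\sup_t\|\bbT^n(t)\|_1\le C$.

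To obtain \eqref{a:equ30} I would pass to the limit in \eqref{a:equ9} tested against $\psi\bw$ with $\psi\in C([0,T])$ and $\bw\in C^1_D(\overline\Omega)^d$. The inertial term converges by the weak convergence of $\bu^n_{tt}$, and $\int_0^T\langle l^n,\psi\bw\rangle\to\int_0^T\langle l,\psi\bw\rangle$ since $(l^n)_n$ is bounded in $W^{2,1}(0,T;W^{-1,2}_D(\Omega)^d)$ and converges weakly to $l$. For the nonlinear term, $b(v^n)\bbT^n$ is bounded in $L^\infty(0,T;L^1(\Omega)^{d\times d})\subset L^\infty_{w^*}(0,T;\mathcal M(\overline\Omega)^{d\times d})$, so it converges weak-$*$ along a subsequence to some $\mu$; by the a.e.\ convergence $\bbT^n\to\bbT$ and the local equi-integrability of $(b(v^n)\bbT^n)_n$ coming from the monotonicity of $F$ (as in \cite{RN6,preprint2}), the part of $\mu$ in $Q$ equals $b(v)\bbT\,\mathrm{d}x\,\mathrm{d}t$, so its singular part lives on $[0,T]\times\partial\Omega$ and, since $\bw$ vanishes on all of $\Gamma_D$, only the portion over $\overline{\Gamma_N}$ survives the pairing with $\beps(\psi\bw)\in C(\overline Q)^{d\times d}$. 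Applying Lebesgue's differentiation theorem and defining $\tilde\bg$ through this singular contribution, represented on $\Gamma_N$ via the extension operator $T$ of \eqref{intro:equ15} (the definition is unambiguous since extensions vanishing near $\overline{\Gamma_N}$ are annihilated, and is bounded in $(C^1_0(\Gamma_N)^d)^*$ uniformly in $t$ by \eqref{intro:equ15} and the uniform $L^\infty(0,T;\mathcal M)$ bound) gives \eqref{a:equ30} with $\tilde\bg\in L^\infty_{w^*}(0,T;(C^1_0(\Gamma_N)^d)^*)$. Finally, the initial conditions \eqref{a:equ31} follow because $\bu^n(0)=\bu_0$, $\bu^n_t(0)=\bu_1$, $v^n(0)=v_0$ for all $n$, while the uniform estimates place $\bu$ in $W^{2,2}(0,T;W^{1,2}_D(\Omega)^d)\hookrightarrow C^1([0,T];W^{1,2}_D(\Omega)^d)$ and $v$ in $W^{1,\infty}(0,T;H^k(\Omega))$; the strong convergences $\bu^n\to\bu$ in $C([0,T];W^{1,2}_D(\Omega)^d)$, $\bu^n_t\to\bu_t$ in $C([0,T];L^2(\Omega)^d)$ and $v^n\to v$ in $C([0,T];H^{k'}(\Omega))$ identify the traces at $t=0$, which are then upgraded to the $W^{1,2}$- and $H^k$-limits by the respective time regularity (and, for $v$, by weak lower semicontinuity together with $v(0)=v_0$, as in Section~\ref{sec:p}).

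The main obstacle I expect is precisely the second and third steps above: turning the weighted estimates of Propositions \ref{a:prop3}--\ref{a:prop4} (which only control $\bbT^n$ globally in $L^\infty(0,T;L^1)$) into an honest pointwise limit, and, simultaneously, pinning down the limiting Radon measure --- in particular showing that the mass lost in the limit is carried by $\Gamma_N$ only and that the induced boundary functional is uniformly bounded in $(C^1_0(\Gamma_N)^d)^*$. Everything else is routine weak/strong compactness once these two points are in place.
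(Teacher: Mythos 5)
Your proposal is correct and its overall architecture coincides with the paper's: compactness from the $n$-independent bounds of Lemmas \ref{a:lem5}--\ref{a:lem8}, Corollary \ref{a:cor5} and Propositions \ref{a:prop3}--\ref{a:prop4}; identification of the limit against compactly supported test functions; the measure-valued weak-* limit of $(\bbT^n)_n$ in $L^\infty_{w^*}(0,T;\mathcal{M}(\overline{\Omega})^{d\times d})$ and a normal-trace difference to define $\tilde{\bg}$ on $\Gamma_N$; and standard arguments for the initial data. The one genuinely different ingredient is the mechanism for the pointwise convergence of $(\bbT^n)_n$, and here the logical order is reversed. The paper applies the Aubin--Lions lemma to the auxiliary sequences $\bbS^n=\bbT^n(1+|\bbT^n|)^{-(1+a)}$ and $s^n=(1+|\bbT^n|)^{-(1+a)}$, whose space-time derivatives are controlled by the weighted estimates; it deduces $s>0$ a.e.\ via Fatou and the $L^1(Q)$ bound, obtains $\bbT^n=(s^n)^{-1}\bbS^n\to\bbT$ a.e., and only then infers the a.e.\ convergence of the strain. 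You instead apply Aubin--Lions directly to $\bu^n_t+\alpha\bu^n$ (bounded in $L^2(0,T;W^{2,2}_{loc})$ with time derivative in $L^2(0,T;W^{1,2})$), obtain a.e.\ convergence of $F(\bbT^n)=\beps(\bu^n_t+\alpha\bu^n)-n^{-1}\bbT^n$, show via the truncation argument and the uniform $L^1(Q)$ bound that the limit strain lies in the open unit ball a.e., and invert $F$ by continuity. Both routes rest on the same two pillars (the weighted second-derivative estimates and the $L^1(Q)$ bound excluding blow-up on a set of positive measure); yours is somewhat more elementary, avoiding the chain rule for weak derivatives applied to $\bbS^n$ and $s^n$, and it delivers the constitutive relation $\beps(\bu_t+\alpha\bu)=F(\bbT)$ by construction rather than by uniqueness of pointwise limits. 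The remaining terse points in your write-up (local equi-integrability of $b(v^n)\bbT^n$ for the interior identification, and the trace identification at $t=0$) are deferred to the same references and standard arguments that the paper itself invokes, so they do not constitute a gap relative to the paper's own level of detail.
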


\begin{proof}
The first, second and fourth convergence results follow immediately from the {\it a priori} bounds in Lemma \ref{a:lem5}, Lemma \ref{a:lem6}, Lemma \ref{a:lem7}, Proposition \ref{a:prop3} and Corollary \ref{a:cor5}. Using Propositions \ref{a:prop3} and \ref{a:prop4}, applying the reasoning in \cite{mypaperpreprint} yields the pointwise convergence result \(\bbT^n \rightarrow\bbT \) a.e. in \( Q\). In particular, we consider sequences 
\[
\bbS^n = \frac{\bbT^n}{(1 + |\bbT^n|)^{1+a}}, \quad s^n = \frac{1 }{(1 + |\bbT^n|)^{1 + a}}.
\]
By a careful application of the chain rule for weak derivatives, it follows that \((\bbS^n)_n \) is bounded in \( W^{1,2}(0, T; L^2(\Omega)^{d\times d}) \) and \( L^2(0, T; W^{1,2}_{loc}(\Omega)^{d\times d}) \), independent of \( n \). An analogous result holds for \((s^n)_n\). By the Aubin--Lions lemma, it follows that \( \bbS^n \rightarrow\bbS \) and \( s^n \rightarrow s\) strongly in \(L^2(0,  T; L^2_{loc}(\Omega)) \) and hence pointwise a.e. in \( Q\), up to a subsequence that we do not relabel. By Fatou's lemma and the bound on \((\bbT^n)_n \) in \( L^1(Q)^{d\times d}\), we deduce that \( s^{-\frac{1}{1+a}}\in L^1(Q) \). Hence \( s> 0 \) a.e. in \( Q\) and so \( \bbT^n = (s^n)^{-1}\bbS^n \) converges pointwise a.e. in \( Q\) to a limit which we denote by \( \bbT \). 
By Fatou's lemma and Lemma \ref{a:lem8},  \( \bbT\in L^\infty(0, T; L^1(\Omega)^{d\times d}) \). The fifth bullet point now follows. 

By Lemma \ref{a:lem5},  we have that \(n^{-1}\bbT^n \rightarrow\mathbf{0}\) strongly in \( L^2(Q)^{d\times d}\). With the constitutive relation for the regularised problem and the pointwise convergence of \((\bbT^n)_n \), this implies the pointwise convergence result for \((\beps(\bu^n_t + \alpha\bu^n))_n \), noting that weak limits in \( L^2(Q)\) and pointwise limits   coincide. Hence all of the stated convergence results  hold.
Furthermore,   the  strain-limiting  constitutive relation between \( \bbT \) and \( \bu \) must hold by uniqueness of pointwise limits. 

The attainment of the initial conditions (\ref{a:equ31}) follows from standard arguments, using  the aforementioned regularity, application of the Aubin--Lions lemma  and the initial conditions satisfied by the sequence of approximate solutions. 

Adapting the reasoning in \cite{preprint2}, noticing that the Neumann boundary term vanishes when considering compactly supported test functions, and using the fact that \((b(v^n))_n \) is uniformly bounded on \( Q\) and converges strongly in \( L^p(Q)\) to \( b(v) \) for every \(  p \in [1,\infty) \), we see that, for every \( \bw\in W^{1,2}_0(\Omega)^d\) and a.e. \( t\in (0, T) \), 
\begin{equation}\label{a:equ28}
\int_\Omega\bu_{tt}(t) \cdot \bw  + b(v(t)) \bbT(t) :\beps(\bw)\,\mathrm{d}x  = \langle l(t), \bw\rangle = \int_\Omega\boldf(t)\cdot \bw\,\mathrm{d}x.
\end{equation}

Next, we want to be able to test against functions that only vanish on the Dirichlet part of the boundary, and so obtain some information regarding the satisfaction of the  Neumann boundary condition. 
First we notice that \( (\bbT^n)_n \) is bounded in \(L^\infty(0, T; L^1(\Omega)^{d\times d}) \) and hence in \( L^\infty_{w^*}(0, T; \mathcal{M}(\overline{\Omega})^{d\times d}) \),   the dual space of \( L^1(0, T; C(\overline{\Omega})^{d\times d}) \). Up to a further subsequence that we do not relabel, it follows from the Banach--Alaoglu theorem that we have \( \bbT^n \overset{\ast}{\rightharpoonup}\bbT \) weakly-* in \( L^\infty_{w^*}(0, T; \mathcal{M}(\overline{\Omega})^{d\times d}) \). Furthermore, this limit satisfies
\begin{equation}\label{a:equ29}
\int_\Omega \bu_{tt}(t) \cdot \bw \,\mathrm{d}x + \langle \overline{\bbT}(t), \beps(\bw) \rangle_{(\mathcal{M}(\overline{\Omega}),  C(\overline{\Omega}))} = \langle l(t), \bw\rangle,
\end{equation}
for every \( \bw \in C^1_D(\overline{\Omega})^d\) and a.e. \( t\in (0, T) \), where \( \langle \cdot, \cdot\rangle_{(\mathcal{M}(\overline{\Omega}),  C(\overline{\Omega}))} \) denotes the duality pairing between \( \mathcal{M}(\overline{\Omega}) \) and \( C(\overline{\Omega})\). In the spirit of \cite{RN6},  to define the error on \( \Gamma_N \), we define the normal component the stresses \( \bbT(t) \) and \( \overline{\bbT}(t)\) on the Neumann boundary, respectively, by
\begin{align*}
\langle \bbT(t)\mathbf{n}, \bw \rangle|_{C^1_0(\Gamma_N)} = \int_\Omega\bu_{tt}(t) \cdot \tilde{\bw} + \bbT(t) :\beps(\tilde{\bw}) \,\mathrm{d}x,
\end{align*}
and 
\begin{align*}
\langle\overline{\bbT}(t) \mathbf{n},\bw \rangle_{C^1_0(\Gamma_N)} = \int_\Omega\bu_{tt}(t) \cdot \tilde{\bw} \,\mathrm{d}x + \langle \overline{\bbT}(t), \tilde{\bw}\rangle_{(\mathcal{M}(\overline{\Omega}),  C(\overline{\Omega}))},
\end{align*}
for a.e. \( t\in (0, T) \) and every \( \bw \in C^1_0(\Gamma_N)^d\), where  \( \tilde{\bw}\in C^1_D(\overline{\Omega})^d\) is an extension of \( \bw \) to \( \overline{\Omega}\) such that it vanishes on \( \Gamma_D\). 
Such an extension exists by the definition of \( C^1_0(\Gamma_N)^d\). 
Using (\ref{a:equ28}) and  (\ref{a:equ29}), the definitions of \( \bbT(t)\mathbf{n}\) and \( \overline{\bbT}(t) \mathbf{n}\) as above are independent of the choice of extension \( \tilde{\bw}\). 
Furthermore, by the discussion in Section \ref{sec:naux}, we can choose the extension such that \( \|\tilde{\bw}\|_{1,\infty}\leq C \|\bw \|_{C^1_0(\Gamma_N) }\), where \( C\) depends only on \( \Omega\). Hence, \(\bbT(t)\mathbf{n}\), \( \overline{\bbT}(t) \mathbf{n}\) are well-defined elements of the dual space \( (C^1_0(\Gamma_N)^d)^*\). 
Moreover, by the stated regularity results, we have \( \bbT(t) \mathbf{n}\), \(\overline{\bbT}(t) \mathbf{n}\in L^\infty_{w^*}(0, T; (C^1_0(\Gamma_N)^d)^*) \). Then we define the penalisation \( \tilde{\bg}\) on \( (0,T) \times \Gamma_N \) by
\begin{equation}\label{a:equ55}
\langle \tilde{\bg}(t), \bw \rangle  = \langle\overline{\bbT}(t) \mathbf{n} - \bbT(t) \mathbf{n}, \bw\rangle,
\end{equation}
for a.e. \( t\in (0, T) \), for every \( \bw\in C^1_0(\Gamma_N)^d\). 
Using (\ref{a:equ28}), (\ref{a:equ29}) and (\ref{a:equ55}), it follows that (\ref{a:equ30}) holds. 
Hence we conclude the existence of a solution to the elastodynamic equation with strain-limiting constitutive relation, up to the Neumann boundary condition. 
\end{proof}


\begin{proposition}\label{a:prop5}
Let the assumptions of Theorem \ref{a:thm2} hold and let \((\bu, \bbT,v, \tilde{\bg}) \) be the solution quadruple constructed there. For a.e. \( t\in (0, T) \), \( v(t) \) is the unique solution of the minimisation problem 
\begin{equation}\label{a:equ32}
\mathcal{E}(\bu(t), v(t)) + \mathcal{H}(v(t)) + \mathcal{G}_k(v(t), v_t(t)) = \inf_{v\in H^k_{D+1}(\Omega), \, v\leq v(t)} \Big\{ \mathcal{E}(\bu(t), v) + \mathcal{H}(v) + \mathcal{G}_k(v, v_t(t))\Big\}.
\end{equation}
Furthermore, \( v_t \leq 0 \) a.e. in \(Q\).
\end{proposition}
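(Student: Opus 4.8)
The plan is to reduce the minimisation problem to its first-order optimality condition and then pass to the limit $n\to\infty$ in the corresponding inequality for the regularised solutions $(\bu^n,\bbT^n,v^n)$. Since the functional $w\mapsto \mathcal{E}(\bu(t),w)+\mathcal{H}(w)+\mathcal{G}_k(w,v_t(t))$ is convex on the convex set $\{w\in H^k_{D+1}(\Omega):w\leq v(t)\}$ — indeed $\mathcal{E}$ is convex in its second argument because $b$ is convex and $\varphi^*\geq 0$, $\mathcal{H}$ is a strictly convex quadratic, and $\mathcal{G}_k(\cdot,v_t(t))$ is linear — it suffices to establish that, for a.e.\ $t\in(0,T)$,
\begin{equation}\label{a:eqVIplan}
0 \leq \big[ \partial_v\mathcal{E}(\bu(t),v(t)) + \mathcal{H}^\prime(v(t)) + \partial_v\mathcal{G}_k(v(t),v_t(t)) \big](\chi)
\end{equation}
for every $\chi\in H^k_D(\Omega)$ with $\chi\leq 0$: by convexity any admissible competitor then has functional value at least that of $v(t)$, and strict convexity of $\mathcal{H}$ gives uniqueness. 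Note first that (\ref{a:eqVIplan}) is meaningful: by Fatou's lemma applied to the bound of Lemma \ref{a:lem6} together with the pointwise convergence $\beps(\alpha\bu^n)\to\beps(\alpha\bu)$ from Theorem \ref{a:thm2}, we get $\sup_{t}\int_\Omega\varphi^*(\beps(\alpha\bu(t)))\,\mathrm{d}x\leq C$, so $|\beps(\alpha\bu(t))|<1$ a.e.\ in $\Omega$ and $\partial_v\mathcal{E}(\bu(t),v(t))(\chi)=\int_\Omega\frac{b^\prime(v(t))}{\alpha}\chi\,\varphi^*(\beps(\alpha\bu(t)))\,\mathrm{d}x$ is finite.

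To obtain (\ref{a:eqVIplan}) I would start from the regularised optimality condition (\ref{a:equ42}) established in the proof of Proposition \ref{a:prop2}, multiply it by an arbitrary $\psi\in C([0,T])$ with $\psi\geq 0$, and integrate over $(0,T)$. For the $\mathcal{H}^\prime$ term and for $\partial_v\mathcal{G}_k(v^n(t),v^n_t(t))(\chi)=(\chi,v^n_t(t))_{k,2}$, the weak-$*$ convergence $v^n\overset{\ast}{\rightharpoonup}v$ in $W^{1,\infty}(0,T;H^k(\Omega))$ — in particular $v^n\rightharpoonup v$ and $v^n_t\rightharpoonup v_t$ weakly in $L^2(0,T;H^k(\Omega))$ — passes to the limit directly against the fixed test function $\chi\psi$. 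For the nonlinear term $\int_Q\frac{b^\prime(v^n)}{\alpha}\chi\psi\,\varphi^*_n(\beps(\alpha\bu^n))\,\mathrm{d}x\,\mathrm{d}t$ I would use that $b^\prime(v^n)=2\max\{v^n,0\}\geq 0$ converges uniformly on $\overline{Q}$ to $b^\prime(v)$ (Aubin--Lions together with $H^{k-1}(\Omega)\hookrightarrow C(\overline{\Omega})$, valid since $k-1>\tfrac{d}{2}$), that $\chi\psi\leq 0$, and that $\varphi^*_n(\beps(\alpha\bu^n))\geq 0$ converges pointwise a.e.\ in $Q$ to $\varphi^*(\beps(\alpha\bu))$ (from $\beps(\alpha\bu^n)\to\beps(\alpha\bu)$ a.e.\ and the monotone, locally uniform convergence $\varphi^*_n\nearrow\varphi^*$ on the open unit ball). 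Fatou's lemma applied to the non-negative integrand $-\frac{b^\prime(v^n)}{\alpha}\chi\psi\,\varphi^*_n(\beps(\alpha\bu^n))$ then yields
\[
\limsup_{n\to\infty}\int_Q \frac{b^\prime(v^n)}{\alpha}\chi\psi\,\varphi^*_n(\beps(\alpha\bu^n))\,\mathrm{d}x\,\mathrm{d}t \;\leq\; \int_Q \frac{b^\prime(v)}{\alpha}\chi\psi\,\varphi^*(\beps(\alpha\bu))\,\mathrm{d}x\,\mathrm{d}t,
\]
which is precisely the direction needed to preserve the inequality $\geq 0$ in the limit. Combining this with the limits of the linear terms gives $0\leq\int_0^T\big[\partial_v\mathcal{E}(\bu(t),v(t))+\mathcal{H}^\prime(v(t))+\partial_v\mathcal{G}_k(v(t),v_t(t))\big](\chi)\,\psi(t)\,\mathrm{d}t$ for every non-negative $\psi\in C([0,T])$, and Lebesgue's differentiation theorem then delivers (\ref{a:eqVIplan}) for a.e.\ $t$, hence (\ref{a:equ32}). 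The non-healing property $v_t\leq 0$ a.e.\ in $Q$ follows from $v^n_t\leq 0$ (Proposition \ref{a:prop2}) and the weak convergence $v^n_t\rightharpoonup v_t$ in $L^2(Q)$, since $\{w\in L^2(Q):w\leq 0\}$ is convex and closed, hence weakly closed.

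The main obstacle is the passage to the limit in the nonlinear elastic term: because $\varphi^*$ is finite only on the open unit ball and blows up at its boundary, one cannot argue via strong convergence of $\beps(\alpha\bu^n)$ in a Lebesgue space on which $\varphi^*$ is continuous, as was done in Proposition \ref{p:prop5} for the polynomial case. The two facts that rescue the argument are (i) the a priori $L^1$-bound of Lemma \ref{a:lem6}, which via Fatou forces the limiting strain $\beps(\alpha\bu(t))$ to remain in the open unit ball a.e., so the limiting functional is well defined, and (ii) the sign $b^\prime(v^n)\chi\psi\leq 0$, which converts the required convergence into a one-sided Fatou estimate and thereby reduces the demand to mere pointwise a.e.\ convergence of $\varphi^*_n(\beps(\alpha\bu^n))$.
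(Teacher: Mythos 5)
Your proposal follows essentially the same route as the paper: reduce the minimisation to the first-order variational inequality, pass to the limit in the regularised optimality condition tested against $\chi\psi$ with $\chi\le 0$, handle the linear terms by weak convergence, and handle the elastic term by a one-sided (reverse) Fatou argument exploiting the sign of $b^\prime(v^n)\chi\psi$ together with pointwise a.e.\ convergence of $\varphi^*_n(\beps(\alpha\bu^n))$. The one step where your justification is thinner than the paper's is the claim that $|\beps(\alpha\bu(t))|<1$ a.e., which you need so that the limit points lie \emph{strictly} inside the unit ball and the locally uniform convergence $\varphi^*_n\to\varphi^*$ can be invoked along the moving arguments $\beps(\alpha\bu^n)$. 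Your derivation of this from Fatou applied to Lemma \ref{a:lem6} only yields $\varphi^*(\beps(\alpha\bu(t)))<\infty$ a.e., hence $|\beps(\alpha\bu(t))|\le 1$ a.e.; the further conclusion $<1$ requires $\varphi^*$ to be infinite on the unit sphere, which holds for $a\le 1$ but fails for $a>1$ (there $\int_0^1 s(1-s^a)^{-1/a}\,\mathrm{d}s<\infty$). The paper closes this gap differently and more robustly: it uses the memory kernel representation of $\beps(\bu)$ together with the safety strain condition (\ref{a:equ5}) and $|F(\bbT)|<1$ to obtain the uniform bound $\|\beps(\alpha\bu)\|_{L^\infty(Q)}\le C_{**}<1$, after which the rest of your argument goes through verbatim. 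Substituting that estimate for your Fatou inference makes the proof complete for all $a>0$.
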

\begin{proof}
Since \( v^n_t\leq 0 \) in \( Q\) and weak convergence preserves ordering, it is immediate that \( v_t \leq 0 \) a.e. in \( Q\). To prove (\ref{a:equ32}), it is enough to show, for a.e. \( t\in (0, T) \) and  every \( \chi \in H^k_D(\Omega) \) such that \( \chi \leq 0 \), that the following holds:
\begin{equation}\label{a:equ63}
0 \leq \big[ \partial_v\mathcal{E}(\bu(t), v(t)) + \mathcal{H}^\prime(v(t)) + \partial_v\mathcal{G}_k(v(t), v_t(t)) \big](\chi).
\end{equation}
For every \( n \) and every test function \( \chi \) as above, we know that the solution of the regularised problem satisfies
\begin{align*}
0 \leq \big[ \partial_v\mathcal{E}_n(\bu^n(t), v^n(t)) + \mathcal{H}^\prime(v^n(t)) + \partial_v\mathcal{G}_k(v^n(t), v_t^n(t)) \big](\chi).
\end{align*}
Using the convergence results of Theorem \ref{a:thm2}, we immediately deduce that
\begin{equation}\label{a:equ33}
\lim_{n\rightarrow\infty}\int_0^T \psi \big[ \mathcal{H}^\prime(v^n(t)) + \partial_v\mathcal{G}_k(v^n(t), v_t^n(t)) \big](\chi)\,\mathrm{d}t
=  \int_0^T \psi \big[\mathcal{H}^\prime(v(t)) + \partial_v\mathcal{G}_k(v(t), v_t(t)) \big](\chi)\,\mathrm{d}t,
\end{equation}
for a fixed but arbitrary  \( \psi \in C([0, T]) \) such that \( \psi \geq 0 \). 

For  the nonlinear term, first we claim that \( \beps(\bu^n) \rightarrow\beps(\bu ) \) pointwise a.e. in \( Q\). Recalling  that \( (\bu^n)_n \) converges weakly in \( W^{1,2}(0, T; W^{2,2}_{loc}(\Omega)^d) \) and applying the Aubin--Lions lemma, it follows  that \( \bu^n \rightarrow\bu \) strongly in \( C([0, T]; W^{1,2}_{loc}(\Omega)^d) \). Considering an exhaustion of \(\Omega\) yields the required pointwise convergence result. 

Next, we claim that \((\varphi^*(\beps(\alpha \bu^n)))_n \) converges pointwise a.e. on \( Q\) to \( \varphi^*(\beps(\alpha\bu)) \). 
Using that \( \beps(\bu_t + \alpha\bu) = F(\bbT ) \) with the memory kernel property and the safety strain condition (\ref{a:equ5}), we have
\begin{align*}
|\beps(\bu(t)) | &= \Big| \mathrm{e}^{-\alpha t}\beps(\bu_0) + \int_0^t \mathrm{e}^{\alpha(s-t) } F(\bbT(s)) |\,\mathrm{d}s\Big|
\\
&\leq \frac{\mathrm{e}^{-\alpha t}}{\alpha}C_* + \int_0^t \mathrm{e}^{\alpha (s-t) }\,\mathrm{d}t
\\
& = \frac{1}{\alpha}\Big( 1 + \mathrm{e}^{-\alpha T}(C_* - 1) \Big)
\\
&= \frac{C_{**}}{\alpha},
\end{align*}
where \( C_{**} \in (0, 1) \). In particular,  \( \|\beps(\alpha\bu) \|_{L^\infty(Q)} \leq C_{**}< 1\). For a.e. \( (t,x) \in Q\), for  \( n \) sufficiently large (depending on  \((t,x) \)) we have that  \( |\beps(\alpha\bu^n(t,x))| \) is  bounded away from \( 1\) as a consequence of the pointwise convergence result. The function \( \varphi^*\) is well-defined and finite on the open unit ball. Also \( \varphi^*_n(\bbT) \rightarrow\varphi^*(\bbT) \) as \( n \rightarrow\infty\) for every \(|\bbT|<1 \). Thus, we deduce that
\begin{align*}
\varphi^*_n(\beps(\alpha\bu^n)) \rightarrow\varphi^*(\beps(\alpha\bu)) \quad\text{ pointwise a.e. in }Q.
\end{align*}
Combining this with the pointwise convergence of \( (v^n)_n \), we get that
\[
\psi \frac{b^\prime(v^n)}{\alpha}\chi \varphi^*_n(\beps(\alpha\bu^n)) \rightarrow \psi \frac{b^\prime(v)}{\alpha}\chi \varphi^*(\beps(\alpha\bu))\quad\text{ pointwise a.e. in }Q.
\]
The sequence on the left is non-positive a.e. in \( Q\), using the non-positivity of \( \chi \). Hence, by the reverse Fatou lemma, we deduce that
\begin{equation}\label{a:equ56}
\begin{aligned}
\int_0^T \psi \partial_v\mathcal{E}(\bu(t), v(t)) (\chi)\,\mathrm{d}t &= \int_Q \psi \frac{b^\prime(v)}{\alpha}\chi \varphi^*(\beps(\alpha\bu))\,\mathrm{d}x\,\mathrm{d}t
\\
&\geq \lim_{n\rightarrow\infty} \int_Q\psi \frac{b^\prime(v^n)}{\alpha}\chi \varphi^*_n(\beps(\alpha\bu^n)) \,\mathrm{d}x\,\mathrm{d}t.
\end{aligned}
\end{equation}
Combining (\ref{a:equ56}) with (\ref{a:equ33}) and recalling that \( \psi \) is arbitrary, we see that (\ref{a:equ63}) holds and so we conclude the stated result. 
\end{proof}

Using the pointwise convergence results,  Fatou's lemma and the weak lower semi-continuity of norms, taking the limit as \( n \rightarrow\infty \) in  the energy-dissipation equality for the regularised solution immediately  yields the following result.

\begin{proposition}\label{a:prop6}
Let the assumptions of Theorem \ref{a:thm2} hold and let \((\bu, \bbT,v, \tilde{\bg}) \) be the solution quadruple constructed there. For every \( t\in [0, T]\), the following energy-dissipation inequality holds:
\begin{align*}
&\mathcal{F}(t;\bu(t), \bu_t(t), v(t)) + \int_0^t \|v_t(s) \|_{k,2}^2 \,\mathrm{d}s + \int_0^t \langle l_t(s), \bu(s) \rangle\,\mathrm{d}s
\\
&\quad
+ \int_0^t \int_\Omega b(v) \big[ F^{-1}(\beps(\bu_t + \alpha\bu)) - F^{-1}(\beps(\alpha\bu)) \big]:\beps(\bu_t) \,\mathrm{d}x\,\mathrm{d}s
\\
&\leq 
\mathcal{F}(0; \bu_0, \bu_1, v_0).
\end{align*}
\end{proposition}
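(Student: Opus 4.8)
The plan is to pass to the limit $n\to\infty$ in the regularised energy-dissipation equality (\ref{a:equ44}), which holds for every $t\in[0,T]$, and to show that the limiting triple satisfies the inequality ``$\le$'' of the statement; this mirrors exactly how the ``$\le$'' direction of the energy-dissipation equality was obtained in Theorem~\ref{p:thm4}. I would split the total energy as
\[
\mathcal{F}_n(t;\bu^n(t),\bu^n_t(t),v^n(t)) = \mathcal{K}(\bu^n_t(t)) + \mathcal{E}_n(\bu^n(t),v^n(t)) + \mathcal{H}(v^n(t)) - \langle l^n(t),\bu^n(t)\rangle,
\]
and sort the terms into those that pass to the limit with equality and those for which only a one-sided bound survives. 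All the convergences needed are already available from Theorem~\ref{a:thm2} and the pointwise limits recorded in the proof of Proposition~\ref{a:prop5}.

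The terms that converge with equality are the kinetic term, the instantaneous work term, and the initial energy. The bounds of Theorem~\ref{a:thm2} together with the memory kernel property and the pointwise convergence of $(\bbT^n)_n$ give strong convergence $\bu^n_t\to\bu_t$ in $C([0,T];L^2(\Omega)^d)$ and $\bu^n\to\bu$ in $L^\infty(0,T;W^{1,2}_D(\Omega)^d)$, so $\mathcal{K}(\bu^n_t(t))\to\mathcal{K}(\bu_t(t))$; moreover $l^n\to l$ weakly in $W^{2,1}(0,T;W^{-1,2}_D(\Omega)^d)$, hence $l^n(t)\rightharpoonup l(t)$ in $W^{-1,2}_D(\Omega)^d$ for every $t$ and $l^n_t\overset{\ast}{\rightharpoonup}l_t$ in $L^\infty_{w^*}(0,T;W^{-1,2}_D(\Omega)^d)$, which yields $\langle l^n(t),\bu^n(t)\rangle\to\langle l(t),\bu(t)\rangle$, $\int_0^t\langle l^n_t,\bu^n\rangle\,\mathrm{d}s\to\int_0^t\langle l_t,\bu\rangle\,\mathrm{d}s$ (writing $\langle l^n_t,\bu^n\rangle=\langle l^n_t,\bu^n-\bu\rangle+\langle l^n_t,\bu\rangle$ and using the strong convergence of $\bu^n$), and $\langle l^n(0),\bu_0\rangle\to\langle l(0),\bu_0\rangle$. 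Since the safety strain condition (\ref{a:equ5}) keeps $\beps(\alpha\bu_0)$ in the open unit ball, where $\varphi^*_n\le\varphi^*<\infty$ and $\varphi^*_n\to\varphi^*$ pointwise, dominated convergence gives $\mathcal{E}_n(\bu_0,v_0)\to\mathcal{E}(\bu_0,v_0)$; hence $\mathcal{F}_n(0;\bu_0,\bu_1,v_0)\to\mathcal{F}(0;\bu_0,\bu_1,v_0)$.

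The remaining terms are handled by weak lower semi-continuity and Fatou's lemma. From the weak-$\ast$ convergence in $W^{1,\infty}(0,T;H^k(\Omega))$ together with the strong convergence $v^n\to v$ in $C([0,T];L^2(\Omega))$ (Aubin--Lions), one has $v^n(t)\rightharpoonup v(t)$ weakly in $H^1(\Omega)$ for each $t$ and $v^n_t\rightharpoonup v_t$ weakly in $L^2(0,t;H^k(\Omega))$, so $\mathcal{H}(v(t))\le\liminf_n\mathcal{H}(v^n(t))$ and $\int_0^t\|v_t(s)\|_{k,2}^2\,\mathrm{d}s\le\liminf_n\int_0^t\|v^n_t(s)\|_{k,2}^2\,\mathrm{d}s$. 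For the elastic energy, $b(v^n)\varphi^*_n(\beps(\alpha\bu^n))\to b(v)\varphi^*(\beps(\alpha\bu))$ pointwise a.e.\ in $Q$ — this is precisely the pointwise limit established in the proof of Proposition~\ref{a:prop5}, using the safety strain condition to keep $\beps(\alpha\bu^n)$ in the unit ball — and the integrand is nonnegative, so Fatou's lemma gives $\mathcal{E}(\bu(t),v(t))\le\liminf_n\mathcal{E}_n(\bu^n(t),v^n(t))$ for a.e.\ $t$.

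The main obstacle is the nonlinear dissipation integral $\int_0^t\!\int_\Omega b(v^n)\big[F^{-1}_n(\beps(\bu^n_t+\alpha\bu^n))-F^{-1}_n(\beps(\alpha\bu^n))\big]:\beps(\bu^n_t)\,\mathrm{d}x\,\mathrm{d}s$: its leading factor is $\bbT^n=F^{-1}_n(\beps(\bu^n_t+\alpha\bu^n))$, which is only bounded in $L^1(Q)^{d\times d}$, so no weak compactness in a Lebesgue space is available and the limit cannot be taken by linearity. The resolution is to exploit monotonicity and sign: since $F^{-1}_n$ is monotone, the integrand equals $b(v^n)\big(F^{-1}_n(\beps(\bu^n_t+\alpha\bu^n))-F^{-1}_n(\beps(\alpha\bu^n))\big):\big(\beps(\bu^n_t+\alpha\bu^n)-\beps(\alpha\bu^n)\big)\ge 0$ pointwise a.e.\ in $Q$, and every factor converges pointwise a.e.\ in $Q$ by Theorem~\ref{a:thm2} — namely $\bbT^n\to\bbT$, $\beps(\bu^n_t)\to\beps(\bu_t)$, $v^n\to v$, and $F^{-1}_n(\beps(\alpha\bu^n))\to F^{-1}(\beps(\alpha\bu))$, the last once more using that $\beps(\alpha\bu)$ remains in the open unit ball, where $F^{-1}$ is defined and $F^{-1}_n\to F^{-1}$. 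Fatou's lemma then bounds $\liminf_n$ of this integral below by $\int_0^t\!\int_\Omega b(v)\big[F^{-1}(\beps(\bu_t+\alpha\bu))-F^{-1}(\beps(\alpha\bu))\big]:\beps(\bu_t)\,\mathrm{d}x\,\mathrm{d}s$, recalling $F^{-1}(\beps(\bu_t+\alpha\bu))=\bbT$. Collecting all estimates, the limit of the left-hand side of (\ref{a:equ44}) is $\ge$ the left-hand side of the asserted inequality while the right-hand side converges exactly, so the inequality holds for a.e.\ $t\in[0,T]$; since, by the regularity of $(\bu,\bu_t,v)$ from Theorem~\ref{a:thm2}, both sides are continuous in $t$, it holds for every $t\in[0,T]$.
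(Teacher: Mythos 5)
Your proposal is correct and is essentially the paper's own argument: the paper's proof of Proposition \ref{a:prop6} consists precisely of passing to the limit in the regularised energy--dissipation equality (\ref{a:equ44}) using the pointwise convergence results, Fatou's lemma (with the monotonicity-induced non-negativity of the dissipation integrand) and weak lower semi-continuity of norms; you have simply spelled out the term-by-term bookkeeping. The only small imprecision is in the work term, where $\langle l^n_t,\bu^n-\bu\rangle$ cannot be killed by strong convergence of $\bu^n$ in $W^{1,2}(\Omega)^d$ (which is only available locally); one should instead split as $\langle l^n_t-l_t,\bu^n\rangle+\langle l_t,\bu^n\rangle$ and use the strong convergence of $(l^n_t)_n$ together with the weak-$*$ convergence of $(\bu^n)_n$ in $L^\infty(0,T;W^{1,2}_D(\Omega)^d)$.
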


Before trying to obtain the opposite inequality, we  discuss equivalent formulations of the  energy-dissipation equality to motivate our proof in the case of fully Dirichlet boundary conditions, as well as to see why it fails in the case of mixed boundary conditions. We note that we cannot adapt the proof of Proposition \ref{p:prop6} to this setting due to  the lack of satisfaction of the elastodynamic equation without a penalisation term. 

Differentiating and carefully applying the chain rule for weak derivatives, an equivalent formulation of the energy-dissipation equality is
\begin{align*}
0 &= \Big\{ \int_\Omega \bu_{tt}(t) \cdot \bu_t(t) + b(v(t)) \bbT(t) :\beps(\bu_t(t)) \,\mathrm{d}x - \langle l(t), \bu_t(t) \rangle\Big\}
\\
&\quad
+ \Big[ \int_\Omega \frac{b^\prime(v(t))v_t(t)}{\alpha} \varphi^*(\beps(\alpha\bu(t)))  + \frac{1}{2\epsilon} (v(t) - 1) v_t(t) + 2\epsilon\nabla v(t) \cdot \nabla v_t(t) \,\mathrm{d}x + \|v_t(t) \|_{k,2}^2\Big].
\end{align*}
The  terms in the curly brackets correspond to the elastodynamic equation, and the second set of brackets is exactly 
\begin{equation}\nonumber
\big[ \partial_v\mathcal{E}(\bu(t), v(t))  + \mathcal{H}^\prime(v(t)) + \partial_v\mathcal{G}_k(v(t), v_t(t)) \big](v_t(t))
\end{equation}
 The latter collection of terms in bounded below by zero. Hence, to show that the sum of terms in the square brackets vanishes, we must prove that
\begin{equation}\label{a:equ34}
\begin{aligned}
\int_\Omega \frac{b^\prime(v(t))v_t(t)}{\alpha} \varphi^*(\beps(\alpha\bu(t)))  + \frac{1}{2\epsilon} (v(t) - 1) v_t(t) + 2\epsilon\nabla v(t) \cdot \nabla v_t(t) \,\mathrm{d}x + \|v_t(t) \|_{k,2}^2 \leq 0 .
\end{aligned}
\end{equation}
If (\ref{a:equ34}) holds, the energy-dissipation equality reduces to showing that 
\begin{equation}\label{a:equ35}
\int_\Omega \bu_{tt}(t) \cdot \bu_t(t) + b(v(t)) \bbT(t) :\beps(\bu_t(t)) \,\mathrm{d}x - \langle l(t), \bu_t(t) \rangle = 0,
\end{equation}
for a.e. \( t\in (0, T) \). 
Clearly (\ref{a:equ35}) holds if \( \Gamma_N \neq \emptyset\). Otherwise, it is sufficient to prove that
\begin{align*}
\lim_{n\rightarrow\infty}\int_Q b(v^n) \bbT^n : \beps(\bu^n_t) \,\mathrm{d}x \,\mathrm{d}t = \int_Q b(v) \bbT : \beps(\bu_t) \,\mathrm{d}x\,\mathrm{d}t.
\end{align*}
However, due to a lack of convergence results for \((\bbT^n)_n \) that are global in space, it is not clear how to prove such a result in the mixed boundary case. Hence, from now on, we only consider the case \( \Gamma_D = \partial\Omega\) and so, to obtain the energy-dissipation equality it suffices to prove that (\ref{a:equ34}) holds. 

Using weak lower semi-continuity and the fact that \( k > 1\), we see that, for every \( \psi \in C([0, T]) \) such that \( \psi \geq 0 \), we have
\begin{align*}
0 &= \lim_{n\rightarrow\infty} \int_0^T \psi(t) \Big\{\int_\Omega \Big[ \frac{b^\prime(v^n(t))v_t^n(t)}{\alpha} \varphi^*_n(\beps(\alpha\bu^n(t)))  + \frac{1}{2\epsilon} (v^n(t) - 1) v_t^n(t) 
\\&\quad
+ 2\epsilon\nabla v^n(t) \cdot \nabla v_t^n(t) \,\mathrm{d}x \Big]
+ \|v_t^n(t) \|_{k,2}^2\Big\} \,\mathrm{d}t
\\
&\geq \lim_{n\rightarrow\infty}\Big[ \int_Q \psi \frac{b^\prime(v^n)v_t^n}{\alpha} \varphi^*_n(\beps(\alpha\bu^n))\,\mathrm{d}x\,\mathrm{d}t\Big] 
\\&\quad
+ \int_0^T \psi \Big\{\int_\Omega\frac{1}{2\epsilon} (v(t) - 1) v_t(t) + 2\epsilon\nabla v(t) \cdot \nabla v_t(t) \,\mathrm{d}x 
 + \|v_t(t) \|_{k,2}^2\Big\} \,\mathrm{d}t.
\end{align*}
If we can show that 
\begin{equation}\label{a:equ64}
\lim_{n\rightarrow\infty}\Big[ \int_Q \psi \frac{b^\prime(v^n)v_t^n}{\alpha} \varphi^*_n(\beps(\alpha\bu^n))\,\mathrm{d}x\,\mathrm{d}t\Big]
\geq \int_Q \psi \frac{b^\prime(v)v_t }{\alpha}\varphi^*(\beps(\alpha\bu)) \,\mathrm{d}x\,\mathrm{d}t,
\end{equation}
then (\ref{a:equ34}) follows. 
At best, we have a weak convergence result for \((v^n_t)_n \). Hence we need a strong convergence result for \((\varphi^*_n(\beps(\alpha\bu^n)))_n \) in order to compute the limit in (\ref{a:equ64}). At present, we only have  pointwise convergence. We   show that this pointwise result for \((\varphi^*_n(\beps(\alpha\bu^n)))_n \) can be improved to strong convergence in   \( L^1(0, T; L^1_{loc}(\Omega))\). 

We first show that  \( \bbT^n \rightarrow\bbT \) strongly in \( L^1(0, T; L^1_{loc}(\Omega)^{d\times d}) \). 
If \( a\in (0, \frac{2}{d}) \), we later improve the result on \((\bbT^n)_n \) to strong convergence in \( L^{1 + \delta}(0, T; L^{1 + \delta}_{loc}(\Omega)^{d\times d})\) for a \( \delta  = \delta(a) > 0 \). 
Using that \((v^n_t)_n \) and \( v_t \) vanish on the boundary of \( \Omega\), we then prove that
\begin{equation}\label{a:equ57}
\lim_{n\rightarrow\infty}\Big[ \int_Q \psi \frac{b^\prime(v^n)v_t^n}{\alpha} \varphi^*_n(\beps(\alpha\bu^n))\,\mathrm{d}x\,\mathrm{d}t\Big] = \int_Q \psi \frac{b^\prime(v)v_t }{\alpha}\varphi^*(\beps(\alpha\bu)) \,\mathrm{d}x\,\mathrm{d}t.
\end{equation}
The energy-dissipation equality follows from (\ref{a:equ57}) using the aforementioned arguments. As a result, we conclude the existence of a weak energy solution to the strain-limiting problem with Dirichlet boundary conditions. 

\begin{lemma}\label{a:lem12}
Suppose that the hypotheses of Theorem \ref{a:thm2} hold and additionally assume that \( \Gamma_N = \emptyset\). 
Let \((\bu, \bbT, v) \) be the solution triple from Theorem \ref{a:thm2} and \(((\bu^n, \bbT^n, v^n))_n \) the sequence of solutions to the regularised problem constructed in  Proposition \ref{a:prop2}. Then  
\begin{align*}
\bbT^n \rightarrow\bbT \quad\text{ strongly in }L^1(0, T; L^1_{loc}(\Omega)^{d\times d}) . 
\end{align*}
\end{lemma}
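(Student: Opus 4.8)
The plan is to upgrade the pointwise a.e.\ convergence $\bbT^n\to\bbT$ from Theorem~\ref{a:thm2} to strong $L^1_{loc}$ convergence by proving that, on every cylinder $(0,T)\times\Omega_0$ with $\Omega_0\Subset\Omega$, the family $(\bbT^n)_n$ is uniformly integrable, and then invoking Vitali's convergence theorem (the $L^\infty(0,T;L^1(\Omega))$ bound of Lemma~\ref{a:lem8} already forces $\bbT\in L^1_{loc}(Q)$). First I would record the auxiliary strong convergences that are needed: combining Lemma~\ref{a:lem5}, Proposition~\ref{a:prop3}, Proposition~\ref{a:prop4} and Corollary~\ref{a:cor5} with the Aubin--Lions lemma gives $\bu^n_t+\alpha\bu^n\to\bu_t+\alpha\bu$ strongly in $L^2(0,T;W^{1,2}_{loc}(\Omega)^d)$, whence $F_n(\bbT^n)=\beps(\bu^n_t+\alpha\bu^n)\to\beps(\bu_t+\alpha\bu)=F(\bbT)$ strongly in $L^2(0,T;L^2_{loc}(\Omega)^{d\times d})$; moreover, since $k>\tfrac d2+1$ yields $H^k(\Omega)\hookrightarrow\hookrightarrow C^1(\overline\Omega)$, Lemma~\ref{a:lem7} gives $v^n\to v$ in $C([0,T];C^1(\overline\Omega))$, so that $b(v^n)\to b(v)$ uniformly on $\overline Q$.

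The reduction of uniform integrability of $(\bbT^n)_n$ to that of the local dissipation densities uses the quoted Lemma: from $(1+y^a)^{1/a}\le c_a(1+y)$ with $c_a=\max\{1,2^{-1+1/a}\}$ and $\tfrac{y^2}{1+y}\ge y-1$ one gets $F_n(\bbT^n):\bbT^n\ge c_a^{-1}(|\bbT^n|-1)$, and since $b\ge\eta>0$, for any measurable $E\subset(0,T)\times\Omega_0$
\begin{align*}
\int_E|\bbT^n|\,\mathrm{d}x\,\mathrm{d}t\le \frac{c_a}{\eta}\int_E b(v^n)\,\bbT^n:F_n(\bbT^n)\,\mathrm{d}x\,\mathrm{d}t + c_a|E|.
\end{align*}
Hence it is enough to prove that the nonnegative densities $e^n:=b(v^n)\,\bbT^n:F_n(\bbT^n)=b(v^n)\,\bbT^n:\beps(\bu^n_t+\alpha\bu^n)$ are uniformly integrable on $(0,T)\times\Omega_0$.

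The main obstacle is precisely this last point, and it is here that the monotonicity of $F$ is essential, since for general $a$ no $n$-uniform bound on $\bbT^n$ better than $L^\infty(0,T;L^1)$ is available. Adapting the arguments of \cite{MR2673410, RN6, preprint2}, I would proceed as follows. Fixing a cut-off $\zeta\in C_c^\infty(\Omega)$ with $\zeta\equiv1$ on $\Omega_0$, test the weak elastodynamic equation (\ref{a:equ9}) against $\zeta^2(\bu^n_t+\alpha\bu^n)$ (an admissible test function; the Neumann term is absent because $\Gamma_N=\emptyset$); handling the inertial part with (\ref{p:equ35}) produces a local energy identity expressing $\int_Q\zeta^2 e^n$ in terms of inertial contributions $\int_Q\bu^n_{tt}\cdot\zeta^2(\bu^n_t+\alpha\bu^n)$, the forcing $\int_Q\boldf\cdot\zeta^2(\bu^n_t+\alpha\bu^n)$, and a flux term $\int_Q\zeta\,b(v^n)\,\bbT^n:\bigl(\nabla\zeta\otimes(\bu^n_t+\alpha\bu^n)\bigr)^{\mathrm{sym}}$. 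The inertial and forcing terms pass to the limit by the $L^2$-bounds of Lemma~\ref{a:lem5} and Proposition~\ref{a:prop3} together with the strong convergences above; the flux term has to be controlled, as in \cite{preprint2, RN6}, by exploiting the weighted gradient estimate of Proposition~\ref{a:prop4} rather than a crude Hölder bound. With $\lim_n\int_Q\zeta^2 e^n$ identified, a Minty-type comparison using the monotonicity of $F_n$ against bounded continuous test tensors $\bbB\in C(\overline Q)^{d\times d}$, together with the weak-$*$ limit $\bbT^n\overset{*}{\rightharpoonup}\mu$ in $L^\infty_{w^*}(0,T;\mathcal{M}(\overline\Omega)^{d\times d})$ from Theorem~\ref{a:thm2}, shows that the Chacon (biting) concentration part of $(e^n)_n$ carries no mass, i.e.\ $(e^n)_n$ is uniformly integrable on $(0,T)\times\Omega_0$. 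Feeding this into the displayed inequality gives uniform integrability of $(\bbT^n)_n$ there, and Vitali's theorem then delivers $\bbT^n\to\bbT$ strongly in $L^1(0,T;L^1_{loc}(\Omega)^{d\times d})$. I expect the passage to the limit in the flux term and the verification that the concentration measure vanishes to be the delicate steps.
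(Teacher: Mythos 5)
Your overall strategy --- identify the limit of the dissipation integral $\int_Q b(v^n)\,\bbT^n\!:\!F_n(\bbT^n)$ and use it, together with the pointwise convergence and the coercivity $F_n(\bbT)\!:\!\bbT\ge c_a^{-1}(|\bbT|-1)$, to upgrade to strong $L^1_{loc}$ convergence --- is the right one and is essentially what the paper does. But the execution has a genuine gap at the step you yourself flag as delicate: the flux term $\int_Q \zeta\, b(v^n)\,\bbT^n:\bigl(\nabla\zeta\otimes(\bu^n_t+\alpha\bu^n)\bigr)$ produced by your cut-off. At this stage the only $n$-uniform information on $\bbT^n$ is the $L^\infty(0,T;L^1(\Omega))$ bound, pointwise a.e.\ convergence, and weak-$*$ convergence in measures; the factor $\bu^n_t+\alpha\bu^n$ is not uniformly bounded in $L^\infty_{loc}$ (for $d\ge 4$ even $W^{2,2}_{loc}$ does not embed into $L^\infty$), and the weighted gradient estimate of Proposition \ref{a:prop4} only yields $L^{1+\delta}_{loc}$ integrability of $\bbT^n$ under the extra restriction $a<\frac2d$ (Lemma \ref{a:lem9}), which is not assumed here. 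Passing to the limit in this pairing would require precisely the strong $L^1_{loc}$ convergence of $\bbT^n$ that you are trying to prove, so the argument is circular as written. The localization is also unnecessary: since $\Gamma_N=\emptyset$, the function $\bu^n_t+\alpha\bu^n$ itself is an admissible global test function, and testing globally removes the flux term entirely --- this is exactly how the paper identifies $\lim_n\int_Q b(v^n)\,\bbT^n\!:\!F_n(\bbT^n)=\int_Q b(v)\,\bbT\!:\!F(\bbT)$.

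Two further remarks on efficiency. Once that convergence of integrals is known, the biting-lemma/concentration machinery is overkill: the densities $e^n$ are nonnegative, converge a.e., and their integrals converge, so Scheff\'e's lemma gives $e^n\to e$ strongly in $L^1(Q)$ directly (this is the paper's route), and uniform integrability is then automatic. On the other hand, your uniform-integrability-plus-Vitali conclusion, had the flux term not been an issue, would have let you bypass the paper's final decomposition of $|\bbT^n|$ into the regions $\{|\bbT^n|\le 1\}$ and $\{|\bbT^n|>1\}$ (its equation (\ref{a:equ37})), which the paper needs in order to pass from strong convergence of $\bbT^n\!:\!F(\bbT^n)$ to strong convergence of $|\bbT^n|$; that is a genuine, if minor, simplification. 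Note also that the paper separately disposes of the regularisation part by showing $n^{-1/2}\bbT^n\to\mathbf{0}$ strongly in $L^2(0,T;L^2_{loc}(\Omega)^{d\times d})$ via the Aubin--Lions lemma, a step you would still need in order to replace $F_n$ by $F$ in the limit identities.
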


\begin{proof}
Using the pointwise convergence result, it is enough to show that  \(|\bbT^n|\rightarrow|\bbT |\) strongly  in \( L^1(0, T; L^1_{loc}(\Omega)) \). 
There exists a constant \( C\), independent of \( n \), such that 
\begin{align*}
\sup_{t\in [0, T]}\int_\Omega \frac{|\bbT^n(t) |^2}{n} \,\mathrm{d}x + \int_Q \frac{|\bbT^n_t|^2}{n} \,\mathrm{d}x\,\mathrm{d}t + \frac{1}{c(\Omega_0)}\int_0^T \int_{\Omega_0} \frac{|\nabla \bbT^n|^2}{n}\,\mathrm{d}x\,\mathrm{d}t
\leq C,
\end{align*}
for every compact  subset \( \Omega_0\subset \Omega \).  Applying the Aubin--Lions lemma to  \((n^{-\frac{1}{2}}\bbT^n)_n \), it follows that  \( n^{-\frac{1}{2}}\bbT^n \rightarrow \mathbf{0}\) strongly in \( L^2(0,T; L^2_{loc}(\Omega)^{d\times d}) \). 

On the other hand, using the elastodynamic equation and the  convergence results for \((\bu^n)_n \), we have that
\begin{align*}
\lim_{n\rightarrow\infty} \int_Q b(v^n) \bbT^n : F_n(\bbT^n) \,\mathrm{d}x\,\mathrm{d}t = \int_Q b(v) \bbT :F(\bbT) \,\mathrm{d}x\,\mathrm{d}t.
\end{align*}
By the pointwise convergence result and the non-negativity of the integrands, it follows that 
\[
b(v^n) \bbT^n:F_n(\bbT^n) \rightarrow b(v) \bbT :F(\bbT) \quad\text{ strongly in }L^1(0, T; L^1(\Omega)) .
\]
It follows that 
\begin{align*}
\bbT^n:F_n(\bbT^n) \rightarrow \bbT : F(\bbT) \quad \text{ strongly in } L^1(0, T; L^1(\Omega)).
\end{align*}
Using the local convergence result for \((n^{-\frac{1}{2}}\bbT^n)_n \) in \(L^2(0, T; L^2_{loc}(\Omega)^{d\times d})\), we see that
\begin{equation}\label{a:equ38}
\bbT^n :F(\bbT^n) \rightarrow \bbT : F(\bbT) \quad\text{ strongly in } L^1(0, T; L^1_{loc}(\Omega)).
\end{equation}
We write
\begin{equation}\label{a:equ37}
|\bbT^n| = \Big( \frac{|\bbT^n|^2}{(1 + |\bbT^n|^a)^{\frac{1}{a}}}\Big) \cdot \Big( \frac{(1 + |\bbT^n|^a)^{\frac{1}{a}}}{|\bbT^n|} \Big) \cdot \mathbb{I}_{\{ |\bbT^n| > 1\} }  + |\bbT^n|\cdot  \mathbb{I}_{\{ |\bbT^n| \leq 1\} } .
\end{equation}
The indicator functions \( \mathbb{I}_{\{ |\bbT^n| > 1\} } \), \( \mathbb{I}_{\{ |\bbT^n| \leq 1\} } \) converge pointwise a.e. in \( Q\) to \( \mathbb{I}_{\{ |\bbT| > 1\} } \) and \( \mathbb{I}_{\{ |\bbT| \leq  1\} } \), respectively. 
Applying the dominated convergence theorem and the pointwise convergence result, we get
\begin{align*}
|\bbT^n|\cdot  \mathbb{I}_{\{ |\bbT^n| \leq 1\} }\rightarrow |\bbT |\cdot  \mathbb{I}_{\{ |\bbT| \leq 1\} } \quad\text{ strongly in }L^1(0, T; L^1(\Omega)).
\end{align*}
For the other term on the right-hand side of (\ref{a:equ37}), the second factor is uniformly bounded by \( 2^{\frac{1}{a}}\) and converges pointwise a.e. on \(Q\). The first factor converges strongly in \( L^1(0, T; L^1_{loc}(\Omega)) \) by (\ref{a:equ38}). Thus we have 
\begin{align*}
\Big( \frac{|\bbT^n|^2}{(1 + |\bbT^n|^a)^{\frac{1}{a}}}\Big) \cdot \Big( \frac{(1 + |\bbT^n|^a)^{\frac{1}{a}}}{|\bbT^n|} \Big) \cdot \mathbb{I}_{\{ |\bbT^n| > 1\} } &\rightarrow \Big( \frac{|\bbT|^2}{(1 + |\bbT|^a)^{\frac{1}{a}}} \Big) \cdot  \Big( \frac{(1 + |\bbT|^a)^{\frac{1}{a}}}{|\bbT|} \Big) \cdot \mathbb{I}_{\{ |\bbT| > 1\} } 
\\&= |\bbT|\cdot  \mathbb{I}_{\{ |\bbT| > 1\} },
\end{align*}
with respect to strong convergence in \( L^1(0, T; L^1_{loc}(\Omega))\). Thus the convergence asserted in the statement of the lemma follows. 
\end{proof}


\begin{lemma}\label{a:lem9}
Suppose that the hypotheses of Lemma \ref{a:lem12} hold. Furthermore, we also assume that \( a\in (0, \frac{2}{d})\). 
Let \((\bu, \bbT, v) \) be the solution triple from Theorem \ref{a:thm2} and \(((\bu^n, \bbT^n, v^n))_n \) the sequence of solutions to the regularised problem constructed in  Proposition \ref{a:prop2}. Let \( \Omega_0 \subset\Omega\) be a compact  subset.  For every \( \delta>0 \) such that \( a + \delta < \frac{2}{d}\), there exists  a constant \( C = C(\Omega_0)  \), independent of \( n \), such that
\begin{align*}
\int_0^T \int_{\Omega_0} |\bbT^n|^{1 + \delta} + |\bbT|^{1 + \delta}\,\mathrm{d}x\,\mathrm{d}t \leq C.
\end{align*}
Furthermore, \( \bbT^n \rightarrow\bbT \) strongly in \( L^{1+ {\delta}}(0, T; L^{1+ {\delta}}_{loc}(\Omega)^{d\times d}) \) for every \( \delta\) as above. 
\end{lemma}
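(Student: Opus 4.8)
The strategy is to use the weighted interior gradient estimate of Proposition~\ref{a:prop4} together with the $L^\infty(0,T;L^1(\Omega))$ bound of Lemma~\ref{a:lem8} to place a suitable power of $1+|\bbT^n|$ into a parabolic Sobolev space uniformly in $n$, and then to interpolate. Fix a compact set $\Omega_0\subset\Omega$, choose a compact $\Omega_1$ with $\Omega_0\subset\mathrm{int}(\Omega_1)\subset\Omega_1\subset\Omega$ and a cut-off $\zeta\in C^\infty_c(\mathrm{int}(\Omega_1))$ with $\zeta\equiv 1$ on $\Omega_0$, and set $g^n:=(1+|\bbT^n|)^{(1-a)/2}$; note $1-a>0$ since $a<\tfrac{2}{d}\le 1$. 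Since $\bbT^n(t)\in W^{1,2}_{loc}(\Omega)^{d\times d}$ for a.e.\ $t$ (Proposition~\ref{a:prop4}), the chain rule gives $|\nabla g^n|^2\le C\,|\nabla\bbT^n|^2(1+|\bbT^n|)^{-(1+a)}$, so the weighted estimate of Proposition~\ref{a:prop4} bounds $\zeta\nabla g^n$ in $L^2((0,T)\times\Omega_1)$ independently of $n$; and $\int_{\Omega_1}|g^n(t)|^{2/(1-a)}\,\dd x=\int_{\Omega_1}(1+|\bbT^n(t)|)\,\dd x\le C$ by Lemma~\ref{a:lem8}, with $\tfrac{2}{1-a}\ge 2$, so $\zeta g^n$ is bounded in $L^\infty(0,T;L^{2/(1-a)}(\Omega_1))$ independently of $n$. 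Extending $\zeta g^n$ by zero, it is thus bounded, uniformly in $n$, in $L^2(0,T;W^{1,2}(\mathbb{R}^d))\cap L^\infty(0,T;L^{2/(1-a)}(\mathbb{R}^d))$.

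The second step is a Gagliardo--Nirenberg interpolation in space followed by integration in time. With $s:=\tfrac{2}{1-a}$ and the interpolation parameter $\theta:=\tfrac{d}{s+d}\in(0,1)$ one has $\|\zeta g^n(t)\|_{L^q}\le C\|\zeta g^n(t)\|_{W^{1,2}}^{\theta}\|\zeta g^n(t)\|_{L^{s}}^{1-\theta}$ with $q:=\tfrac{2}{\theta}=2+\tfrac{4}{d(1-a)}$, and raising this to the power $q=2/\theta$ and integrating over $(0,T)$ yields, uniformly in $n$, a bound on $\zeta g^n$ in $L^q((0,T)\times\mathbb{R}^d)$; the admissibility requirements $\theta\in(0,1)$ and $s<q<\tfrac{2d}{d-2}$ all reduce to $a<\tfrac{2}{d}$ (with the usual modification for $d=2$, where $W^{1,2}$ embeds into every $L^q$ with $q<\infty$), hence hold by hypothesis. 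Since $|\bbT^n|\le C\,(g^n)^{2/(1-a)}$ on $\Omega_0$, this gives $\int_0^T\!\int_{\Omega_0}|\bbT^n|^{1+\delta}\,\dd x\,\dd t\le C(\Omega_0)$ whenever $\tfrac{2(1+\delta)}{1-a}\le q$, which rearranges to exactly $a+\delta\le\tfrac{2}{d}$; in particular the bound holds for every $\delta>0$ with $a+\delta<\tfrac{2}{d}$. The corresponding bound for $\int_0^T\!\int_{\Omega_0}|\bbT|^{1+\delta}\,\dd x\,\dd t$ then follows from the pointwise convergence $\bbT^n\to\bbT$ a.e.\ in $Q$ (Theorem~\ref{a:thm2}) and Fatou's lemma.

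Finally, to upgrade to strong convergence, fix $\delta>0$ with $a+\delta<\tfrac{2}{d}$ and choose $\delta_1$ with $\delta<\delta_1<\tfrac{2}{d}-a$. By the previous step $(\bbT^n)_n$ is bounded in $L^{1+\delta_1}(0,T;L^{1+\delta_1}(\Omega_0)^{d\times d})$, so $(|\bbT^n-\bbT|^{1+\delta})_n$ is bounded in $L^{(1+\delta_1)/(1+\delta)}$ over $(0,T)\times\Omega_0$ with exponent strictly greater than $1$, and is therefore uniformly integrable; together with $\bbT^n\to\bbT$ a.e.\ in $Q$, Vitali's convergence theorem gives $\bbT^n\to\bbT$ strongly in $L^{1+\delta}(0,T;L^{1+\delta}(\Omega_0)^{d\times d})$. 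Exhausting $\Omega$ by compact subsets then yields convergence in $L^{1+\delta}(0,T;L^{1+\delta}_{loc}(\Omega)^{d\times d})$.

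I expect the one delicate point to be the bookkeeping in the interpolation: one must check that the Gagliardo--Nirenberg inequality is admissible and that the space--time integrability exponent $q=2+\tfrac{4}{d(1-a)}$ it produces is large enough to reach $|\bbT^n|^{1+\delta}$ precisely when $a+\delta<\tfrac{2}{d}$, so that no gap opens between the hypothesis and the conclusion. The localisation via cut-off functions (exactly as in the proof of Proposition~\ref{a:prop4}) and the final applications of Fatou's and Vitali's theorems are routine.
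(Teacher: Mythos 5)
Your proof is correct and follows essentially the same route as the paper: both arguments place $(1+|\bbT^n|)^{(1-a)/2}$ in $L^2_t W^{1,2}_{x,loc}$ via the weighted gradient estimate of Proposition \ref{a:prop4}, combine this with the $L^\infty(0,T;L^1(\Omega))$ bound of Lemma \ref{a:lem8}, and interpolate, with your single Gagliardo--Nirenberg step being just a repackaging of the paper's Sobolev embedding followed by H\"{o}lder interpolation (both land on exactly the threshold $a+\delta<\tfrac{2}{d}$). Your final step --- boundedness in $L^{1+\delta_1}$ for some $\delta_1>\delta$ plus pointwise convergence, then Vitali --- is in fact the careful way to justify the paper's terser closing assertion that weak plus pointwise convergence yields strong convergence.
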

\begin{proof}
First we recall that there exists a constant \( C = C(\Omega_0)\) independent of \( n\) such that 
\begin{align*}
C&\geq \sup_{t\in [0, T]}\int_\Omega|\bbT^n(t) |\,\mathrm{d}x + \int_0^T \int_{\Omega_0} \frac{|\nabla \bbT^n|^2}{(1 + |\bbT^n|^a)^{\frac{1}{a}}}\,\mathrm{d}x\,\mathrm{d}t
\\
& \geq  c_a \Big( c_p \int_0^T \|( 1 + |\bbT^n(t)|)^{\frac{1-a}{2}}\|_{L^p(\Omega_0)} \,\mathrm{d}t - 1\Big) 
\\
&\geq c_a c_p \Big( \int_0^T \|\bbT^n\|_{L^{\frac{p(1-a)}{2}}(\Omega_0)}^{1-a} \,\mathrm{d}t\Big),
\end{align*}
where \( p \in (2,\infty) \) if \( d = 2\) and \( p \in (2,\frac{2d}{d-2}] \) if \( d> 2\), by  the Sobolev embedding theorem applied to the function \((1 + |\bbT^n(t) |)^{\frac{1-a}{2}}\). We consider the case \( d> 2\).  The argument for \( d = 2\) is similar. Letting \( p = \frac{2d}{d-2}\), it follows that
\begin{equation}\label{a:equ36}
\sup_{t\in [0, T]}\int_\Omega |\bbT^n(t) |\,\mathrm{d}x + \int_0^T \Big( \int_{\Omega_0} |\bbT^n(t) |^{\frac{d(1-a)}{d-2}}\,\mathrm{d}x\Big)^{\frac{d-2}{d}}\,\mathrm{d}t \leq C,
\end{equation}
where \( C\) is independent of \( n \). 
 We fix \( \delta  = \delta(a,d) > 0 \) such that
\( a + \delta\in (0, \frac{2}{d}) \), which is possible due to the assumption on \( a\). A standard manipulation shows  that
\begin{align*}
\frac{d(1 + \delta) - 2}{d-2} < \frac{d(1-a)}{d-2}.
\end{align*}
Applying H\"{o}lder's inequality with parameters \( \frac{d}{2}\) and \( \frac{d}{d-2}\), we get
\begin{align*}
\int_{\Omega_0} |\bbT^n(t) |^{1 + \delta}\,\mathrm{d}x & \leq \Big( \int_{\Omega_0}|\bbT^n(t) |\,\mathrm{d}x \Big)^{\frac{2}{d}}\Big( \int_{\Omega_0} |\bbT^n(t) |^{ 1 + \frac{\delta d}{d-2}}\,\mathrm{d}x\Big)^{\frac{d-2}{d}}
\\
&\leq C\Big( \int_{\Omega_0} |\bbT^n(t) |^{\frac{d(1 + \delta) - 2}{d-2} }\,\mathrm{d}x \Big)^{\frac{d-2}{d}}
\\
&\leq C \Big( \int_{\Omega_0} |\bbT^n(t) |^{\frac{d(1-a)}{d-2}}\,\mathrm{d}x \Big)^{\frac{d-2}{d}}
\end{align*}
Integrating over \((0, T) \) and using (\ref{a:equ36}), it follows that
\begin{align*}
\int_0^T \int_{\Omega_0} |\bbT^n(t) |^{ 1 + \delta}\,\mathrm{d}x\,\mathrm{d}t\leq C.
\end{align*}
By reflexivity, we deduce that \(\bbT^n \rightharpoonup \bbT \) weakly in \( L^{1 + \delta}(0, T; L^{1 + \delta}_{loc}(\Omega)^{d\times d}) \). 
Combining the weak and pointwise convergence results  yields the   strong convergence result. 
\end{proof}

With these strong convergence results in mind, we return to focusing on proving (\ref{a:equ64}). We use the strong convergence of \((\bbT^n)_n \) to first show that \((\varphi^*_n(\beps(\bu^n_t + \alpha\bu^n)))_n \) converges strongly in \( L^1(0, T; L^1_{loc}( \Omega)^{d\times d}) \). Then we make use of the memory kernel property to show that \( (\varphi^*_n(\beps( \alpha\bu^n)))_n \) converges strongly in \( L^1(0, T; L^1_{loc}( \Omega)) \) to \( \varphi^*(\beps(\alpha\bu)) \). We combine this with the weak-* convergence of \((v^n_t)_n \) in \( L^\infty(0, T; L^\infty(\Omega))\) to  deal with the limit in (\ref{a:equ64}) away from the boundary of \( \Omega\). We use the auxiliary result, Corollary \ref{a:cor6}, to control the limit near the boundary. This is another place where we make use of the fully Dirichlet boundary condition. 

\begin{lemma}\label{a:lem13}
Let the assumptions of Lemma \ref{a:lem12} hold. Then we have that
\begin{align*}
\varphi^*_n(\beps(\alpha\bu^n)) \rightarrow\varphi^*(\beps(\alpha\bu)) \quad\text{ strongly in }L^1(0, T; L^1_{loc}(\Omega)) .
\end{align*}
\end{lemma}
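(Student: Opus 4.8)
The plan is to push the strong convergence through in two stages: first from the stress $\bbT^n$ (Lemma \ref{a:lem12}) to $\varphi^*_n(\beps(\bu^n_t + \alpha\bu^n))$, and then from $\beps(\bu^n_t + \alpha\bu^n)$ to $\beps(\alpha\bu^n)$ via the memory kernel representation together with a Jensen-type bound producing an $L^1_{loc}$-convergent majorant. Fix a compact subset $\Omega_0 \subset \Omega$. We shall use repeatedly that, by the proof of Proposition \ref{a:prop5}, $\varphi^*_n(\beps(\alpha\bu^n)) \to \varphi^*(\beps(\alpha\bu))$ pointwise a.e.\ in $Q$, that $\bbT^n \to \bbT$ and $\beps(\bu^n_t + \alpha\bu^n) \to \beps(\bu_t + \alpha\bu)$ pointwise a.e.\ in $Q$ (Theorem \ref{a:thm2}), and that $\varphi^*_n$, being the conjugate of the quadratically coercive function $\varphi_n$, is convex and finite-valued on all of $\mathbb{R}^{d\times d}$.

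\emph{Step 1: $\varphi^*_n(\beps(\bu^n_t + \alpha\bu^n)) \to \varphi^*(\beps(\bu_t + \alpha\bu))$ strongly in $L^1(0,T;L^1_{loc}(\Omega))$.} From the constitutive relation $\beps(\bu^n_t + \alpha\bu^n) = F_n(\bbT^n)$ and the Fenchel--Young equality (\ref{intro:equ13}) for $\varphi_n$, $\varphi^*_n(\beps(\bu^n_t + \alpha\bu^n)) = \bbT^n : F_n(\bbT^n) - \varphi_n(\bbT^n)$. Inserting $F_n(\bbT) = F(\bbT) + \bbT/n$ and $\varphi_n(\bbT) = \varphi(\bbT) + |\bbT|^2/(2n)$ gives the pointwise identity
\[
\varphi^*_n(\beps(\bu^n_t + \alpha\bu^n)) = \big( \bbT^n : F(\bbT^n) - \varphi(\bbT^n)\big) + \frac{|\bbT^n|^2}{2n}.
\]
Now $\bbT^n : F(\bbT^n) \to \bbT : F(\bbT)$ strongly in $L^1(0,T;L^1_{loc}(\Omega))$ by (\ref{a:equ38}); the term $|\bbT^n|^2/(2n) = \tfrac12|n^{-1/2}\bbT^n|^2 \to 0$ strongly in $L^1(0,T;L^1_{loc}(\Omega))$ because $n^{-1/2}\bbT^n \to \mathbf{0}$ strongly in $L^2(0,T;L^2_{loc}(\Omega)^{d\times d})$ (established in the proof of Lemma \ref{a:lem12}); and $\varphi(\bbT^n) \to \varphi(\bbT)$ strongly in $L^1(0,T;L^1_{loc}(\Omega))$ since $0 \le \varphi(\bbT^n) \le |\bbT^n|$, $\varphi(\bbT^n) \to \varphi(\bbT)$ a.e., and $|\bbT^n| \to |\bbT|$ a.e.\ and in $L^1(0,T;L^1_{loc}(\Omega))$ by Lemma \ref{a:lem12} (using $\big||\bbT^n|-|\bbT|\big|\le|\bbT^n-\bbT|$), so that $\{\varphi(\bbT^n)\}_n$ is uniformly integrable over $(0,T)\times\Omega_0$ and Vitali's theorem applies. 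Combining, the limit is $\bbT:F(\bbT) - \varphi(\bbT) = \varphi^*(F(\bbT)) = \varphi^*(\beps(\bu_t + \alpha\bu))$, using the limiting constitutive relation from Theorem \ref{a:thm2}.

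\emph{Step 2: passage to $\beps(\alpha\bu^n)$.} Solving $\partial_t\beps(\bu^n) + \alpha\beps(\bu^n) = F_n(\bbT^n) = \beps(\bu^n_t + \alpha\bu^n)$ as a linear ODE gives the memory kernel representation
\[
\beps(\alpha\bu^n(t)) = \mathrm{e}^{-\alpha t}\,\alpha\beps(\bu_0) + \int_0^t \alpha\mathrm{e}^{\alpha(s-t)}\,\beps(\bu^n_s + \alpha\bu^n)(s)\,\mathrm{d}s, \qquad \mathrm{e}^{-\alpha t} + \int_0^t \alpha\mathrm{e}^{\alpha(s-t)}\,\mathrm{d}s = 1,
\]
so the right-hand side is a genuine convex combination. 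By convexity of $\varphi^*_n$ and Jensen's inequality, for a.e.\ $(t,x)$,
\[
0 \le \varphi^*_n(\beps(\alpha\bu^n)(t,x)) \le \mathrm{e}^{-\alpha t}\varphi^*_n(\alpha\beps(\bu_0)(x)) + \int_0^t \alpha\mathrm{e}^{\alpha(s-t)}\varphi^*_n(\beps(\bu^n_s + \alpha\bu^n)(s,x))\,\mathrm{d}s =: D_n(t,x).
\]
Since $\varphi^*_n \le \varphi^*$ and, by the safety strain condition (\ref{a:equ5}), $\varphi^*(\alpha\beps(\bu_0)) \le |\Omega|C_*^2(1-C_*^a)^{-1/a}$, the first term of $D_n$ is dominated by a constant and converges in $L^1(0,T;L^1(\Omega))$ to $\mathrm{e}^{-\alpha t}\varphi^*(\alpha\beps(\bu_0))$ by dominated convergence; the second term is the image of $\varphi^*_n(\beps(\bu^n_\cdot + \alpha\bu^n))$ under the operator $h\mapsto\big(t\mapsto\int_0^t \alpha\mathrm{e}^{\alpha(s-t)}h(s)\,\mathrm{d}s\big)$, which is bounded (with norm $\le 1$) on $L^1((0,T)\times\Omega_0)$, hence converges strongly in $L^1(0,T;L^1_{loc}(\Omega))$ by Step 1. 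Thus $D_n \to D$ strongly in $L^1(0,T;L^1_{loc}(\Omega))$ for some limit $D$, so $\{D_n\}_n$ is uniformly integrable over $(0,T)\times\Omega_0$, and therefore so is $\{\varphi^*_n(\beps(\alpha\bu^n))\}_n$ by the majorisation above. Together with the pointwise a.e.\ convergence recalled at the outset and the finiteness of the measure of $(0,T)\times\Omega_0$, Vitali's convergence theorem gives $\varphi^*_n(\beps(\alpha\bu^n)) \to \varphi^*(\beps(\alpha\bu))$ in $L^1((0,T)\times\Omega_0)$; as $\Omega_0$ was arbitrary, this is the claim.

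The main obstacle is Step 1: one must verify that the elliptic corrections $\bbT^n/n$ and $|\bbT^n|^2/(2n)$ genuinely vanish in $L^1_{loc}$, not merely stay bounded — which is where the $L^2_{loc}$ decay of $n^{-1/2}\bbT^n$ from Lemma \ref{a:lem12} (resting in turn on the weighted higher-regularity bounds of Propositions \ref{a:prop3} and \ref{a:prop4}) is indispensable — and that $\varphi(\bbT^n) \to \varphi(\bbT)$ strongly, which hinges on the strong $L^1_{loc}$ convergence of $\bbT^n$ secured in Lemma \ref{a:lem12}. Once Step 1 is in place, the fact that the memory-kernel weights integrate to exactly $1$ turns Step 2 into a routine Jensen-plus-Vitali argument.
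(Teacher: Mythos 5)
Your proof is correct and follows essentially the same route as the paper: Step 1 uses the Fenchel--Young identity together with the strong $L^1_{loc}$ convergence of $\bbT^n:F_n(\bbT^n)$ and the vanishing of the elliptic corrections exactly as in the paper's argument, and Step 2 is the same memory-kernel-plus-Jensen majorisation. The only (cosmetic) difference is the final measure-theoretic step, where you deduce uniform integrability from the $L^1_{loc}$-convergent majorant $D_n$ and invoke Vitali, whereas the paper applies Fatou's lemma twice to the nonnegative difference $D_n - \varphi^*_n(\beps(\alpha\bu^n))$ to obtain convergence of norms and concludes by the Scheff\'e-type argument; both are valid.
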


\begin{proof}
First, we  show that \( \varphi^*_n(\beps(\bu^n_t + \alpha\bu^n))\rightarrow\varphi^*(\beps(\bu_t + \alpha\bu)) \) strongly in \( L^1(0, T; L^1_{loc}(\Omega)) \). From the proof of Lemma \ref{a:lem12}, we know that \( (n^{-1}|\bbT^n|^2 )_n \) converges strongly to \( 0 \) in \( L^1(0, T; L^1_{loc}(\Omega) ) \). 
Using the dominated convergence theorem and Lemma \ref{a:lem12}, it follows that \( \varphi_n(\bbT^n) \rightarrow\varphi(\bbT) \) strongly in \( L^1(0, T; L^1_{loc}(\Omega))\). Using this with the strong convergence of \((\bbT^n:F_n(\bbT^n))_n\) and (\ref{intro:equ13}),  we deduce that
\begin{align*}
\varphi^*_n(\beps(\bu^n_t + \alpha\bu^n)) \rightarrow\varphi^*(\beps(\bu_t + \alpha\bu)) \quad\text{ strongly in }L^1(0, T; L^1_{loc}(\Omega)) .
\end{align*}
Now we   prove the   convergence result for \((\varphi^*_n(\beps(\alpha\bu^n)))_n\). By the memory kernel property and  Jensen's inequality, it follows that
\begin{equation}\label{a:equ65}
\varphi^*_n(\beps(\alpha\bu^n(t))) \leq \varphi^*_n(\beps(\alpha\bu_0)) + \int_0^t \alpha\mathrm{e}^{\alpha(s-t)} \varphi^*_n(\beps(\bu^n_t + \alpha\bu^n)) \,\mathrm{d}s.
\end{equation}
By the   pointwise convergence result for \((\varphi^*_n(\beps(\alpha\bu^n)))_n \) and the strong convergence result for \((\varphi^*_n(\beps(\bu^n_t + \alpha\bu^n)))_n \), we have
\begin{equation}\label{a:equ66}
\begin{aligned}
0 &\leq \limsup_{n\rightarrow\infty}\Big\{ \varphi^*_n(\beps(\alpha\bu_0)) + \int_0^t \alpha\mathrm{e}^{\alpha(s-t)} \varphi^*_n(\beps(\bu^n_t + \alpha\bu^n)) \,\mathrm{d}s - \varphi^*_n(\beps(\alpha\bu^n(t))) \Big\}
\\
&= \varphi^*(\beps(\alpha\bu_0)) + \int_0^t \alpha\mathrm{e}^{\alpha(s-t)} \varphi^*(\beps(\bu_t + \alpha\bu)) \,\mathrm{d}s - \varphi^*(\beps(\alpha\bu(t))),
\end{aligned}
\end{equation}
a.e. in \( \Omega\), for a.e. \( t\in (0, T) \). However, using the non-negativity from (\ref{a:equ65}) and (\ref{a:equ66}) with Fatou's lemma and the  strong convergence of  \((\varphi^*_n(\beps(\bu^n_t + \alpha\bu^n)))_n\) in \( L^1(0, T; L^1_{loc}( \Omega))\), it follows that
\begin{align*}
0 \leq &
\int_0^T \int_{\Omega_0} \varphi^*(\beps(\alpha\bu_0))  - \varphi^*(\beps(\alpha\bu(t))) + \Big[ \int_0^t \alpha\mathrm{e}^{\alpha(s-t)} \varphi^*(\beps(\bu_t + \alpha\bu)) \,\mathrm{d}s \Big]\,\mathrm{d}x\,\mathrm{d}t
\\
&\leq \liminf_{n\rightarrow\infty} \int_0^T \int_{\Omega_0} \varphi^*_n(\beps(\alpha\bu_0)) - \varphi^*_n(\beps(\alpha\bu^n(t))) +\Big[  \int_0^t \alpha\mathrm{e}^{\alpha(s-t)} \varphi^*_n(\beps(\bu^n_t + \alpha\bu^n)) \,\mathrm{d}s \Big] \,\mathrm{d}x\,\mathrm{d}t
\\
&= \int_0^T \int_{\Omega_0} \varphi^*(\beps(\alpha\bu_0)) + \Big[ \int_0^t \alpha\mathrm{e}^{\alpha(s-t)} \varphi^*(\beps(\bu_t + \alpha\bu)) \,\mathrm{d}s \Big]\,\mathrm{d}x\,\mathrm{d}t 
\\&\quad
- \limsup_{n\rightarrow\infty} \Big(   \int_0^T \int_{\Omega_0} \varphi^*_n(\beps(\alpha\bu^n(t))) \,\mathrm{d}x\,\mathrm{d}t\Big).
\end{align*}
Cancelling out  terms from the left- and right-hand sides, and rearranging the remaining two terms, it follows that
\begin{align*}
\limsup_{n\rightarrow\infty} \int_0^T \int_{\Omega_0} \varphi^*_n(\beps(\alpha\bu^n(t))) \,\mathrm{d}x\,\mathrm{d}t& \leq \int_0^T \int_{\Omega_0}\varphi^*(\beps(\alpha\bu(t)))\,\mathrm{d}x\,\mathrm{d}t
\\
& \leq \liminf_{n\rightarrow\infty} \int_0^T \int_{\Omega_0} \varphi^*_n(\beps(\alpha\bu^n(t))) \,\mathrm{d}x\,\mathrm{d}t ,
\end{align*}
using Fatou's lemma again for the second inequality. The convergence of norms and the pointwise convergence yields the required result. 
\end{proof}

The final piece of information required before we prove (\ref{a:equ57}) concerns the sequence \( (v^n_t)_n \). We   show that  the functions \((v^n_t)_n \) vanish uniformly on the boundary. 
This allows us to overcome the fact that the convergence result from Lemma \ref{a:lem13} is only local in space. 
We require the following auxiliary result, the proof of which can be found in the Appendix. 
 
\begin{lemma}\label{a:lem11}
Let \( w\in W^{1,\infty}(\Omega) \) be such that \( w = 0 \) on \( \partial\Omega\) in the trace sense. There exists positive constants \( C\)  and \( \delta_0  \), depending only on \( \Omega\), such that
\begin{align*}
|w(x) |\leq C\|\nabla w\|_\infty d(x,\partial\Omega) \quad \text{ for a.e. }x\in \Omega\text{ such that }d(x,\partial\Omega)< \delta_0.
\end{align*}
\end{lemma}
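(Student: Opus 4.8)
The plan is to localise near $\partial\Omega$ in boundary charts and apply the fundamental theorem of calculus along short segments running from $x$ vertically down to the boundary graph. Since $\Omega$ is bounded with Lipschitz boundary, I cover $\partial\Omega$ by finitely many open sets $U_1,\dots,U_N$ in which, after a rigid motion, $\Omega\cap U_i=\{(y',y_d)\in U_i:y_d>\gamma_i(y')\}$ and $\partial\Omega\cap U_i=\{(y',\gamma_i(y'))\in U_i\}$ for an $L$-Lipschitz graph $\gamma_i$, with $L$ the same for all $i$. A Lebesgue-number-type compactness argument then produces a radius $\rho>0$ such that for every $z\in\partial\Omega$ there is an index $i$ with $B(z,\rho)\subset U_i$; I set $\delta_0:=\rho/(100(1+L))$, which depends only on $\Omega$.

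Next I fix $x\in\Omega$ with $d:=d(x,\partial\Omega)<\delta_0$ and let $z\in\partial\Omega$ realise this distance, so $|x-z|=d<\delta_0$ and $B(z,\rho)\subset U_i$ for some $i$. In the chart coordinates of $U_i$ write $x=(x',x_d)$ and $z=(z',\gamma_i(z'))$. A direct estimate using the Lipschitz bound on $\gamma_i$ shows $0<x_d-\gamma_i(x')\le(1+L)|x-z|<\rho/100$, and moreover that the segment $\{(x',t):\gamma_i(x')\le t\le x_d\}$ stays inside $B(z,\rho)\subset U_i$, so its interior lies in $\Omega$ and its lower endpoint $x^*:=(x',\gamma_i(x'))$ lies on $\partial\Omega$. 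I then invoke the elementary graph-distance inequality
\begin{align*}
|x-(y',\gamma_i(y'))|\ \ge\ \tfrac{1}{2\max(1,L)}\bigl(x_d-\gamma_i(x')\bigr)\qquad\text{for every }y',
\end{align*}
which is proved by distinguishing the case $|x'-y'|\le(x_d-\gamma_i(x'))/(2L)$ from its complement. Because the nearest point $z$ lies in $\partial\Omega\cap U_i$, which is precisely the graph of $\gamma_i$, this yields $d=|x-z|\ge\frac{1}{2\max(1,L)}(x_d-\gamma_i(x'))$, that is, $x_d-\gamma_i(x')\le 2\max(1,L)\,d$.

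It remains to integrate. Since $\Omega$, and every $\Omega\cap U_i$, is a Lipschitz domain, $W^{1,\infty}$ functions admit Lipschitz representatives on the closure, and for such a representative ``trace zero on $\partial\Omega$'' coincides with vanishing pointwise on $\partial\Omega$; in particular $w(x^*)=0$. Writing $w(x)-w(x^*)$ as the integral of $\partial_d w$ along the vertical segment from $x^*$ to $x$ gives
\begin{align*}
|w(x)|=\Bigl|\int_0^{x_d-\gamma_i(x')}\partial_d w\bigl(x',\gamma_i(x')+s\bigr)\,\mathrm{d}s\Bigr|\le\|\nabla w\|_\infty\bigl(x_d-\gamma_i(x')\bigr)\le 2\max(1,L)\,\|\nabla w\|_\infty\,d(x,\partial\Omega),
\end{align*}
which is the asserted bound with $C=2\max(1,L)$ and $\delta_0$ as above, both depending only on $\Omega$. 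The one point that really needs care — the ``main obstacle'' here — is the geometric bookkeeping ensuring that the nearest boundary point \emph{and} the connecting segment both sit inside a single chart, so that the distance to $\partial\Omega$ may legitimately be replaced by the distance to the graph $\gamma_i$; the uniform radius $\rho$ and the smallness of $\delta_0$ are chosen precisely to secure this. Everything else is the fundamental theorem of calculus together with the identification $W^{1,\infty}(\Omega)=C^{0,1}(\overline{\Omega})$ valid on Lipschitz domains.
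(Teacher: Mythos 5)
Your proof is correct and follows essentially the same route as the paper's appendix: localise in finitely many Lipschitz boundary charts and integrate $\partial_d w$ along the vertical segment joining $x$ to the point $(x',\gamma_i(x'))$ on the graph, with $\delta_0$ chosen so that a neighbourhood of the boundary is covered by the charts. You are in fact slightly more careful than the paper, which asserts the comparability $|y_d-\gamma(y')|\leq C\,d(x,\partial\Omega)$ without justification, whereas you prove it via the graph-distance inequality and also verify that the connecting segment stays inside a single chart.
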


\begin{corollary}\label{a:cor6}
Let the assumptions of Theorem \ref{a:thm2} hold and assume additionally that \(\Gamma_N  = \emptyset\). Let  \( \delta_0 \) be the constant from Lemma \ref{a:lem11}. For every \( \delta\in (0, \delta_0) \), define \( \Omega_\delta\) to be the set of \( x\in \Omega\) such that \( d(x, \partial\Omega) < \delta\). There exists a constant \( C\), independent of \( n \),  such that, for every \( \delta\in (0, \delta_0) \), 
\begin{equation}\label{a:equ58}
\|b^\prime(v^n) v^n_t \|_{L^\infty((0, T)\times\Omega_\delta)} \leq C\delta. 
\end{equation}
\end{corollary}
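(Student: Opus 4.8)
The plan is to assemble the uniform bounds already established with the geometric estimate of Lemma~\ref{a:lem11}. First I would use the boundary condition. Since $\Gamma_N = \emptyset$ we have $\Gamma_D = \partial\Omega$, so $v^n(t) = 1$ on $\partial\Omega$ in the trace sense for every $t \in [0,T]$. As $v^n \in W^{1,\infty}(0,T;H^k(\Omega))$ and the trace operator $H^k(\Omega) \to L^2(\partial\Omega)$ is bounded and linear, the map $t \mapsto v^n(t)|_{\partial\Omega}$ lies in $W^{1,\infty}(0,T;L^2(\partial\Omega))$ with weak time derivative $v^n_t(t)|_{\partial\Omega}$; since this map is constantly equal to $1$, it follows that $v^n_t(t)|_{\partial\Omega} = 0$ for a.e.\ $t \in (0,T)$.

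Next I would invoke the uniform regularity estimates. By Lemma~\ref{a:lem7}, the sequence $(v^n_t)_n$ is bounded in $L^\infty(0,T;H^k(\Omega))$ independently of $n$, and since $k > \tfrac{d}{2}+1$ the Sobolev embedding $H^k(\Omega) \hookrightarrow W^{1,\infty}(\Omega)$ yields a constant $C$, independent of $n$, with $\|\nabla v^n_t(t)\|_{\infty} \le C$ for a.e.\ $t \in (0,T)$. Fix such a $t$ and apply Lemma~\ref{a:lem11} to $w = v^n_t(t,\cdot) \in W^{1,\infty}(\Omega)$, which vanishes on $\partial\Omega$ by the previous step, to obtain
\[
|v^n_t(t,x)| \le C\,\|\nabla v^n_t(t)\|_\infty\, d(x,\partial\Omega) \le C\delta
\]
for a.e.\ $x$ with $d(x,\partial\Omega) < \delta < \delta_0$. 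Hence $\|v^n_t\|_{L^\infty((0,T)\times\Omega_\delta)} \le C\delta$ with $C$ independent of $n$ and of $\delta$.

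Finally I would control $b'(v^n)$. By Lemma~\ref{a:lem6}, $(v^n)_n$ is bounded in $L^\infty(0,T;H^k(\Omega))$, and since $k > \tfrac{d}{2}$ we have $H^k(\Omega) \hookrightarrow C(\overline{\Omega})$, so $(v^n)_n$ is bounded in $L^\infty(Q)$ uniformly in $n$; consequently $b'(v^n) = 2\max\{0,v^n\}$ is bounded in $L^\infty(Q)$ uniformly in $n$. Combining this with the previous estimate,
\[
\|b'(v^n) v^n_t\|_{L^\infty((0,T)\times\Omega_\delta)} \le \|b'(v^n)\|_{L^\infty(Q)}\,\|v^n_t\|_{L^\infty((0,T)\times\Omega_\delta)} \le C\delta,
\]
which is \eqref{a:equ58}. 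There is no genuine obstacle here; the only point needing a little care is the justification that $v^n_t$ has vanishing trace on $\partial\Omega$, which follows by differentiating in time the constant-in-time boundary identity $v^n(t)|_{\partial\Omega} \equiv 1$, and everything else is a direct consequence of Lemmas~\ref{a:lem6}, \ref{a:lem7} and \ref{a:lem11}.
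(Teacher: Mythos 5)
Your proof is correct and follows essentially the same route as the paper: combine the uniform $L^\infty(0,T;H^k(\Omega))$ bounds on $v^n$ and $v^n_t$ (Lemmas \ref{a:lem6} and \ref{a:lem7}) with the Sobolev embedding $H^k(\Omega)\hookrightarrow W^{1,\infty}(\Omega)$ and apply Lemma \ref{a:lem11} to $v^n_t(t,\cdot)$. Your explicit justification that $v^n_t$ has vanishing trace on $\partial\Omega$ (by differentiating the constant boundary identity $v^n(t)|_{\partial\Omega}\equiv 1$, valid since $\Gamma_D=\partial\Omega$) is a point the paper only asserts in the surrounding discussion, so it is a welcome addition rather than a deviation.
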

\begin{proof}
For \( x\in \Omega_\delta\) and a.e. \( t\in (0, T) \), we have
\begin{align*}
|b^\prime(v^n(t,x)) v^n_t(t,x)| &\leq \|b^\prime(v^n) \|_\infty|v^n_t(t,x) |
\\
&\leq C\|v^n \|_\infty \|\nabla v^n_t \|_\infty  d(x,\partial\Omega) 
\\
&\leq C\delta. 
\end{align*}
Taking the essential supremum over the left-hand side, we conclude the required result. 
\end{proof}

Now we have enough information to prove (\ref{a:equ57}). As a consequence, we see that the energy-dissipation equality holds in the case of fully Dirichlet boundary conditions, concluding the proof of the  existence result for the strain-limiting Dirichlet problem. 

\begin{lemma}
Let the assumptions of Lemma \ref{a:lem12} hold and let \( \psi \in C([0, T]) \) be fixed but arbitrary.  We have
\begin{align*}
\lim_{n\rightarrow\infty} \int_Q \psi \frac{b^\prime(v^n)v^n_t}{\alpha} \varphi^*_n(\beps(\alpha\bu^n)) \,\mathrm{d}x\,\mathrm{d}t = \int_Q \psi \frac{b^\prime(v)v_t}{\alpha}\varphi^*(\beps(\alpha\bu)) \,\mathrm{d}x\,\mathrm{d}t.
\end{align*}
\end{lemma}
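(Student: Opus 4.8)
The plan is to split the spatial domain into a thin boundary layer, where both integrands are uniformly small because $v^n_t$ and $v_t$ vanish on $\partial\Omega$, and a compact interior subset, where the strong local convergence of $(\varphi^*_n(\beps(\alpha\bu^n)))_n$ from Lemma \ref{a:lem13} can be combined with the weak-$*$ convergence of $(v^n_t)_n$. Fix $\psi\in C([0,T])$ and recall that by Lemma \ref{a:lem7} and the embedding $H^k(\Omega)\hookrightarrow L^\infty(\Omega)$ (valid since $k>\frac{d}{2}+1$), the sequences $(v^n_t)_n$ are uniformly bounded in $L^\infty(Q)$, so that $v^n_t\overset{\ast}{\rightharpoonup}v_t$ weakly-$*$ in $L^\infty(Q)=(L^1(Q))^*$; moreover $(b'(v^n))_n$ is uniformly bounded in $L^\infty(Q)$ and, by the pointwise convergence $v^n\to v$, converges to $b'(v)$ pointwise a.e. and strongly in every $L^q(Q)$, $q<\infty$.

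For $\delta\in(0,\delta_0)$, with $\delta_0$ as in Lemma \ref{a:lem11}, write $\Omega_\delta=\{x\in\Omega:d(x,\partial\Omega)<\delta\}$ and $\Omega^\delta=\{x\in\Omega:d(x,\partial\Omega)\ge\delta\}$, the latter being a compact subset of $\Omega$. First I would estimate the boundary-layer contribution. By Corollary \ref{a:cor6}, $\|b'(v^n)v^n_t\|_{L^\infty((0,T)\times\Omega_\delta)}\le C\delta$ uniformly in $n$; and since $v_t(t)\in H^k(\Omega)\hookrightarrow W^{1,\infty}(\Omega)$ with $v(t)\in H^k_{D+1}(\Omega)$ (so $v_t(t)=0$ on $\partial\Omega$) and $\|v_t(t)\|_{k,2}\le C$ uniformly, Lemma \ref{a:lem11} applied to $v_t(t)$ gives $\|b'(v)v_t\|_{L^\infty((0,T)\times\Omega_\delta)}\le C\delta$ as well. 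Combining these with the uniform $L^\infty(0,T;L^1(\Omega))$ bound on $(\varphi^*_n(\beps(\alpha\bu^n)))_n$ from Lemma \ref{a:lem6}, together with the bound $\varphi^*(\beps(\alpha\bu))\in L^\infty(0,T;L^1(\Omega))$ obtained from the same bound and Fatou's lemma, yields
\[
\left|\int_0^T\int_{\Omega_\delta}\psi\,\frac{b'(v^n)v^n_t}{\alpha}\varphi^*_n(\beps(\alpha\bu^n))\,\mathrm{d}x\,\mathrm{d}t\right|\le \frac{C\|\psi\|_\infty\,\delta}{\alpha}\sup_{t\in[0,T]}\int_\Omega\varphi^*_n(\beps(\alpha\bu^n(t)))\,\mathrm{d}x\le C'\delta,
\]
and the analogous estimate holds with $(v^n,\bbT^n)$ replaced by $(v,\bbT)$, the constant $C'$ being independent of $n$ and $\delta$.

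For the interior region, I would show that $\psi\,\frac{b'(v^n)}{\alpha}\varphi^*_n(\beps(\alpha\bu^n))\to\psi\,\frac{b'(v)}{\alpha}\varphi^*(\beps(\alpha\bu))$ strongly in $L^1((0,T)\times\Omega^\delta)$. Writing the difference as $\psi\,\frac{b'(v^n)}{\alpha}\big(\varphi^*_n(\beps(\alpha\bu^n))-\varphi^*(\beps(\alpha\bu))\big)+\psi\,\frac{b'(v^n)-b'(v)}{\alpha}\varphi^*(\beps(\alpha\bu))$, the first summand tends to $0$ in $L^1((0,T)\times\Omega^\delta)$ because $\|b'(v^n)\|_{L^\infty(Q)}\le C$ and $\varphi^*_n(\beps(\alpha\bu^n))\to\varphi^*(\beps(\alpha\bu))$ strongly in $L^1(0,T;L^1_{loc}(\Omega))$ by Lemma \ref{a:lem13}; the second tends to $0$ by dominated convergence, using $b'(v^n)\to b'(v)$ pointwise a.e. with uniform bound and $\varphi^*(\beps(\alpha\bu))\in L^1((0,T)\times\Omega^\delta)$. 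Since also $v^n_t\overset{\ast}{\rightharpoonup}v_t$ weakly-$*$ in $L^\infty((0,T)\times\Omega^\delta)$, pairing a strongly $L^1$-convergent sequence with a weakly-$*$ convergent $L^\infty$ sequence passes to the limit, so
\[
\lim_{n\to\infty}\int_0^T\int_{\Omega^\delta}\psi\,\frac{b'(v^n)v^n_t}{\alpha}\varphi^*_n(\beps(\alpha\bu^n))\,\mathrm{d}x\,\mathrm{d}t=\int_0^T\int_{\Omega^\delta}\psi\,\frac{b'(v)v_t}{\alpha}\varphi^*(\beps(\alpha\bu))\,\mathrm{d}x\,\mathrm{d}t.
\]
Adding the two regions, for each fixed $\delta$ we obtain $\limsup_{n\to\infty}\big|\int_Q\psi\,\frac{b'(v^n)v^n_t}{\alpha}\varphi^*_n(\beps(\alpha\bu^n))-\int_Q\psi\,\frac{b'(v)v_t}{\alpha}\varphi^*(\beps(\alpha\bu))\big|\le 2C'\delta$, and letting $\delta\to0$ gives the claim. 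The main obstacle is the boundary layer: the strong convergence in Lemma \ref{a:lem13} is only local in space, because the regularity estimates on $\bbT^n$ in Proposition \ref{a:prop4} are interior only, so the near-boundary contribution can be controlled solely through the uniform decay of $b'(v^n)v^n_t$ towards $\partial\Omega$ furnished by Corollary \ref{a:cor6}—which is precisely where the standing hypothesis $\Gamma_N=\emptyset$ is used. The remainder is a routine interchange of strong and weak-$*$ limits.
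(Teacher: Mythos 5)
Your proposal is correct and follows essentially the same route as the paper: the same splitting into a boundary layer $\Omega_\delta$ controlled via Corollary \ref{a:cor6} (and Lemma \ref{a:lem11} for the limit term) and a compact interior where the local strong $L^1$ convergence of $\varphi^*_n(\beps(\alpha\bu^n))$ from Lemma \ref{a:lem13} is paired with the weak-$*$ $L^\infty$ convergence of the time derivatives, followed by letting $\delta\to 0$. The only difference is cosmetic — you group $b'(v^n)\varphi^*_n(\beps(\alpha\bu^n))$ as the strongly convergent factor and $v^n_t$ as the weakly-$*$ convergent one, whereas the paper splits the difference with $b'(v^n)v^n_t$ as the weak factor — and your explicit verification that $b'(v)v_t$ also satisfies the $O(\delta)$ bound on $\Omega_\delta$ is a detail the paper leaves implicit.
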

\begin{proof}
Let  \( \delta\in (0, \delta_0 ) \) where we define \( \delta_0\) and  \(\Omega_\delta\)  as in Lemma \ref{a:lem11} and Corollary \ref{a:cor6}, respectively. Then, we have
\begin{equation}\label{a:equ59}
\begin{aligned}
&\Big|\int_Q  \psi \frac{b^\prime(v^n)v^n_t}{\alpha} \varphi^*_n(\beps(\alpha\bu^n))  -  \psi \frac{b^\prime(v)v_t}{\alpha}\varphi^*(\beps(\alpha\bu)) \,\mathrm{d}x\,\mathrm{d}t\Big|
\\&\leq
\Big|\int_0^T \int_{\Omega\setminus \Omega_\delta}  \psi \frac{b^\prime(v^n)v^n_t}{\alpha} \varphi^*_n(\beps(\alpha\bu^n))  -  \psi \frac{b^\prime(v)v_t}{\alpha}\varphi^*(\beps(\alpha\bu)) \,\mathrm{d}x\,\mathrm{d}t\Big|
\\
&\quad
+ \Big|\int_0^T \int_{ \Omega_\delta}  \psi \frac{b^\prime(v^n)v^n_t}{\alpha} \varphi^*_n(\beps(\alpha\bu^n))  -  \psi \frac{b^\prime(v)v_t}{\alpha}\varphi^*(\beps(\alpha\bu)) \,\mathrm{d}x\,\mathrm{d}t\Big|
\\
&\leq 
\Big|\int_0^T \int_{\Omega\setminus \Omega_\delta} \psi \varphi^*(\beps(\alpha\bu))\Big(\frac{b^\prime(v)v_t}{\alpha} - \frac{b^\prime(v^n)v^n_t}{\alpha}\Big) \,\mathrm{d}x\,\mathrm{d}t\Big|
\\
&\quad
+ \Big| \int_{\Omega\setminus \Omega_\delta} \psi \frac{b^\prime(v^n)v^n_t}{\alpha} \Big( \varphi^*(\beps(\alpha\bu)) - \varphi^*_n(\beps(\alpha\bu^n)) \big) \,\mathrm{d}x\,\mathrm{d}t \Big|
\\
&\quad
 \int_0^T \int_{\Omega_\delta} |\psi |\Big( \frac{|b^\prime(v^n)||v^n_t|}{\alpha}\varphi^*_n(\beps(\alpha\bu^n)) + \frac{|b^\prime(v)||v_t|}{\alpha}\varphi^*(\beps(\alpha\bu)) \Big) \,\mathrm{d}x\,\mathrm{d}t
 \\
 &\leq \Big|\int_0^T \int_{\Omega\setminus \Omega_\delta} \psi \varphi^*(\beps(\alpha\bu))\Big(\frac{b^\prime(v)v_t}{\alpha} - \frac{b^\prime(v^n)v^n_t}{\alpha}\Big) \,\mathrm{d}x\,\mathrm{d}t\Big|
 \\
 &\quad
 +C \int_0^T \int_{\Omega\setminus \Omega_\delta}\Big|\varphi^*_n(\beps(\alpha\bu^n)) - \varphi^*(\beps(\alpha\bu)) \Big| \,\mathrm{d}x\,\mathrm{d}t + C\delta,
\end{aligned}
\end{equation}
where \( C\) is a constant, independent of \( n \), using the \( L^\infty(Q)\) bound on \((v^n_t)_n \) from Lemma \ref{a:lem7} and (\ref{a:equ58}) to bound the integral over \( \Omega_\delta\) on the right-hand side. 

Using the local convergence results and noting  that \( \Omega\setminus \Omega_\delta\) is compactly contained in \( \Omega\), the second term on the right-hand side of (\ref{a:equ59}) vanishes in the limit as \( n\rightarrow\infty \). For the first term on the right, we use that \( (b^\prime(v^n)v^n_t )_n\) converges weakly-* in \( L^\infty(Q) \) as \( n \rightarrow\infty \)  with the fact that \( \psi \varphi^*(\beps(\alpha\bu)) \in L^1(Q) \). It follows that this integral also vanishes in the limit. This yields 
\begin{align*}
\lim_{n\rightarrow\infty} \Big|\int_Q  \psi \frac{b^\prime(v^n)v^n_t}{\alpha} \varphi^*_n(\beps(\alpha\bu^n))  -  \psi \frac{b^\prime(v)v_t}{\alpha}\varphi^*(\beps(\alpha\bu)) \,\mathrm{d}x\,\mathrm{d}t\Big| \leq C\delta,
\end{align*}
for every \( \delta\in (0, \delta_0)\), where \( C\) is  independent of \( \delta\). Letting \( \delta\rightarrow 0\), we conclude the stated result.
\end{proof}

\begin{corollary}
Let the hypotheses of Lemma \ref{a:lem9} hold and let \((\bu, \bbT, v) \) be the solution triple constructed in Theorem \ref{a:thm2}. Then \((\bu, \bbT, v) \) is a weak energy solution of the strain-limiting dynamic fracture problem in the following sense. For every \( \bw \in W^{1,2}_D(\Omega)^d\cap W^{1,\infty}(\Omega)^d\), we have
\begin{align*}
\int_\Omega\bu_{tt}(t) \cdot \bw + b(v(t)) \bbT(t) : \beps(\bw ) \,\mathrm{d}x = \int_\Omega\boldf(t) \cdot \bw \,\mathrm{d}x,
\end{align*}
for a.e. \( t\in (0, T) \), with constitutive relation
\begin{align*}
\beps(\bu_t + \alpha\bu) = \frac{\bbT}{(1  + |\bbT|^a)^{\frac{1}{a}}}\quad \text{ pointwise a.e. in }Q.
\end{align*} 
The minimisation problem (\ref{a:equ2}) holds for a.e. \( t\in [0, T]\) and the energy-dissipation equality (\ref{a:equ3}) is satisfied for every \( t\in [0, T]\). The initial conditions hold in the sense that
\begin{align*}
\lim_{t\rightarrow 0+}\big[ \|\bu(t) - \bu_0\|_{1,2} + \|\bu_t(t) - \bu_1 \|_{1,2} + \|v(t) - v_0 \|_{k,2} \big] = 0. 
\end{align*}
\end{corollary}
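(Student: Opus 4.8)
The plan is to assemble the statement from the results already in hand, since all the genuinely hard analysis is behind us. First I would record that, because $\Gamma_N=\emptyset$, the space $C^1_0(\Gamma_N)^d$ is trivial, so the penalisation $\tilde{\bg}$ produced by Theorem~\ref{a:thm2} vanishes and $\bg\equiv 0$; thus (\ref{a:equ30}) becomes the clean weak elastodynamic equation $\int_\Omega\bu_{tt}(t)\cdot\bw+b(v(t))\bbT(t):\beps(\bw)\,\mathrm{d}x=\int_\Omega\boldf(t)\cdot\bw\,\mathrm{d}x$ for a.e.\ $t$ and every $\bw\in C^1_D(\overline{\Omega})^d$. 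To reach the test class $W^{1,2}_D(\Omega)^d\cap W^{1,\infty}(\Omega)^d$ stated in the corollary I would approximate a given $\bw$ by $\bw_k=\zeta_k\bw$, with $\zeta_k\in C^\infty_c(\Omega)$ a cut-off equal to $1$ outside a shrinking boundary layer (mollified slightly so that $\bw_k\in C^1_D(\overline{\Omega})^d$); Lemma~\ref{a:lem11} gives $|\nabla\zeta_k\otimes\bw|\le C\|\nabla\bw\|_\infty$ on that layer, so $(\bw_k)_k$ is bounded in $W^{1,\infty}(\Omega)^d$ with $\bw_k\to\bw$ in $L^2(\Omega)^d$ and $\beps(\bw_k)\overset{\ast}{\rightharpoonup}\beps(\bw)$ in $L^\infty(\Omega)^{d\times d}$, which suffices to pass to the limit against $\bu_{tt}(t),\boldf(t)\in L^2(\Omega)^d$ and $b(v(t))\bbT(t)\in L^1(\Omega)^{d\times d}$ (Lemma~\ref{a:lem8}). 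The constitutive relation $\beps(\bu_t+\alpha\bu)=F(\bbT)$ a.e.\ in $Q$ and the attainment of the initial data are exactly what Theorem~\ref{a:thm2} supplies, and the minimisation problem (\ref{a:equ2}) for a.e.\ $t$ together with $v_t\le 0$ is Proposition~\ref{a:prop5}; so after this first step only the energy-dissipation equality remains.

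For (\ref{a:equ3}) the plan is as follows: Proposition~\ref{a:prop6} already gives ``$\le$'' for every $t$, so I would obtain ``$\ge$'' via the roadmap sketched before the statement. Differentiating the left-hand side of (\ref{a:equ3}) and applying the chain rule for weak derivatives (legitimate because the constitutive relation and the safety-strain bound $\|\beps(\alpha\bu)\|_{L^\infty(Q)}\le C_{**}<1$ from the proof of Proposition~\ref{a:prop5} force $\beps(\bu_t)\in L^\infty(Q)^{d\times d}$, so all products are integrable), one checks that its derivative equals $\big(\int_\Omega\bu_{tt}\cdot\bu_t+b(v)\bbT:\beps(\bu_t)\,\mathrm{d}x-\langle l,\bu_t\rangle\big)+\big[\partial_v\mathcal{E}(\bu,v)+\mathcal{H}^\prime(v)+\partial_v\mathcal{G}_k(v,v_t)\big](v_t)$. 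The first bracket vanishes on testing the weak elastodynamic equation against $\bu_t(t)$ --- this is precisely where $\Gamma_N=\emptyset$ is used, since no boundary penalisation obstructs this test and $\beps(\bu_t)\in L^\infty(Q)$ makes the pairing meaningful; the second bracket is $\ge 0$ by the minimisation inequality of Proposition~\ref{a:prop5} and $\le 0$ by (\ref{a:equ34}), which I would deduce from the lemma immediately preceding the statement (giving (\ref{a:equ57}), hence (\ref{a:equ64})) together with the weak lower semicontinuity inequality displayed just before (\ref{a:equ64}) and the arbitrariness of $\psi\ge 0$. Hence the derivative of the left-hand side of (\ref{a:equ3}) vanishes a.e.\ on $(0,T)$. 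I would then note that each of its pieces is absolutely continuous in $t$ --- $\mathcal{K}(\bu_t)$, $\mathcal{H}(v)$ and $\langle l,\bu\rangle$ are Lipschitz by the $W^{1,\infty}$-in-time regularity of $\bu$, $v$ and $l\in C^1$, $\mathcal{E}(\bu,v)$ is absolutely continuous using the local Lipschitz continuity of $\varphi^*$ on the open unit ball, and the dissipation integrals are absolutely continuous as integrals --- so the whole left-hand side is absolutely continuous, hence constant and equal to $\mathcal{F}(0;\bu_0,\bu_1,v_0)$ at $t=0$, which uses the attainment of the initial data; this gives (\ref{a:equ3}) for every $t\in[0,T]$.

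The main obstacles are bookkeeping rather than new analysis: (i) making the pairing $\int_\Omega b(v)\bbT:\beps(\bw)$ meaningful for test functions that do not vanish near $\partial\Omega$, which the cut-off estimate of Lemma~\ref{a:lem11} resolves (this is why that lemma and Corollary~\ref{a:cor6} were proved), and (ii) promoting the energy-dissipation identity from a.e.\ $t$ to every $t$, which needs the absolute-continuity argument above. The deepest inputs --- pointwise convergence of $(\bbT^n)_n$ (Proposition~\ref{a:prop4}), strong convergence of $(\varphi^*_n(\beps(\alpha\bu^n)))_n$ in $L^1(0,T;L^1_{loc}(\Omega))$ (Lemmas~\ref{a:lem12} and~\ref{a:lem13}), and the uniform vanishing of $(v^n_t)_n$ near $\partial\Omega$ (Corollary~\ref{a:cor6}) --- are already established, and the single point at which this argument would break under mixed boundary conditions is the illegitimacy of testing the penalised elastodynamic equation against $\bu_t(t)$.
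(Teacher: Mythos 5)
Your assembly is, in outline, exactly the paper's own (implicit) proof: the paper offers no separate argument for this corollary, but builds it from Theorem \ref{a:thm2} (elastodynamic equation with the penalisation $\tilde{\bg}$ trivial because $C^1_0(\Gamma_N)^d=\{0\}$ when $\Gamma_N=\emptyset$, the constitutive relation, and the initial data), Proposition \ref{a:prop5} (minimisation), Proposition \ref{a:prop6} (the ``$\leq$'' direction), and the reduction of ``$\geq$'' to (\ref{a:equ34}), hence to (\ref{a:equ64})/(\ref{a:equ57}), which is the content of the final lemma. Your cut-off argument via Lemma \ref{a:lem11} for extending the test class to $W^{1,2}_D(\Omega)^d\cap W^{1,\infty}(\Omega)^d$, and your absolute-continuity argument for upgrading the energy identity from a.e.\ $t$ to every $t$, are sound and in fact supply details the paper leaves tacit.

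The one step whose justification does not close as written is the claim that the first bracket vanishes ``on testing the weak elastodynamic equation against $\bu_t(t)$''. You have established the identity only for $\bw\in W^{1,2}_D(\Omega)^d\cap W^{1,\infty}(\Omega)^d$, and your extension genuinely uses $\nabla\bw\in L^\infty(\Omega)^{d\times d}$: Lemma \ref{a:lem11} is what controls the boundary-layer term $\nabla\zeta_k\otimes\bw$ against the merely $L^1$ stress. For $\bw=\bu_t(t)$ you only know $\beps(\bu_t(t))\in L^\infty(\Omega)^{d\times d}$; an $L^\infty$ bound on the symmetric gradient gives neither $\nabla\bu_t(t)\in L^\infty(\Omega)^{d\times d}$ nor the bound $|\bu_t(t,x)|\leq C\,d(x,\partial\Omega)$ that the cut-off needs (Hardy's inequality only gives $\bu_t(t)/d(\cdot,\partial\Omega)\in L^2(\Omega)^d$, which cannot be paired with $\bbT(t)\in L^1(\Omega)^{d\times d}$ on the layer). ``The pairing is meaningful'' is not the same as ``the identity holds for this test function''. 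The paper's intended route to (\ref{a:equ35}) is different: one passes to the limit in the approximate identity $\int_\Omega\bu^n_{tt}\cdot\bu^n_t+b(v^n)\bbT^n:\beps(\bu^n_t)\,\mathrm{d}x=\langle l^n,\bu^n_t\rangle$, writing $\bbT^n:\beps(\bu^n_t)=\bbT^n:F_n(\bbT^n)-\bbT^n:\beps(\alpha\bu^n)$ and using the global strong $L^1(Q)$ convergence of $b(v^n)\bbT^n:F_n(\bbT^n)$ established inside the proof of Lemma \ref{a:lem12}, together with the convergence results for $\beps(\alpha\bu^n)$ and $\bbT^n$. You should replace the direct testing step by this limit argument (or otherwise justify admissibility of $\bu_t(t)$), since as stated it invokes the weak formulation for a test function outside the class for which you proved it.
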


\appendix
\section{Proof of Lemma \ref{a:lem11}}
Let \( w\in W^{1,\infty}(\Omega) \) be such that \( w = 0 \) on \( \partial\Omega\) in the trace sense. 
The domain \( \Omega\) is Lipschitz, so for every \( x_0 \in \partial\Omega\) there exists an \( r> 0 \) and a Lipschitz function \( \gamma: \mathbb{R}^{d - 1}\rightarrow\mathbb{R}\) such that
\begin{align*}
\Omega\cap B(x_0, r) &= \{ x\in B(x_0, r) \,:\, \mathbf{Q}x = y = (y^\prime, y_d), \, y_d > \gamma(y^\prime) \},
\\
\partial\Omega\cap B(x_0, r) &= \{ x\in B(x_0, r) \,:\, \mathbf{Q}x = y = (y^\prime, y_d), \, y_d =  \gamma(y^\prime) \},
\\
\Omega^c\cap B(x_0, r) &= \{ x\in B(x_0, r) \,:\, \mathbf{Q}x = y = (y^\prime, y_d), \, y_d < \gamma(y^\prime) \},
\end{align*}
where \( \mathbf{Q}\) is an orthogonal change of coordinates. Defining the function \( \tilde{w} = w(\mathbf{Q}^{-1}\cdot )\), we have
\begin{equation}\label{a:equ67}
\begin{aligned}
|w(x)| = |\tilde{w}(y)| &= |\tilde{w}(y^\prime, y_d) - \tilde{w}(y^\prime, \gamma(y^\prime)) |
\\
&=\Big| \int_0^1 \partial_{y_d}\tilde{w}(y^\prime, sy_d + (1-s) \gamma(y^\prime)) \cdot ( y_d - \gamma(y^\prime))  \,\mathrm{d}s\Big|
\\
&\leq \|\nabla_y \tilde{w}\|_\infty |y_d - \gamma(y^\prime) |
\\
&\leq C(\mathbf{Q}) \|\nabla w\|_\infty |y - (y^\prime, \gamma(y^\prime)) |
\\
&\leq C \|\nabla w\|_\infty d(x,\partial\Omega) ,
\end{aligned}
\end{equation}
where the constant \( C\) depends on \(x_0\), \( r\) and \( \mathbf{Q}\), but is independent of \( w\). 

Next, we cover the boundary by finitely many balls of this form, that is, there exists \( K \in \mathbb{N}\), points \( x_1, \dots, x_K\in \partial\Omega\) and positive real values \( r_1,\dots , r_K\) such that \((B(x_i, r_i))_{i=1}^K \) is an open cover of \( \partial\Omega\), with each ball \( B(x_i, r_i) \) of the above form. 
In this case, there exists \( \delta_0 > 0 \) such that \( \Omega_{\delta_0}\subset \cup_{i=1}^K B(x_i, r_i) \). 
Using (\ref{a:equ67}), there exists a constant \( C\) depending only on \( \Omega\) such that,  for every \( x\in \cup_{i=1}^K B(x_i, r_i) \), 
\begin{equation}\label{a:equ68}
|w(x)|\leq C\|\nabla w\|_{\infty} d(x, \partial\Omega). 
\end{equation}
Hence, (\ref{a:equ68}) holds for every \( x\in \Omega_{\delta_0}\) and so we conclude the required result.

 \bibliographystyle{abbrv}

 \bibliography{references} 

\begin{thebibliography}{10}

\bibitem{MR1113814}
L.~Ambrosio, S.~Mortola, and V.~M. Tortorelli.
\newblock Functionals with linear growth defined on vector valued {BV}
  functions.
\newblock {\em J. Math. Pures Appl. (9)}, 70(3):269--323, 1991.

\bibitem{MR1164940}
L.~Ambrosio and V.~M. Tortorelli.
\newblock On the approximation of free discontinuity problems.
\newblock {\em Boll. Un. Mat. Ital. B (7)}, 6(1):105--123, 1992.

\bibitem{RN6}
L.~Beck, M.~Bul\'{\i}\v{c}ek, J.~M\'{a}lek, and E.~S\"{u}li.
\newblock On the existence of integrable solutions to nonlinear elliptic
  systems and variational problems with linear growth.
\newblock {\em Arch. Ration. Mech. Anal.}, 225(2):717--769, 2017.

\bibitem{BORDEN2014100}
M.~J. Borden, T.~J. Hughes, C.~M. Landis, and C.~V. Verhoosel.
\newblock A higher-order phase-field model for brittle fracture: Formulation
  and analysis within the isogeometric analysis framework.
\newblock {\em Computer Methods in Applied Mechanics and Engineering},
  273:100--118, 2014.

\bibitem{BourdinBlaise2011Atmf}
B.~Bourdin, C.~Larsen, and C.~Richardson.
\newblock A time-discrete model for dynamic fracture based on crack
  regularization.
\newblock {\em International Journal of Fracture}, 168(2):133--143, 2011.

\bibitem{MR1968440}
A.~Braides.
\newblock {\em {$\Gamma$}-convergence for beginners}, volume~22 of {\em Oxford
  Lecture Series in Mathematics and its Applications}.
\newblock Oxford University Press, Oxford, 2002.

\bibitem{RN8}
M.~Bul\'{\i}\v{c}ek, J.~M\'{a}lek, and E.~S\"{u}li.
\newblock Analysis and approximation of a strain-limiting nonlinear elastic
  model.
\newblock {\em Math. Mech. Solids}, 20(1):92--118, 2015.

\bibitem{preprint2}
M.~Bul\'{\i}\v{c}ek, V.~Patel, Y.~\c{S}eng\"{u}l, and E.~S\"{u}li.
\newblock Existence and uniqueness of global weak solutions to strain-limiting
  viscoelasticity with {D}irichlet boundary data.
\newblock Submitted. {\it https://arxiv.org/pdf/2011.11683.pdf}, 2020.

\bibitem{mypaperpreprint}
M.~Bul\'{i}\v{c}ek, V.~Patel, Y.~\c{S}eng\"{u}l, and E.~S\"{u}li.
\newblock Existence of large-data global weak solutions to a model of a
  strain-limiting viscoelastic body.
\newblock {\em Commun. Pure Appl. Anal.}, 20(5):1931--1960, 2021.

\bibitem{RN21}
R.~Bustamante and K.~Rajagopal.
\newblock Solutions of some simple boundary value problems within the context
  of a new class of elastic materials.
\newblock {\em International Journal of Non-Linear Mechanics}, 46(2):376--386,
  2011.

\bibitem{MR3654907}
M.~Caponi.
\newblock Linear hyperbolic systems in domains with growing cracks.
\newblock {\em Milan J. Math.}, 85(1):149--185, 2017.

\bibitem{MR4064375}
M.~Caponi.
\newblock Existence of solutions to a phase-field model of dynamic fracture
  with a crack-dependent dissipation.
\newblock {\em NoDEA Nonlinear Differential Equations Appl.}, 27(2):Paper No.
  14, 48, 2020.

\bibitem{MR4117498}
M.~Caponi and F.~Sapio.
\newblock A dynamic model for viscoelastic materials with prescribed growing
  cracks.
\newblock {\em Ann. Mat. Pura Appl. (4)}, 199(4):1263--1292, 2020.

\bibitem{MR2847479}
G.~Dal~Maso and C.~J. Larsen.
\newblock Existence for wave equations on domains with arbitrary growing
  cracks.
\newblock {\em Atti Accad. Naz. Lincei Rend. Lincei Mat. Appl.},
  22(3):387--408, 2011.

\bibitem{MR3541897}
G.~Dal~Maso, C.~J. Larsen, and R.~Toader.
\newblock Existence for constrained dynamic {G}riffith fracture with a weak
  maximal dissipation condition.
\newblock {\em J. Mech. Phys. Solids}, 95:697--707, 2016.

\bibitem{MR3748493}
G.~Dal~Maso and I.~Lucardesi.
\newblock The wave equation on domains with cracks growing on a prescribed
  path: existence, uniqueness, and continuous dependence on the data.
\newblock {\em Appl. Math. Res. Express. AMRX}, 2017(1):184--241, 2016.

\bibitem{MR1897378}
G.~Dal~Maso and R.~Toader.
\newblock A model for the quasi-static growth of brittle fractures: existence
  and approximation results.
\newblock {\em Arch. Ration. Mech. Anal.}, 162(2):101--135, 2002.

\bibitem{evansPDE}
L.~C. Evans.
\newblock {\em Partial differential equations}, volume~19 of {\em Graduate
  Studies in Mathematics}.
\newblock American Mathematical Society, Providence, RI, 1998.

\bibitem{MR2373373}
C.~Fefferman.
\newblock {$C^m$} extension by linear operators.
\newblock {\em Ann. of Math. (2)}, 166(3):779--835, 2007.

\bibitem{RN119}
L.~E. Figueroa and E.~S\"{u}li.
\newblock Greedy approximation of high-dimensional {O}rnstein-{U}hlenbeck
  operators.
\newblock {\em Found. Comput. Math.}, 12(5):573--623, 2012.

\bibitem{MR1988896}
G.~A. Francfort and C.~J. Larsen.
\newblock Existence and convergence for quasi-static evolution in brittle
  fracture.
\newblock {\em Comm. Pure Appl. Math.}, 56(10):1465--1500, 2003.

\bibitem{MR1633984}
G.~A. Francfort and J.-J. Marigo.
\newblock Revisiting brittle fracture as an energy minimization problem.
\newblock {\em J. Mech. Phys. Solids}, 46(8):1319--1342, 1998.

\bibitem{MR2106765}
A.~Giacomini.
\newblock Ambrosio-{T}ortorelli approximation of quasi-static evolution of
  brittle fractures.
\newblock {\em Calc. Var. Partial Differential Equations}, 22(2):129--172,
  2005.

\bibitem{griffith1921vi}
A.~A. Griffith.
\newblock The phenomena of rupture and flow in solids.
\newblock {\em Philos. Trans. Roy. Soc. London}, 221(582-593):163--198, 1921.

\bibitem{MR971465}
V.~A. Kondrat'ev and O.~A. Ole\u{\i}nik.
\newblock Boundary value problems for a system in elasticity theory in
  unbounded domains. {K}orn inequalities.
\newblock {\em Uspekhi Mat. Nauk}, 43(5(263)):55--98, 239, 1988.

\bibitem{KUHN20103625}
C.~Kuhn and R.~Müller.
\newblock A continuum phase field model for fracture.
\newblock {\em Engineering Fracture Mechanics}, 77(18):3625--3634, 2010.
\newblock Computational Mechanics in Fracture and Damage: A Special Issue in
  Honor of Prof. Gross.

\bibitem{larsen2010models}
C.~J. Larsen.
\newblock Models for dynamic fracture based on {G}riffith’s criterion.
\newblock In {\em IUTAM Symposium on Variational Concepts with Applications to
  the Mechanics of Materials}, pages 131--140. Springer, 2010.

\bibitem{MR2673410}
C.~J. Larsen, C.~Ortner, and E.~S\"{u}li.
\newblock Existence of solutions to a regularized model of dynamic fracture.
\newblock {\em Math. Models Methods Appl. Sci.}, 20(7):1021--1048, 2010.

\bibitem{mott1948brittle}
N.~Mott.
\newblock Brittle fracture in mild steel plates.
\newblock {\em Engineering}, 165:16--18, 1948.

\bibitem{MR997568}
D.~Mumford and J.~Shah.
\newblock Optimal approximations by piecewise smooth functions and associated
  variational problems.
\newblock {\em Comm. Pure Appl. Math.}, 42(5):577--685, 1989.

\bibitem{MR4060061}
M.~Negri.
\newblock {$\Gamma$}-convergence for high order phase field fracture: continuum
  and isogeometric formulations.
\newblock {\em Comput. Methods Appl. Mech. Engrg.}, 362:112858, 21, 2020.

\bibitem{MR2306785}
S.~Nicaise and A.-M. S\"{a}ndig.
\newblock Dynamic crack propagation in a 2{D} elastic body: the out-of-plane
  case.
\newblock {\em J. Math. Anal. Appl.}, 329(1):1--30, 2007.

\bibitem{RN118}
N.~S. Papageorgiou and S.~T. Kyritsi-Yiallourou.
\newblock {\em Handbook of applied analysis}, volume~19 of {\em Advances in
  Mechanics and Mathematics}.
\newblock Springer, New York, 2009.

\bibitem{onICT}
K.~R. Rajagopal.
\newblock On implicit constitutive theories.
\newblock {\em Appl. Math.}, 48(4):279--319, 2003.

\bibitem{RN19}
K.~R. Rajagopal.
\newblock On a new class of models in elasticity.
\newblock {\em Math. Comput. Appl.}, 15(4):506--528, 2010.

\bibitem{RN20}
K.~R. Rajagopal.
\newblock Non-linear elastic bodies exhibiting limiting small strain.
\newblock {\em Math. Mech. Solids}, 16(1):122--139, 2011.

\bibitem{onthenonlinearelasticresponse}
K.~R. Rajagopal.
\newblock On the nonlinear elastic response of bodies in the small strain
  range.
\newblock {\em Acta Mech.}, 225(6):1545--1553, 2014.

\bibitem{MR4117509}
E.~Tasso.
\newblock Weak formulation of elastodynamics in domains with growing cracks.
\newblock {\em Ann. Mat. Pura Appl. (4)}, 199(4):1571--1595, 2020.

\end{thebibliography}
 
 \end{document}